\documentclass[11pt]{amsart}
\allowdisplaybreaks
\usepackage[english]{babel}
\usepackage[pdftex,textwidth=400pt,marginratio=1:1]{geometry}
\usepackage{amsfonts}
\usepackage[dvips]{graphics}
\usepackage[colorinlistoftodos]{todonotes}
\usepackage{amsmath}
\usepackage{amsthm}
\usepackage{amssymb}
\usepackage{bbm}
\usepackage{cancel}
\usepackage{color}
\usepackage[all,cmtip]{xy}
\usepackage{graphicx}
\usepackage{mathabx}
\usepackage{tikz-cd}
\usepackage{epigraph} 
\usepackage{hyperref}
\usepackage{stmaryrd}
\usepackage{enumitem}
\usepackage[mathscr]{eucal}
\usepackage{soul}
\usepackage{xfrac}

\newtheorem{theorem}{Theorem}[section]
\newtheorem{corollary}[theorem]{Corollary}
\newtheorem{proposition}[theorem]{Proposition}
\newtheorem{lemma}[theorem]{Lemma}
\newtheorem{conjecture}[theorem]{Conjecture}

\theoremstyle{definition}
\newtheorem{definition}[theorem]{Definition}
\newtheorem{example}[theorem]{Example}
\newtheorem{remark}[theorem]{Remark}

\DeclareFontFamily{OT1}{rsfs}{}
\DeclareFontShape{OT1}{rsfs}{n}{it}{<-> rsfs10}{}
\DeclareMathAlphabet{\curly}{OT1}{rsfs}{n}{it}
\newcommand{\intprod}{\mathbin{\raisebox{\depth}{\scalebox{1}[-1]{$\lnot$}}}}

\newcommand{\bsix}{\underline{\mathbf{6}}}
\newcommand{\bfour}{\underline{\mathbf{4}}}
\newcommand{\bthree}{\underline{\mathbf{3}}}
\newcommand{\blambda}{\underline{\boldsymbol{\lambda}}}

\newcommand\LL{\mathbb L}

\renewcommand\O{\mathcal O}
\newcommand\PP{\mathbb P}

\newcommand\cV{\mathcal V}

\newcommand\cW{\mathcal W}

\newcommand\cE{\mathcal E}
\newcommand\EE{\mathbb E}
\newcommand\cA{\mathcal A}

\newcommand\cK{\mathcal K}

\newcommand\C{\mathbb C}

\newcommand\FF{\mathbb F}

\newcommand\sfZ{\mathsf Z}

\newcommand\Q{\mathbb Q}
\newcommand\cQ{\mathcal Q}

\newcommand\Z{\mathbb Z}

\newcommand\Coh{\mathrm{Coh}}

\newcommand\fr{\mathrm{fr}}
\newcommand\main{\mathrm{main}}

\newcommand\bfthree{\mathbf{3}}

\newcommand\Quot{\mathrm{Quot}}

\newcommand\SU{\mathrm{SU}}
\newcommand\Exp{\mathrm{Exp}}

\newcommand\vd{\mathrm{vd}}
\newcommand\pt{\mathrm{pt}}
\newcommand\vir{\mathrm{vir}}
\newcommand{\red}{\mathrm{red}}

\newcommand\loc{\mathrm{loc}}

\newcommand\td{\mathrm{td}}
\newcommand\rk{\operatorname{rk}}

\newcommand\tr{\operatorname{tr}}

\newcommand\im{\operatorname{im}}
\newcommand\ch{\operatorname{ch}}

\newcommand\id{\operatorname{id}}

\newcommand\Hom{\operatorname{Hom}}
\renewcommand\hom{\mathcal{H}{\it{om}}}
\newcommand\Ext{\operatorname{Ext}}
\newcommand\ext{\curly Ext}

\newcommand\Hilb{\operatorname{Hilb}}

\newcommand\Sym{\operatorname{Sym}}

\newcommand\mdot{{\scriptscriptstyle\bullet}}

\newcommand\INTO{\ar@{^{(}->}[r]}

\DeclareRobustCommand{\SkipTocEntry}[4]{}

\def\Supp{\mathrm{Supp}}

\def\udot{^{\mdot}}

\def\cM{\mathcal{M}}

\def\Quot{\mathrm{Quot}}

\def\GL{\mathrm{GL}}

\def\TT{\mathbb{T}}

\def\cH{\mathcal{H}}

\def\cU{\mathcal{U}}

\def\and{\quad\mathrm{and}\quad}

\def\Map{\mathsf{Map}}

\makeatletter
\def\l@section{\@tocline{1}{0pt}{1.5em}{1.5em}{}}
\def\l@subsection{\@tocline{2}{0pt}{3.2em}{3.2em}{}}
\makeatother

\begin{document}

\author[N.~Arbesfeld]{Noah Arbesfeld}
\address{University of Vienna, Faculty of Mathematics}
\email{noah.arbesfeld@univie.ac.at}

\author[M.~Kool]{Martijn Kool}
\address{Utrecht University, Mathematical Institute}
\email{m.kool1@uu.nl}

\author[W.~Lim]{Woonam Lim}
\address{Yonsei University, Department of Mathematics}
\email{woonamlim@yonsei.ac.kr}

\title[The geometry of Nekrasov's gauge origami theory]{The geometry of Nekrasov's gauge origami theory}

\begin{abstract}
Nekrasov’s gauge origami theory provides a (complex) 4-dimensional generalization of the ADHM quiver and its moduli spaces of representations.  We describe the origami moduli space as the zero locus of an isotropic section of a quadratic vector bundle on a smooth space. This allows us to give an algebro-geometric definition of the origami partition function in terms of Oh--Thomas virtual cycles. The key input is the computation of a sign associated to each torus fixed point of the moduli space. Furthermore, we establish an integrality result and dimensional reduction formulae, and discuss an application to non-perturbative Dyson--Schwinger equations following Nekrasov's work. Finally, we conjecture a description of the origami moduli space in terms of certain 2-dimensional framed sheaves on $\mathbb{P}^1 \times \mathbb{P}^1 \times \mathbb{P}^1 \times \mathbb{P}^1$, which we verify at the level of torus fixed points. 
\end{abstract}

\maketitle

\tableofcontents

\section{Introduction} \label{sec:intro}

The (2D) ADHM quiver $Q_{2}$, as given in Figure \ref{fig1} below, and its representations play a central role in geometry, representation theory and physics. The relation
\begin{equation} \label{eqn:ADHM}
[B_1, B_2] + I J = 0
\end{equation}
is sometimes referred to as the ADHM equation, which first appeared in the work of Atiyah--Drinfeld--Hitchin--Manin on constructions of $\SU(r)$ instantons on $S^4$ \cite{ADHM}.
Fixing dimension vector $(r,n)$, a representation $(B_1,B_2,I,J)$ of $Q_2$ is said to be stable if
$$\C \langle B_1,B_2 \rangle \cdot I(W) = V,$$
where $V:=\C^n$ and $W:=\C^r$. The moduli space $M_{Q_2}(r,n)$ of stable representations of $Q_2$ is a smooth quasi-projective variety of dimension $2rn$. The variety $M_{Q_2}(r,n)$ has an action by an algebraic torus $\mathbb{T} = (\C^*)^2 \times (\C^*)^r$, where $(\C^*)^2$ scales the loops and the action of $(\C^*)^r$ is induced from the diagonal action on $W$. Although $M_{Q_2}(r,n)$ is non-compact, the fixed locus $M_{Q_2}(r,n)^{\mathbb{T}}$ is 0-dimensional and reduced. Integration of  $\mathbb{T}$-equivariant cohomology classes against the $\TT$-equivariant fundamental class is therefore well-defined \cite{AB}. Certain generating series known as Nekrasov partition functions can be defined mathematically by fixing the form of the integrand and summing over all $n$ \cite{Nek1}.

\begin{figure}[ht]
\centering
\begin{tikzpicture}[>=stealth,->,shorten >=2pt,looseness=.5,auto]
  \matrix [matrix of math nodes,
           column sep={2cm,between origins},
           row sep={2cm,between origins}]
{ 
\node[draw, minimum size=7.5mm, shape=rectangle] (A) {r}; & & 
\node[draw, minimum size=7.5mm, shape=circle] (B) {n}; \\
};
\path[->] (B) edge [in=45,out=90,looseness=20]  node {$B_1$} (B);
\path[->] (B) edge [in=315,out=270,looseness=20]  node {} (B);
\node [anchor=west,right] at (-0.4,0.8) {$I$};  
\node [anchor=west,right] at (-0.4,-0.8) {$J$};  
\node [anchor=west,right] at (2.8,-2) {$B_2$}; 
\draw (A) to [bend left=15,looseness=1] (B) node [midway] {};
\draw (B) to [bend left=15,looseness=1] (A) node [midway,below] {};
\end{tikzpicture}
\vspace{-0.5cm}
\caption{The quiver $Q_2$.}\label{fig1}
\end{figure}
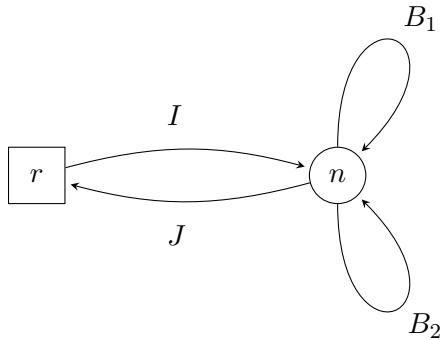

The space $M_{Q_{\mathrm{2}}}(r,n)$ admits a sheaf-theoretic description essentially due to Barth \cite{Bar}, see also \cite{Don, Nak, OSS}. Consider $\mathbb{P}^2$ with the line at infinity denoted by $\ell_{\infty}$. A rank $r$ framed sheaf on $\mathbb{P}^2$ with instanton number $n$ is a pair $(E,\phi)$, where $E$ is a rank $r$ torsion free sheaf on $\mathbb{P}^2$ satisfying $c_2(E) = n$ and $\phi \colon E|_{\ell_{\infty}} \rightarrow \O_{\ell_{\infty}}^{\oplus r}$ is an isomorphism. The moduli space of framed sheaves, denoted by $M_{\mathbb{P}^2}(r,n)$, admits a natural $\mathbb{T}$-action and there exists a $\mathbb{T}$-equivariant isomorphism
\begin{equation} \label{eqn:Barth}
M_{Q_2}(r,n) \cong M_{\PP^2}(r,n).
\end{equation}

In this paper we study the 4D ADHM quiver $Q_{\mathrm{4}}$ introduced by Nekrasov, which generalizes the 2D ADHM quiver \cite{Nek2, Nek3, Nek4, Nek5, Nek6}. The relevant moduli space of stable quiver representations is denoted by $M_{Q_4}(\vec{r},n)$ and referred to as the {\it origami moduli space}. Nekrasov associates to these moduli spaces a certain generating function of invariants known as the \emph{gauge origami partition function}. As we explain in Example \ref{ex:2dreductioncont}, the gauge origami partition function can be considered as a generalization of the local Vafa--Witten partition function to the setting of singular surfaces. Unlike the 2D ADHM theory, the origami moduli spaces are usually singular, making it difficult to understand the geometric meaning of the partition function and give a sheaf-theoretic description of it. Therefore, we ask:
\begin{enumerate}\item[(1)] Does there exist a virtual cycle $[M_{Q_{4}}(\vec{r},n)]^{\vir}$ that gives rise to an algebro-geometric definition of the gauge origami partition function?\item[(2)]Does the origami moduli space $M_{Q_4}(\vec{r},n)$ admit a sheaf-theoretic description?
\end{enumerate} 

We solve (1) by realizing $M_{Q_4}(\vec{r},n)$ as the zero locus of an isotropic section of a quadratic vector bundle on the moduli space of stable representations of the 4D ADHM quiver without relations. This description provides us with a (globally defined) Oh--Thomas virtual cycle \cite{OT}, which we use to give an algebro-geometric definition of the gauge origami partition function. By carefully analyzing the orientations (signs) on the torus fixed points, we show that our algebro-geometric definition matches with Nekrasov's original definition, given in terms of Boltzmann weights. This allows us to reprove the so-called {\it non-perturbative Dyson--Schwinger equations}, following Nekrasov's work. We also give a conjectural answer to (2) in terms of certain 2-dimensional framed sheaves on $\PP^1 \times \PP^1 \times \PP^1 \times \PP^1$, which we verify at the level of fixed points.

For an extensive exploration of the (quantum) algebraic structures and the mathematical physics of gauge origami theory, we refer to the series of papers \cite{KN1,KN2,KN3,KN4} by Kimura--Noshita. A representation-theoretic construction of Nekrasov's $qq$-characters can be found in work of Liu \cite{Liu}. Instead, our focus is on the geometry of gauge origami theory.

\subsection{Quiver description}

Denote by $\widehat{Q}_4$ the (framed) quiver without relations from Figure \ref{fig2}. Following Nekrasov \cite{Nek4}, we introduce the following notation. We let $\bfour := \{1,2,3,4\}$ and denote by $\bsix$ the collection of subsets $A \subset \bfour$ such that $|A| = 2$. We also define $\bthree:=\{\{1,2\}, \{1,3\}, \{2,3\}\} \subset \bsix$. Furthermore, we write $\overline{A}$ for the unique element of $\bsix$ satisfying $A \cap \overline{A} = \varnothing$ and we define $\phi(A) = \min \overline{A}$. Then we consider the following ADHM equations 
\begin{align} 
\begin{split} \label{eqn:4dADHM}
&\mu_{A} := [B_a, B_b] + I_A J_A = 0, \quad \nu_A := J_{\overline{A}} I_A = 0, \\ 
&\mu_{A,\overline{a}} := B_{\overline{a}} I_A = 0, \quad \nu_{A,\overline{a}} := J_A B_{\overline{a}} = 0,
\end{split}
\end{align}
for all $A = \{a<b\} \in \bsix$ and $\overline{a} \in \overline{A}$. The 4D ADHM quiver $Q_4$ is obtained from $\widehat{Q}_4$ by imposing the relations \eqref{eqn:4dADHM}. We fix a dimension vector
\begin{equation} \label{eqn:dimvec}
(\vec{r}, n) = ( (r_A), n),
\end{equation}
where $(r_A) = (r_A)_{A \in \bsix}$. We define $V:= \C^n$, $W_A := \C^{r_A}$ and set 
\[
r := \sum_{A} r_A.
\] 
A representation $(B_a,I_A,J_A) = (B_a,I_A,J_A)_{a \in \bfour, A \in \bsix}$ of $Q_4$ is called stable when
\begin{equation} \label{eqn:ADHMstab}
\sum_{A} \C \langle B_1, B_2, B_3, B_4 \rangle \cdot I_A(W_A) = V.
\end{equation}

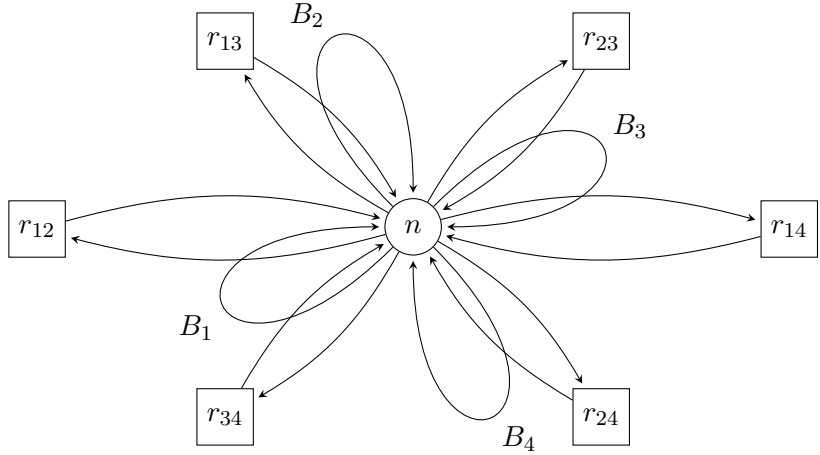
\begin{figure}[ht]
\centering
\begin{tikzpicture}[>=stealth,->,shorten >=2pt,looseness=.5,auto]
  \matrix [matrix of math nodes,
           column sep={2.5cm,between origins},
           row sep={2.5cm,between origins}]
{ 
 & \node[draw, shape=rectangle, minimum size=7.5mm] (D) {r_{13}}; &  & 
   \node[draw, shape=rectangle, minimum size=7.5mm] (E) {r_{23}}; &  \\
\node[draw, shape=rectangle, minimum size=7.5mm] (A) {r_{12}}; & & 
\node[draw, shape=circle, minimum size=7.5mm] (B) {n}; & & 
\node[draw, shape=rectangle, minimum size=7.5mm] (C) {r_{14}}; \\
 & \node[draw, shape=rectangle, minimum size=7.5mm] (F) {r_{34}}; &  & 
   \node[draw, shape=rectangle, minimum size=7.5mm] (G) {r_{24}}; &  \\
};

\path[->] (B) edge [in=0,out=45,looseness=30]  node {$B_3$} (B);
\path[->] (B) edge [in=90,out=135,looseness=30]  node {$B_2$} (B);
\path[->] (B) edge [in=180,out=225,looseness=30]  node {$B_1$} (B);
\path[->] (B) edge [in=270,out=315,looseness=30]  node {$B_4$} (B);

\draw (A) to [bend left=15,looseness=1] (B) node [midway] {};
\draw (B) to [bend left=15,looseness=1] (A) node [midway,below] {};
\draw (D) to [bend left=15,looseness=1] (B) node [midway] {};
\draw (B) to [bend left=15,looseness=1] (D) node [midway,below] {};
\draw (E) to [bend left=15,looseness=1] (B) node [midway] {};
\draw (B) to [bend left=15,looseness=1] (E) node [midway,below] {};
\draw (F) to [bend left=15,looseness=1] (B) node [midway] {};
\draw (B) to [bend left=15,looseness=1] (F) node [midway,below] {};
\draw (G) to [bend left=15,looseness=1] (B) node [midway] {};
\draw (B) to [bend left=15,looseness=1] (G) node [midway,below] {};
\draw (C) to [bend left=15,looseness=1] (B) node [midway] {};
\draw (B) to [bend left=15,looseness=1] (C) node [midway,below] {};
\end{tikzpicture}
\vspace{-0.5cm}
\caption{The quiver $Q_4$. The arrows from $r_A$ to $n$ are labelled by $I_A$ and the arrows from $n$ to $r_A$ are labelled by $J_A$.
}\label{fig2}
\end{figure}

There exists a quasi-projective scheme $\cM := M_{Q_4}(\vec{r},n)$ parametrizing stable representations of $Q_4$ with dimension vector $(\vec{r},n)$ (Proposition \ref{prop:King}). This scheme  is in general singular (Examples \ref{ex:butterfly n=1} and \ref{ex:book n=1}). It is a closed subscheme of the \emph{smooth} variety $\cA := A_{\widehat{Q}_4}(\vec{r},n)$ parametrizing stable representations of $\widehat{Q}_4$ with dimension vector $(\vec{r}, n)$. Nekrasov refers to elements of $\cM$ as \emph{spiked instantons}. Roughly speaking, elements of $\cM$ can be regarded as instantons on the union of the 6 coordinate 2-planes in $\C^4$.

For $r_{12} = r$ and $r_A = 0$ otherwise, the stable representations of $Q_4$ and $Q_2$ coincide and $M_{Q_4}(\vec{r},n) \cong M_{Q_2}(r,n)$ (Example \ref{ex:2dreduction}).

We construct an even rank quadratic vector bundle $(E,q) \to \mathcal{A}$, i.e., a vector bundle with a fibre-wise non-degenerate bilinear form $q \colon E \otimes E \to \O_{\cA}$. The bundle $(E,q)$ has the following properties. There exists an isotropic section $s \in H^0(\mathcal{A},E)$, that is, a section $s$ satisfying $q(s,s)=0$. Over a representation  $(B_a,I_A,J_A)$, the section $s$ is defined by $(\mu_A,\nu_A,\mu_{A,\overline{a}}, \nu_{A,\overline{a}})$, where $A$ runs over all elements of $\bsix$ and $\overline{a}$ over all elements of $\overline{A}$. The section $s$ cuts out $\mathcal{M}$, i.e., $\mathcal{M} = Z(s)$. This construction gives rise to a 3-term symmetric obstruction theory 
\[
\phi \colon E\udot_{Q_4} \to \tau^{\geq -1}\LL_{\cM}, \quad (E\udot_{Q_4})^\vee[2] \stackrel{\theta}{\cong} E\udot_{Q_4}, \quad \theta^\vee[2] = \theta
\]
by a standard construction which we review in Section \ref{sec:quivervirtualstruc}, and where $\tau^{\geq -1}\LL_{\cM}$ denotes the truncated cotangent complex of $\cM$. We furthermore construct a maximal isotropic subbundle $\Lambda \subset E$, which gives rise to an orientation on $\cM$, that is, an isomorphism
\begin{equation*} 
o \colon \det(E^\mdot_{Q_4}) \cong \O_{\cM}
\end{equation*}
with the property that $o \otimes o = (-1)^{\rk(\rk-1)/2} \det(\theta)$, where $\rk:=\rk(E)$. The precise choice of orientation is described in \eqref{eqn:choiceofori}. 

We consider the following algebraic tori
\begin{equation} \label{eqn:defT}
T:= Z(t_1t_2t_3t_4-1) \leq (\C^*)^4, \quad \mathbb{T} := T \times (\C^*)^r.
\end{equation}
We refer to $(\C^*)^r$ as the framing torus. As discussed in Section \ref{sec:quivermoduli}, there is a natural action of $\mathbb{T}$ on $\cA$, $E$, and $\Lambda$ with respect to which $s$ is $\TT$-equivariant and $q$ is $\TT$-invariant. As a consequence, the morphisms $\phi$ and $\theta$ are $\TT$-equivariant.  Moreover, the fixed point locus $\cM^{\TT}$ is 0-dimensional and reduced. By Oh--Thomas theory \cite{OT}\footnote{As we mostly work equivariantly and K-theoretically, we follow the approach of Oh--Thomas instead of those of Borisov--Joyce \cite{BJ} or Cao--Leung \cite{CL}.}, these data give rise to a virtual cycle and virtual structure sheaf
\[
[\cM]^{\vir} \in A_{\vd}^{\TT}(\cM,\Q), \quad \widehat{\O}_{\cM}^{\vir} \in K_0^{\TT}(\cM)_{\loc}, \quad \vd := - \sum_{A \in \bthree} r_A r_{\overline{A}},
\]
where $A_{*}^{\TT}(\cM,\Q)$ denote the $\TT$-equivariant (rational) Chow groups of $\cM$ and $K_0^\TT(\cM)_{\loc}$ denotes the (suitably localized) Grothendieck group of $\TT$-equivariant coherent sheaves on $\cM$.\footnote{As we work with \emph{$\TT$-equivariant} Chow groups, there are nontrivial classes in negative degrees.} These classes depend on the choice of orientation \eqref{eqn:choiceofori}. Note that the virtual dimension is independent of the instanton number $n$.

We define the \emph{cohomological} and \emph{K-theoretic gauge origami partition functions}
\begin{equation} \label{eqn:origamipartintro}
\sfZ_{\vec{r}}(q) := \frac{1}{\mathsf{C}_{\vec{r}}} \sum_{n=0}^\infty q^n \int_{[M_{Q_4}(\vec{r},n)]^{\vir}} 1, \quad 
\sfZ_{\vec{r}}^K(q) := \frac{1}{\mathsf{C}^K_{\vec{r}}}\sum_{n=0}^\infty q^n \chi(M_{Q_4}(\vec{r},n), \widehat{\O}^{\vir}),
\end{equation}
where 
\[
\mathsf{C}_{\vec{r}} := \int_{[M_{Q_{4}}(\vec{r},0)]^{\vir}} 1, \quad \mathsf{C}^K_{\vec{r}} := \chi(M_{Q_{4}}(\vec{r},0), \widehat{\O}^{\vir}),
\]
are normalization factors.\footnote{
    The cohomological normalization factor matches, up to sign, with the ``cross part'' of the perturbative prefactor in \cite[Eqn.~(111), (119)]{Nek4}.
}
The coefficients of the cohomological gauge origami partition function are rational functions in the $\TT$-equivariant parameters $s_1$,$s_2$,$s_3$,$s_4$ (modulo $s_1+s_2+s_3+s_4=0)$ and $(v_{A,\alpha})$, where $A$ runs over all elements of $\bsix$ and $\alpha$ over all elements of $\{1, \ldots, r_A\}$. The coefficients of the K-theoretic gauge origami partition function are rational functions in $t_1^{\frac{1}{2}}$,$t_2^{\frac{1}{2}}$,$t_3^{\frac{1}{2}}$,$t_4^{\frac{1}{2}}$ (modulo $t_1t_2t_3t_4=1$) and $(w_{A,\alpha}^{\frac{1}{2}})$, where $s_i = c_1^{\TT}(t_i)$ and $v_{A,\alpha} = c_1^{\TT}(w_{A,\alpha})$. In fact, the cohomological gauge origami partition function $\sfZ_{\vec{r}}(q)$  can be obtained from its K-theoretic analogue $\sfZ_{\vec{r}}^K(q)$  as a limit, see Lemma \ref{lem:lim 1}.
Consider  the $\TT$-equivariant K-group of a point 
\[
K_0^{\TT}(\pt) \cong \Z[t_1^{\pm 1},\ldots,t_4^{\pm 1}, (w_{A,\alpha}^{\pm 1})] / (t_1t_2t_3t_4 - 1).
\]
It contains elements
\[
t_A := t_a t_b, \quad P_a := 1-t_a, \quad P_A := (1-t_a)(1-t_b), \quad P_{1234} := \prod_{a=1}^4 (1-t_a),
\]
for any $A = \{a<b\} \in \bsix$. For any virtual character $\chi \in K_0^{\TT}(\pt)$, we write $\chi^*$ for its dual, obtained by replacing $t_a \mapsto 1/t_a$, $w_{A,\alpha} \mapsto 1/w_{A,\alpha}$. Furthermore, assuming $\chi$ has no $\TT$-fixed part with negative coefficient, we consider its Euler class $e(\chi)$ and symmetrized K-theoretic Euler class
\begin{equation} \label{eqn:bracket}
[\chi] := \widehat{\Lambda}_{-1} \chi^* := \frac{\Lambda_{-1} \chi^*}{\det(\chi^*)^{\frac{1}{2}}},
\end{equation}
where $\Lambda_{-1}(-) = \sum_i (-1)^i \Lambda^i (-)$. 
The fixed locus $M_{Q_4}(\vec{r},n)^{\TT}$ is indexed by collections of integer partitions (finite 2D partitions) of total size $n$, that is
\[
\blambda = (\lambda_{A,\alpha} ), \quad |\blambda| := \sum_{A, \alpha} |\lambda_{A,\alpha}| = n,
\]
where $|-|$ denotes the size. We view an integer partition $\lambda$ as a finite subset of $\Z_{\geq 0}^2$ satisfying the property that if $(i,j) \in \lambda$, $0 \leq i' \leq i$, and $0 \leq j' \leq j$ then $(i',j') \in \lambda$. To such a fixed point, we associate the following elements of $K_0^{\TT}(\pt)$ introduced by Nekrasov \cite[(90)]{Nek4}\footnote{Beware of a minor typo in loc.~cit.: $\phi(\overline{A})$ should be $\phi(A)$.}
\begin{align}
\begin{split} \label{eqn:Neknot}
\mathsf{v}_{\blambda} &:= \sum_{A} (P_{\phi(A)} T_A + P_{\overline{A}} N_A \sum_{B \neq A} K_B^*) - P_{1234} \sum_{A<B}K_A K_B^*, \\
T_A &:= N_A K_A^* + t_A N_A^* K_A - P_A K_A K_A^*, \\
K_{A,\alpha} &:=\sum_{(i,j) \in \lambda_{A,\alpha}} t_a^i t_b^j w_{A,\alpha}, \quad K_A := \sum_{\alpha} K_{A,\alpha}, \quad  \quad N_A := \sum_{\alpha} w_{A,\alpha},
\end{split}
\end{align}
where $A<B$ is defined by the lexicographical order (e.g.~$\{1,2\}$ is the smallest element). Note that $T_A$ is precisely the character of the tangent bundle of the classical instanton moduli  multiplied by a factor $t_A$. This implies that $\mathsf{v}_{\blambda}$ has no $\TT$-fixed terms.\footnote{Only the diagonal contributions of $T_A$ potentially have $\TT$-fixed terms. These are copies of the tangent space to fixed points of the Hilbert scheme of points of $\C^2$, up to a factor $t_A$. It is well-known that these tangent spaces contain no eigenspaces of the form $\C \cdot (t_1 t_2)^m$, from which the claim follows. See, e.g.,~\cite[Lem.~3.6]{CK} for a similar result for $\Hilb^n(\C^4)$.} 

Our first main result is that the invariants \eqref{eqn:origamipartintro}, globally defined in terms of Oh--Thomas virtual cycles, indeed produce Nekrasov's ``Boltzmann weights'' \cite[Eqn.~(89), (90)]{Nek4} with the correct sign choice.
\begin{theorem} \label{thm:main}
Fix $\vec{r} = (r_A).$ For any $n$, let $o$ be the orientation given by \eqref{eqn:choiceofori}. Then 
\[
\sfZ_{\vec{r}}(q) = \sum_{\blambda} q^{|\blambda|} e(-\mathsf{v}_{\blambda}), \quad \sfZ_{\vec{r}}^K(q) = \sum_{\blambda} q^{|\blambda|} [-\mathsf{v}_{\blambda}].
\]
\end{theorem}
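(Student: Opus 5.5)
We argue by $\TT$-equivariant virtual localization for Oh--Thomas classes. Since $\cM^{\TT}$ is $0$-dimensional and reduced, with points indexed by $\blambda$, the Oh--Thomas localization formula gives
\[
\int_{[\cM]^{\vir}} 1 = \sum_{\blambda} (-1)^{o_{\blambda}} \sqrt{e}\bigl(-T^{\vir}_{\blambda}\bigr)^{-1}\!\!\!\!\!\!\quad\text{i.e.}\quad \sum_{\blambda} \pm\, e(\Lambda_{\blambda})\, e(T_{\cA,\blambda})^{-1}\, e(\mathrm{gl})\ldots,
\]
and similarly in K-theory with $\widehat{\Lambda}_{-1}$. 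Concretely, at a fixed point $x$ the virtual tangent space is $T_x\cA - E_x + (\text{gauge})$. More precisely, $T^{\vir}_x = \mathrm{Hom}\text{-complex}$ of the form $-\mathrm{End}(V) + \Hom(V,V\otimes \C^4) + \bigoplus_A(\Hom(W_A,V)+\Hom(V,W_A)t_A) - E_x$. The contribution is $e(\Lambda_x)/e(T_x\cA)$, up to the sign $\pm$ comparing the orientation $o$ at $x$ with the one induced by $\Lambda_x$, or by any chosen half $\Lambda'_x$ of the self-dual $E_x$ with $E_x = \Lambda'_x + \Lambda_x'^*$. In K-theory the contribution is $\widehat{\Lambda}_{-1}$ of the corresponding half, using the square root of the virtual canonical bundle built into $\widehat{\O}^{\vir}$.

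\textbf{Step 1} (character computation). Write $V_{\blambda} = \sum_A K_A$ and $W_A=N_A$, and express $T_x\cA - \Lambda_x$ in $K_0^{\TT}(\pt)$, where $\Lambda_x$ is the fibre of the maximal isotropic subbundle $\Lambda$ from \eqref{eqn:choiceofori}. The expected identity is
\[
T_x\cA - \Lambda_x = \mathsf{v}_{\blambda} + (\text{terms } \chi - \chi^*\text{-type pairs}) ,
\]
and more precisely the goal is $T_x\cA-\Lambda_x - \mathsf{v}_{\blambda} = \sigma - \sigma^*$ for some $\sigma$. Here one uses $-P_{1234}KK^* $ together with $t$-weights: the quadratic pieces of $\mathsf{v}$ come from $-VV^*(1-\sum t_a + \ldots)$. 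The half of $\Lambda^2\C^4\otimes \mathrm{End}V$ singled out by $\bthree$ corresponds to $\phi(A)$, and the $\nu_A$, $\mu_{A,\bar a}$, $\nu_{A,\bar a}$ pieces produce the $P_{\overline A}N_A K_B^*$ and $P_{\phi(A)}T_A$ terms. The same bookkeeping at $n=0$ gives the normalization $\mathsf{C}_{\vec r}$: the leftover $N_AN_B^*$ terms cancel against it. Since $e(\sigma-\sigma^*)=(-1)^{\rk\sigma}$ and $[\sigma-\sigma^*]=(-1)^{\rk \sigma}$, each fixed-point contribution equals $\pm e(-\mathsf{v}_{\blambda})$, respectively $\pm[-\mathsf{v}_{\blambda}]$.

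\textbf{Step 2} (signs; the main obstacle). We must show that the total sign is $+1$ for every $\blambda$, combining the parity of $\rk\sigma$ with the comparison sign between $o$ restricted to $x$ and the orientation induced by $\Lambda_x$ (the latter relative to $\Lambda'_x$), all divided by the $n=0$ value. I would try a deformation argument. The orientation $o$ is global, and $\cM$ is typically not connected through $\TT$-fixed points, but $\cA$ is connected, and the sign of $\Lambda$ versus the chosen half can be compared along a $\TT'$-equivariant path, with $\TT'\subset\TT$ a subtorus, connecting $x$ to a point $x_0$ with all partitions in a single $A$, for instance all boxes in $r_{12}$-type partitions. For such $x_0$ the computation reduces via Example~\ref{ex:2dreduction} to the smooth 2D ADHM space, where the sign is manifestly $+1$.

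Alternatively, and perhaps more robustly, the sign can be computed directly at each $\blambda$: write $\Lambda_x$ and the half determined by $\mathsf{v}_{\blambda}$ as explicit sums of monomials, and count mod $2$ the weights $w$ lying in one but whose dual lies in the other. This is the key parity computation, with the parity given by a sum of boxes and arm/leg-type counts. I expect this combinatorial parity check, organized by adding one box at a time and verifying that the sign change is even, to be the hardest and lengthiest part. The K-theoretic statement follows identically, and the cohomological one then follows also via the limit in Lemma~\ref{lem:lim 1}.
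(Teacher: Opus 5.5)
Your Step~1 matches the paper's Proposition~\ref{prop:compare}, but Step~2 has a genuine gap: it puts the sign in the wrong place. The comparison you name, between $o$ at $x$ and the orientation induced by $\Lambda_x$, is by definition \eqref{eqn:choiceofori} just the constant $(-1)^{(r-1)n}$.

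\textbf{Where the sign actually comes from.} The nontrivial sign lives in the $\TT$-fixed part. Both $\Lambda|_P$ and $T_{\cA}|_P$ have $\TT$-fixed weights, so the contribution only makes sense as $\pm 1/e(T_{\cA}|_P-\Lambda|_P)$. The fixed parts cancel there because $d_Ps$ embeds $T_{\cA}|_P^{\TT}$ as a maximal isotropic subspace of $E|_P^{\TT}$. The localization sign compares the orientation this subspace induces on $E|_P^{\TT}$ with the one induced by $\Lambda$. By Proposition~\ref{prop:KR} it equals $(-1)^{(r-1)n}(-1)^{\dim \mathrm{cok}(\pi_\Lambda\circ d_Ps)^{\TT}}$, which depends on the linearized equations at $P$, not only on $\TT$-characters.

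\textbf{What your direct count detects.} Comparing monomials of $\Lambda_x$ with the half determined by $\mathsf{v}_{\blambda}$ gives only $\rk\sigma^m \bmod 2$. This is $(r-1)n+\sum_{A\in\bsix\setminus\bthree}\sum_\alpha\bigl(|\lambda_{A,\alpha}|-h_{\lambda_{A,\alpha}}(0,0)\bigr)$, and the global factor $(-1)^{(r-1)n}$ cancels only the first term. For example, take $r_{14}=1$, $r_A=0$ otherwise, and $\lambda_{14,1}=(2,2)$. Your bookkeeping then yields the sign $-1$, contradicting the statement.

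\textbf{The missing ingredients.} (i)~At $\TT$-fixed points $J_A^P=0$ (Proposition~\ref{prop:fixlocquiver}). Hence the terms $I_A^PJ_A$, $J_{\overline A}I_A^P$, $J_AB^P_{\overline a}$ of $\pi_\Lambda\circ d_Ps$ land in summands with no $\TT$-fixed part. So $\mathrm{cok}(\pi_\Lambda\circ d_Ps)^{\TT}=\mathrm{cok}(\Xi_P)^{\TT}$ for the pure commutator map $\Xi_P$ of Proposition~\ref{prop:cokcalc}. (ii)~The parity of $\dim\mathrm{cok}(\Xi_P)^{\TT}$ is exactly the hard fixed-point sign computation for $\Hilb^n(\C^4)$ due to Kool--Rennemo. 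It equals $\sum_{A\in\bsix\setminus\bthree}\sum_\alpha(|\lambda_{A,\alpha}|-h_{\lambda_{A,\alpha}}(0,0))$ and cancels the leftover term, which is why all signs are $+1$.

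\textbf{The deformation route does not repair this.} The half determined by $\mathsf{v}_{\blambda}$ exists only at fixed points, so there is nothing continuous to transport along a path in $\cA$. Fixed points with boxes in different $A$ lie in different components of $\cM^{(\C^*)^r}$, so any such path must leave the fixed loci. The base point is also not ``manifestly $+1$'': for general $\vec r$ the space near $x_0$ is not the 2D ADHM space. Even when $r_{12}=r$, the sign is $+1$ only because $(-1)^{(r-1)n}$ cancels $(-1)^{\rk\sigma^m}$.
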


The proof of this formula uses the Oh--Thomas virtual torus localization formula \cite{OT}. The main difficulty is to determine the orientations (signs) on the fixed points induced by our (global) choice of orientation \eqref{eqn:choiceofori}. In work by the second-named author and Rennemo \cite{KR}, the Hilbert schemes $\Hilb^n(\C^4)$ are also realized as zero loci of isotropic sections of quadratic vector bundles on a smooth ambient space. Although $Q_{4}$ is more complicated than the framed 4-loop quiver relevant to $\Hilb^n(\C^4)$, we are able to reduce the orientation calculation for $Q_{4}$ to the one for the framed 4-loop quiver. Surprisingly, after rewriting our results in terms of Nekrasov's character $\mathsf{v}_{\blambda}$ we find that the signs are all $+1$. 

Theorem \ref{thm:main} immediately implies the dimensional reduction formulae as follows. For $r_{12} = r$ and $r_A = 0$ otherwise, the origami partition function reduces to the generating series of $\widehat{\chi}_{-t_3}(M_{Q_2}(r,n))$ associated to the 2D ADHM quiver $Q_2$, where $\widehat{\chi}_{-y}(-)$ denotes the symmetrized Hirzebruch $\chi_{-y}$-genus (Example \ref{ex:2dreductioncont}). Moreover, for arbitrary $\vec{r}$ with $r_{14} = r_{24}= r_{34} = 0$, the origami partition function reduces to the generating series of $\chi(M_{Q_3}(\vec{r},n),\widehat{\mathcal{O}}^{\vir})$ associated to the 3D ADHM quiver $Q_3$ (Example \ref{ex:3dreductioncont}). Here $M_{Q_3}(\vec{r},n)$ admits a Behrend--Fantechi perfect obstruction theory induced by the presentation of $M_{Q_3}(\vec{r},n)$ as the critical locus of a regular function on a smooth ambient variety and $\widehat{\mathcal{O}}^{\vir}$ denotes its (Nekrasov--Okounkov twisted) Behrend--Fantechi virtual structure sheaf. The fact that $M_{Q_3}(\vec{r},n)$ has a description as a critical locus is due to Rap\v{c}\'ak--Soibelman--Yang--Zhao \cite{RSYZ}, who construct an action of the cohomological Hall algebra of $\C^3$ on the critical cohomology of these moduli spaces. We therefore see that the origami partition function contains invariants of holomorphic symplectic as well as $(-1)$-shifted symplectic moduli spaces as special cases. We also characterize a ``genuinely 4-dimensional'' partition function in terms of the plethystic exponential (defined in \eqref{def:pleth}).\footnote{Nekrasov refers to elements of $M_{Q_4}(\vec{r},n)$ with $r_{12}, r_{34} \neq 0$ and $r_A = 0$ otherwise as crossed instantons.}
\begin{theorem} \label{thm:crossinst}
For $r_{12} = r_{34} = 1$ and $r_A = 0$ otherwise, we have
\[
\sfZ_{\vec{r}}^K(q) = \mathrm{Exp} \Bigg( \frac{[t_1t_3][t_2t_3]}{[t_1][t_2]} \frac{q}{1-q} \Bigg) \mathrm{Exp} \Bigg( \frac{[t_1t_3][t_1t_4]}{[t_3][t_4]} \frac{q}{1-q} \Bigg).
\]
\end{theorem}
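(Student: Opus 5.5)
By Theorem \ref{thm:main}, $\sfZ_{\vec{r}}^K(q)=\sum_{\blambda} q^{|\blambda|}[-\mathsf{v}_{\blambda}]$, where $\blambda=(\lambda,\mu)$ is a pair of partitions with $\lambda=\lambda_{12}$ and $\mu=\lambda_{34}$. I will first compute $\mathsf{v}_{\blambda}$ explicitly in this case and then resum the series. Set $N_{12}=w$, $N_{34}=u$, $K:=K_{12}$ and $L:=K_{34}$. Since $\overline{\{1,2\}}=\{3,4\}$ and $\phi(\{1,2\})=3$, $\phi(\{3,4\})=1$, the definition \eqref{eqn:Neknot} gives
\[
\mathsf{v}_{\blambda}=P_3T_{12}+P_1T_{34}+P_{34}wL^*+P_{12}uK^*-P_{1234}KL^*.
\]
The first two terms are, up to the $t_A$ twist, the $\chi_{-y}$-type contributions of the two planes. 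By Example \ref{ex:2dreductioncont} with $r=1$, summing $[-P_3T_{12}]$ over $\lambda$ gives the generating series of $\widehat{\chi}_{-t_3}(\Hilb^n(\C^2))$ in the $(t_1,t_2)$-plane. This should produce $\Exp\bigl(\frac{[t_1t_3][t_2t_3]}{[t_1][t_2]}\frac{q}{1-q}\bigr)$, by the standard formula for the symmetrized $\chi_y$-genus of Hilbert schemes of points on $\C^2$ via Göttsche/Ellingsrud--Göttsche--Lehn type arguments, or equivalently via the known Nekrasov formula for the $U(1)$ theory with adjoint matter of mass $t_3$. Symmetrically, the $\mu$-sum of $[-P_1T_{34}]$ gives the $(t_3,t_4)$ factor $\Exp\bigl(\frac{[t_1t_3][t_1t_4]}{[t_3][t_4]}\frac{q}{1-q}\bigr)$, after using $t_1t_2t_3t_4=1$ to identify $[t_3t_1]$ and $[t_4t_1]$ as the relevant mass factors.

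So the theorem reduces to showing that the cross terms contribute trivially: for every pair $(\lambda,\mu)$,
\[
\bigl[-P_{34}wL^*-P_{12}uK^*+P_{1234}KL^*\bigr]=1.
\]
Writing $K=w\,k$ and $L=u\,l$, where $k,l$ are the $t$-characters of the partitions, the cross part becomes $-P_{34}\,w u^{-1} l^*-P_{12}\,u w^{-1}k^*+P_{1234}\,w u^{-1}kl^*$. The symmetrized class $[\cdot]$ satisfies $[\chi^*]=[\chi]$ up to the sign $(-1)^{\rk}$, and $[\chi]\,[-\chi]=1$. I will therefore try to show that the cross part, as a virtual character, has the form $\chi-\chi^*$ modulo terms whose signs cancel, or more concretely that it is $\TT$-equivariantly of the shape $\chi+\overline{\chi}^{*}$ with $\overline{\chi}=-\chi$.

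The key identity I want is
\[
P_{12}^*\,k^*=P_{12}k^*/t_{12}\cdot(\dots),
\]
using $P_{12}^*=t_{12}^{-1}P_{12}$ together with $t_{12}^{-1}=t_{34}$. Then I would use the combinatorial identity $P_{12}k=1-\partial\lambda$-type formulas, namely $P_{12}k=\sum(\text{corner terms})$, and the analogous one for $l$, to rewrite $P_{1234}kl^*=(P_{12}k)(P_{34}l^*)$ expressed through the corner expansions. This should cancel the linear terms against the $w u^{-1}$ and $u w^{-1}$ pieces, leaving something of the form $X-X^*$ times a unit character.

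This cancellation is the step I expect to be the main obstacle: showing that the cross part is anti-self-dual, $\chi^*=-\chi$ (perhaps after twisting by $wu^{-1}$), with even rank, so that its symmetrized Euler class equals $\pm1$, and then pinning the sign to $+1$. I would first verify the claim for $\lambda=\square$, $\mu=\varnothing$ directly, and then argue by induction on boxes, using that adding a box changes $k$ by a single monomial. The sign should follow because Theorem \ref{thm:main} already fixed all signs to $+1$, and the $n$-independence of the rank count implies the result at $q^0$, where the normalization $\mathsf{C}^K$ gives $1$.
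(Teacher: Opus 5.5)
Your identification of the diagonal pieces is correct: by Example \ref{ex:2dreductioncont}, summing $[-P_3T_{12}]$ and $[-P_1T_{34}]$ over partitions gives the two plethystic exponentials. The gap is the claimed pointwise identity $[-P_{34}wL^*-P_{12}uK^*+P_{1234}KL^*]=1$. It is false, so no rewriting via corner expansions and no induction on boxes can prove it.

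Put $x:=u/w$ and take $\lambda=(1)$, $\mu=\varnothing$, the first case you proposed to check. Then $K=w$, $L=0$, the cross part of $\mathsf{v}_{\blambda}$ is $P_{12}\,x$, and
\[
[-P_{12}\,x]=\frac{[t_1x]\,[t_2x]}{[x]\,[t_1t_2x]}.
\]
This is a nonconstant function of $x$ with poles at $x=1$ and $x=t_3t_4$. The full $q^1$ coefficient of $\sfZ^K_{\vec r}(q)$ is
\[
\frac{[t_1t_3][t_2t_3]}{[t_1][t_2]}\cdot\frac{[t_1x][t_2x]}{[x][t_1t_2x]}+\frac{[t_1t_3][t_1t_4]}{[t_3][t_4]}\cdot\frac{[t_3x^{-1}][t_4x^{-1}]}{[x^{-1}][t_3t_4x^{-1}]}.
\]
The residues at $x=1$ cancel only between the two summands, using $[t_1t_4]=-[t_2t_3]$ and $[t_3t_4]=-[t_1t_2]$. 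This is exactly what the paper flags: individual fixed-point contributions depend on the framing parameter, and the dependence disappears only after summing over all fixed points. A fixed-point-by-fixed-point argument therefore cannot work.

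What you are missing is a global reason that the whole normalized coefficient is independent of $x$. With that in hand, you can evaluate at $x\to\infty$ (or $x\to0$), where every cross factor tends to $1$ (Lemma \ref{lem:lim 2}), and your 2D input finishes the proof.

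The paper gets the independence geometrically, after setting $w=1$:
\begin{enumerate}
\item $\widehat{\O}^{\vir}$ lifts, up to an explicit monomial, to a $\Z/2$-graded equivariant coherent sheaf $\mathcal{N}_{\vec r,n}$ (Proposition \ref{prop:sheaflift}). Hence the normalized invariant equals $f(x)/(1-(xt_1t_2)^{-1})$ with $f=\chi(\mathcal{N}_{\vec r,n})$.
\item Since $J_A=0$ here (Proposition \ref{prop:Jiszero}), the moduli space is a Quot scheme. It has a proper Quot-to-Chow map to $\Sym^n(\C^4)$, on which the framing torus acts trivially. So $f$ is a Laurent polynomial in $x$ with coefficients in $\Q(t_1,\dots,t_4)$.
\item The limits $x^{\pm1}\to\infty$ exist, which forces $f=\alpha+\beta x^{-1}$. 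The two limits are equal, which forces $\alpha=-t_1t_2\beta$. Therefore the quotient is the constant $\alpha$.
\end{enumerate}
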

The proof of this theorem involves several steps. First, for arbitrary $M_{Q_4}(\vec{r},n)$, we establish that the Oh--Thomas virtual structure sheaf $\widehat{\O}^{\vir}$, which a priori is only a \emph{localized} K-theory class, lifts to an actual coherent sheaf and therefore has an \emph{integral} K-theory class (up to some equivariant parameters, Proposition \ref{prop:sheaflift}). Second, for $r_{12} = r_{34} = 1$ and $r_A=0$ otherwise, we use the fact that $M_{Q_4}(\vec{r},n)$ is isomorphic to a certain Quot scheme on $\C^4$ (Proposition \ref{prop: Quot scheme}), which allows us to push-forward to $\Sym^n(\C^4)$.  As a consequence, one can show that the origami partition function is independent of the equivariant framing parameters. Third, taking certain limits in the equivariant framing parameters then provides the desired splitting. For a recent work on lifts of the Oh--Thomas virtual structure sheaf in the context of spin structures, we refer to work of Kuhn \cite{Kuh}.

We introduce a co-stable analogue of the origami partition function (Section \ref{sec:stabcostab}) and we conjecture that the stable/co-stable wall-crossing formula is trivial. This implies a nontrivial symmetry of the origami partition function, i.e.,
$$
\mathsf{Z}_{\vec{r}}^K(q) \Big|_{(w_{A,\alpha}) \mapsto (t_A w_{A,\alpha}^{-1})} = \mathsf{Z}_{\vec{r}}^K(q),
$$
for which we give some computational evidence. This generalizes a theorem proved in \cite{AKL} in the case $r_{12} = r$ and $r_A = 0$ otherwise.

Closely following Nekrasov \cite{Nek2, Nek3}, another application of Theorem \ref{thm:main} is to the so-called non-perturbative Dyson--Schwinger equations. Suppose  $r_{12}, r_{34} > 0$ and $r_A = 0$ otherwise. Nekrasov's compactness theorem \cite[Sect.~8]{Nek3} asserts that for certain subtori $\mathbb{T}' \leq \TT$ given by the kernel of some $\TT$-characters, the fixed locus $M_{Q_4}(\vec{r},n)^{\TT'}$ is \emph{proper}. This implies that the (non-normalized, cohomological) origami partition function $\mathsf{C}_{\vec{r}}\cdot\mathsf{Z}_{\vec{r}}(q)$ has no poles in the equivariant parameters corresponding to these $\TT$-characters. The vanishing of these poles implies nontrivial equations for the ``classical'' Nekrasov partition function associated to the original 2D ADHM quiver $Q_2$. We discuss one of Nekrasov's examples in Section \ref{sec:DS}. We emphasize that this application relies on having a \emph{global} definition of the origami partition function, as provided in this paper.

Finally, we discuss connectedness of the origami moduli spaces. Connectedness of moduli spaces of quiver representations is a subtle problem in the presence of relations. In Theorem \ref{thm: connectedness of origami moduli space}, we prove that the moduli space $M_{Q_4}(\vec{r},n)$ is always connected, thus generalizing the connectedness of the usual 2D ADHM moduli spaces.

\subsection{Framed sheaves description}

Let $X$ be a smooth projective 4-fold. Let $D$ be an ample effective divisor on $X$, not necessarily smooth, such that $2D \in |-K_X|$. Let $F$ be a (perfect) 1-dimensional sheaf on $D$, called a framing sheaf, which is $\mu_D$-semistable. We consider framed sheaves $(E,\phi)$ consisting of a pure 2-dimensional sheaf $E$ on $X$ together with an isomorphism $\phi \colon E|_D \to F$. For a fixed topological type $v = (0,0,\gamma,*,*)\in H^{\mathrm{even}}(X,\Q)$, let $\underline{M}_{(X,D,F)}^{\fr}(v)$ be the moduli functor parametrizing families of framed sheaves $(E,\phi)$ on $X$ with $\ch(E) = v$. The following is (essentially) Theorem \ref{cor: CY4 obstruction theory} in Section \ref{sec:derstrucsheaves}. 

\begin{theorem} \label{thm:fr}
The moduli functor $\underline{M}_{(X,D,F)}^{\fr}(v)$ is represented by a quasi-projective scheme $\cM := M_{(X,D,F)}^{\fr}(v)$. Also, $\cM$ admits a 3-term symmetric obstruction theory 
$$
\phi_{\fr} \colon E\udot_{\fr} \to \tau^{\geq -1} \LL_{\cM}, \quad (E\udot_{\fr})^\vee[2] \stackrel{\theta_{\fr}}{\cong} E\udot_{\fr}, \quad \vd := \frac{1}{2} \rk(E_{\fr}\udot) =  -\frac{1}{2} \gamma^2,
$$ 
such that for every point $P = [(E,\phi)] \in \cM$, we have
\[
(E\udot_{\fr})^\vee|_P \cong R\Hom_X(E,E(-D))[1].
\]
Furthermore, $\cM$ has a derived enhancement with a $(-2)$-shifted symplectic structure.
\end{theorem}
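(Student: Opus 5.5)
Representability comes first, then the obstruction theory, then the shifted symplectic structure. For representability we follow the framed-sheaf strategy of Huybrechts--Lehn and Bruzzo--Markushevich. Since $D$ is ample and $F$ is $\mu_D$-semistable, a framed sheaf $(E,\phi)$ with $\ch(E)=v$ should have bounded Castelnuovo--Mumford regularity. We argue by restricting to members of $|mD|$ and using that $E|_D \cong F$ is fixed; the restriction to $D$ pins down the slopes. This makes the family bounded, and $\underline{M}^{\fr}_{(X,D,F)}(v)$ is then a locally closed subfunctor of a relative Quot scheme, namely quotients $\O_X(-m)^{\oplus N}\twoheadrightarrow E$, equipped with a framing. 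The framing condition is representable as an open subscheme of $\Hom(E|_D,F)$, namely the locus of isomorphisms. Framed sheaves have no automorphisms: an automorphism $\psi$ of $E$ commuting with $\phi$ has $\psi-\id$ vanishing on $D$, so $\psi - \id \in \Hom(E,E(-D))$, and we will show this group vanishes. Hence the $\GL_N$-action on the Quot-type scheme is free, and a GIT or Luna-slice argument yields a quasi-projective fine moduli scheme $\cM$.

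Next we show that, for $E$ pure of dimension 2 and $D$ ample, $\Hom(E,E(-D))=0$ holds, and by Serre duality so does
\[
\Ext^4(E,E(-D)) \cong \Hom(E(-D),E\otimes K_X)^* = \Hom(E,E(-D))^*,
\]
where we use $K_X = \O(-2D)$. To prove the vanishing of $\Hom(E,E(-D))$, take $f\in \Hom(E,E(-D))$ and compose with the inclusions $E(-kD)\hookrightarrow E$. This produces an endomorphism $f^{(k)}$ of $E$ which lands in $E(-kD)$ for all $k$. Purity and the finiteness of the Hilbert polynomial force $f$ to be nilpotent, and then we push the argument further to conclude $f=0$, for instance by looking at the image sheaf $I=\im f$, which satisfies $I \cong I'(-D)$ up to Hilbert polynomial considerations. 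The same identification shows that the framed deformation-obstruction complex is $R\Hom(E,E(-D))[1]$: deformations of $(E,\phi)$ are governed by the cone of $R\Hom(E,E)\to R\Hom(E|_D,F)\cong R\Hom(E,E\otimes\O_D)$, which equals $R\Hom(E,E(-D))$.

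For the obstruction theory, let $\mathbb{E}$ be the universal sheaf and $\pi$ the projection. We set
\[
(E\udot_{\fr})^\vee := \tau^{[1,2]}R\pi_*R\hom(\mathbb{E},\mathbb{E}(-D))[1].
\]
Because $\Ext^0$ and $\Ext^4$ vanish, the complex $R\pi_*R\hom(\mathbb{E},\mathbb{E}(-D))$ is perfect with amplitude $[1,3]$, so no truncation is in fact needed. Relative Serre duality together with $\omega_X(2D)\cong\O_X$ gives $\theta_{\fr}$. The symmetry $\theta_{\fr}^\vee[2]=\theta_{\fr}$ follows from the naturality of Grothendieck duality, exactly as in the compact CY4 case. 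The morphism $\phi_{\fr}$ is built from the Atiyah class of $\mathbb{E}$ relative to the framing. Concretely, it is the relative Atiyah class of the pair, using the Huang--Illusie/Gillam comparison as in the ordinary sheaf case, and it is an obstruction theory by the deformation computation above. Riemann--Roch gives $\chi(E,E(-D)) = \int \ch(E)^\vee\ch(E)e^{-D}\td_X$. With $\ch(E)=(0,0,\gamma,\ldots)$, this reduces to $\gamma^2$, since for 2-dimensional sheaves only the degree-4 term $\gamma\cdot\gamma$ survives. Consequently $\rk = -\gamma^2$ and $\vd = -\frac12\gamma^2$.

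For the derived statement, we realize $\cM$ as the truncation of the derived fibre of the restriction map $\mathbf{Perf}_X\to\mathbf{Perf}_D$ over the point $[F]$. By the relative AKSZ/Lagrangian formalism of Calaque and Brav--Dyckerhoff, the restriction map carries a Lagrangian structure relative to $(-1)$-shifted data on $D$, which comes from a boundary structure associated with the splitting $K_X\cong\O(-2D)$. The fibre of a Lagrangian over a point is then $(-2)$-shifted symplectic, with tangent complex $R\Hom(E,E(-D))[1]$, matching the obstruction theory above. \textbf{Main obstacle:} constructing the relative orientation for a possibly singular $D$ with $2D\in|-K_X|$. The fact that $D$ is only a half-canonical divisor, rather than an anticanonical one, makes the boundary structure nonstandard. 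As a first attempt we would pass to the double cover branched along $2D$, or else work directly with the quadratic form on $R\Hom(E,E(-D))$ furnished by Serre duality. The boundedness step is a secondary obstacle.
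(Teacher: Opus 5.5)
\textbf{There is a genuine gap.} It sits exactly where the hypothesis that $F$ is $\mu_D$-semistable has to enter, and your argument never uses that hypothesis. Without it, both the vanishing of $\Hom(E,E(-D))$ and quasi-projectivity fail.

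Your claim that $\Hom(E,E(-D))=0$ for every pure $2$-dimensional $E$ and ample $D$ is false. Take a surface $S\subset X$ and $E=\O_S\oplus\O_S(-D)$. Then $E(-D)=\O_S(-D)\oplus\O_S(-2D)$, and the map that kills $\O_S$ and sends the summand $\O_S(-D)$ identically onto the summand $\O_S(-D)$ is a nonzero element of $\Hom(E,E(-D))$. It is nilpotent, so your nilpotence step is fine, but ``pushing further'' to get $f=0$ cannot work. This $E$ is excluded only because $E|_D=\O_\ell\oplus\O_\ell(-D)$ is not $\mu_D$-semistable.

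The missing step, which is the heart of the paper's proof, is this: every framed sheaf, viewed as the HL-framed sheaf $E\to i_*F$, is $\delta$-stable whenever $0<\delta_1<a_1(F)$. The paper proves it in three moves:
\begin{enumerate}
\item Express the Hilbert polynomials of $E$ and of its saturated subsheaves through those of their restrictions to $D$.
\item Compare $x^{d-1}$-coefficients, using the semistability of $F$.
\item Treat subsheaves with zero framing via the largest $n$ with $E'\subseteq E(-nD)$, which exists by Krull's theorem and ampleness.
\end{enumerate}
From this, $\Hom(E,E(-D))=\Hom(E,[E\to i_*F])=0$ follows from a general lemma on $\delta$-stable HL-framed sheaves. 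Quasi-projectivity follows because framed sheaves form an open subfunctor of Huybrechts--Lehn's moduli space of $\delta$-stable framed sheaves.

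Your substitute for the representability part does not suffice. Boundedness ``by restricting to members of $|mD|$'' controls nothing away from $D$, since only $E|_D$ is fixed. A free $\GL_N$-action by itself only yields an algebraic-space quotient. Luna slices or a quasi-projective GIT quotient require identifying framed sheaves with GIT-stable points, and that is precisely the $\delta$-stability statement.

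For the $(-2)$-shifted symplectic structure, you set up the right object, the derived fibre of $Li^*\colon \Map(X,\mathbf{Perf})\to\Map(D,\mathbf{Perf})$ over $[F]$. But you leave the essential point open, and the Lagrangian/AKSZ route you propose does not apply directly. There is no AKSZ shifted symplectic form on $\Map(D,\mathbf{Perf})$ for the restriction to be Lagrangian against, because $D$ is not Calabi--Yau: $K_D\cong\O_D(-D)$. A nondegenerate pairing from Serre duality alone would also not give closedness. The paper instead invokes Spaide's theorem on framed mapping stacks, which is tailored to $2D\in|-K_X|$.

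You also need, and do not address, that the classical truncation of this derived fibre (defined using $Li^*$) is the framed-sheaf functor (defined using $i^*$). This holds because a pure $2$-dimensional $E$ with $E|_D$ one-dimensional is transverse to $D$, so $Li^*E\cong i^*E$, and this persists flatly in families.

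Once $\Hom(E,E(-D))=0$ is available, your Atiyah-class construction of a symmetric obstruction theory and your Riemann--Roch computation $\vd=-\frac12\gamma^2$ are fine. The paper instead obtains the self-dual three-term resolution from quasi-projectivity via Oh--Thomas, and isotropy from the shifted symplectic structure.
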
 

The moduli space is realized as an open subset of a (torsion version of) Huybrechts--Lehn's moduli space of $\delta$-stable framed sheaves \cite{HL}. This approach is inspired by a similar result of Bruzzo--Markushevich for framed torsion free sheaves on surfaces \cite{BM}. To show the existence of the $(-2)$-shifted symplectic structure
    in the sense of Pantev--To\"en--Vaqui\'e--Vezzosi \cite{PTVV}, we rely on work of Spaide \cite{Spa}. We expect that the moduli space $M_{(X,D,F)}^{\fr}(v)$ can be equipped with orientations by adapting the constructions in \cite{CGJ, Boj, JU}. In enumerative geometry, moduli of framed sheaves have also appeared in the context of local surfaces in \cite{Opr} and in the recent proof of the K-theoretic DT/PT vertex correspondence of \cite{KLT}.

The moduli space $M_{(X,D,F)}^{\fr}(v)$ is non-compact, so to define invariants we require the action of an algebraic torus $\TT$ with compact fixed locus. Our main example is 
$$
X=\PP^1\times\PP^1\times\PP^1\times\PP^1,\quad D=\{(z_1,z_2,z_3,z_4)\,|\,z_1z_2z_3z_4=\infty\}. 
$$
For each $A\in \bsix$, consider the surface 
$$S_A:=\{(z_1,z_2,z_3,z_4)\in X\,|\,z_{\bar{a}}=z_{\bar{b}}=0\},\quad \overline{A}=\{\bar{a}<\bar{b}\}
$$
and denote its intersection with $D$ by $\ell_A:=S_A\cap D$. Then $S_A\simeq \PP^1\times \PP^1$ and $\ell_A$ is a chain of two $\PP^1$'s. Our choice of the framing sheaf on $D$ is
\[
F_{\vec{r}} := \bigoplus_{A} \O_{\ell_A}^{\oplus r_A}
\]
with $r_A \in \Z_{\geq 0}$. Choosing the Chern character according to the dimension vector $(\vec{r}, n)$ of the 4D ADHM quiver, we set
$$v_{\vec{r},n}:=\sum_{A}r_A\cdot\ch(\O_{S_A})-n[\pt],
$$
and set $M_{(\PP^1)^4}(\vec{r},n) := M_{(X,D,F)}^{\fr}(v_{\vec{r},n})$. We observe that the virtual dimension of $M_{(\PP^1)^4}(\vec{r},n)$ arising from the $3$-term symmetric obstruction theory is $- \sum_{A \in \bthree} r_A r_{\overline{A}}$, which matches the virtual dimension of the corresponding origami moduli space.

The torus $\mathbb{T} \cong (\C^*)^3 \times (\C^*)^r$ defined in \eqref{eqn:defT} acts naturally on $M_{(\PP^1)^4}(\vec{r},n)$, where the action of the first factor is induced from the standard action on $(\PP^1)^4$ and the second factor acts on the framing. Then $\phi_{\fr}$, $\theta_{\fr}$ from Theorem \ref{thm:fr} are $\TT$-equivariant.

There are several indications that the moduli spaces $M_{Q_4}(\vec{r},n)$ and $M_{(\mathbb{P}^1)^4}(\vec{r},n)$ are closely related. Both depend on the same discrete parameters $(\vec{r}, n)$ and admit $\mathbb{T}$-equivariant derived enhancements with a $(-2)$-shifted symplectic form of the same virtual dimension. We make the following conjecture, which can be seen as a 4D analogue of Barth's result.
\begin{conjecture}\label{conj:main}
    For any $(\vec{r},n)$, there exists a $\mathbb{T}$-equivariant isomorphism 
    $$M_{Q_4}(\vec{r},n) \cong M_{(\mathbb{P}^1)^4}(\vec{r},n)$$ 
    intertwining the $\mathbb{T}$-equivariant 3-term symmetric obstruction theories on both sides. 
\end{conjecture}

Our main evidence for this conjecture is that the $(\C^*)^r$-fixed loci of both moduli spaces are isomorphic and that the virtual tangent representations at the $\TT$-fixed points coincide (Corollary \ref{cor:isofixloc} and Proposition \ref{prop:Tvirmatch} in Section \ref{sec:origamiviasheaves}).\footnote{In further support of this conjecture, Rennemo has notified us that he can produce a morphism $M_{Q_4}(\vec{r},n) \to M_{(\PP^1)^4}(\vec{r},n)$, which agrees with Barth's isomorphism when $r_A = 0$ for all $A \neq \{1,2\}$.}
\begin{theorem}\label{prop:comparefx}
There exists a $\TT$-equivariant isomorphism $$M_{Q_4}(\vec{r},n)^{(\C^*)^r} \cong M_{(\PP^1)^4}(\vec{r},n)^{(\C^*)^r}.$$ Under this isomorphism, for any $P \in M_{Q_4}(\vec{r},n)^{\TT} \cong M_{(\PP^1)^4}(\vec{r},n)^{\TT}$, we have
\[
(E_{Q_4}^{\mdot})^{\vee}|_P = (E_{\mathrm{fr}}^{\mdot})^{\vee}|_P \in K_0^{\TT}(\pt).
\]
\end{theorem}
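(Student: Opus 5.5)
The plan is to identify both $(\C^*)^r$-fixed loci with a product of classical moduli spaces, and then to compare virtual tangent spaces at $\TT$-fixed points as explicit characters.

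\textbf{Quiver side.} A $(\C^*)^r$-fixed stable representation splits $V=\bigoplus_{A,\alpha} V_{A,\alpha}$, with $V_{A,\alpha}$ generated by $I_{A,\alpha}$; this is the weight decomposition for the framing torus, using a suitable lift of the action to $\GL(V)$. The maps $B_a$ preserve each summand, and every $J_A$ vanishes except possibly the diagonal components. The relations $\mu_{A,\overline a}=0$ together with stability force $B_{\overline a}|_{V_{A,\alpha}}=0$. What remains on each summand is a rank one stable $Q_2$-representation in the loops $B_a,B_b$. For $r=1$ the ADHM relation forces $J=0$, so the summand is a point of $\Hilb^{n_{A,\alpha}}(\C^2_{A})$. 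Hence
\[
M_{Q_4}(\vec r,n)^{(\C^*)^r}\cong \coprod_{\sum n_{A,\alpha}=n}\ \prod_{A,\alpha}\Hilb^{n_{A,\alpha}}(\C^2_A),
\]
and this holds scheme-theoretically. To get the scheme structure I would run the argument on families, or compare tangent spaces to the fixed part of the deformation space.

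\textbf{Sheaf side.} A $(\C^*)^r$-fixed framed sheaf splits according to the framing weights as $E=\bigoplus_{A,\alpha}E_{A,\alpha}$ with $E_{A,\alpha}|_D\cong\O_{\ell_A}$. Purity and the framing show that $E_{A,\alpha}$ is scheme-theoretically supported on $S_A\cong\PP^1\times\PP^1$. There it is a rank one torsion free sheaf framed along $\ell_A$, hence an ideal sheaf $I_Z\subset\O_{S_A}$ with $Z$ finite and disjoint from $\ell_A$, i.e. $Z\subset\C^2_A$. I would justify the support statement by showing that $E_{A,\alpha}$ is an $\O_{S_A}$-module, using that its first Chern classes match those of $\O_{S_A}$. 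This again gives $\prod\Hilb^{n_{A,\alpha}}(\C^2_A)$, and $\TT$-equivariance is clear on both sides. Making this work in families, to get an isomorphism of schemes, is the first technical obstacle; I would handle it by showing that both fixed functors are represented by the same product of Hilbert schemes.

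\textbf{Characters.} For the second claim, at a $\TT$-fixed point $\blambda$ I compute both classes in $K_0^\TT(\pt)$. On the quiver side, the character of $(E^\mdot_{Q_4})^\vee|_P$ is the alternating sum of the terms of the three-term complex:
\[
\mathrm{End}(V)\ \to\ \textstyle\bigoplus_a t_a\mathrm{End}(V)\oplus\bigoplus_A(\text{$I_A,J_A$ terms})\ \to\ E,
\]
followed by the dual of the first term. Writing $V=\sum K_A$ and $W_A=N_A$ turns this into an explicit Laurent polynomial. On the sheaf side,
\[
-\chi(E,E(-D))=-\sum_{A,B}\chi\bigl(I_{Z_A},I_{Z_B}(-D)\bigr),
\]
which I compute by torus localization on $(\PP^1)^4$. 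Writing $I_{Z_A}=\O_{S_A}-\O_{Z_A}$, the twist by $\O(-D)$ makes all contributions from charts at $\infty$ cancel, via the standard framing argument where $\O(-D)$ restricted to the boundary kills the contributions. The remaining terms are local contributions at the origin $\C^4$. The local character is $\chi_{\C^4}(F,G)=\overline{F}G/\overline{P_{1234}}$ with $F=\sum_A w_A\bigl(P_{\overline A}-P_{1234}K_A\bigr)$, the character of $\O_{S_A}$ minus the character of $\O_{Z_A}$.

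\textbf{Main obstacle.} The hardest step is this final bookkeeping: showing that the two Laurent polynomials coincide after using $t_1t_2t_3t_4=1$. The cross terms between $S_A$ and $S_B$ with $A\neq B$ are supported on lines or points, and their non-compact contributions must be truncated correctly by the framing. I would first verify the identity for the pieces with $n=0$. I would then check the diagonal terms against $T_A$ in \eqref{eqn:Neknot}. As a consistency check, I would confirm that both sides reduce to $\mathsf{v}_{\blambda}+\mathsf{v}_{\blambda}^*$ up to the $n$-independent normalization.
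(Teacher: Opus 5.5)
Your overall route is the paper's: identify both $(\C^*)^r$-fixed loci with $\coprod\prod_{A,\alpha}\Hilb^{n_{A,\alpha}}(\C^2_A)$, then compare characters at $\TT$-fixed points using $I_{Z}=\O_{S_A}-\O_{Z}$. On the quiver side, computing $T_{\cA}|_P-E|_P+\Omega_{\cA}|_P$ directly (with $E$ the quadratic bundle) is fine and lands on the paper's expression without going through $\mathsf{v}_{\blambda}$.

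The genuine gap is in your sheaf-side localization. It is false that the twist by $\O(-D)$ kills the contributions of the fixed points at infinity, leaving $-\chi(E,E(-D))$ equal to the local character at the origin of $F=\sum_A w_A(P_{\overline{A}}-P_{1234}K_A)$. This breaks down for the pairings of $\O_{S_A}$ with $\O_{S_B}$ when $A=B$ or $|A\cap B|=1$.

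Already for $r_{12}=1$, all other $r_A=0$ and $n=0$, your recipe returns $-P_{34}/P_{12}^*$. This is not even a Laurent polynomial. In fact $R\Hom_X(\O_{S_{12}},\O_{S_{12}}(-D))=0$: the local Ext sheaves are $\Lambda^q N_{S_{12}}$, trivial up to characters, and $\O(-D)|_{S_{12}}\cong\O_{\PP^1\times\PP^1}(-1,-1)$. So the three fixed points of $S_{12}$ at infinity contribute exactly minus the origin term. Similarly, for $|A\cap B|=1$ the Ext sheaves live on the line $S_A\cap S_B\cong\PP^1$, where $\O(-D)$ restricts to $\O_{\PP^1}(-1)$, and the answer is again $0$.

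The repair is the paper's three-way split.
Terms involving some $\O_{Z}$ are supported in $\C^4$, where $\O(-D)$ is equivariantly trivial. There your local formula is correct and gives the $n$-dependent part $\sum_{A,B}(P_{\overline{A}}N_AK_B^*+P_{\overline{A}}^*N_A^*K_B-P_{1234}K_AK_B^*)$.
The $n$-independent $\O_{S_A}$--$\O_{S_B}$ terms are part of the claimed identity and must be computed globally on $(\PP^1)^4$, e.g.\ from the Koszul resolution of $\O_{S_A}$. This gives $R\Hom_X(\O_{S_A},\O_{S_B}(-D))=\delta_{B,\overline{A}}\,t_A$ up to framing weights, which produces the term $-\sum_A N_AN_{\overline{A}}^*t_{\overline{A}}$ on the quiver side.
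Only for $B=\overline{A}$, where the intersection is just the origin, does your local recipe happen to give the right value $P_A/P_A^*=t_A$.

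On the fixed loci you follow the paper, but two sheaf-side steps need the correct input.

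First, a $(\C^*)^r$-fixed point only provides isomorphisms $g_w\colon E\to E$ compatible with the rescaled framing. To obtain a $(\C^*)^r$-linearization, and hence the eigensheaf splitting, you need these $g_w$ to be unique, so that $g_{vw}=g_v g_w$. This is exactly where $\Hom_X(E,E(-D))=0$ is used, which comes from Huybrechts--Lehn $\delta$-stability of framed sheaves.

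Second, matching Chern classes does not make $E_{A,\alpha}$ an $\O_{S_A}$-module, and $\ch(E_{A,\alpha})$ is not known a priori anyway. The paper instead uses ampleness of $D$. If $\Supp E_{A,\alpha}$ met $\{z_{\overline{a}}=t\}$ for some $t\neq0$, the intersection would be positive-dimensional and would therefore meet $D$ outside $\ell_A$, contradicting $E_{A,\alpha}|_D\cong\O_{\ell_A}$. The framing then forces generic reducedness along $\ell_A$, and purity finishes the argument. The same ampleness argument shows that eigensheaves whose weight is not a framing weight vanish.
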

This result implies that the virtual invariants defined by the quiver and framed sheaves models coincide, provided we use the sign choice coming from the quiver model. We provide some further evidence for Conjecture \ref{conj:main} in Section \ref{sec:mainconj}. In particular, we establish a $\TT$-equivariant isomorphism between the $(J_A = 0)$ locus in $M_{Q_4}(\vec{r},n)$ and a corresponding locus in $M_{(\PP^1)^4}(\vec{r},n)$ --- these loci contain the $(\C^*)^r$-fixed loci of Theorem \ref{prop:comparefx}. \\

\noindent \textbf{Acknowledgments.} This paper would not exist without N.~Nekrasov's beautiful discovery and exploration of gauge origami theory. We thank Y.~Bae, Y.~Cao, H.~Iritani, D.~Joyce, T.~Kimura, T.~Kinjo, H.~Liu, S.~Monavari, A.~Okounkov, H.~Park, B.~Szendr\H{o}i and R.~P.~Thomas for useful discussions. The second-named author would like to specifically thank J.~Rennemo; the sign analysis in \cite{KR} provides a key input for this paper. The third-named author would like to specially thank D.~Butson, L.~Hennecart and G. Noshita for discussions related to Quot scheme loci, connectedness of origami moduli spaces, and stable/co-stable wall-crossing respectively. NA was supported by the EPSRC through grant EP/R013349/1 and the World Premier International Research Center Initiative (WPI), MEXT, Japan. MK is supported by NWO Grant VI.Vidi.192.012 and ERC Consolidator Grant FourSurf 101087365. WL is supported by the Yonsei University Research Fund of 2024-22-0502, the POSCO Science Fellowship of POSCO TJ Park Foundation, and NRF grant funded by the Korean government (MSIT) (RS-2025-00514643).

\section{Quiver description} \label{sec:quiver}

\subsection{Moduli of quiver representations} \label{sec:quivermoduli}

In this section, we introduce the moduli spaces of quiver representations studied in this paper. In particular, we realize the stability \eqref{eqn:ADHMstab} from the introduction as a King stability condition.

Consider the (framed) quiver $\widehat{Q}_4$ without relations in Figure \ref{fig2}. Using the notation of the introduction, we fix the dimension vector $(\vec{r},n)$ from \eqref{eqn:dimvec} and set $V := \C^n$ and $W_A := \C^{r_A}$. We consider the subset 
\[
\mathcal{U} \subset \mathcal{R} := \bigoplus_{a} \mathrm{End}(V) \oplus \bigoplus_{A} \Big( \Hom(W_A,V) \oplus \Hom(V,W_A) \Big)
\]
consisting of representations $$P:= (B_a,I_A,J_A)$$ satisfying the stability condition \eqref{eqn:ADHMstab}. 

As there exists a King stability condition on $\widehat{Q}_4$ whose locus of stable objects is $\mathcal{U}$ (by the proof of Proposition \ref{prop:King} below), it follows that $\mathcal{U}$ is open. We consider the  action of $G := \GL(n,\C)$ on $\mathcal{R}$  given by 
\begin{equation} \label{eqn:GLaction}
g \cdot P = (g B_a g^{-1},gI_A,J_Ag^{-1}),
\end{equation}
for $g \in G$. This action restricts to $\mathcal{U}$, where it is free. We consider the quotient stack
\[
A_{\widehat{Q}_4}(\vec{r},n) := [\mathcal{U} / G].
\]
By the following proposition, this quotient  is a smooth quasi-projective variety. Recall that $r:=\sum_{A} r_{A}$.

\begin{proposition} \label{prop:King}
The variety $A_{\widehat{Q}_4}(\vec{r},n)$ is  smooth of dimension $3n^2+2rn$. 
\end{proposition}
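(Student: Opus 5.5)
The plan is to realize $\mathcal{U}$ as the locus of King-stable representations for a suitable character of $G$, and then to apply standard GIT for a free action on a smooth space.

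First I pass from the framed quiver to an unframed one using Crawley-Boevey's trick. Add a single vertex $\infty$ with dimension $1$. For each $A$, replace the framing $W_A$ by $r_A$ arrows $\infty \to$ (the $n$-vertex) and $r_A$ arrows back. Equivalently, I work with the GIT quotient $\mathcal{R} /\!\!/_{\chi} G$ for the character $\chi(g) = \det(g)^{-1}$ (or $\det(g)$; I will fix the sign so that it matches the stability condition). By King's criterion, $P$ is $\chi$-semistable if and only if no proper $B$-invariant subspace $S \subsetneq V$ contains $\sum_A \im I_A$. This is exactly condition \eqref{eqn:ADHMstab}, since $\sum_A \C\langle B_1,\dots,B_4\rangle I_A(W_A)$ is the smallest $B$-invariant subspace containing all the images $\im I_A$. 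For this choice of $\chi$, semistability coincides with stability, because the dimension vector has a $1$ at the vertex $\infty$ and no strictly semistable degenerations occur. Hence $\mathcal{U} = \mathcal{R}^{\chi\text{-ss}} = \mathcal{R}^{\chi\text{-s}}$ is open.

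Next I show that $G$ acts freely on $\mathcal{U}$. Suppose $g\cdot P = P$. Then $gI_A = I_A$ and $gB_a = B_a g$, so $\ker(g - 1)$ is a $B$-invariant subspace containing every $\im I_A$. By stability, $\ker(g-1) = V$, so $g = 1$. Since $\mathcal{U}$ consists of stable points, the GIT quotient $\mathcal{U}/G = \mathcal{R}/\!\!/_\chi G$ is a geometric quotient. It is quasi-projective, being projective over the affine quotient $\mathcal{R}/\!\!/G = \operatorname{Spec} \C[\mathcal{R}]^G$. With free action and stable orbits, Luna's étale slice theorem shows that $\mathcal{U} \to \mathcal{U}/G$ is a principal $G$-bundle. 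Therefore the stack $[\mathcal{U}/G]$ is represented by this smooth quasi-projective variety, because $\mathcal{U}$, being open in the affine space $\mathcal{R}$, is smooth.

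Finally I compute the dimension:
\[
\dim \mathcal{R} - \dim G = 4n^2 + \sum_A 2 r_A n - n^2 = 3n^2 + 2rn .
\]

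The main obstacle is the careful matching of King's stability with \eqref{eqn:ADHMstab}. This requires choosing the sign of $\chi$ correctly, and in the framed setting checking subrepresentations of both types: those with $\infty$-component $0$ and those with $\infty$-component $1$. I would follow the proof of the analogous statement for the 2D ADHM quiver in Nakajima's treatment and check that the argument is insensitive to the number of loops and framings. Note that no relations are imposed on $\widehat{Q}_4$, which is why smoothness is automatic here.
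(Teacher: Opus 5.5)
Your proposal is correct and follows essentially the same route as the paper: the Crawley-Boevey trick with a King stability condition, where the paper fixes the sign as $\Theta = (\Theta_\infty,\Theta_0) = (n,-1)$. With this choice, $\Theta$ is positive on subrepresentations with $\infty$-component $1$ and negative on those with $\infty$-component $0$, which yields exactly \eqref{eqn:ADHMstab} and no strictly semistable points. The paper then simply cites Reineke for the smooth quasi-projective quotient, whereas you spell out the freeness of the action, the principal bundle structure, and the dimension count $4n^2+2rn-n^2 = 3n^2+2rn$.
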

\begin{proof}
We use the Crawley-Boevey trick: construct an auxiliary quiver $\widehat{Q}_4'$ with an isomorphic space of  representations and realize the stable framed representations of $\widehat{Q}_4$ as $\Theta$-stable representations of $\widehat{Q}_4'$ for an appropriate choice of King stability. We use the notation of \cite{Rei}.

Let $\widehat{Q}_4'$ be the quiver with two vertices denoted $0$ and $\infty$, edges consisting of 4 loops at $0$, $r$ arrows from $\infty$ to $0$ and $r$ arrows from $0$ to $\infty$. Then the representations of $\widehat{Q}_4$ with dimension vector $(\vec{r},n)$ and those of $\widehat{Q}_4'$ with dimension vector $(d_\infty,d_0) = (1,n)$ are identified by 
\begin{align*}
(B_a,I_A,J_A) \mapsto (B_a,i_{A,\alpha}, j_{A,\beta}),
\end{align*}
where $i_{A,\alpha}$ denote the column vectors of $I_{A}$ and $j_{A,\beta}$ denote the row vectors of $J_{A}$. Consider the group
$$
G' := (\mathrm{GL}(n,\C) \times \C^*) / \C^*,
$$
where the quotient is by diagonal matrices in both factors. This group acts by conjugation on the representations of $\widehat{Q}_4'$. Moreover, under the isomorphism 
\[
G' \cong \mathrm{GL}(n,\C) \times \{1\} \cong G
\]
the above identification of representations intertwines the group actions. 

We choose $\Theta = (\Theta_\infty = n, \Theta_0 = -1)$. Denote the induced $G'$-linearization by $\chi_{\Theta}$ \cite{Rei}. Then it is easy to see that there are no strictly $\Theta$-semistable representations of $\widehat{Q}_4'$ with dimension vector $(1,n)$. Moreover, the $\Theta$-stable representations of $\widehat{Q}_4'$ with dimension vector $(1,n)$ precisely correspond to the stable representations of $\widehat{Q}_{4}$. The result follows from \cite[Sect.~3.5]{Rei}.
\end{proof}

In the proof of the previous proposition, $A_{\widehat{Q}_4}(\vec{r},n)$ is realized as the moduli space of stable representations of some auxiliary quiver $\widehat{Q}_4'$ with respect to a choice of King stability condition. From this point of view, we consider the moduli space of semisimple representations of $\widehat{Q}_4'$ with dimension vector $(1,n)$ which (by abuse of notation) we denote by $A^{\mathrm{ssimp}}_{\widehat{Q}_4}(\vec{r},n)$. Then $A^{\mathrm{ssimp}}_{\widehat{Q}_4}(\vec{r},n)$ is an affine variety and there is a projective morphism \cite[Prop.~3.5]{Rei}
\[
A_{\widehat{Q}_4}(\vec{r},n) \to A^{\mathrm{ssimp}}_{\widehat{Q}_4}(\vec{r},n),
\]
called the semisimplification map. 

We impose the relations \eqref{eqn:4dADHM} via a section of a vector bundle on $A_{\widehat{Q}_4}(\vec{r},n)$. Consider the trivial vector bundle $\widetilde{E} \to \mathcal{U}$ with fibre
\[
\bigoplus_{A} \mathrm{End}(V) \oplus \bigoplus_{A} \Hom(W_A, W_{\overline{A}}) \oplus \bigoplus_{A, \overline{a}} (\Hom(W_A,V) \oplus \Hom(V,W_A)).
\]
The equations \eqref{eqn:4dADHM} define a section $\widetilde{s} \in H^0(\mathcal{U},\widetilde{E})$ given at some $P:= (B_a,I_A,J_A)$ by
\begin{equation} \label{eqn:defsec}
\widetilde{s}_P := (\mu_A,\nu_A,\mu_{A,\overline{a}}, \nu_{A,\overline{a}}).
\end{equation}
Moreover, $G$ acts on this vector bundle by
\[
g \cdot (P,(P_A,Q_A,R_{A,\overline{a}}, S_{A,\overline{a}})) = (g \cdot P,(g P_A g^{-1},Q_A,g R_{A,\overline{a}}, S_{A,\overline{a}} g^{-1}))
\]
and $\widetilde{s}$ is a $G$-equivariant section. Therefore $\widetilde{E}, \widetilde{s}$ descend to a vector bundle and section $E,s$ on $A_{\widehat{Q}_4}(\vec{r},n)$. We define the closed subscheme
\[
M_{Q_4}(\vec{r},n) := Z(s) \subset A_{\widehat{Q}_4}(\vec{r},n).
\]
The closed points of $M_{Q_4}(\vec{r},n)$ correspond to isomorphism classes of stable representations of the 4D ADHM quiver $Q_4$, which is $\widehat{Q}_4$ together with the relations \eqref{eqn:4dADHM}. Following Nekrasov, we refer to $M_{Q_4}(\vec{r},n)$ as an \emph{origami moduli space}.

As before, there exists a moduli space of semisimple representations of $Q'_4$, denoted by $M^{\mathrm{ssimp}}_{Q_4}(\vec{r},n)$, together with a projective morphism
$$M_{Q_4}(\vec{r},n)\rightarrow M^{\mathrm{ssimp}}_{Q_4}(\vec{r},n),
$$
again referred to as the semisimplification map.

The following two examples establish that $Q_4$ and its representations form a generalization of the traditional 2D ADHM quiver and its more recent 3D analogue. 

\begin{example} \label{ex:2dreduction}
Consider $Q_4$ and fix $r_{12} = r$ and $r_A = 0$ otherwise. Let $P = (B_a,I_A,J_A)$ be a stable representation of $Q_4$ with dimension vector $(\vec{r},n)$. We claim that $B_3=B_4=0$. By the stability condition \eqref{eqn:ADHMstab}, every vector in $V$ is of the form $v=f(B_1,B_2,B_3,B_4)I_{12}(w)$ for some polynomial $f$ (in non-commuting variables) and $w\in W_{12}$. The 4D ADHM equations \eqref{eqn:4dADHM} imply that $B_3$ commutes with all $B_a$'s and $B_3I_{12}=0$, so $B_3(v)=0$. Similarly, we have $B_4=0$. Therefore $(B_1,B_2,I_{12}, J_{12})$ is a representation of the 2D ADHM quiver $Q_2$ (Figure \ref{fig1}) with dimension vector $(r,n)$. It is easy to see that this provides an isomorphism $M_{Q_4}(\vec{r},n) \cong M_{Q_2}(r,n)$. 
\end{example}

\begin{example} \label{ex:3dreduction}
We define the 3D ADHM quiver $Q_3$ as follows. Remove from Figure \ref{fig2} the loop labelled by $B_4$ and the arrows labelled by $I_{a4}$, $J_{a4}$. We impose the relations 
\begin{equation} \label{eqn:3dADHM} 
[B_a, B_b] + I_A J_A = 0, \quad B_{\overline{a}} I_A = 0, \quad J_A B_{\overline{a}} = 0,
\end{equation}
for all $A = \{a<b\} \in \bthree := \{\{1,2\}, \{1,3\}, \{2,3\}\}$ and $\overline{a}\in \{1,2,3\}\backslash A$. 
    These relations can be  obtained by taking cyclic derivatives of the framed potential 
    $$W^{\fr}:=B_3([B_1, B_2]+I_{12}J_{12})+B_2([B_1, B_3]+I_{13}J_{13})+B_1([B_2, B_3]+I_{23}J_{23})
    $$
    introduced in \cite{RSYZ}.
The stability condition is the same as before, namely 
\begin{equation*}
\sum_{A \in \bthree} \C \langle B_1, B_2, B_3 \rangle \cdot I_A(W_A) = V.
\end{equation*} 
As in the 4D case, there is a smooth variety $A_{\widehat{Q}_3}(\vec{r},n)$ parametrizing stable $(\vec{r},n)$-dimensional representations of the corresponding quiver $\widehat{Q}_3$ without relations. Then the relations \eqref{eqn:3dADHM} cut out $M_{Q_3}(\vec{r},n) \subset A_{\widehat{Q}_3}(\vec{r},n)$.

Now consider $Q_4$ with $r_{a4} = 0$ for all $a$. Then any stable representation $P = (B_a,I_A,J_A)$ of $Q_4$ with dimension vector $(\vec{r},n)$ satisfies $B_4 = 0$ by the argument of Example \ref{ex:2dreduction}. Thus there is an isomorphism $M_{Q_4}(\vec{r},n) \cong M_{Q_3}(\vec{r},n)$.
\end{example}

\subsection{Torus action and fixed loci}\label{subsec:torus}

In this section, we define a torus action on the origami moduli space and analyze its torus fixed points. We define the algebraic torus $\mathbb{T}' := (\C^*)^4 \times (\C^*)^r$, which has $\TT \leq \TT'$ (defined in \eqref{eqn:defT}) as a subtorus. 

Recall the definition of the origami moduli space from the previous section. Then $\TT$ acts on points $P=(B_a,I_A,J_A)$ of $\mathcal{R}$ by\footnote{We act on the \emph{points} by $B_a\mapsto t_a^{-1} B_a$ so that  we act on the corresponding coordinate functions by $b_a \mapsto t_a b_a$.}
\begin{equation} \label{eqn:Taction}
\tau \cdot P = (t_a^{-1} B_a, I_A w_A^{-1}, t_A^{-1} w_A J_A), 
\end{equation}
for $\tau = (t_a,w_A) \in \TT$, where $w_A$ is regarded as a diagonal matrix with entries $w_{A,1}, \ldots, w_{A,r_A}$. As the $\TT$-action preserves the open subset $\mathcal{U}\subseteq \mathcal{R}$ and the actions of $G$ and $\TT$ commute, there is an induced $\TT$-action on $A_{\widehat{Q}_4}(\vec{r},n)$. 

The action of $\TT$ lifts to $\widetilde{E}$ by 
\begin{align}\label{eqn: Taction on bundle}
&\tau \cdot (P,(P_A,Q_A,R_{A,\overline{a}}, S_{A,\overline{a}})) = \nonumber\\
&(\tau \cdot P,(t_A^{-1} P_A, t_{\overline{A}}^{-1} w_{\overline{A}} Q_A w_A^{-1},t_{\overline{a}}^{-1} R_{A,\overline{a}} w_A^{-1}, t_A^{-1} t_{\overline{a}}^{-1} w_A S_{A,\overline{a}})).
\end{align}
The $G$ and $\TT$ actions commute, so we obtain an action of $\TT$ on $E$. As $s$ is $\TT$-equivariant, there is an induced $\TT$-action on $M_{Q_4}(\vec{r},n)$. In conclusion, we have the diagram
\begin{displaymath} 
\xymatrix
{
& E \ar[d] \\
M_{Q_4}(\vec{r},n) := Z(s) \ar@^{(->}[r] & \ar@/_1pc/[u]_{s} A_{\widehat{Q}_4}(\vec{r},n),
}
\end{displaymath}
where $\TT$ acts on $E$ and $s$ is $\TT$-equivariant. 

We first analyze the fixed loci with respect to the framing torus action. The following proposition resembles a result of Bifet for Quot schemes \cite{Bif}.
\begin{proposition} \label{prop:fixlocquiver}
There exists a $\TT$-equivariant isomorphism of schemes 
\[
M_{Q_4}(\vec{r},n)^{(\C^*)^r} \cong  \bigsqcup_{\sum_{A,\alpha} n_{A,\alpha} = n} \prod_{A,\alpha} M_{Q_2}(1,n_{A,\alpha}),
\]
where the disjoint union is over all decompositions $\sum_{A,\alpha} n_{A,\alpha} = n$ and $M_{Q_2}(1,n_{A,\alpha})$ denotes the moduli space of representations of the 2D ADHM quiver with dimension vector $(1,n_{A,\alpha})$. Moreover, any representation $(B_a, I_A, J_A)$ in this fixed locus satisfies $J_A = 0$ for all $A$.
\end{proposition}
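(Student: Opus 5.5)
The plan is the standard Bifet-type argument, carried out on the level of quiver data. First I describe the closed points. A point $P=(B_a,I_A,J_A)$ is fixed by $(\C^*)^r$ exactly when, for each $w\in(\C^*)^r$, there is $g_w\in G$ with $g_w\cdot P = w\cdot P$. Freeness of the $G$-action on $\mathcal{U}$ makes $w\mapsto g_w$ a homomorphism $(\C^*)^r\to G$. This gives a weight decomposition
\[
V=\bigoplus_{\chi} V_\chi,
\]
where $B_a$ preserves each $V_\chi$, $I_A$ sends the $w_{A,\alpha}$-line $\C e_{A,\alpha}$ into $V_{w_{A,\alpha}}$, and $J_A$ sends $V_\chi$ to the $\chi$-part of $W_A$. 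Stability \eqref{eqn:ADHMstab} forces $V$ to be spanned by $\C\langle B\rangle I_A(e_{A,\alpha})$, which lies in weight $w_{A,\alpha}$. Since the $w_{A,\alpha}$ are distinct characters, we get
\[
V=\bigoplus_{A,\alpha}V_{A,\alpha}, \qquad V_{A,\alpha}:=\C\langle B\rangle I_A(e_{A,\alpha}).
\]
Thus the $B_a$ are block diagonal, and $J_A$ maps $V_{B,\beta}$ to $\C e_{A,\alpha}$ only when $(B,\beta)=(A,\alpha)$.

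Next I analyze each block using the relations \eqref{eqn:4dADHM}, exactly as in Example \ref{ex:2dreduction}. On $V_{A,\alpha}$ the vector $I_A(e_{A,\alpha})$ is cyclic, and $B_{\overline a}I_A=0$ for $\overline a\in\overline A$. The relation $\mu_{C}$ for $C=\{a,\overline a\}$ reads $[B_a,B_{\overline a}]=-I_CJ_C$. Its block component on $V_{A,\alpha}$ vanishes when $C\neq A$, because $J_C$ is zero on $V_{A,\alpha}$ unless $C=A$. Hence $B_{\overline a}$ commutes with all $B$'s on this block and kills the cyclic vector, so $B_{\overline a}|_{V_{A,\alpha}}=0$. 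The block $(B_a,B_b,I_A e_{A,\alpha},J_{A,\alpha})$ is then a stable rank-one 2D ADHM datum. For rank one it is standard (Nakajima) that $J=0$, via the trace argument using $\mathrm{tr}([B_1,B_2]^k\cdots)$, or via the known identification $M_{Q_2}(1,m)\cong\Hilb^m(\C^2)$. This gives $J_A=0$ and the map on points to $\prod M_{Q_2}(1,n_{A,\alpha})$. Conversely, a tuple of rank-one ADHM data assembles, by direct sum with $B_{\overline a}=0$ and $J=0$, into a $(\C^*)^r$-fixed stable $Q_4$ representation. All remaining relations ($\nu_A$, $\mu_{A,\overline a}$, $\nu_{A,\overline a}$) hold trivially, and mixed commutators vanish by block-diagonality. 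The construction is $T$-equivariant since the weights are respected.

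The main obstacle is upgrading this bijection on points to an isomorphism of schemes. For this I would run the argument functorially over a base $S$ with trivial $(\C^*)^r$-action. A family in the fixed locus yields a weight decomposition of the tautological bundle $\mathcal V=\bigoplus\mathcal V_{A,\alpha}$, whose summands are locally free of locally constant ranks $n_{A,\alpha}$; this is where the disjoint union comes from. Equivalently, the fixed locus is $[\mathcal U^{\rho}/G^{\rho}]$ for cocharacters $\rho$ into $G$, where $G^\rho=\prod \GL(n_{A,\alpha})$. I then need the vanishing $B_{\overline a}=0$ and $J=0$ scheme-theoretically, not just on reduced points. For $B_{\overline a}$, the identity above holds over any base, because cyclicity of $I_A(e_{A,\alpha})$ is an open surjectivity condition valid in families. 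For $J=0$, I would argue on the smooth space $M_{Q_2}(1,m)$ itself: the rank-one stable ADHM equations with $J$ free cut out $\{J=0\}$ scheme-theoretically, since the Hilbert scheme is smooth of dimension $2m$ and the $J=0$ locus already has that dimension. Alternatively one can note that the relation $\mu_A$ forces $IJ$ to be nilpotent of trace zero, which gives $J I=0$ and then $J=0$ by cyclicity, and this argument works over Artinian bases as well. Verifying this final scheme-theoretic step is what I expect to take the most care.
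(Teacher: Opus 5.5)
Your proposal is correct and follows essentially the same route as the paper: the homomorphism $w\mapsto g_w$ gives the weight decomposition $V=\bigoplus_{A,\alpha} V_{A,\alpha}$, the relations together with stability force $B_{\overline a}|_{V_{A,\alpha}}=0$ and the block form of $J_A$, Nakajima's rank-one result gives $J_A=0$, and the bijection is upgraded to an isomorphism of schemes by running the argument over a base $S$ (the paper concludes this step by citing Fogarty). Scheme-theoretic vanishing of $J$ is not needed for the isomorphism itself, because the diagonal blocks of $J_A$ are part of the data of $M_{Q_2}(1,n_{A,\alpha})$, so you can drop the unproved Artinian trace argument.
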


\begin{proof}
We first construct a bijection at the level of closed points. The map from right to left follows by writing
\[
\C^n = \bigoplus_{A,\alpha} \C^{n_{A,\alpha}}
\]
and taking direct sums. For the map from left to right, we start with a $(\C^*)^r$-fixed representation $(B_a,I_A,J_A)$. Then for each $w = (w_{A,\alpha}) \in (\C^*)^r$, there is a unique $g_w \in G = \GL(n,\C)$ such that
\[
g_w B_a g_w^{-1} = B_a, \quad g_w I_A w_A^{-1} = I_A, \quad w_A J_A g_w^{-1} = J_A,
\]
where $w_A$ denotes the diagonal matrix with entries $w_{A,\alpha}$. This assignment yields a representation
\[
(\C^*)^{r} \to \GL(V), \quad w \mapsto g_w,
\]
where $V:=\C^n$. Denote by $\chi_{A,\alpha} \colon (\C^*)^r \to \C^*$ the character defined by $\chi_{A,\alpha}(w) = w_{A,\alpha}$. Then we obtain a decomposition 
\[
V = \bigoplus_{A,\alpha} V_{A,\alpha} \otimes \chi_{A,\alpha}
\]
and we define $n_{A,\alpha} := \dim V_{A,\alpha}$. Denote the standard basis of $W_A = \C^{r_A}$ by $e_{A,\alpha}$. Clearly $B_a V_{A,\alpha} \subset V_{A,\alpha}$ and $I_A e_{A,\alpha} \in V_{A,\alpha}$. We claim that the following hold.
\begin{itemize}
\item $J_{A}V_{A,\alpha} \subset \C e_{A,\alpha}$ and $J_A V_{B,\beta} = 0$ for all $A \neq B$ and any $\beta = 1, \ldots, r_B$;
\item $B_{\overline{a}} V_{A,\alpha} = 0$ for all $A \in \bsix$, $\alpha=1, \ldots, r_A$, $\overline{a} \in \overline{A}$. 
\end{itemize}
Both properties follow from the 4D ADHM equations \eqref{eqn:4dADHM} and stability \eqref{eqn:ADHMstab}. The resulting assignment is a well-defined bijection. For the final statement, any representation $(B_1,B_2,I,J)$ in $M_{Q_2}(1,n)$ satisfies $J=0$ \cite[Prop.~2.8]{Nak}.

The argument also works relative to a base $\C$-scheme $S$ of finite type, $w \in (\C^*)^r$ is replaced by $w \in \Hom(S,(\C^*)^r)$ and $(B_a,I_A,J_A)$ is replaced by a family of stable representations over $S$. Namely one considers $B_a \in \Hom(\cV, \cV)$, $I_A \in \Hom(W_A \otimes \O_S, \cV)$, and $J_A \in \Hom(\cV,W_A \otimes \O_S)$, with $\cV$ a rank $n$ locally free sheaf on $S$ satisfying the equations in \eqref{eqn:4dADHM} over $S$ and stability on the fibres over all points $s \in S$. Thus, the bijection is an isomorphism of schemes by \cite[Thm.~2.3]{Fog}.
\end{proof}

Using Proposition \eqref{prop:fixlocquiver}, we can describe the $\mathbb{T}$-fixed loci. 

\begin{corollary}\label{cor: T-fixed loci quiver}
    The $\mathbb{T}$-fixed locus $M_{Q_4}(\vec{r},n)^{\mathbb{T}}$ consists of reduced points labeled by 
\[
\blambda = (\lambda_{A,\alpha})_{A\in\bsix,\,1\leq\alpha\leq r_A}, \quad |\blambda| = \sum_{A,\alpha} |\lambda_{A,\alpha}| = n,
\]
where each $\lambda_{A,\alpha}$ is an integer partition. 
\end{corollary}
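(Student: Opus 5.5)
Since $(\C^*)^r \leq \TT$, we have $M_{Q_4}(\vec{r},n)^{\TT} = \big(M_{Q_4}(\vec{r},n)^{(\C^*)^r}\big)^{T}$. We first take the $(\C^*)^r$-fixed locus and then the $T$-fixed locus. The isomorphism of Proposition \ref{prop:fixlocquiver} is $\TT$-equivariant, so it identifies the $T$-fixed locus on the left with the $T$-fixed locus of $\bigsqcup \prod_{A,\alpha} M_{Q_2}(1,n_{A,\alpha})$. The first step is to determine how $T$ acts on each factor $M_{Q_2}(1,n_{A,\alpha})$. By the proof of Proposition \ref{prop:fixlocquiver}, the summand $V_{A,\alpha}$ is killed by $B_{\overline{a}}$ for $\overline{a}\in\overline{A}$. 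The remaining data are $(B_a,B_b,I_A e_{A,\alpha},0)$ with $A=\{a<b\}$. By \eqref{eqn:Taction}, $T$ acts on these through $(t_a,t_b)$, up to a framing rescaling that can be absorbed by $G$.

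The second step is to compute the fixed points of each factor. Via Barth's isomorphism \eqref{eqn:Barth} and the standard identification $M_{Q_2}(1,m)\cong \Hilb^m(\C^2)$ (Nakajima), the action of $(t_a,t_b)$ becomes the standard torus action on $\C^2$. Here I have to check that the restriction $T \to (\C^*)^2$, $(t_1,\dots,t_4)\mapsto(t_a,t_b)$, is surjective. It is: given any $t_a,t_b$, choose the other two coordinates with product $(t_at_b)^{-1}$. Hence the fixed points of $\Hilb^m(\C^2)$ under $T$ are exactly the monomial ideals, i.e.\ integer partitions $\lambda_{A,\alpha}$ of size $m$. These are reduced, because the fixed locus of a torus acting on a smooth variety is smooth and here it is $0$-dimensional. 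Since the fixed locus of a product is the product of fixed loci, the $\TT$-fixed points of $M_{Q_4}(\vec{r},n)$ are indexed by the collections $\blambda$ with $|\blambda|=n$.

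The main obstacle is reducedness as a scheme. Taking fixed loci is compatible with taking fixed loci in stages scheme-theoretically: $X^{\TT}=(X^{(\C^*)^r})^{T}$ as closed subschemes, because both represent the functor of $\TT$-invariant maps. Each $M_{Q_2}(1,m)$ is smooth, so $X^{(\C^*)^r}$ is smooth. Fixed loci of a torus acting on a smooth scheme are smooth (Iversen/Fogarty), so $X^{\TT}$ is smooth and $0$-dimensional, and therefore a disjoint union of reduced points. I would state this compatibility carefully using the functorial description from \cite{Fog}, as in the proof of Proposition \ref{prop:fixlocquiver}.
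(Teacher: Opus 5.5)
Your proposal is correct and follows essentially the paper's argument. You restrict the $\TT$-action to the factors of Proposition \ref{prop:fixlocquiver}, observe that it acts through the standard $(t_a,t_b)$-action on $M_{Q_2}(1,n_{A,\alpha})\cong\Hilb^{n_{A,\alpha}}(\C^2)$ with the framing torus acting trivially, and read off the monomial-ideal fixed points. The only difference is that you justify reducedness via smoothness of torus fixed loci on smooth varieties, and explicitly check surjectivity of $T\to(\C^*)^2$, whereas the paper simply cites Nakajima for the fixed locus of the Hilbert scheme being $0$-dimensional and reduced.
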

\begin{proof}
Under the isomorphism of Proposition \ref{prop:fixlocquiver}, the action of $\TT$ on the factor $M_{Q_2}(1,n_{A,\alpha})$ restricts to the action of $(\C^*)^2 \times (\C^*)^r$, where $(\C^*)^2$ acts by the standard action (with coordinates $(t_a)_{a \in A}$) and $(\C^*)^r$ acts trivially. The fixed locus is $M_{Q_2}(1,n_{A,\alpha})^{(\C^*)^2}$ is 0-dimensional and reduced \cite{Nak}, so we deduce that $M_{Q_4}(\vec{r},n)^{\TT}$ is also 0-dimensional and reduced. Note that $M_{Q_2}(1,n) \cong \Hilb^n(\C^2)$, the Hilbert scheme of $n$ points on $\C^2$ \cite{Nak}. The corollary follows from the standard description of the $(\C^*)^2$-fixed point locus of $\Hilb^n(\C^2)$.
\end{proof}

We set some notation for integer partitions. An integer partition $\lambda$ is a sequence
\[
\lambda = (\lambda_i \in \Z_{\geq 0})_{i \geq 1}
\]
satisfying $\lambda_{i} \geq \lambda_{i+1}$ for all $i$ and of total finite \emph{size} $|\lambda| := \sum_i \lambda_i$.  We can view $\lambda$ as a subset of $\Z_{\geq 0}^2$ by defining 

\[
(i,j) \in \lambda \Leftrightarrow j+1 \leq \lambda_{i+1}.
\]
We denote the \emph{length} of a partition by $\ell(\lambda)$; it is the largest $\ell$ such that $\lambda_{\ell} \neq 0$. For any square $(i,j) \in \lambda$, we define the \emph{hook length} at $(i,j)$ by
$$
h_\lambda(i,j):= |\{(i,j') \in \lambda \, | \, j' \geq j\} \cup \{(i',j) \in \lambda \, | \, i' \geq i\}|,
$$
and we set $h_{\lambda}(i,j) = 0$ when $(i,j) \notin \lambda$. For a sequence $\blambda = (\lambda_{A,\alpha})$, it is convenient to view each $\lambda_{A,\alpha}$ as a subset of $\Z_{\geq 0}^4$ as follows. For $A = \{a<b\}$, we view $\lambda_{A,\alpha} \subset \Z_{\geq 0}^2$ as above, and embed $\Z_{\geq 0}^2 \subset \Z_{\geq 0}^4$ via the obvious inclusion according to $A=\{a<b\} \subset \bfour$ (e.g., ~for $A=\{2,4\}$, we map $(x,y) \mapsto (0,x,0,y)$). 

\medskip

We now introduce the rank $n$ tautological vector bundle $\mathcal{V}$ on $M_{Q_4}(\vec{r},n)$. Consider the trivial vector bundle $\widetilde{\mathcal{V}} \to \mathcal{U}$ with fibre $V := \C^n$. The group $G$ acts on $\widetilde{\mathcal{V}}$ by
\[
g \cdot (P,v) = (g \cdot P, gv),
\]
for all $g \in G$. Furthermore, the trivial action of $\TT$ on $\widetilde{\mathcal{V}}$ commutes with the $G$-action. Therefore, on the quotient $A_{\widehat{Q}_4}(\vec{r},n)$, we obtain a rank $n$ $\TT$-equivariant vector bundle $\mathcal{V}$ and we denote its restriction to $M_{Q_4}(\vec{r},n)$ by the same symbol. 

Denote the usual tautological bundle on $M(1,n_{A,\alpha})$ by $\mathcal{V}_{A,\alpha}$. In terms of Hilbert schemes, $\mathcal{V}_{A,\alpha}$ is the  vector bundle $\O^{[n]}$ with fibre $H^0(Z,\O_Z)$ over $Z \in \Hilb^n(\C^2)$. Under the isomorphism of Proposition \ref{prop:fixlocquiver}, we have
\[
\mathcal{V}|_{\prod_{A,\alpha} M_{Q_2}(1,n_{A,\alpha})} \cong \bigboxplus_{A,\alpha}  \mathcal{V}_{A,\alpha}.
\]
Therefore, for any fixed point $P \in M_{Q_4}(\vec{r},n)^{\TT}$ corresponding to integer partitions $\blambda$, the $\TT$-character of the fibre $\mathcal{V}|_P \cong V$ is given by 
\begin{equation} \label{eqn:charV}
\sum_{A} K_A, \ K_A:=\sum_{\alpha} K_{A,\alpha}, \ K_{A,\alpha} := \sum_{(i,j) \in \lambda_{A,\alpha}} t_a^i t_b^j w_{A,\alpha} \ \textrm{for all } A = \{a<b\} \in \bsix.
\end{equation}

\subsection{Examples and \texorpdfstring{$J_A=0$}{JA} loci}

In this section, we give some examples of the origami moduli spaces $M_{Q_4}(\vec{r},n)$ and we discuss a relation to Quot schemes.

In the following examples, we explicitly describe the moduli space $M_{Q_4}(\vec{r},1)$ for two choices of rank vectors. Such descriptions are possible as setting $n=1$ leads to several simplifications: the stability \eqref{eqn:ADHMstab} is equivalent to having at least one nonzero $I_A$; the commutator $[B_a,B_b]$ automatically vanishes for any $A=\{a,b\} \in \bsix$ and the group $\GL(1,\C)$ acts on $B_a$, $I_A$, $J_A$ with weights $0$, $1$, $-1$, respectively. 

\begin{example}\label{ex:butterfly n=1}
Let $n=1$ and let $\vec{r}$ be the rank vector such that $r_{12}=r_{34}=1$ with $r_A=0$ otherwise. By Proposition \ref{prop:Jiszero} below, all stable solutions of the generalized ADHM equations have $J_A=0$. There are 3 types of solutions up to $\C^*$-action (all non-specified variables are zero):
    \begin{enumerate}
        \item[(i)] $I_{12}=1$, $B_1, B_2\in \C$, 
        \item[(ii)] $I_{34}=1$, $B_3,B_4\in\C$,
        \item[(iii)] $[I_{12}: I_{34}]\in \PP^1$.
    \end{enumerate}
These solutions are not mutually disjoint. The type (iii) solution with $[I_{12}: I_{34}]=[1:0]$ (resp.~$[I_{12}: I_{34}]=[0:1]$) is equal to the type (i) (resp.~type (ii)) solution with $B_a=0$ for all $a\in \bfour$. Therefore, $M_{Q_4}(\vec{r},1)$ has three irreducible components $\C^2$, $\C^2$ and $\PP^1$ glued together as in Figure \ref{fig3}.
\begin{figure}[ht]
\begin{center}
\includegraphics[scale=.09]{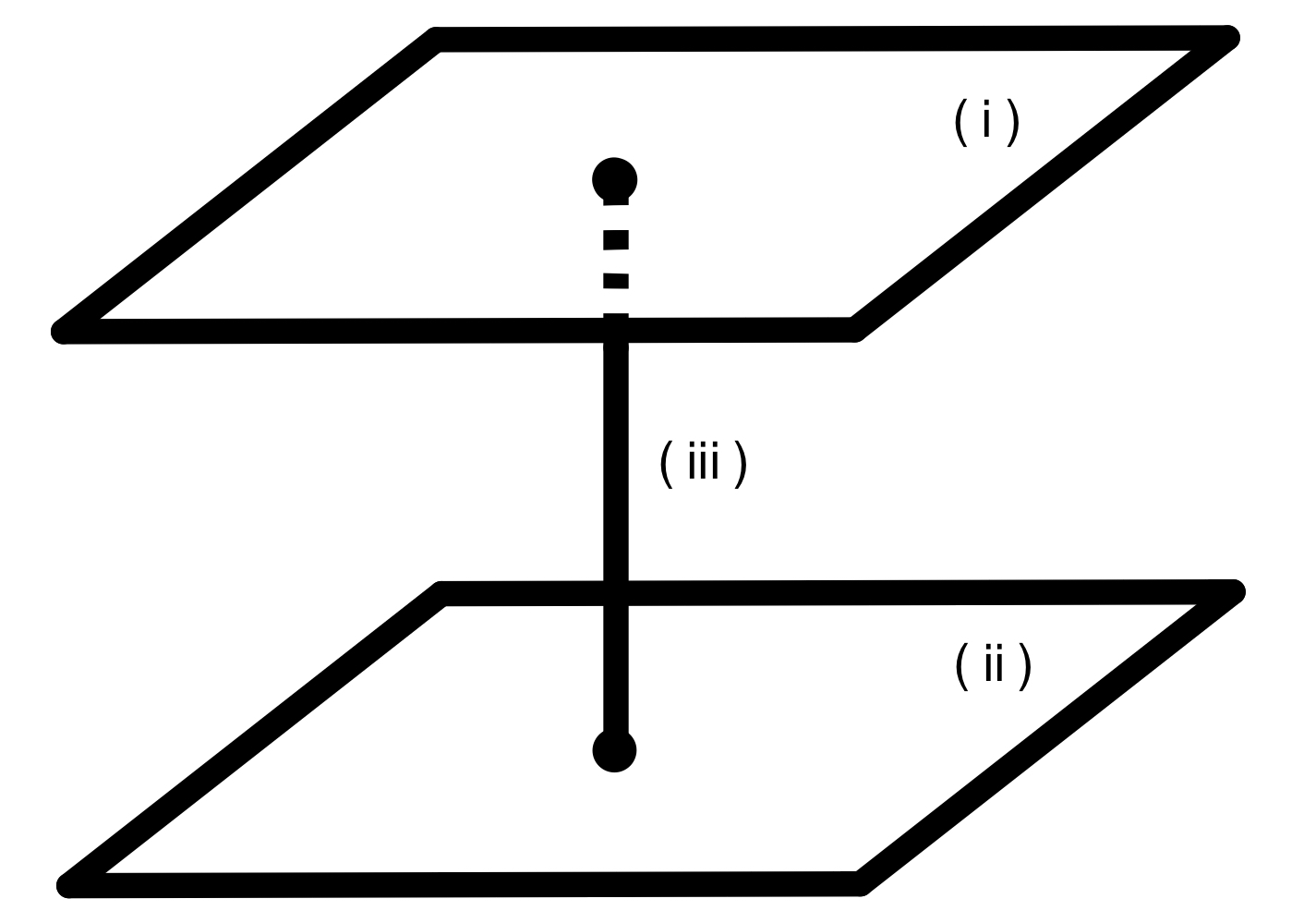}
\end{center}
\caption{The moduli space of Example \ref{ex:butterfly n=1}.}\label{fig3}
\end{figure}
Additionally, one can check from the local equations that the moduli space is reduced. This example is also worked out in \cite[Sect.~6.2]{Nek3} for more general $r_{12}, r_{34}>0$ with $r_A=0$ otherwise.
\end{example}

\begin{example}\label{ex:book n=1}
Let $n=1$ and let $\vec{r}$ be the rank vector such that $r_{12}=r_{23}=1$ with $r_A=0$ otherwise. As explained in Example \ref{ex:3dreduction}, all the stable solutions of the generalized ADHM equations have $B_4=0$. There are five types of solutions up to $\C^*$-action (all non-specified variables are zero):
    \begin{enumerate}
        \item[(i)] $I_{12}=1$, $B_1, B_2\in \C$,
        \item[(ii)] $I_{23}=1$, $B_2, B_3\in \C$,
        \item[(iii)] $[I_{12}:I_{23}]\in \PP^1$, $B_2\in \C$,
        \item[(iv)] $I_{12}=1$, $J_{23}\in \C$, $B_2\in \C$,
        \item[(v)] $I_{23}=1$, $J_{12}\in \C$, $B_2\in \C$.
    \end{enumerate}    
Therefore, the moduli space has five components $\C^2, \C^2, \C\times \PP^1, \C^2, \C^2$ glued together as in Figure \ref{fig4}. \begin{figure}[ht]
\begin{center}
\includegraphics[scale=.09]{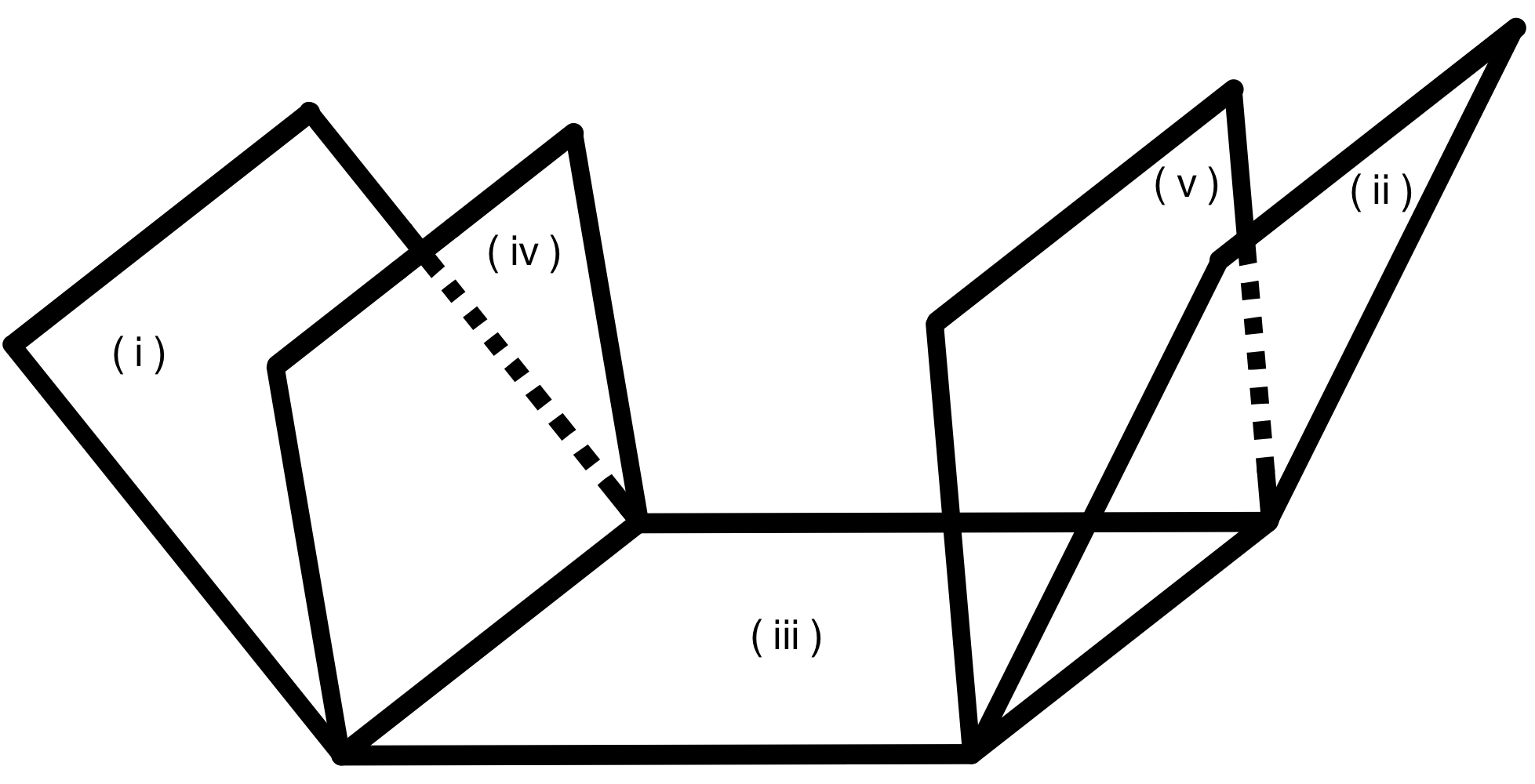}
\end{center}
\caption{The moduli space of Example \ref{ex:book n=1}.}\label{fig4}
\end{figure}
Again, one can check that the moduli space is reduced. The components (iv) and (v) have $J_A\neq 0$ in the interior, so they do not belong to $\widetilde{M}_{Q_4}(\vec{r},1)$ as in Definition \ref{def: J=0 loci} below.

\end{example}
\begin{remark}
In contrast to Example \ref{ex:butterfly n=1}, the moduli space we describe in Example \ref{ex:book n=1} is different from that of  \cite[Sect.~6.4]{Nek3}. In loc.~cit.~there are only solutions of type (i), (ii), (iii). This difference is due to the modification of the moduli spaces introduced in \cite[Sect.~3.3]{Nek3}.\footnote{\label{footnote: Nekrasov's different moduli space}Following the notation of \cite[Sect.~3.3]{Nek3}, the moduli space we consider in this paper is denoted by $\mathfrak{M}_n^0(\vec{r})$. The moduli space described in \cite[Sect.~6.4]{Nek3} is the smaller one $\mathfrak{M}_n^\infty(\vec{r})$ obtained by imposing extra equations. 
    These two moduli spaces coincide if the rank vector is such that $r_{12}, r_{34} \geq 0$ and $r_A=0$ otherwise.} This modification makes a significant difference with regard to the Donaldson--Thomas theory of Calabi--Yau 3-folds. While the moduli space we describe in Example \ref{ex:book n=1} is globally a critical locus (as explained in Section \ref{sec:quivervirtualstruc}), the one in \cite[Sect.~6.4]{Nek3} \emph{cannot} be written as a critical locus around the intersection points of the irreducible components. This is because Nekrasov's moduli space, which is a union of three components (i), (ii), (iii), has reduced lci singularities. By \cite[Cor.~1.21]{BF2}, critical loci do not have such singularities. 
\end{remark}

We end this section with a relation to Quot schemes on the singular surface $\bigcup_{A \in \bsix} \C_A^2 \subset \C^4$, where $\C_A^2 = Z(x_{\overline{a}},x_{\overline{b}})$ with $\overline{A} = \{\overline{a}, \overline{b}\}$. We refer to \cite{FM} for an analysis of a similar Quot scheme on $\bigcup_{a \in \bfour} \C_a^3 \subset \C^4$ and \cite{Mon} on $\bigcup_{a \in \{1,2\}} \C_a^1 \subset \C^2$ (in the obvious notation).

\begin{definition}\label{def: J=0 loci}
    We denote by $\widetilde M_{Q_4}(\vec r, n)$ the closed subscheme of the moduli space $M_{Q_4}(\vec r, n)$ parametrizing $(B_a,I_A, J_A)$ with $J_A=0$ for all $A\in \bsix$. 
\end{definition}

By Proposition \ref{prop:fixlocquiver}, we have
\[
M_{Q_4}(\vec{r},n)^{(\C^*)^r} \subset \widetilde{M}_{Q_4}(\vec{r},n).
\]

\begin{proposition}\label{prop: Quot scheme}
    There is an isomorphism 
    $$\widetilde M_{Q_4}(\vec r, n)\simeq \Quot_{\C^4}\Big(\bigoplus_{A}\O_{\C^2_A}^{\oplus r_A}, n\Big),
    $$
    where the right hand side is the Quot scheme parametrizing length $n$ quotients of the sheaf $\bigoplus_{A} \O_{\C^2_A}^{\oplus r_A}$.
\end{proposition}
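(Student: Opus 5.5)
The plan is to show that both sides represent the same functor on $\C$-schemes of finite type, by the standard dictionary between quotients of a module over a polynomial ring and cyclic data of commuting operators. Set $R := \C[x_1,x_2,x_3,x_4]$ and $R_A := R/(x_{\overline{a}},x_{\overline{b}}) = \C[x_a,x_b]$ for $A = \{a<b\}$. Then $\bigoplus_A \O_{\C^2_A}^{\oplus r_A}$ corresponds to the $R$-module $M := \bigoplus_A W_A \otimes R_A$. A length $n$ quotient $M \twoheadrightarrow Q$ is the same as an $n$-dimensional vector space $Q$ with commuting endomorphisms $B_a$ (multiplication by $x_a$) and linear maps $I_A \colon W_A \to Q$ (the images of $W_A \otimes 1$). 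These data must satisfy:
\begin{enumerate}
\item[(a)] $[B_a,B_b] = 0$ for all $a,b$;
\item[(b)] $B_{\overline{a}} I_A = 0$ for $\overline{a} \in \overline{A}$, since $x_{\overline{a}}$ kills $R_A$;
\item[(c)] surjectivity, i.e. $\sum_A \C[B_1,\dots,B_4]\cdot I_A(W_A) = Q$.
\end{enumerate}
Two such data give the same quotient if and only if they differ by a unique element of $\GL(Q)$.

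The key step is to compare these conditions with the 4D ADHM equations \eqref{eqn:4dADHM} when $J_A = 0$ for all $A$. Setting $J_A = 0$ in \eqref{eqn:4dADHM} kills $\nu_A$ and $\nu_{A,\overline{a}}$ identically. It reduces $\mu_A$ to $[B_a,B_b] = 0$ for every $A = \{a,b\} \in \bsix$, which is (a) since $\bsix$ runs over all pairs, and it leaves $\mu_{A,\overline{a}} = B_{\overline{a}} I_A = 0$, which is (b). The stability condition \eqref{eqn:ADHMstab} uses the free algebra $\C\langle B_1,\dots,B_4\rangle$. Because the $B_a$ commute, this algebra acts through $\C[B_1,\dots,B_4]$, so \eqref{eqn:ADHMstab} coincides with (c). The $G$-action \eqref{eqn:GLaction} with $J_A = 0$ matches the change of basis of $Q$. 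This gives a bijection on closed points between $\widetilde M_{Q_4}(\vec r,n) = [\,(\mathcal U \cap Z(\widetilde s) \cap \{J=0\})/G\,]$ and the Quot scheme, both realized as quotients by free actions.

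To upgrade this to an isomorphism of schemes, I will run the argument in families over a base $S$, as at the end of the proof of Proposition \ref{prop:fixlocquiver}. Given a family of stable representations with $J_A = 0$ on a rank $n$ bundle $\cV$, the commuting $B_a$ make $\cV$ an $R\otimes\O_S$-module. The $I_A$ define a morphism $M\otimes \O_S \to \cV$, which by (b) is $R$-linear and by fibrewise stability is surjective by Nakayama. The result is a flat family of length $n$ quotients, since $\cV$ is locally free. Conversely, a flat family of quotients has a locally free pushforward to $S$, which I will trivialize locally. The $x_a$ then give commuting $B_a$, the images of the generators give the $I_A$, and the resulting representation is stable fibrewise. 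These constructions are mutually inverse modulo the $G$-action, so the functors agree and the two fine moduli spaces are isomorphic.

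The main obstacle is a mild technical point rather than the combinatorics. One must check that the quotient stack $\widetilde M_{Q_4}(\vec r,n)$, defined as a closed subscheme of the free quotient $[\mathcal U/G]$, represents exactly the functor of $G$-torsors with an equivariant map to the zero locus. Equivalently, one needs families of quiver data on a locally free $\cV$, with local trivializations glued by $G$, to descend correctly. I will handle this by using that the $G$-action on $\mathcal U$ is free with a geometric quotient, as in Proposition \ref{prop:King}, so that $\mathcal U \to A_{\widehat Q_4}(\vec r,n)$ is a principal $G$-bundle. If that proves cumbersome, I will instead construct the morphism in one direction using the universal family on one side, check that it is bijective on points and on tangent spaces, and conclude as in Proposition \ref{prop:fixlocquiver} via \cite[Thm.~2.3]{Fog}.
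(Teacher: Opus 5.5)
Your proposal is correct and follows the same route as the paper. You set $J_A=0$, note that the surviving relations $[B_a,B_b]=0$ and $B_{\overline a}I_A=0$, together with stability, say exactly that the $I_A$ induce a surjection of $\C[x_1,\dots,x_4]$-modules $\bigoplus_A W_A\otimes \O_{\C^2_A}\twoheadrightarrow V$, and identify the $\GL(n,\C)$-action with change of basis. The paper only checks the resulting bijection on $\C$-points, so your family-level argument is what actually upgrades it to an isomorphism of schemes (principal $G$-bundle $\mathcal U\to A_{\widehat Q_4}(\vec r,n)$, Nakayama for surjectivity, local freeness of the pushforward for flatness).

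Stick with that functorial argument rather than your fallback. A morphism that is bijective on closed points and on Zariski tangent spaces need not be an isomorphism when the schemes are singular or non-reduced, e.g.\ $\mathrm{Spec}\,\C[x]/(x^2)\hookrightarrow\mathrm{Spec}\,\C[x]/(x^3)$. Likewise, the appeal to Fogarty's theorem in Proposition~\ref{prop:fixlocquiver} relies on the bijection holding for families over arbitrary finite-type bases $S$, not just on points and tangent spaces.
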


\begin{proof}
As all $J_A$ are set to zero, $\widetilde M_{Q_4}(\vec r, n)$ parametrizes the linear maps $(B_a,I_A)$ satisfying the equations
$$[B_a, B_b]=0,\quad B_{\overline{a}} I_A = 0, 
$$
for all $A = \{a<b\} \in \bsix$ and $\overline{a} \in \overline{A}$, and the stability condition
$$\sum_{A} \C \langle B_1, B_2, B_3, B_4 \rangle \cdot I_A(W_A) = V,$$
up to $\GL(n,\C)$-action. Using the relations, we rewrite the stability condition as 
$$\sum_{A=\{a<b\} \in \bsix} \C [B_a, B_b]\, I_A(W_A) = V.
$$
As the polynomial ring $\C[B_1,B_2, B_3, B_4]$ acts on the $n$-dimensional vector space $V$, we can regard $V$ as a 0-dimensional sheaf on the affine 4-space $\C^4$ of length $n$. Furthermore, the stability condition implies that the linear maps $I_A \colon W_A\rightarrow V$ induce a surjective morphism 
$$\bigoplus_{A} \Big(\O_{\C^2_A}\otimes W_A\Big)\twoheadrightarrow V
$$
of coherent sheaves on $\C^4$. Two equivalent collections of linear maps under $\GL(n,\C)$-action define the same point in the Quot scheme. This process is reversible, yielding a bijection between $\C$-points of $M_{Q_4}(\vec r, n)$ and the Quot scheme. 

\end{proof}

\subsection{Connectedness}

In general, the question of connectedness (in the complex analytic topology) of moduli spaces of representations of quivers with relations is a difficult problem. The connectedness Nakajima quiver varieties is proved in \cite{CB}. In this section, we prove connectedness of the moduli space $M_{Q_4}(\vec{r},n)$. 

The following lemma provides a sufficient criterion for connectedness when a scheme is equipped with a torus action and an equivariant proper morphism. 

\begin{lemma}\label{lem: connectedness criterion}
    Let $X$, $Y$ be $\mathbb{C}$-schemes of finite type equipped with the action of an algebraic torus $T$. Let $\pi \colon X\rightarrow Y$ be a $T$-equivariant proper morphism. Assume that $Y^T=\{y_0\}$ and for every $y\in Y$ there exists a $1$-parameter subgroup $\lambda \colon \C^*\rightarrow T$ such that $\lim_{t\rightarrow 0}\lambda(t)\cdot y=y_0$. If $X^T$ lies in a single connected component of $X$, then $X$ is connected.
\end{lemma}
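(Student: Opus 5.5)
The plan is to show that every connected component of $X$ contains a $T$-fixed point. Since $X^T$ lies in a single connected component, this forces $X$ to have only one component. Throughout, connected components are taken in the analytic topology; for finite type $\C$-schemes these agree with Zariski connected components. Each connected component $X_i$ of $X$ is a closed and open, $T$-invariant subscheme, because $T$ is connected. Hence it suffices to prove that any nonempty closed $T$-invariant subset $Z \subset X$ contains a $T$-fixed point.

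The first step pushes the problem down to $Y$. Take $Z = X_i$ nonempty. Since $\pi$ is proper, $\pi(Z)$ is a nonempty closed $T$-invariant subset of $Y$. Choose any $y \in \pi(Z)$ and a one-parameter subgroup $\lambda$ with $\lim_{t\to 0}\lambda(t)\cdot y = y_0$. Because $\pi(Z)$ is closed and $T$-invariant, $y_0 \in \pi(Z)$. Therefore the fibre $F := \pi^{-1}(y_0)\cap Z$ is nonempty.

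The second step finds a fixed point in $F$. The point $y_0$ is $T$-fixed, so $F$ is $T$-invariant. It is also proper over $\{y_0\}$, hence a nonempty proper $\C$-scheme with a $T$-action. By Borel's fixed point theorem, applied to the solvable connected group $T$ acting on the complete variety $F_{\mathrm{red}}$, the set $F^T$ is nonempty. Alternatively one can argue directly: take a point $x\in F$, consider the closure of its $T$-orbit, which is complete, and take limits along one-parameter subgroups, which exist by properness, to reach a closed orbit, which must be a point. This gives a point of $X^T$ in $X_i$.

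Finally, every component $X_i$ meets $X^T$, and $X^T$ lies in one component, so $X = X_i$ is connected. The only delicate point is the first step, namely ensuring that $y_0\in\pi(Z)$. This is where we use closedness of $\pi(Z)$, which follows from properness, together with the limit hypothesis. We also need to note that a limit of points in a closed $T$-stable set stays in that set. Note that only the existence of a single limit point is needed, so the hypothesis ``for every $y$'' is more than enough; we use it for one $y\in\pi(Z)$.
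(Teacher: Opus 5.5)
Your argument is correct, but it is organized differently from the paper's proof.

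The paper argues point by point. For $x \in X$, it lifts the curve $t \mapsto \lambda(t)\cdot \pi(x)$, $0 \mapsto y_0$, through $\pi$ using the valuative criterion of properness. This gives a map $\widetilde{f}\colon \C \to X$ through $x$, joining $x$ to $x_0 = \widetilde{f}(0) \in \pi^{-1}(y_0)$. It then joins $x_0$ to $X^T$ by taking the limit along a generic one-parameter subgroup $\mu$ with $X^\mu = X^T$; this limit exists because $\pi^{-1}(y_0)$ is proper.

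You instead argue component by component. For a $T$-invariant component $Z$, closedness of $\pi(Z)$ replaces the curve lifting. Borel's fixed point theorem on the complete $T$-scheme $Z \cap \pi^{-1}(y_0)$ replaces the limit along $\mu$.

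Your route is shorter and avoids both the lifting and the choice of a generic cocharacter. It also makes clear that the hypothesis on $Y$ is used only to guarantee that every nonempty closed $T$-invariant subset of $Y$ contains $y_0$.

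The paper's route, in turn, joins every point to $X^T$ by images of $\mathbb{A}^1$, a chain of at most two rational curves. This gives connectedness in the analytic topology directly. Your argument instead needs the standard comparison between Zariski and analytic connected components of finite type $\C$-schemes, which you correctly invoke.
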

\begin{proof}
    We write $x\sim x'$ if $x$ and $x'$ lie in the same connected component of $X$. Let $x\in X$ be any point and set $y=\pi(x)$. By assumption, there exists a $1$-parameter subgroup $\lambda \colon \C^*\rightarrow T$ such that $\lim_{t\rightarrow 0}\lambda(t)\cdot y=y_0$. In other words, there exists a morphism
    $$f \colon \C\rightarrow Y,\quad t\mapsto\begin{cases}
        \lambda(t)\cdot y &\textnormal{if }t\neq 0,\\
        y_0&\textnormal{if }t=0.
    \end{cases}
    $$
    This morphism fits into a commutative diagram
    \begin{equation*}
        \begin{tikzcd}
\C^* \arrow[r] \arrow[d, phantom, "\subseteq" sloped] & X \arrow[d, "\pi"] \\
\C \arrow[r,"f"]            & Y                
\end{tikzcd}
    \end{equation*}
    where the top arrow sends $t\in \C^*$ to $\lambda(t)\cdot x\in X$. As $\pi$ is proper, the morphism $f$ uniquely lifts to a morphism $\widetilde{f} \colon \C\rightarrow X$. Set $x_0:=\widetilde{f}(0)\in X$. Then $x\sim x_0$, as they are connected by $\widetilde{f}$. Note that $X_{y_0}:=\pi^{-1}(y_0)$ is a proper $T$-scheme containing both $X^T$ and $x_0$. Choose a generic $1$-parameter subgroup $\mu \colon\C^*\rightarrow T$ with $X^T=X^\mu$. As $X_{y_0}$ is proper, there exists a limit $\lim_{t\rightarrow 0}\mu(t)\cdot x_0=:x_1\in X_{y_0}$. As limit points are necessarily $\mu$-fixed, we have $x_1\in X^{\mu}=X^T$. Thus $x\sim x_0\sim x_1\in X^T$. As $X^T$ lies in a single connected component, this shows that $X$ is connected.   
\end{proof}

\begin{theorem}\label{thm: connectedness of origami moduli space}
    For every $(\vec{r},n)$, the moduli space $M_{Q_4}(\vec{r},n)$ is connected. 
\end{theorem}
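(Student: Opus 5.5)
We apply Lemma \ref{lem: connectedness criterion} with $X = M_{Q_4}(\vec{r},n)$, $Y = M^{\mathrm{ssimp}}_{Q_4}(\vec{r},n)$ and $\pi$ the semisimplification map, which is projective and hence proper. For the torus we take $T$ (or a suitable subtorus of $\TT'$) acting on the loops. The proof then has two parts: checking the hypotheses on $Y$, and checking that $X^{\TT}$ lies in a single connected component.

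\emph{Step 1: the hypotheses on $Y$.} The affine quotient $Y$ has coordinate ring generated by $\GL(n)$-invariants. Since the framing vertex $\infty$ has dimension one, these invariants are traces $\tr(B_{i_1}\cdots B_{i_k})$ and matrix entries $J_{A}\, B_{i_1}\cdots B_{i_k}\, I_{B}$. We use the one-parameter subgroup $t \mapsto (t_a = t)$ of $(\C^*)^4$, trivial on the framing torus, and let $\lambda(t)$ act on points by $B_a \mapsto tB_a$ and $J_A \mapsto t^2 J_A$ (up to the inversion conventions). Each invariant of positive degree then scales by a positive power of $t$, so for every $y \in Y$ the limit $\lim_{t\to 0}\lambda(t)\cdot y$ is the unique point $y_0$ at which all invariants vanish. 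The same weight argument shows $Y^{T'}=\{y_0\}$ for the torus $T'$ generated by $\lambda$. Using $(\C^*)^4$ rather than $T$ is harmless: the relations \eqref{eqn:4dADHM} are homogeneous, so $(\C^*)^4$ acts on $M_{Q_4}$, and $\pi$ is equivariant for it.

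\emph{Step 2: $X^{\TT}$ lies in one connected component.} We run the lemma with $T'' := (\C^*)^4\times(\C^*)^r$, which contains $\lambda$. Since $(\C^*)^4$ with generic weights and the framing torus together cut out the same fixed points as $\TT$ (by Proposition \ref{prop:fixlocquiver} and Corollary \ref{cor: T-fixed loci quiver}), $X^{T''}$ consists of the points $\blambda$. It therefore suffices to connect every $\blambda$ to a single base point, say the point where all boxes are placed in $\lambda_{12,1}$ (assuming $r_{12}\ge1$ after relabelling). Inside the $(\C^*)^r$-fixed locus $\bigsqcup\prod_{A,\alpha}\Hilb^{n_{A,\alpha}}(\C^2)$, each component $\prod\Hilb^{n_{A,\alpha}}(\C^2)$ is connected, so points $\blambda$ with the same size vector $(n_{A,\alpha})$ lie in one component. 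It remains to move a single box between two different framings $(A,\alpha)$ and $(B,\beta)$.

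For this we generalize Example \ref{ex:butterfly n=1}. Take the direct sum of a fixed representation of dimension $n-1$ and a one-dimensional summand $\C$ on which all $B_a=0$ and all $J_A=0$, with $I_{A,\alpha}$ and $I_{B,\beta}$ mapping into $\C$ with ratio $[x:y]\in\PP^1$. All relations \eqref{eqn:4dADHM} hold, because on the one-dimensional summand every $B$ vanishes. Stability holds for every value of $[x:y]$, since the summand is hit by at least one of the two $I$'s and the remaining summand is stable on its own. This gives a $\PP^1$ inside $X$ whose endpoints differ only by transferring one box (the box at the origin) between the two framings. The remaining issue is combinatorial: the remaining $(n-1)$-dimensional part must carry, for the two framings, partitions to which this box can be added or removed. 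It suffices to realize each endpoint as the point with the empty partition for $(B,\beta)$ together with an arbitrary partition for $(A,\alpha)$. We do this by first using the connectivity of the Hilbert schemes to move all boxes of $(B,\beta)$ into a single column, and then transferring them one by one. Inducting on the number of boxes not in $\lambda_{12,1}$ connects every $\blambda$ to the base point.

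\emph{Main obstacle.} The delicate step is Step 2. One must check that the $\PP^1$ family is really a family of stable representations that also varies correctly in the non-direct-sum directions; I expect choosing the decomposition as an honest direct sum to settle this. One must also check that Step 1 genuinely uses the relations, in particular that $\lambda$ fixes $J_A$ compatibly with the stated action, which should be routine.
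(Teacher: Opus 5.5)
\textbf{Verdict: there is a genuine gap in Step 2.} Your Step 1 is fine. Your $\lambda$ differs from the paper's, which scales all of $B_a,I_A,J_A$ by $t$, but both contract $Y$ to $y_0$. The overall architecture is also the paper's: Lemma~\ref{lem: connectedness criterion}, plus $\PP^1$-pencils joining the components $\prod_{A,\alpha}\Hilb^{n_{A,\alpha}}(\C^2_A)$ of $M_{Q_4}(\vec r,n)^{(\C^*)^r}$.

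\emph{Why the pencil fails.} You claim the pencil is stable for all $[x:y]$ ``since the summand is hit by at least one of the two $I$'s and the remaining summand is stable on its own.'' This fails as soon as the framing hitting $\C$ also carries boxes in the $(n-1)$-dimensional part $V'$. That framing vector then has components in both summands, and the span need not split. Suppose $V'$ is a $\TT$-fixed point, as your partition bookkeeping requires. Its $(A,\alpha)$-piece is $\O_{Z_\lambda}$ with $Z_\lambda$ supported at the origin, and your new summand is the skyscraper $\C_0$, also at the origin. Then $\O_{Z_\lambda}\oplus\C_0$ needs two generators at $0$, so at the endpoint where only $I_{A,\alpha}$ hits $\C$ the representation is unstable.

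\emph{A concrete instance.} Take $r_{12}=r_{34}=1$, $n=2$, and try to pass from type $(1,1)$ to type $(2,0)$. Here $V'=\C$ carries the single box of $\lambda_{12}$ and all $B_a=0$. At $[1:0]$ one gets $I_{12}(1)=(1,1)$ and $I_{34}=0$, whose span is only a line. In fact, for $y\neq 0$ the pencil is constant in $X$, equal to the type-$(1,1)$ fixed point.

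Put differently, the only box your pencil can transfer is $(0,0)$, and that box can never be added to a nonempty partition. So the column-shuffling followed by one-by-one transfer onto a nonempty $\lambda_{12,1}$ cannot be carried out.

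\emph{The honest direct sum does not rescue this.} If $e_{A,\alpha},e_{B,\beta}$ map only into $\C$, stability does hold. But then both framings must be empty in $V'$, and each move merely swaps a $1$ and a $0$ among the $n_{A,\alpha}$. The multiset $\{n_{A,\alpha}\}$ is preserved, so for example types $(2,0)$ and $(1,1)$ are never joined.

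\emph{The missing idea: leave the $\TT$-fixed points.}
\begin{enumerate}
\item Each $\prod_{A,\alpha}\Hilb^{m_{A,\alpha}}(\C^2_A)$ is connected. So you may pick $(Z_{A,\alpha})$ in it with every $Z_{A,\alpha}$ supported \emph{away from the origin}, with ADHM data $(B_a^{(A,\alpha)},I_{A,\alpha},0)$ on $V_{A,\alpha}$.
\item Put $V=\bigoplus V_{A,\alpha}\oplus\C$, $B_a=\bigoplus B_a^{(A,\alpha)}\oplus 0$ and $J_A=0$. Keep the framings as before except $I_{B,\beta}(1)=(\iota I_{B,\beta}(1),s)$ and $I_{B',\beta'}(1)=(\iota I_{B',\beta'}(1),t)$. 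This is deliberately \emph{not} a direct sum.
\item The relations hold blockwise, since every $B_a$ kills $\C$.
\item Since $0\notin\Supp Z$, the Chinese remainder theorem gives $\O_Z\oplus\C_0\cong\O_{Z\sqcup\{0\}}$. This module is cyclic with generator $(1,c)$ for any $c\neq 0$. Hence the pencil is stable for every $[s:t]$.
\item Its endpoints are $(\C^*)^r$-fixed points of types $(m_{A,\alpha})+(\delta_{B,\beta})$ and $(m_{A,\alpha})+(\delta_{B',\beta'})$.
\end{enumerate}
Since whole components are connected, only these numerical types matter, and any two types are related by such single-unit moves. No partition combinatorics is needed.
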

\begin{proof}
    Apply Lemma \ref{lem: connectedness criterion} to the following setting:
    \begin{enumerate}
        \item [i)] $X=M_{Q_4}(\vec{r},n),\ Y=M^{\mathrm{ssimp}}_{Q_4}(\vec{r},n)$ equipped with their natural $\mathbb{T}$-actions,
        \item [ii)] the semisimplification map $\pi \colon M_{Q_4}(\vec{r},n)\rightarrow M^{\mathrm{ssimp}}_{Q_4}(\vec{r},n)$, which is proper and $\mathbb{T}$-equivariant,
        \item [iii)] $M^{\mathrm{ssimp}}_{Q_4}(\vec{r},n)^{\mathbb{T}}=\{y_0\}$ corresponding to $B_a=I_A=J_A=0$ for all $a\in\bfour$, $A\in\bsix$.
    \end{enumerate}
    We verify the assumptions of the lemma. First, consider $\lambda \colon \C^*\rightarrow T$ defined as 
    $$\lambda(t):=(t_a=t^{-1}, w_A=t^{-1}\cdot\textnormal{id}).$$
    As $\lambda(t)\cdot (B_a,I_A,J_A)=(tB_a, t I_A, tJ_A)$, this 1-parameter subgroup contracts the space $M^{\mathrm{ssimp}}_{Q_4}(\vec{r},n)$ to $y_0$ as $t$ goes to $0$.

    It remains to show that $M_{Q_4}(\vec{r},n)^{\mathbb{T}}$ lies in a single connected component. This is the  most delicate part of the proof. We prove the slightly stronger statement that $M_{Q_4}(\vec{r},n)^{(\C^*)^r}$ lies in a single connected component. Roughly speaking, this follows from the fact that components (i) and (ii) in Example \ref{ex:butterfly n=1} and \ref{ex:book n=1} are connected via component (iii). Recall from Proposition \ref{prop:fixlocquiver} that $(\C^*)^r$-fixed locus of $M_{Q_4}(\vec r, n)$ is
\begin{equation} \label{eqn:disjointunionHilb}
\bigsqcup_{\sum_{A,\alpha}n_{A,\alpha}=n}\prod_{A, \alpha}\Hilb^{n_{A,\alpha}}(\C^2_{A}).
\end{equation}
    Note that each set of the disjoint union, indexed by $(n_{A,\alpha})$, is connected.
    
    For each $B \in \bsix$ and $\beta = 1, \ldots, r_B$, we denote by $(\delta_{B,\beta})$ the sequence which has value 1 for $A = B$ and $\alpha = \beta$, and zero otherwise. Then any two numerical types $(n_{A,\alpha})$ indexing the connected components in \eqref{eqn:disjointunionHilb} can be related by a sequence of simple modifications, namely, by changing $(m_{A,\alpha})+(\delta_{B,\beta})$ to $(m_{A,\alpha})+(\delta_{B',\beta'})$ where $\sum_{A,\alpha}m_{A,\alpha}=n-1$. Therefore, it suffices to construct a connected family of representations intersecting both of the connected components in \eqref{eqn:disjointunionHilb} corresponding to $(m_{A,\alpha})+(\delta_{B,\beta})$ and $(m_{A,\alpha})+(\delta_{B',\beta'})$ with $(B,\beta) \neq (B',\beta')$. Fix 
    $$(Z_{A,\alpha})\in \prod_{A, \alpha}\Hilb^{m_{A,\alpha}}(\C^2_{A})$$
    such that the support of $Z_{A,\alpha}$ is disjoint from the origin $o\in \C^2_A$ for all $A,\alpha$. We denote the ADHM data corresponding to $Z_{A,\alpha}$ by
    \[
    B_a^{(A,\alpha)} \in \mathrm{End}(V_{A,\alpha}), \quad I_{A,\alpha} \in \Hom(\C_{A,\alpha}, V_{A,\alpha}), \quad J_{A,\alpha} = 0 \in \Hom(V_{A,\alpha}, \C_{A,\alpha}),
    \]
    where $V_{A,\alpha}$ is a vector space of dimension $m_{A,\alpha}$, $\C_{A,\alpha}$ is a 1-dimensional vector space and $J_{A,\alpha} = 0$ by Proposition \ref{prop:fixlocquiver}. We define a $\PP^1_{[s:t]}$-family of representations with rank vector $(\vec r,n)$. Namely for each $[s:t] \in \PP^1$, define $(B_a, I_A, J_A)$ as follows:
    \begin{enumerate}
        \item [a)] the central vertex is $V=\Big(\bigoplus_{A,\alpha} V_{A,\alpha} \Big)\oplus \C$,\vspace{5pt}
        \item [b)] the endomorphism $B_a$ acts on $V_{A,\alpha}$ by $B_{a}^{(A,\alpha)}$ and on $\C$ by $0$,\vspace{5pt}
        \item [c)] denote by $\iota_{A,\alpha} \colon V_{A,\alpha} \hookrightarrow \bigoplus_{A',\alpha'} V_{A',\alpha'}$ the inclusion, then the linear map $I_A$ is determined by
        $$I_A|_{\C_{A,\alpha}}(1) =\begin{cases}
            (\iota_{A,\alpha}(I_{A,\alpha}(1)),0) &\textnormal{if}\quad (A,\alpha) \neq (B,\beta), (B',\beta' )\\
            (\iota_{A,\alpha}(I_{A,\alpha}(1)),s) &\textnormal{if}\quad (A,\alpha) = (B,\beta)\\
            (\iota_{A,\alpha}(I_{A,\alpha}(1)),t) &\textnormal{if}\quad (A,\alpha) = (B',\beta' ),
        \end{cases}
        $$
        \item [d)] $J_A=0$ for all $A$.
    \end{enumerate}
    This choice of $(B_a, I_A, J_A)$ satisfies the generalized ADHM equations \eqref{eqn:4dADHM}. Stability follows from the fact that the support of $Z_{A,\alpha}$ is away from $o\in \C^2_A$ and $(s,t)\neq (0,0)$. This property can be seen by applying the 2-dimensional analogue of \cite[Lem.~3.7]{KR} combined with \cite[Lem.~3.6]{KR}. Specializing to the case $[s:t]=[1:0]$, this construction yields a $(\C^*)^r$-fixed point with numerical data $(m_{A,\alpha})+(\delta_{B,\beta})$. More precisely, it corresponds to $(Z_{A,\alpha}')$ such that $Z_{B,\beta}'=Z_{B,\beta}\sqcup o$ and $Z_{A,\alpha}'=Z_{A,\alpha}$ if $(A,\alpha)\neq (B,\beta)$. The case $[s:t]=[0:1]$ is similar. In words: the above pencil of representations moves a point supported at $o$ from copy $(B,\beta)$ to copy $(B',\beta')$. This completes the proof.
\end{proof}

Recall from Definition \ref{def: J=0 loci} that $\widetilde{M}_{Q_4}(\vec{r},n)\hookrightarrow M_{Q_4}(\vec{r},n)$ denotes the closed locus with extra equations $J_A=0$ for all $A\in\bsix$. The argument of the proof of the previous theorem also applies to this locus. 

\begin{proposition}\label{prop: connectedness of Quot scheme}
    For every $(\vec{r},n)$, the moduli space $\widetilde{M}_{Q_4}(\vec{r},n)$ is connected. 
\end{proposition}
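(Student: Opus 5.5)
We run the proof of Theorem \ref{thm: connectedness of origami moduli space} again, replacing $M_{Q_4}(\vec{r},n)$ by $X:=\widetilde{M}_{Q_4}(\vec{r},n)$. We need a proper $\TT$-equivariant map to a space with a single attracting fixed point. For this we compose the closed embedding $\widetilde{M}_{Q_4}(\vec{r},n)\hookrightarrow M_{Q_4}(\vec{r},n)$ with the semisimplification map $\pi$. Closed embeddings are proper, so the composite $\pi|_X\colon X\to Y:=M^{\mathrm{ssimp}}_{Q_4}(\vec{r},n)$ is proper and $\TT$-equivariant. The locus $X$ is $\TT$-stable, because $\TT$ acts on $J_A$ by scalars and hence preserves $J_A=0$. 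If one prefers, $Y$ can be replaced by the scheme-theoretic image of $X$, which is closed and $\TT$-stable. The hypotheses on $Y$ are then exactly those verified in the proof of Theorem \ref{thm: connectedness of origami moduli space}: $Y^{\TT}=\{y_0\}$, and the one-parameter subgroup $\lambda(t)=(t_a=t^{-1},w_A=t^{-1}\mathrm{id})$ contracts every point of $Y$ to $y_0$. Passing to a $\TT$-stable closed subscheme changes neither property.

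By Lemma \ref{lem: connectedness criterion}, it remains to show that $X^{\TT}$ lies in a single connected component of $X$. Proposition \ref{prop:fixlocquiver} gives $M_{Q_4}(\vec{r},n)^{(\C^*)^r}\subset \widetilde{M}_{Q_4}(\vec{r},n)$, so $X^{\TT}=M_{Q_4}(\vec{r},n)^{\TT}$. We prove the stronger statement that the whole $(\C^*)^r$-fixed locus \eqref{eqn:disjointunionHilb} lies in one connected component of $X$. Each piece $\prod_{A,\alpha}\Hilb^{n_{A,\alpha}}(\C^2_A)$ is connected and is contained in $X$. As in the theorem, any two numerical types $(n_{A,\alpha})$ are joined by a chain of elementary moves
\[
(m_{A,\alpha})+(\delta_{B,\beta})\ \leadsto\ (m_{A,\alpha})+(\delta_{B',\beta'}),
\]
so it suffices to link the two pieces involved in each such move by a connected family inside $X$.

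For this we reuse the $\PP^1_{[s:t]}$-family from the proof of Theorem \ref{thm: connectedness of origami moduli space}. The key observation is that this family already has $J_A=0$ for all $A$ (item d) of that construction). It therefore lies in $X$ and not merely in $M_{Q_4}(\vec{r},n)$. The family was already shown there to satisfy the relations \eqref{eqn:4dADHM} and stability for all $[s:t]$, and its endpoints $[1:0]$ and $[0:1]$ lie in the two required pieces. The image of $\PP^1$ is connected and meets both pieces, which completes the argument.

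The only point requiring care, and hence the main (mild) obstacle, is that connectedness of the chain must hold in $X$ itself, not just in the ambient space. This is exactly why we check that both the fixed loci and the connecting pencils satisfy $J_A=0$. Alternatively, one could phrase everything through the Quot scheme description of Proposition \ref{prop: Quot scheme}, with the proper map to $\Sym^n(\C^4)$. Since $M_{Q_4}(\vec{r},n)\to M^{\mathrm{ssimp}}_{Q_4}(\vec{r},n)$ restricted to $X$ already suffices, we use the quiver argument above.
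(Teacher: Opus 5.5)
Your argument is the paper's: it notes that $\widetilde{M}_{Q_4}(\vec r,n)^{(\C^*)^r}=M_{Q_4}(\vec r,n)^{(\C^*)^r}$ and that the connecting pencils have $J_A=0$, then reruns the proof of Theorem~\ref{thm: connectedness of origami moduli space} for $\widetilde M_{Q_4}(\vec r,n)\to\widetilde M^{\mathrm{ssimp}}_{Q_4}(\vec r,n)$. Restricting $\pi$ to the closed subscheme, as you do, is equivalent and makes properness immediate.

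There is one step you must repair, and it is inherited from the proof of Theorem~\ref{thm: connectedness of origami moduli space}. The cocharacter $\lambda(t)=(t_a=t^{-1},w_A=t^{-1}\mathrm{id})$ is not a cocharacter of $\TT$, because $t_1t_2t_3t_4=t^{-4}\neq 1$.

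In general no cocharacter of $\TT$ can take its place. The function $\tr(B_1B_2B_3B_4)$ on $Y$ is $\TT$-invariant and vanishes at $y_0$. It equals $1$ at the point of $Y$ with $I_A=J_A=0$ and $B_a=\mathrm{diag}(1,0,\dots,0)$ for all $a$, so that point cannot flow to $y_0$ under $\TT$. Replacing $Y$ by the image of $X$ does not help either. For $r_{12},r_{34}\ge 1$ and $n\ge 2$, the image contains points with $\tr B_a\neq 0$ for all four $a$. No cocharacter whose $t_a$-exponents sum to zero can contract such a point.

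The fix is to apply Lemma~\ref{lem: connectedness criterion} with $\TT'=(\C^*)^4\times(\C^*)^r$.
\begin{itemize}
\item Formula \eqref{eqn:Taction} defines a $\TT'$-action. The relations \eqref{eqn:4dADHM}, stability and $J_A=0$ are all homogeneous, so $\TT'$ acts on $X$, $M_{Q_4}(\vec r,n)$ and $Y$ compatibly with $\pi$.
\item Now $\lambda\in\TT'$, so every point of $Y$ contracts to $y_0$, and hence $Y^{\TT'}=\{y_0\}$.
\item Since $X^{\TT'}\subseteq X^{(\C^*)^r}$, your pencil argument, showing that $X^{(\C^*)^r}$ lies in one connected component, completes the proof unchanged.
\end{itemize}
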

\begin{proof}
Note that $\widetilde{M}_{Q_4}(\vec{r},n)^{(\C^*)^r}=M_{Q_4}(\vec{r},n)^{(\C^*)^r}$. As the explicit $\PP^1_{[s:t]}$-families of representations in the proof of Theorem \ref{thm: connectedness of origami moduli space} lie in the $J_A=0$ locus, the same proof applies to $\widetilde{M}_{Q_4}(\vec{r},n)\rightarrow \widetilde{M}^{\mathrm{ssimp}}_{Q_4}(\vec{r},n)$.

\end{proof}

\subsection{Virtual structures} \label{sec:quivervirtualstruc}

In this section, we endow $M_{Q_4}(\vec{r},n)$ with a symmetric 3-term obstruction theory. We also discuss find some values of $\vec{r}$ for which the origami moduli space is smooth or quasi-smooth. Along the way, we discuss the relevant obstruction theories, virtual cycles and virtual structure sheaves in our setting.

\subsubsection{Smooth case} Consider the moduli space $\mathcal{M}:=M_{Q_4}(\vec{r},n)$. When $r_{12} = r$ and $r_A = 0$ otherwise, $M_{Q_4}(\vec{r},n) \cong M_{Q_2}(r,n)$ (Example \ref{ex:2dreduction}) is a smooth variety of dimension $2rn$ \cite{Nak}. Moreover, it is holomorphic symplectic, that is \cite{Nak} 
\begin{equation} \label{eqn:holsymp}
T_{\mathcal{M}} \stackrel{\theta}{\cong} \Omega_{\mathcal{M}} \otimes (t_1t_2)^{-1}, \quad \theta^{\vee} = \theta \otimes (t_1t_2).
\end{equation}

For general $\vec{r}$, the space $\mathcal{M}$ is no longer smooth, for example as in Examples \ref{ex:butterfly n=1} and \ref{ex:book n=1}.

\subsubsection{Quasi-smooth case}  Suppose $r_{a4} = 0$ for all $a$. We explain how to obtain a quasi-smooth structure on the origami moduli spaces. By Example \ref{ex:3dreduction}, one has $M_{Q_4}(\vec{r},n) \cong M_{Q_3}(\vec{r},n)$. As in \cite{RSYZ}, consider the regular function 
\[
f \colon \mathcal{A} := A_{\widehat{Q}_3}(\vec{r},n) \to \C, \quad (B_1,B_2,B_3,(I_A,J_A)_{A \in \bthree}) \mapsto \sum_{A = \{a<b\} \in \bthree} \tr(B_{\overline{a}} \mu_A).
\]
Recall that $\bthree = \{\{1,2\}, \{1,3\}, \{2,3\}\}$, $\overline{a}$ is the unique element of $\{1,2,3\} \setminus A$, and $\mu_A$ is as defined in \eqref{eqn:4dADHM}. By \cite[Lem.~3.1.1]{RSYZ}, one has $M_{Q_3}(\vec{r},n) = Z(df)$. Thus there is a Behrend--Fantechi 2-term obstruction theory of virtual dimension zero \cite{BF}
\begin{displaymath}
\xymatrix
{
T_{\mathcal{A}}|_{\mathcal{M}} \otimes (t_1t_2t_3) \ar[r] \ar^{(df)^*}[d] & \Omega_{\mathcal{A}}|_{\mathcal{M}} \ar@{=}[d] & E_{Q_3}\udot \ar[d] \\
I/I^2|_{\mathcal{M}} \ar^{d}[r] & \Omega_{\mathcal{A}}|_{\mathcal{M}} & \tau^{\geq -1} \LL_{\mathcal{M}},
}
\end{displaymath}
where $I := I_{\cM/\cA} \subset \O_{\cA}$ denotes the ideal sheaf of $\cM \subset \cA$, the bottom map is the K\"ahler differential and the top map $d \circ (df)^*$ is the Hessian of $f$. Note that
\[
(E_{Q_3}\udot)^{\vee}[1] \stackrel{\theta}{\cong} E_{Q_3}\udot \otimes (t_1t_2t_3)^{-1}, \quad \theta^\vee[1] = \theta \otimes (t_1t_2t_3).
\]
The moduli spaces $\mathcal{A}, \mathcal{M}$ are generally non-compact but they have an action of 
\[
\TT = (\C^*)^3 \times (\C^*)^r
\]
defined as in \eqref{eqn:Taction}. As  $f$ is scaled by $\TT$ with weight $t_1t_2t_3$, the above 2-term obstruction theory is $\TT$-equivariant. The $\TT$-fixed locus $\mathcal{M}^{\TT}$ is 0-dimensional and reduced (Proposition \ref{prop:fixlocquiver}). So there is a virtual cycle and virtual structure sheaf \cite{BF}
\[
[\cM]^{\vir} \in A_{0}^{\TT}(\cM), \quad \O_{\cM}^{\vir} \in K_0^{\TT}(\cM).
\]
As in Nekrasov--Okounkov \cite{NO}, it is more natural to work with the \emph{twisted} virtual structure sheaf  
\begin{equation} \label{eqn:NOtwist}
\widehat{\O}_{\cM}^{\vir} := \O_{\mathcal{M}}^{\vir} \otimes (\det E_{Q_3}\udot)^{\frac{1}{2}} \cong \O_{\mathcal{M}}^{\vir} \otimes K_{\cA}|_{\cM} \otimes (t_1t_2t_3)^{-\frac{\dim(\cA)}{2}},
\end{equation}
which lies in $K_0^{\widetilde{\TT}}(\mathcal{M})$, where $\widetilde{\TT} \to \TT$ is a double cover of $\TT$. By the Atiyah--Bott localization formula for equivariant Chow groups \cite{EG} and the Thomason localization formula for equivariant K-theory \cite{Tho}, we have
\begin{align*}
\iota_* \colon A^{\TT}_*(\mathcal{M}^{\TT}) \otimes_{\Z[\sigma]} \Q(\sigma) &\stackrel{\cong}{\to} A^{\TT}_*(\mathcal{M}) \otimes_{\Z[\sigma]} \Q(\sigma) \\
\iota_* \colon K_0^{\TT}(\mathcal{M}^{\TT}) \otimes_{\Z[\tau,\tau^{-1}]} \Q(\tau) &\stackrel{\cong}{\to} K_0^{\TT}(\mathcal{M}) \otimes_{\Z[\tau,\tau^{-1}]} \Q(\tau),
\end{align*}
where $\Z[\sigma] \cong A_*^{\TT}(\pt)$, $\Z[\tau,\tau^{-1}] = K_0^{\TT}(\pt)$, $\tau = (t_1,t_2,t_3, w_{A,\alpha})$ denotes the collection of all primitive characters of $\TT$ and $\sigma = (s_1,s_2,s_3,v_{A,\alpha})$ are the corresponding equivariant parameters, i.e., 
\[
s_a = c_1^{\TT}(t_a), \quad v_{A,\alpha} = c_1^{\TT}(w_{A,\alpha}).
\]
In order to work with the twisted virtual structure sheaf, we have to tensor the above isomorphism by $- \otimes_{\Z[\tau,\tau^{-1}]} \Q(\tau^{\frac{1}{2}})$. Therefore, one can define $\int_{[\cM]^{\vir}} 1$ by the equivariant push-forward of $(\iota_*)^{-1}[\cM]^{\vir}$ to a point. Similarly, the refined invariant $\chi(\cM, \widehat{\O}_{\cM}^{\vir})$ is defined as the K-theoretic equivariant push-forward of $(\iota_*)^{-1} \widehat{\O}_{\cM}^{\vir}$ to a point. Concretely, these expressions are given by the Graber--Pandharipande localization formula \cite{GP}, and its K-theoretic analogue due to Qu \cite{Qu}
\begin{align*}
\int_{[\cM]^{\vir}} 1 &= \int_{[\cM^{\TT}]^{\vir}} \frac{1}{e(N^{\vir})} \in \Q(\sigma), \\ 
\chi(\cM,\widehat{\O}_{\cM}^{\vir}) &= \chi\Big(\cM^{\TT}, \frac{\O_{\cM^{\TT}}^{\vir} \otimes \det(E_{Q_3}\udot)^{\frac{1}{2}}|_{\cM^{\TT}}}{ \Lambda_{-1} (N^{\vir})^{\vee}}\Big) \in \Q(\tau^{\frac{1}{2}}),
\end{align*}
where $[\cM^{\TT}]^{\vir}$, $\O_{\cM^{\TT}}^{\vir}$ are the induced virtual cycle and virtual structure sheaf of $\cM^{\TT}$, $N^{\vir}$ is the virtual normal bundle, $e(-)$ is the $\TT$-equivariant Euler class, and $\Lambda_{-1}(-)^\vee$ is the K-theoretic $\TT$-equivariant Euler class. Consider the Calabi--Yau torus
\[
\TT_0 := Z(t_1t_2t_3-1) \leq \TT.
\]
Then $E_{Q_3}\udot$ is a $\TT_0$-equivariant \emph{symmetric} 2-term obstruction theory in the sense of \cite{Beh}. In this case, the above-mentioned invariants coincide, are integers and will be calculated in Proposition \ref{prop:CY3spec} below. 

\subsubsection{General case} We now discuss the general origami moduli space $\cM := M_{Q_4}(\vec{r},n)$. Recall that $\cM \subset \cA := A_{\widehat{Q}_4}(\vec{r},n)$ is cut out by the section $s$ of the vector bundle $E \to \cA$ as described in Section \ref{sec:quivermoduli}. However, there is more structure. In order to discuss this structure, we index the standard basis of $\C^4$ and $\Lambda^2 \C^4$ as follows
\[
\C^4: \quad (e_a)_{a \in \bfour}, \quad \Lambda^2 \C^4: \quad (e_A := e_a \wedge e_b)_{A = \{a<b\} \in \bsix}.
\]
Define a quadratic form $q$ on $E$ as follows: over any point $P$, let $q \colon E|_P \to \C$ be
\begin{align}\label{eqn: quadratic form}
q(P_A,Q_A,R_{A,\overline{a}}, S_{A,\overline{a}}) &= \sum_{A \in \bthree} (\tr(P_A P_{\overline{A}}) - \tr(Q_A Q_{\overline{A}})) \cdot e_A \wedge e_{\overline{A}}\\
&\quad +\sum_{A \in \bsix} \tr(R_{A,\overline{a}} S_{A,\overline{b}} - R_{A,\overline{b}} S_{A,\overline{a}}) \cdot e_A \wedge e_{\overline{A}}, 
\notag\end{align}
where in the second sum we write $\overline{A} = \{\overline{a} < \overline{b}\}$. Furthermore, we use $e_A \wedge e_{\overline{A}}$ in order to keep track of signs by choosing an isomorphism $\C \cdot (e_1 \wedge \cdots \wedge e_4) \cong \C$ determined by $e_1 \wedge \cdots \wedge e_4 \mapsto 1$. The quadratic form $q$ is non-degenerate. This result holds as for any two finite-dimensional complex vector spaces $U,V$, the quadratic form $\Hom(U,V) \times \Hom(V,U) \mapsto \C$, $(A,B) \mapsto \tr(BA)$ is non-degenerate, and direct sums of non-degenerate quadratic forms are non-degenerate.
\begin{proposition}\label{prop:qszero}
    The section $s\in H^0(\mathcal{A},E)$ defined by the 4D ADHM equations \eqref{eqn:4dADHM} is isotropic with respect to the quadratic form $q$ in \eqref{eqn: quadratic form}.
\end{proposition}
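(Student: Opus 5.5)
The plan is to substitute $s$ into $q$ and verify directly that $q(s,s)=0$, pointwise over any $P=(B_a,I_A,J_A)\in\cA$. By \eqref{eqn:defsec}, the components of $s$ are $P_A=\mu_A$, $Q_A=\nu_A$, $R_{A,\overline{a}}=B_{\overline{a}}I_A$ and $S_{A,\overline{a}}=J_AB_{\overline{a}}$. Write $\varepsilon(A)\in\{\pm1\}$ for the sign defined by $e_A\wedge e_{\overline{A}}=\varepsilon(A)\, e_1\wedge\cdots\wedge e_4$. Since $e_A$ and $e_{\overline{A}}$ have degree $2$, they commute, so $\varepsilon(\overline{A})=\varepsilon(A)$. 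This lets us rewrite sums over $\bthree$ of expressions symmetric in $A\leftrightarrow\overline{A}$ as sums over all of $\bsix$.

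For $A=\{a<b\}\in\bthree$ with $\overline{A}=\{c<d\}$, I will expand
\[
\tr(\mu_A\mu_{\overline{A}})=\tr([B_a,B_b][B_c,B_d])+\tr([B_a,B_b]I_{\overline{A}}J_{\overline{A}})+\tr(I_AJ_A[B_c,B_d])+\tr(I_AJ_AI_{\overline{A}}J_{\overline{A}}).
\]
The four resulting types of terms are handled as follows.
\begin{itemize}
\item The quartic $IJIJ$ term equals $\tr(J_{\overline{A}}I_AJ_AI_{\overline{A}})=\tr(\nu_A\nu_{\overline{A}})$ by cyclicity of the trace, so it cancels exactly against the $-\tr(Q_AQ_{\overline{A}})$ term.
\item The two mixed terms, collected over $A\in\bthree$ using $\varepsilon(\overline{A})=\varepsilon(A)$, combine into
\[
\sum_{A\in\bsix}\varepsilon(A)\,\tr\big([B_{\overline{a}},B_{\overline{b}}]\,I_AJ_A\big), \qquad \overline{A}=\{\overline{a}<\overline{b}\}.
\]
\item The second line of \eqref{eqn: quadratic form} contributes, by cyclicity,
\[
\tr(B_{\overline{a}}I_AJ_AB_{\overline{b}}-B_{\overline{b}}I_AJ_AB_{\overline{a}})=\tr\big(I_AJ_A[B_{\overline{b}},B_{\overline{a}}]\big)=-\tr\big([B_{\overline{a}},B_{\overline{b}}]I_AJ_A\big),
\]
which carries the same sign $\varepsilon(A)$ and hence cancels the mixed terms.
\item What remains is $\sum_{A\in\bthree}\varepsilon(A)\tr([B_a,B_b][B_c,B_d])$. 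For this, note that the identity $\sum_{A\in\bthree}\varepsilon(A)[B_a,B_b][B_c,B_d]=\tfrac12\sum_{\sigma\in S_4}\mathrm{sgn}(\sigma)B_{\sigma(1)}B_{\sigma(2)}B_{\sigma(3)}B_{\sigma(4)}$ expresses it as a full antisymmetrization. The trace of the full antisymmetrization of four matrices vanishes: cyclic rotation by one step is an odd permutation in $S_4$, so the terms pair off with opposite signs and equal traces.
\end{itemize}

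The main obstacle is bookkeeping of signs: one must check that the $\varepsilon(A)$ conventions in the two lines of $q$ match, and that the ordering $\overline{a}<\overline{b}$ in the definition of $q$ gives exactly the opposite sign to the mixed terms. I would verify this on the single case $A=\{1,2\}$ (where $\varepsilon=+1$) and on $A=\{1,3\}$ (where $\varepsilon=-1$) before trusting the general pattern. The quartic-$B$ vanishing is standard, but I would confirm it by explicit expansion if the antisymmetrization identity's factor seems off.
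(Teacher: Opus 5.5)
Your proposal is correct and follows the paper's proof essentially step for step: the quartic $IJIJ$ terms cancel against $-\tr(Q_AQ_{\overline{A}})$, the mixed terms (regrouped over $\bsix$ using $e_A\wedge e_{\overline{A}}=e_{\overline{A}}\wedge e_A$) cancel against the $R,S$ terms, and the remaining quartic $B$-term is the trace of a full antisymmetrization, which vanishes because a $4$-cycle is odd. One small correction: your displayed antisymmetrization identity is false as a matrix identity, since the left side contains only the $12$ monomials with $B_4$ in the last two slots. It holds only after taking traces and using $\tr([B_a,B_b][B_c,B_d])=\tr([B_c,B_d][B_a,B_b])$ to symmetrize over $A\leftrightarrow\overline{A}$, and this symmetrization is exactly where the factor $\tfrac12$ comes from.
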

\begin{proof}
    Recall that $\tr(XY)=\tr(YX)$ for any two matrices $X$ and $Y$ of size $m \times n$ and $n \times m$, respectively. In particular, the trace is invariant under cyclic permutation of four matrices. By definition of the quadratic form, we have
\begin{align*}
    q(s)=&\sum_{A=\{a<b\}\in \bthree}\tr\left(([B_a,B_b]+I_AJ_A)([B_{\overline{a}},B_{\overline{b}}]+I_{\overline{A}}J_{\overline{A}})\right)e_A\wedge e_{\overline{A}}\\
    &-\sum_{A=\{a<b\}\in \bthree}\tr\left(J_{\overline{A}}I_AJ_AI_{\overline{A}}\right)e_A\wedge e_{\overline{A}} \\
    &+\sum_{A\in \bsix}\tr\left(B_{\overline{a}}I_AJ_AB_{\overline{b}}-B_{\overline{b}}I_AJ_AB_{\overline{a}}\right)e_A\wedge e_{\overline{A}}.
\end{align*}
By symmetry and $4$-cyclic invariance of the trace, this sum simplifies to  
\begin{multline*}
    \sum_{A=\{a<b\}\in \bthree}\tr\left(
    [B_a,B_b][B_{\overline{a}},B_{\overline{b}}]+I_AJ_A[B_{\overline{a}},B_{\overline{b}}]+I_{\overline{A}}J_{\overline{A}}[B_a,B_b]\right)e_A\wedge e_{\overline{A}}\\
    -\sum_{A\in \bsix}\tr\left(I_AJ_A[B_{\overline{a}},B_{\overline{b}}]\right)e_A\wedge e_{\overline{A}}.
\end{multline*}
As $\{A,\overline{A}\}_{A\in \bthree}=\bsix$ and $e_A\wedge e_{\overline{A}}=e_{\overline{A}}\wedge e_A$, every term vanishes with the possible exception of
$$\sum_{A=\{a<b\}\in \bthree}\tr\left(
    [B_a,B_b][B_{\overline{a}},B_{\overline{b}}]\right)e_A\wedge e_{\overline{A}}.
$$
This expression is shown to vanish in \cite[Lem.~3.1]{KR}, but we recall its proof for the reader's convenience. Expanding the commutators and using cyclicity of the trace, the above expression equals 
$$\frac{1}{2}\sum_{\sigma\in \mathfrak{S}_4}\textnormal{sgn}(\sigma)\cdot \tr\left(B_{\sigma(1)}B_{\sigma(2)}B_{\sigma(3)}B_{\sigma(4)}\right)
$$
where $\textnormal{sgn}(\sigma)$ denotes the sign of $\sigma$. This sum vanishes.
\end{proof}

As $\cM$ is cut out by an isotropic section of a quadratic vector bundle on a smooth ambient space, we have a symmetric 3-term obstruction theory
\begin{displaymath}
\xymatrix
{
T_{\cA}|_{\cM} \ar^<<<<{ds}[r] & E|_{\cM} \stackrel{q}{\cong} E^*|_{\cM} \ar^{s}[d] \ar^>>>>{(ds)^*}[r] & \Omega _{\cA}|_{\cM} \ar@{=}[d] & E_{Q_4}\udot \ar[d] \\
& I/I^2|_{\cM} \ar^{d}[r] & \Omega_{\cA}|_{\cM} & \tau^{\geq -1} \LL_{\cM},
}
\end{displaymath}
where $s^* \colon E^* \to \O_{\cA}$ denotes the dual section, $ds$ is the differential of the section, and $(ds)^*$ is its dual. The top row is a complex by isotropy Proposition \ref{prop:qszero}. This obstruction theory is $\TT$-equivariant with respect to the $\TT$-action defined in \eqref{eqn:Taction}, and is  $\TT$-equivariantly symmetric
\[
(E_{Q_4}\udot)^{\vee}[2] \stackrel{\theta}{\cong} E_{Q_4}\udot, \quad \theta^\vee[2] = \theta.
\]
As the obstruction theory is 3-term, there is no corresponding Behrend--Fantechi virtual cycle. However, there is an Oh--Thomas virtual cycle \cite{OT}. Its construction requires a choice of orientation. In our setup, an orientation is determined by a map $o : \O_{\cA} \to \det E$ such that
the composition
\[
\O_{\cA} \overset{o \otimes o}{\to} \det(E) \otimes \det(E) \overset{\id \otimes \Lambda^{\rk} q}\to \det(E) \otimes \det(E^*) \to \O_{\cA}
\]
equals $(-1)^{\rk(\rk-1)/2}\id_{\O_A}$, where $\rk:=\rk(E)$ which is even.

We produce an orientation by specifying a ($\TT$-invariant) maximal isotropic subbundle 
\[
0 \to \Lambda \to E \to \Lambda^* \to 0.
\]
In fact, we will choose $\Lambda$ such that this short exact sequence is split. Any such $\Lambda$ produces a canonical isomorphism 
\[
p_\Lambda \colon \det E \cong \det \Lambda \otimes \det \Lambda^* \cong \O_{\cA},
\]
and, by \cite[Def.~2.2]{OT}, the orientation induced by $\Lambda$ is
\[
o_\Lambda = (-\sqrt{-1})^{\rk/2}p_{\Lambda}^{-1}.
\]
The following is immediate from the definitions of $E$ and $q$.
\begin{lemma}
Let $\Lambda \subset E$ be the $\TT$-invariant subbundle with fibre
\[
\bigoplus_{A \in \bthree} \Big( \mathrm{End}(V) \cdot t_A^{-1} \oplus \Hom(W_A, W_{\overline{A}}) \cdot t_{\overline{A}}^{-1} \Big) \oplus \bigoplus_{A \in \bsix, \overline{a} \in \overline{A}} \Hom(V,W_A) \cdot t_{\overline{a}}^{-1} t_A^{-1}
\]
over all $P \in \cA$. Then $\Lambda$ is maximal isotropic and thus induces an orientation $o_{\Lambda}$.
\end{lemma}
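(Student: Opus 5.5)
The plan is to verify directly, on the $G$-equivariant trivial bundle $\widetilde{E} \to \mathcal{U}$, that the indicated summand $\widetilde{\Lambda} \subset \widetilde{E}$ is $G \times \TT$-stable, isotropic for $q$, and of half the rank of $\widetilde{E}$. Each property then descends to $\Lambda \subset E$ on $\cA = [\mathcal{U}/G]$. Concretely, in the coordinates $(P_A, Q_A, R_{A,\overline{a}}, S_{A,\overline{a}})$ of $\widetilde{E}$, the subbundle $\widetilde{\Lambda}$ is the coordinate subspace spanned by:
\begin{itemize}
\item the components $P_A$ with $A \in \bthree$;
\item the components $Q_A$ with $A \in \bthree$;
\item all components $S_{A,\overline{a}}$.
\end{itemize}
Equivalently, $\widetilde{\Lambda}$ is cut out by $P_{\overline{A}} = 0$ and $Q_{\overline{A}} = 0$ for $A \in \bthree$, together with $R_{A,\overline{a}} = 0$ for all $A, \overline{a}$. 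The characters attached to the summands in the statement are exactly the $\TT$-weights read off from \eqref{eqn: Taction on bundle}. Since both the $G$-action and the $\TT$-action act summand by summand, this coordinate subspace is preserved by both, so it descends to a $\TT$-equivariant subbundle $\Lambda \subset E$.

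For isotropy, inspect the formula \eqref{eqn: quadratic form}. Every term of $q$ pairs two complementary coordinates:
\begin{itemize}
\item $P_A$ with $P_{\overline{A}}$ for $A \in \bthree$;
\item $Q_A$ with $Q_{\overline{A}}$ for $A \in \bthree$;
\item $R_{A,\overline{a}}$ with $S_{A,\overline{b}}$, and $R_{A,\overline{b}}$ with $S_{A,\overline{a}}$.
\end{itemize}
Since $\bthree$ together with its complements $\{\overline{A}\}_{A \in \bthree}$ is all of $\bsix$, in each pair exactly one member vanishes on $\widetilde{\Lambda}$. Hence the bilinear form associated to $q$ vanishes identically on $\widetilde{\Lambda} \times \widetilde{\Lambda}$.

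Next we compare ranks. Using that the $\Hom(W_A, W_{\overline{A}})$ summands pair up, we get
\[
\rk E = 6n^2 + 2\sum_{A \in \bthree} r_A r_{\overline{A}} + 4nr, \qquad \rk \Lambda = 3n^2 + \sum_{A \in \bthree} r_A r_{\overline{A}} + 2nr = \tfrac{1}{2}\rk E.
\]
Since $q$ is non-degenerate, as noted before Proposition \ref{prop:qszero}, an isotropic subbundle has rank at most $\frac12 \rk E$. So $\Lambda$ is maximal isotropic.

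To obtain the split sequence $0 \to \Lambda \to E \to \Lambda^* \to 0$, take the complementary coordinate subbundle $\Lambda'$, spanned by the $P_{\overline{A}}$, the $Q_{\overline{A}}$ and the $R_{A,\overline{a}}$. By the same argument $\Lambda'$ is $G\times\TT$-stable and isotropic, and $E = \Lambda \oplus \Lambda'$. Non-degeneracy of $q$ then forces $q|_{\Lambda \times \Lambda'}$ to be a perfect pairing, giving $\Lambda' \cong \Lambda^*$ equivariantly. We should also check that $q$ is $\TT$-invariant, so that this identification respects weights. Each pairing multiplies weights whose product is $t_1t_2t_3t_4 = 1$. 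For example, $P_A$ and $P_{\overline{A}}$ carry weights $t_A^{-1}$ and $t_{\overline{A}}^{-1}$, and the framing weights $w_A$, $w_A^{-1}$ cancel under the trace. Finally, \cite[Def.~2.2]{OT} then yields $o_\Lambda$. There is no real obstacle here; the only care needed is the bookkeeping of which of each paired coordinate lies in $\Lambda$, and of the weights, which is exactly what the lexicographic choice $\bthree$ versus $\overline{\bthree}$ organizes.
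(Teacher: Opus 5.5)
Your proposal is correct and is the same argument as the paper's, which states the lemma as "immediate from the definitions of $E$ and $q$". You identify $\Lambda$ as a coordinate summand stable under $G$ and $\TT$, note that every term of $q$ pairs a $\Lambda$-coordinate with a complementary one (since $\bthree$ and its complements partition $\bsix$), and check that $\rk\Lambda = 3n^2 + \sum_{A\in\bthree} r_A r_{\overline{A}} + 2rn = \tfrac12\rk E$. That is exactly the verification the paper leaves to the reader, and your explicit isotropic complement also gives the splitting $E = \Lambda\oplus\Lambda^*$ that the paper asserts.
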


Throughout the rest of this paper, we fix the following orientation\footnote{The global sign $(-1)^{(r-1)n}$ is inserted in order to later get a precise match with Nekrasov's original definition of the origami partition function via Boltzmann weights (Theorem \ref{thm:main}).} for $M_{Q_4}(\vec{r},n)$:
\begin{equation} \label{eqn:choiceofori}
o := (-1)^{(r-1)n} \cdot o_\Lambda.
\end{equation}
Then we obtain an Oh--Thomas virtual cycle and virtual structure sheaf
\[
[\cM]^{\vir} \in A_{\vd}^{\TT}(\cM,\Z[\tfrac{1}{2}]), \quad \widehat{\O}_{\cM}^{\vir} \in K_0^{\TT}(\cM)_{\mathrm{loc}},
\]
where 
\[
\vd := \dim \cA - \frac{1}{2} \rk(E) = -\sum_{A \in \bthree} r_A r_{\overline{A}}.
\]
Here $A_{*}^{\TT}(\cM,\Z[\tfrac{1}{2}])$ denotes the $\TT$-equivariant Chow group with coefficients in $\Z[\tfrac{1}{2}]$ and 
\[
K_0^{\TT}(\cM)_{\mathrm{loc}} := K_0^{\TT}(\cM) \otimes_{ \Z[\tau,\tau^{-1}]} \Q(\tau^{\frac{1}{2}}). 
\]
As before, one can use the $\TT$-localization theorems in Chow or K-theory to define $\int_{[\cM]^{\vir}} 1$ and $\chi(\cM,\widehat{\O}_{\cM}^{\vir})$. 
\begin{definition} \label{def:origampartfun}
We define the cohomological, resp.~K-theoretic, origami partition function with respect to the choice of orientation \eqref{eqn:choiceofori} by
\begin{align*}
\sfZ_{\vec{r}}(q) &:= \frac{1}{\mathsf{C}_{\vec{r}}} \sum_{n=0}^\infty q^n \int_{[M_{Q_4}(\vec{r},n)]^{\vir}} 1, \quad  \sfZ_{\vec{r}}^K(q) := \frac{1}{\mathsf{C}^K_{\vec{r}}}\sum_{n=0}^\infty q^n \chi(M_{Q_4}(\vec{r},n), \widehat{\O}^{\vir}), \\
\mathsf{C}_{\vec{r}} &:= \int_{[M_{Q_{4}}(\vec{r},0)]^{\vir}} 1, \quad \hspace{61pt}\mathsf{C}^K_{\vec{r}} := \chi(M_{Q_{4}}(\vec{r},0), \widehat{\O}^{\vir}).
\end{align*}
In the next section we establish that the ``leading terms'' $\mathsf{C}_{\vec{r}}$, $\mathsf{C}^K_{\vec{r}}$ are non-zero, so that the above quotients are well-defined. 
\end{definition}

In order to make these expressions explicit, we require the Oh--Thomas localization formula \cite[Sect.~7]{OT}. As $\cM^{\TT}$ is 0-dimensional and reduced (Proposition \ref{prop:fixlocquiver}), at any fixed point $P$, we have $h^0(E_{Q_4}\udot|_{P}^{\TT}) = 0$, where $E_{Q_4}\udot|_P^{\TT}$ denotes the $\TT$-fixed part of the complex. Thus the differential
\[
d_Ps \colon T_{\cA}|_P^{\TT} \hookrightarrow E|_P^{\TT}
\]
is injective, and therefore $T_{\cA}|_P^{\TT}$ gets identified with an isotropic subspace of $E|_P^{\TT}$. There are (a priori) two cases:
\begin{itemize}
\item $T_{\cA}|_P^{\TT}$ is not maximal, in which case $\rk E_{Q_4}\udot |_P^{\TT} < 0$ and $P$ does not contribute to the invariant;
\item $T_{\cA}|_P^{\TT}$ is maximal, in which case $E_{Q_4}\udot |_P^{\TT}$ is acyclic and $[E_{Q_4}\udot|_P] = [E_{Q_4}\udot|_P^m] \in K_0^{\TT}(\cM)_{\loc}$, where $(-)^m$ denotes the $\TT$-moving part. 
\end{itemize}
In the second case\footnote{From Theorem \ref{thm:main}, proved in the next section, it is easy to see that $\rk E_{Q_4}^\mdot|_P^{\TT} = 0$ for all fixed points $P$, so the first case never occurs.}, the contribution to the invariant is 
\[
\pm \frac{1}{\sqrt{(-1)^{\vd}e(E_{Q_4}\udot|_P^\vee)}}, \quad \pm \frac{1}{\sqrt{(-1)^{\vd}\widehat{\Lambda}_{-1}(E_{Q_4}\udot|_P)}}.
\]
Here $\pm \sqrt{-}$ indicates a choice of square root determined by the global choice of orientation $o = (-1)^{(r-1)n} o_{\Lambda}$. The way to take square roots is described in general terms in \cite[Sect.~7]{OT} and is worked out explicitly for moduli (globally) cut out by isotropic sections in \cite{KR}. In K-theory, we have a splitting
\begin{equation} \label{eqn:locmodelhalf}
[E_{Q_4}\udot|_P] = F_P + F^*_P, \quad F_P := [T_{\cA}|_P - \Lambda|_P]. 
\end{equation}
Therefore, the above expressions can be slightly simplified to
\[
\pm \frac{1}{e(T_{\cA}|_P - \Lambda|_P)}, \quad \pm \frac{1}{\widehat{\Lambda}_{-1}(\Omega_{\cA}|_P - \Lambda^*|_P)}.
\]
The ``correct'' choice of sign comes from a comparison of the orientations induced by $T_{\cA}|_P^{\TT}$ and $\Lambda|_P^{\TT}$. The result is the following \cite{KR}.
\begin{proposition}[Kool--Rennemo] \label{prop:KR}
In the above setup, we have
\[
\pm = (-1)^{(r-1)n} \cdot (-1)^{\dim \mathrm{cok} ( \pi_{\Lambda} \circ d_Ps)^{\TT}},
\]
where we consider the composition 
\[
T_{\cA}|_P \stackrel{d_Ps}{\to} E|_P = \Lambda|_P \oplus \Lambda^*|_P \stackrel{\pi_\Lambda}{\to} \Lambda|_P
\]
and $\pi_\Lambda$ denotes the projection onto $\Lambda|_P$.
\end{proposition}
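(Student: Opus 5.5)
The plan is to reduce the sign to linear algebra on the $\TT$-fixed part of the quadratic space $E|_P$, combined with the way Oh--Thomas localization \cite[Sect.~7]{OT} chooses square roots. First split $E|_P = E|_P^{\TT} \oplus E|_P^{m}$ into fixed and moving parts. Both summands are orthogonal and non-degenerate for $q$, since $q$ is $\TT$-invariant and pairs weight $\chi$ with weight $\chi^{-1}$. Likewise $\Lambda|_P = \Lambda|_P^{\TT} \oplus \Lambda|_P^m$, and each summand is maximal isotropic in the corresponding summand of $E|_P$. The global orientation $o = (-1)^{(r-1)n} o_\Lambda$ restricts at $P$ to the product of the orientation induced by $\Lambda|_P^{\TT}$ on $E|_P^{\TT}$ and the orientation induced by $\Lambda|_P^m$ on $E|_P^m$. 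The normalizing factors $(-\sqrt{-1})^{\rk/2}$ in \cite[Def.~2.2]{OT} are multiplicative in orthogonal direct sums, so this restriction is compatible.

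For the moving part, I will check directly from the definition of the Oh--Thomas square root Euler class that the orientation induced by a maximal isotropic subbundle $\Lambda^m$ gives $\sqrt{e(E^m)} = e(\Lambda^m)$, up to the convention-dependent sign $(-1)^{\rk/2}$ that is already built into $o_\Lambda$. The K-theoretic analogue gives $\widehat{\Lambda}_{-1}$ of $\Lambda^{m,*}$. This accounts for the $-\Lambda|_P$ term in the local contributions $e(T_{\cA}|_P - \Lambda|_P)$ and $\widehat{\Lambda}_{-1}(\Omega_{\cA}|_P - \Lambda^*|_P)$, using the splitting \eqref{eqn:locmodelhalf}. It contributes no further sign.

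For the fixed part, $d_Ps$ identifies $L := T_{\cA}|_P^{\TT}$ with a maximal isotropic subspace of $E|_P^{\TT}$; this uses the second case discussed before the proposition, so that $E_{Q_4}\udot|_P^{\TT}$ is acyclic. Oh--Thomas localization trivializes $\det E|_P^{\TT}$ using the orientation induced by $L$, and the sign $\pm$ is the ratio between this orientation and the one induced by $\Lambda|_P^{\TT}$. Here I invoke the classical fact about the isotropic Grassmannian: two maximal isotropic subspaces $L, L'$ of an even-dimensional quadratic space induce the same orientation if and only if $\dim(L \cap L') \equiv \tfrac{1}{2}\dim$ modulo $2$, that is, if and only if they lie in the same component of $\mathrm{OG}$. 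One must take care whether $L$ is being compared with $\Lambda$ or with $\Lambda^*$, and I expect that bookkeeping to absorb the parity $\tfrac12\dim$. The cokernel of $(\pi_\Lambda \circ d_Ps)^{\TT} \colon L \to \Lambda^{\TT}$ has dimension equal to that of its kernel, since the source and target have equal dimension. Its kernel is $L \cap \Lambda^{*,\TT}$. Hence the sign is $(-1)^{\dim \mathrm{cok}(\pi_\Lambda\circ d_Ps)^{\TT}}$, multiplied by the global factor $(-1)^{(r-1)n}$.

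The main obstacle is getting the sign conventions exactly right. One issue is the relative parity between $\Lambda$ and $\Lambda^*$ as reference subspaces. Another is the normalization $(-\sqrt{-1})^{\rk/2}$ in $o_\Lambda$, together with the factor $(-1)^{\vd}$ in the localization formula. The risk is an off-by-$(-1)^{\rk/2}$ error. I would resolve this by testing the formula in the simplest case $E = \Lambda \oplus \Lambda^*$ with $s = 0$ and $L = \Lambda$, and also with $L = \Lambda^*$, on a point. This calibrates the conventions against \cite{KR}, where exactly this computation is carried out for zero loci of isotropic sections.
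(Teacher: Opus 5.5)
Your proposal is correct and follows essentially the same route as the paper. The paper gives no independent proof: it cites \cite{KR} and notes that the sign comes from comparing the orientations induced by $T_{\cA}|_P^{\TT}$ (via $d_Ps$) and $\Lambda|_P^{\TT}$, which is exactly your comparison via the parity of $\dim\big(d_Ps(T_{\cA}|_P^{\TT})\cap \Lambda^*|_P^{\TT}\big)=\dim \mathrm{cok}(\pi_\Lambda\circ d_Ps)^{\TT}$. The $\tfrac12\dim E|_P^{\TT}$ parities indeed cancel when you pass from $\Lambda^*$ to $\Lambda$, and the remaining normalization checks you flag (splitting $o_\Lambda$ into fixed and moving parts, the global constants) are the bookkeeping carried out in \cite{KR}.
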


The goal of the next section is to determine
\begin{align*}
\int_{[\cM]^{\vir}} 1 &= \sum_{P \in \cM^{\TT}} \pm \frac{1}{e(T_{\cA}|_P - \Lambda|_P)} \\
\chi(\cM,\widehat{\O}_{\cM}^{\vir}) &= \sum_{P \in \cM^{\TT}} \pm \frac{1}{\widehat{\Lambda}_{-1}(\Omega_{\cA}|_P - \Lambda^*|_P)}.
\end{align*}
The main result is the determination of the signs $\pm$.

\subsection{Proof of Theorem \ref{thm:main}} \label{sec:proofmainthm}

We use the notation of the previous sections. Let
\[
P = [(B_a^P,I_A^P,J_A^P)] \in \mathcal{M}^{\TT}  = M_{Q_4}(\vec{r},n)^{\TT}.
\]
We start with the following crucial proposition.
\begin{proposition} \label{prop:cokcalc}Let 
\begin{align*}
\Xi_P : \bigoplus_{a=1}^{3} \mathrm{End}(V) \cdot t_a^{-1}  &\to \bigoplus_{A \in \bthree} \mathrm{End}(V) \cdot t_A^{-1}, \\ 
(B_1,B_2,B_3) &\mapsto \Big( [B_a^P, B_b] + [B_a,B_b^P] \Big)_{A = \{a<b\} \in \bthree}.
\end{align*}
Then $\mathrm{cok}(\pi_{\Lambda} \circ d_P s)^{\TT} = \mathrm{cok}(\Xi_P)^{\TT}.$ 

\end{proposition}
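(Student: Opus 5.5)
We compute $\pi_\Lambda\circ d_Ps$ explicitly at the fixed point $P$ and then peel off the parts of the map that are isomorphisms on $\TT$-fixed parts, leaving $\Xi_P$.

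\textbf{Setup.} Since $T_{\cA}|_P = \mathcal{R}/\mathfrak{gl}(V)$ (quotient by the infinitesimal $G$-action), we work $G$-equivariantly upstairs on $\mathcal{U}$. We consider the map
\[
\mathcal{R}\cdot(\text{weights})\to \Lambda|_P,
\]
noting that the image of $\mathfrak{gl}(V)$ under $\pi_\Lambda\circ d\widetilde{s}$ is $\pi_\Lambda\circ d\widetilde{s}$ applied to $([X,B^P_a],XI^P_A,-J^P_AX)$. By Proposition \ref{prop:fixlocquiver}, $J^P_A=0$ for all $A$. Hence the only components of $d\widetilde{s}$ landing in $\Lambda$ are:
\begin{itemize}
\item the linearizations of $\mu_A$ for $A\in\bthree$, namely $[B_a^P,B_b]+[B_a,B_b^P]+I_A^PJ_A$;
\item the linearizations of $\nu_A$ for $A\in\bthree$, namely $J_{\overline{A}}I^P_A$;
\item the linearizations of $\nu_{A,\overline{a}}$, namely $J_AB^P_{\overline{a}}$.
\end{itemize}
The remaining equations land in $\Lambda^*$ and are discarded by $\pi_\Lambda$.

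\textbf{Variables not in the domain of $\Xi_P$.} The variables $B_4$ and $I_A$ do not appear in the $\Lambda$-components at all, so they lie in the kernel and do not affect the cokernel. The map therefore factors through the variables $(B_1,B_2,B_3,(J_A)_A)$. Working modulo $\mathfrak{gl}(V)$ on the source side changes only the kernel, not the image, because the gauge directions map into $\operatorname{im}$ of the $B$-part. Indeed, the image of $X\in\mathfrak{gl}(V)$ is $\Xi_P([X,B^P_1],[X,B^P_2],[X,B^P_3])$, using $J^P=0$ and the Jacobi identity.

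\textbf{Triangular structure.} The cokernel is thus that of the block map
\[
(B_1,B_2,B_3,J)\mapsto \big(\Xi_P(B)+(I^P_AJ_A)_{A\in\bthree},\ (J_{\overline{A}}I^P_A)_{A\in\bthree},\ (J_AB^P_{\overline{a}})_{A,\overline{a}}\big).
\]
This map is block upper triangular with respect to the decomposition $\mathrm{End}(V)^{\oplus 3}\oplus\bigoplus_A\Hom(V,W_A)$ in the source and $\bigoplus_{\bthree}\mathrm{End}(V)\oplus(\text{rest})$ in the target. It suffices to show that on $\TT$-fixed parts the $J$-block
\[
\psi\colon \bigoplus_A\Hom(V,W_A)\to \bigoplus_{A\in\bthree}\Hom(W_{\overline{A}},W_A)\oplus\bigoplus_{A,\overline{a}}\Hom(V,W_A),
\]
sending $J\mapsto \big((J_{\overline{A}}I^P_A),(J_AB^P_{\overline{a}})\big)$, is an isomorphism, and that the off-diagonal term $I^P_AJ_A$ can be ignored.

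The main obstacle is this fixed-part analysis. We use the decomposition $V=\bigoplus V_{A,\alpha}$ and the characters \eqref{eqn:charV}. The $\TT$-weights of $\Hom(V,W_A)\,t_{\overline a}^{-1}t_A^{-1}$ and of the $J$-variables (weight $t_A^{-1}$) should be compared with $K_B^*$-type terms. A weight of $J_A\colon V_{B,\beta}\to W_{A,\alpha}$ is $t_A^{-1}w_{A,\alpha}\,(t^{-i}w_{B,\beta}^{-1})$; this is never fixed, since it requires $(A,\alpha)=(B,\beta)$ and $t_A^{-1}t_a^{-i}t_b^{-j}=1$, which is impossible in $T$. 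Similarly, the targets $\Hom(V,W_A)t_{\overline a}^{-1}t_A^{-1}$ and $\Hom(W_{\overline A},W_A)t_{\overline A}^{-1}$ have no fixed part, because $t_{\overline{a}}t_A$ times monomials in $t_a,t_b$ (and $t_{\overline{A}}=t_A^{-1}$ times a framing ratio) are nontrivial characters.

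Hence both source and target of $\psi$ have zero $\TT$-fixed part. The fixed part of the whole map is then exactly $\Xi_P^{\TT}$ on $\mathrm{End}(V)^{\oplus3}$, giving $\mathrm{cok}(\pi_\Lambda\circ d_Ps)^{\TT}=\mathrm{cok}(\Xi_P)^{\TT}$. If some weight does turn out to be fixed, I would instead verify that $\psi^{\TT}$ is injective and that its cokernel cancels against the corresponding part of $\Xi_P$, using stability as in Proposition \ref{prop:fixlocquiver}.
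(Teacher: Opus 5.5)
Your proof is correct and follows essentially the same route as the paper. Both arguments pass to the surjection $T_{\mathcal{U}}|_P \twoheadrightarrow T_{\cA}|_P$ and use $J_A^P=0$ to reduce the map to $\Xi_P$ plus the terms $I^P_AJ_A$, $J_{\overline{A}}I^P_A$ and $J_AB^P_{\overline{a}}$. They then kill these extra terms on $\TT$-fixed parts by checking that $\Hom(V,W_A)\cdot t_A^{-1}$, $\Hom(W_A,W_{\overline{A}})\cdot t_{\overline{A}}^{-1}$ and $\Hom(V,W_A)\cdot t_{\overline{a}}^{-1}t_A^{-1}$ have no fixed part.

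Two minor points. The target of $\nu_A$ is $\Hom(W_A,W_{\overline{A}})$, not $\Hom(W_{\overline{A}},W_A)$. Also, the Jacobi-identity computation for gauge directions is unnecessary, since surjectivity of $T_{\mathcal{U}}|_P\to T_{\cA}|_P$ already shows the images agree.
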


\begin{proof}
Recall from Section \ref{sec:quivermoduli} that $\cA = [\mathcal{U} / G]$. By a slight abuse of notation, we also write $P := (B_a^P,I_A^P,J_A^P) \in \mathcal{U}$ for a lift of $P$. Then there is a surjection $T_{\mathcal{U}}|_{P} \twoheadrightarrow T_{\cA}|_P$ and it suffices to consider the cokernel of the composition
\[
T_{\mathcal{U}}|_{P} \twoheadrightarrow T_{\cA}|_P \stackrel{d_Ps}{\to} E|_P = \Lambda|_P \oplus \Lambda^*|_P \stackrel{\pi_\Lambda}{\to} \Lambda|_P.
\]
Explicitly, using \eqref{eqn:defsec}, this composition is given by the map 
\begin{align*}
&\bigoplus_{a=1}^{4} \mathrm{End}(V) \cdot t_a^{-1} \oplus \bigoplus_{A \in \bsix} \Big( \Hom(W_A,V) \oplus \Hom(V,W_A) \cdot t_A^{-1 }\Big) \to \\
&\bigoplus_{A \in \bthree} \Big( \mathrm{End}(V) \cdot t_A^{-1} \oplus \Hom(W_A,W_{\overline{A}}) \cdot t_{\overline{A}}^{-1} \Big) \oplus \bigoplus_{A \in \bsix, \overline{a} \in \overline{A}} \Hom(V,W_A) \cdot t_{\overline{a}}^{-1}t_A^{-1}  \\
&(B_a,I_A,J_A) \mapsto \big( ( d_P\mu_A )_{A \in \bthree}, (d_P \nu_{A})_{A \in \bthree}, (d_P \nu_{A,\overline{a}})_{A \in \bsix, \overline{a} \in \overline{A}}\big)
\end{align*}
where the differentials are
\begin{align*}
    &d_P \mu_{A=\{a<b\}} = [B_a^P, B_b] + [B_a,B_b^P] + I_A^P J_A + I_A J_A^P,\\
    &d_P \nu_A = J_{\overline{A}}^P I_A + J_{\overline{A}} I_A^P, \quad d_P \nu_{A,\overline{a}} = J_A^P B_{\overline{a}} + J_A B_{\overline{a}}^P.
\end{align*}
As $P \in \cM^{\TT}$, Proposition \ref{prop:fixlocquiver} implies that $J_A^P = 0$. This simplifies the composition to the map 
$$(B_a,I_A,J_A) \mapsto \big( ([B_a^P, B_b] + [B_a,B_b^P] + I_A^P J_A)_{A\in \bthree},  (J_{\overline{A}} I_A^P)_{A\in \bthree},(J_A B_{\overline{a}}^P)_{A\in \bsix, \overline{a}\in\overline{A}} \big).
$$
As we are concerned with the $\mathbb{T}$-fixed part of the cokernel, it suffices to show that 
$$I_A^PJ_A\in \mathrm{End}(V)\cdot t_A^{-1},\quad 
J_{\overline{A}}I^P_A\in \Hom(W_A,W_{\overline{A}})\cdot t_{\overline{A}}^{-1},\quad 
J_AB^P_{\overline{a}}\in \Hom(V,W_A) \cdot t_{\overline{a}}^{-1}t_A^{-1}$$
have no $\mathbb{T}$-fixed part. Recall that $\TT$-characters of these vector spaces can be read off from the decompositions 
$$V=\bigoplus_{A\in \bsix}\bigoplus_{1\leq \alpha\leq r_A}\bigoplus_{(i,j)\in \lambda_{A,\alpha}}t_a^it_b^j w_{A,\alpha},\quad W_A=\bigoplus_{1\leq\alpha\leq r_A} w_{A,\alpha}.
$$
We deduce that the $\mathbb{T}$-fixed parts of the following vector spaces vanish
$$\left(\Hom(V,W_A)\cdot t_A^{-1}\right)^{\mathbb{T}}=\left(\Hom(W_A,W_{\overline{A}})\cdot t_{\overline{A}}^{-1}\right)^{\mathbb{T}} = \left(\Hom(V,W_A) \cdot t_{\overline{a}}^{-1}t_A^{-1}\right)^{\mathbb{T}}=0.
$$
In particular, $J_{\overline{A}}I^P_A$ and $J_AB^P_{\overline{a}}$ have no $\mathbb{T}$-fixed parts. Similarly, $I^P_A J_A$ has no $\mathbb{T}$-fixed part because it lies in the image of the $\mathbb{T}$-equivariant map
\begin{equation}
\Hom(V,W_A)\cdot t_A^{-1}\xrightarrow{I^P_A(-)}\mathrm{End}(V)\cdot t_A^{-1}. \qedhere
\end{equation}
\end{proof}

By Proposition \ref{prop:fixlocquiver}, the fixed point $P \in \cM^{\TT}$ corresponds to a collection of integer partitions
\[
P \leftrightarrow \blambda.
\]
The parity of $\dim \mathrm{cok}(\Xi|_P)^{\TT}$ was already determined in the context of the sign calculation of the fixed points for $\Hilb^n(\C^4)$ in \cite[Sect.~4.2]{KR}. By \cite[Prop.~4.5, 4.2]{KR}, the result applied to our setup is 
\begin{equation} \label{eqn:cokXi}
\dim \mathrm{cok}(\Xi|_P)^{\TT} \equiv \sum_{A \in \bsix \setminus \bthree} \sum_{\alpha} \Big( |\lambda_{A,\alpha}| - h_{\lambda_{A,\alpha}}(0,0) \Big) \mod 2,
\end{equation}
where $\bsix \setminus \bthree = \{\{1,4\}, \{2,4\}, \{3,4\}\}$ and $h_{\lambda_{A,\alpha}}(0,0)$ denotes the hook length of the square $(0,0) \in \lambda_{A,\alpha}$.\footnote{When we apply \cite[Prop.~4.5, 4.2]{KR} to $\blambda=(\lambda_{A,\alpha})$, their formula greatly simplifies because two of the coordinates of $(i,j,k,\ell)\in \lambda_{A,\alpha} \subset \Z_{\geq 0}^4$ are zero when $\lambda_{A,\alpha}$ is viewed as a solid partition.}

\begin{remark} \label{rem:n=0}
In the case $n=0$, the space $\cM = \cA$ consists of a single closed point $P$. Then the character of $\Lambda|_P$ is given by 
\[
\sum_{A \in \bthree} N_A^* N_{\overline{A}} t_{\overline{A}}^{-1},
\]
where $N_A$ is defined in \eqref{eqn:Neknot}, and
\[
\chi(\cM,\widehat{\O}_{\cM}^{\vir}) = \prod_{A \in \bthree} [N_A^* N_{\overline{A}} t_{\overline{A}}^{-1}],
\]
where $[-]$ is defined in \eqref{eqn:bracket}. We denote this invariant by $\mathsf{C}^K_{\vec{r}}$ and the sign is $+$ by \eqref{eqn:cokXi} applied to $\blambda = \varnothing$. Similarly, we define $\mathsf{C}_{\vec{r}} := \int_{[\cM]^{\vir}} 1 = \prod_{A \in \bthree} e(N_A^* N_{\overline{A}} t_{\overline{A}}^{-1})$.
\end{remark}

We now compare two choices of halves of $T_{\mathcal{M}}^{\vir}|_P := E_{Q_4}\udot|_P^\vee$: the choice obtained in \eqref{eqn:locmodelhalf} and the choice \eqref{eqn:Neknot} provided by Nekrasov.
\begin{proposition} \label{prop:compare}
We have the following identity in K-theory
\[
\mathsf{v}_{\blambda} - \Big(T_{\mathcal{A}}|_P - \Lambda|_P + \sum_{A \in \bthree} N_A^* N_{\overline{A}} t_{\overline{A}}^{-1}  \Big) = G_{\blambda} - G^*_{\blambda},
\]
for some class $G_{\blambda} \in K_0^{\TT}(\pt)$ satisfying
\[
\rk G^m_{\blambda} \equiv (r-1)n + \sum_{A \in \bsix \setminus \bthree} \sum_{\alpha} \Big( |\lambda_{A,\alpha}| - h_{\lambda_{A,\alpha}}(0,0) \Big) \mod 2,
\]
where $(-)^m$ denotes the $\TT$-moving part.
\end{proposition}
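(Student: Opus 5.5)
Compute both sides explicitly as $\TT$-characters at $P\leftrightarrow\blambda$, writing $V=\sum_A K_A$ and $W_A=N_A$, and organize the difference into terms of the form $X-X^*$. This identifies $G_{\blambda}$ up to the ambiguity $X\mapsto X^*$. By \eqref{eqn:Taction} and Proposition \ref{prop:King},
\[
T_{\cA}|_P=\Big(\sum_a t_a^{-1}-1\Big)VV^* + \sum_A\big(V N_A^* + t_A^{-1}N_A V^*\big).
\]
Here a term $X Y^*$ stands for $\Hom(Y,X)$; the $-VV^*$ comes from the $G$-quotient. From the lemma defining $\Lambda$,
\[
\Lambda|_P=\sum_{A\in\bthree}\big(t_A^{-1}VV^*+t_{\overline{A}}^{-1}N_{\overline{A}}N_A^*\big)+\sum_{A,\overline{a}\in\overline{A}} t_{\overline{a}}^{-1}t_A^{-1}N_AV^*.
\]
The term $\sum_{A\in\bthree}N_A^*N_{\overline{A}}t_{\overline{A}}^{-1}$ cancels the $W$--$W$ part of $\Lambda$. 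What remains is a combination of $K_AK_B^*$, $N_AK_B^*$ and $N_A^*K_B$ terms. I will expand $\mathsf{v}_{\blambda}$ from \eqref{eqn:Neknot} in the same monomials, using $t_1t_2t_3t_4=1$, so that $t_{\overline{A}}=t_A^{-1}$ and $t_{\overline{a}}^{-1}t_A^{-1}=t_{\overline{b}}$ for $\overline{A}=\{\overline{a},\overline{b}\}$.

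The comparison then splits by monomial type. For the framing–vertex terms, $P_{\overline{A}}N_A\sum_{B\neq A}K_B^*$ together with the framing part of $P_{\phi(A)}T_A$ should match $VN_A^*+t_A^{-1}N_AV^*-\sum_{\overline{a}}t_{\overline{a}}^{-1}t_A^{-1}N_AV^*$ up to swaps $X\leftrightarrow X^*$. Each swap contributes some $X$ to $G_{\blambda}$, e.g.\ $X=t_{\overline{a}}N_AK_B^*$ or $t_AN_A^*K_A$, with rank $n\cdot r_A$ or similar. Summing these should produce the term $(r-1)n$ modulo $2$ (or $rn$ corrected by one $n$ from the diagonal part). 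For the vertex–vertex terms, $(\sum t_a^{-1}-1-\sum_{A\in\bthree}t_A^{-1})VV^*$ is compared with $-\sum_AP_{\phi(A)}P_AK_AK_A^*-P_{1234}\sum_{A<B}K_AK_B^*$. The identity $P_{1234}=P_{\phi(A)}P_A P_{\overline{b}}$-type factorizations and the standard relation $\sum_a t_a^{-1}-1-\sum_{A\in\bthree}t_A^{-1}\equiv -P_{1234}$ modulo self-dual corrections (the usual Hilb$^n(\C^4)$ half of \cite{KR}) should let me write the difference as $G-G^*$. Off-diagonal terms $K_AK_B^*$, $A\neq B$, reorganize by moving the half $P_{1234}K_AK_B^*$ versus $P_{1234}K_BK_A^*$; the diagonal terms reduce to comparing the \cite{KR} half with $P_{\phi(A)}$ times the 2D tangent space.

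To control parity, note that the off-diagonal $K_AK_B^*$ swaps and $N$–$K$ swaps have easily computed rank. For $\rk G^m$ only the moving part matters, and the $\TT$-fixed part of $G$ can be isolated because $\mathsf{v}_{\blambda}$ has no fixed terms. The main obstacle is the diagonal contribution for each $(A,\alpha)$. There one must compute the parity of $\rk$ of something like $(1-t_{\phi(A)}^{-1})\cdot(\text{piece of }K_{A,\alpha}K_{A,\alpha}^*)$, and show that its parity is $|\lambda|-h_\lambda(0,0)$ when $A\ni 4$ and $0$ when $A\in\bthree$. The asymmetry comes from $\Lambda$ using $\bthree$, i.e.\ $t_1,t_2,t_3$ rather than $t_4$. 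My first attempt will be to reduce, via $t_4=(t_1t_2t_3)^{-1}$, to the known solid-partition computation \cite[Prop.~4.2, 4.5]{KR} specialized to 2D partitions. This is where $h(0,0)$ (arm plus leg plus one) enters: it counts boxes in the first row and column, whose characters become fixed or self-dual after the substitution. Finally I will track the framing contributions carefully, since their parity must combine with the diagonal terms to give the global $(r-1)n$.
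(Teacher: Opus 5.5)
Your route is the paper's: write $T_{\cA}|_P$, $\Lambda|_P$ and $\mathsf{v}_{\blambda}$ as characters, split the difference into anti-self-dual pieces $X-X^*$, and compute $\rk X^m$ mod $2$ piece by piece. Your character formulas and the matching of the framing terms are correct, and your grouping is sound.

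\begin{itemize}
\item The framing terms between distinct indices have total rank $-(r-1)n$ and no $\TT$-fixed part. These are $X=-N_A^*K_B$ for $A\neq B$, and $X=-(1-t_A+t_{\phi(A)}t_A)w_{A,\alpha}^{-1}K_{A,\beta}$ for $\alpha\neq\beta$.
\item The vertex cross terms $K_{A,\alpha}K_{B,\beta}^*$ give classes of even rank with no fixed part.
\item Each piece is anti-self-dual because of two identities: $c+c^*=-P_{1234}$ for $c=\sum_a t_a^{-1}-1-\sum_{A\in\bthree}t_A^{-1}$, and $P_{\phi(A)}P_A+P^*_{\phi(A)}P^*_A=P_{1234}$. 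Your phrase ``$c\equiv -P_{1234}$ modulo self-dual terms'' is not the right statement.
\item You should record why piecewise choices are harmless. If $X-X^*=X'-X'^*$, then $X-X'$ is self-dual. A self-dual moving class has even rank, because the character lattice of $\TT$ is free.
\end{itemize}

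The gap is the one step with real content: the self-terms of a single $(A,\alpha)$. You do not carry this out, and the tools you indicate do not deliver it.

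\begin{itemize}
\item The remark that the fixed part of $G_{\blambda}$ ``can be isolated because $\mathsf{v}_{\blambda}$ has no fixed terms'' is beside the point. $G_{\blambda}$ has a nonzero fixed part, and counting it is the whole proof.
\item In this paper, \cite[Prop.~4.2, 4.5]{KR} enter only through $\dim\mathrm{cok}(\Xi_P)^{\TT}$ in \eqref{eqn:cokXi}, which is a different quantity.
\item For $A\ni 4$ the relevant half $-P_{\phi(A)}P_A$ involves $t_4$. Also, the framing terms here are $P_{\phi(A)}$-twisted rather than $Z+Z^*$. So the $\Hilb^n(\C^4)$ computation does not specialize directly.
\end{itemize}

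What you need is the elementary count of Lemma \ref{lem:comb}. Take $\lambda=\lambda_{A,\alpha}\neq\varnothing$ and $Z=\sum_{(i,j)\in\lambda}t_a^it_b^j$. The self-terms equal $(X_1-X_1^*)+(X_2-X_2^*)$ with
\[
X_1=-(1-t_A+t_{\phi(A)}t_A)\,Z,\qquad X_2=u_A\,ZZ^*.
\]
Here $u_A=t_1+t_2+t_3-t_1t_2-t_1t_3-t_2t_3$ if $A\in\bthree$, and $u_A=t_a+t_{\phi(A)}+t_4-t_at_{\phi(A)}$ if $A=\{a,4\}$.

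\begin{itemize}
\item $X_1$ has rank $-|\lambda|$ and exactly one fixed term, coming from the box $(0,0)$.
\item For $X_2$, use Lemma \ref{lem:comb} and the fact that terms of $u_A$ involving an index outside $A$ contribute nothing fixed. If $A\in\bthree$, $X_2$ has rank $0$ and $|\lambda|-1$ fixed terms. If $A\ni 4$, it has rank $2|\lambda|^2$ and $2|\lambda|-h_\lambda(0,0)-1$ fixed terms.
\item Hence the self piece has moving rank $\equiv 0$ for $A\in\bthree$ and $\equiv|\lambda|-h_\lambda(0,0)$ for $A\ni 4$. This confirms your targets.
\end{itemize}

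However, your targets hold only because $X_1$ is included in the self piece. The purely vertex--vertex piece you describe has parity $|\lambda|-1$, respectively $h_\lambda(0,0)+1$, on its own.

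The paper does the same count globally. It writes down one $G_{\blambda}$ with $\rk G_{\blambda}\equiv rn$. It then uses Lemma \ref{lem:comb} to get
\[
\rk G^{\TT}_{\blambda}\equiv\sum_{A\in\bthree,\alpha}|\lambda_{A,\alpha}|+\sum_{A\in\bsix\setminus\bthree,\alpha}h_{\lambda_{A,\alpha}}(0,0).
\]
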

\begin{proof}
As before, we consider $\cA = [\mathcal{U} / G]$ and, by slight abuse of notation, we denote by $P \in \mathcal{U}$ a lift of $P$ so that we have a surjection $T_{\mathcal{U}}|_{P} \twoheadrightarrow T_{\cA}|_P$. In fact, $T_{\cA}|_P = T_{\cU}|_P - \mathrm{End}(V)$. By \eqref{eqn:charV} and \eqref{eqn:Neknot}, the characters of $V$ and $W_A$ are given by
\[
\chi_{V} = \sum_A K_A, \quad \chi_{W_A} = N_A,
\]
where $N_A$ was defined in \eqref{eqn:Neknot} and $\blambda$ is the collection of integer partitions corresponding to $P$. As $\TT$ acts as defined in \eqref{eqn:Taction}, the character of $T_{\mathcal{U}}|_{P}$ is given by
\begin{equation} \label{eqn:charTU}
(t_1^{-1}+t_2^{-1}+t_3^{-1}+t_4^{-1}) \cdot \sum_{A,B} K_A^* K_B + \sum_{A,B} N_A^* K_B + \sum_{A,B} K_A^* N_B \cdot t_B^{-1}. 
\end{equation}
On the other hand, by \eqref{eqn: Taction on bundle}, the character of $\Lambda|_P$ is given by
\begin{align}
\begin{split} \label{eqn:charLambda}
&((t_1t_2)^{-1}+(t_1t_3)^{-1}+(t_2t_3)^{-1}) \cdot \sum_{A,B} K_A^* K_B + \sum_{A \in \bthree} N_A^* N_{\overline{A}} t_{\overline{A}}^{-1} + \sum_{A,B, \overline{a} \in \overline{A}} K_B^* N_A \cdot t_{\overline{a}}^{-1}t_A^{-1}.
\end{split}
\end{align}

Recall that in the introduction, for $A \in \bsix$ we defined $\phi(A) := \min \overline{A}$. We also define $\psi(A) := \max \overline{A}$. A tedious yet straight-forward calculation shows that the first equality holds for 
\begin{align*}
G_{\blambda}:= 
&\sum_{A \in \bthree} (t_1+t_2+t_3 - t_1t_2 - t_1t_3 - t_2t_3) K_A^* K_A + \\
&\sum_{A \in \bsix \setminus \bthree} \Big( \sum_{a \neq \psi(A)} t_a - \sum_{B \in \bthree \mathrm{ \, s.t. \, } \psi(A) \notin B} t_B \Big) K_A^* K_A - \\
&\sum_A (1-t_A+t_{\phi(A)}t_A) N_A^* K_A - \sum_{A \neq B} N_A^* K_B + \\
&\sum_{A<B}(1-t_1^{-1}-t_2^{-1}-t_3^{-1}-t_4^{-1}+t_1t_4+t_2t_4+t_3t_4) K_A^* K_B, 
\end{align*}
where we always work modulo the relation $t_1t_2t_3t_4=1$.

For the second part of the proposition, we must determine $\rk G^m_{\blambda} = \rk G_{\blambda} - \rk G^{\TT}_{\blambda}$. Note that $\rk G_{\blambda} \equiv rn \mod 2$. The result follows from Lemma \ref{lem:comb} below, the proof of which is a calculation  left to the reader.
\end{proof}

\begin{lemma} \label{lem:comb}
Let $\lambda$ be an integer partition and define $Z_\lambda := \sum_{(i,j) \in \lambda} t_1^i t_2^j$. Then
\begin{align*}
\dim (t_2 Z_\lambda Z_\lambda^*)^{T} &= |\lambda| - \ell(\lambda), \\
\dim (t_3 Z_\lambda Z_\lambda^*)^{T} &= 0, \\
\dim (t_1 t_2 Z_\lambda Z_\lambda^*)^{T} &= |\lambda| - h_{\lambda}(0,0), \\
\dim (t_1 t_3 Z_\lambda Z_\lambda^*)^{T} &=  0,
\end{align*}
where $(-)^T$ is the $T$-fixed part with respect to $T:=Z(t_1t_2t_3t_4-1)$, $\ell(\lambda)$ denotes the length and $h_{\lambda}(0,0)$ the hook length at $(0,0)$ of $\lambda$.
\end{lemma}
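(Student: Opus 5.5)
The plan is to expand $t_c Z_\lambda Z_\lambda^* = \sum_{(i,j),(i',j')\in\lambda} t_c\, t_1^{i-i'} t_2^{j-j'}$ and count the monomials that are trivial characters of $T$. The characters of $T = Z(t_1t_2t_3t_4-1)$ form $\Z^4/\Z(1,1,1,1)$, so a monomial $t_1^{\alpha}t_2^{\beta}t_3^{\gamma}t_4^{\delta}$ is $T$-fixed exactly when $\alpha=\beta=\gamma=\delta$. In each case we only need to decide, pair of boxes by pair of boxes, whether the exponent vector is constant.

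For $c=3$ the exponent vector is $(i-i',\,j-j',\,1,\,0)$. Its third and fourth entries differ, so it is never constant, and the fixed part is $0$. The case $t_1t_3$ is the same: the vector is $(i-i'+1,\,j-j',\,1,\,0)$, again with third entry $1\neq 0$, so the dimension is $0$.

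For $c=2$ the vector is $(i-i',\,j-j'+1,\,0,\,0)$. It is constant iff $i=i'$ and $j'=j+1$. Such pairs correspond to boxes $(i,j)\in\lambda$ whose neighbour $(i,j+1)$ also lies in $\lambda$. With the convention $(i,j)\in\lambda \Leftrightarrow j+1\le\lambda_{i+1}$, row $i$ has $\lambda_{i+1}$ boxes, so it contributes $\lambda_{i+1}-1$ such pairs when nonempty. Summing over the $\ell(\lambda)$ nonempty rows gives $|\lambda|-\ell(\lambda)$.

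For $t_1t_2$ the vector is $(i-i'+1,\,j-j'+1,\,0,\,0)$, which is constant iff $(i',j')=(i+1,j+1)$. We therefore count boxes $(i,j)$ with $(i+1,j+1)\in\lambda$. Since $\lambda$ is a down-set, this is the number of boxes of $\lambda$ not lying in the first row or first column: the map $(i,j)\mapsto(i+1,j+1)$ is a bijection onto the boxes with $i,j\ge1$. By the definition in the paper, $h_\lambda(0,0)$ is exactly the number of boxes in row $0$ together with column $0$. Hence the count is $|\lambda|-h_\lambda(0,0)$.

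There is no real obstacle here. The only care needed is with the indexing convention (rows indexed by $i$, of length $\lambda_{i+1}$) and with remembering that fixedness for $T$ means equal exponents modulo the diagonal, not vanishing exponents.
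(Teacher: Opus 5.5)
Your proof is correct. Identifying the $T$-fixed monomials as those with exponent vector in $\Z(1,1,1,1)$ and counting pairs of boxes case by case gives all four identities, with the row-length convention and $h_\lambda(0,0)$ (the boxes in row $0$ or column $0$) handled correctly. The paper leaves this lemma to the reader as a direct calculation, and your box-pair count is that calculation.
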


We now put everything together.
\begin{proof}[Proof of Theorem \ref{thm:main}]
By Propositions \ref{prop:KR}, \ref{prop:cokcalc}, \ref{prop:compare}, Remark \ref{rem:n=0}, and \eqref{eqn:cokXi} we have
\begin{align*}
\frac{\chi(\cM,\widehat{\O}_{\cM}^{\vir})}{\mathsf{C}^K_{\vec{r}}}  &= (-1)^{(r-1)n} \cdot \sum_{P \in \cM^{\TT}} (-1)^{\dim \mathrm{cok} ( \pi_{\Lambda} \circ d_Ps)^{\TT}} \frac{\widehat{\Lambda}_{-1}(-\sum_{A \in \bthree} N_A N_{\overline{A}}^{*} t_{\overline{A}}  ))}{\widehat{\Lambda}_{-1}(\Omega_{\cA}|_P - \Lambda^*|_P)} \\
&=(-1)^{(r-1)n} \cdot \sum_{P \in \cM^{\TT}} (-1)^{\dim \mathrm{cok} ( \pi_{\Lambda} \circ d_Ps)^{\TT}} (-1)^{\rk(G^m)} [-\mathsf{v}_{\blambda}] \\
&= \sum_{\blambda \mathrm{ \, s.t. \, } |\blambda| = n} [-\mathsf{v}_{\blambda}],
\end{align*}
where in the second equality $\blambda$ denotes the partition corresponding to $P$. This finishes the proof.

\end{proof}

\subsection{Invariants} 

In this section, we investigate several consequences of Theorem \ref{thm:main}. We first discuss some generalities regarding the cohomological and K-theoretic origami partition functions of Definition \ref{def:origampartfun}. We then investigate two special cases (both of which require the fact that all the signs in Theorem \ref{thm:main} are ``$+$''):
\begin{itemize}
\item (2D ADHM) $r_{12} = r$ and $r_A = 0$ otherwise (continuation of Example ~\ref{ex:2dreduction});
\item (3D ADHM) $r_{a4} = 0$ for all $a$  (continuation of Example \ref{ex:3dreduction}).
\end{itemize}
We end the section with a ``genuinely 4D'' example: $r_{12} = r_{34} = 1$ and $r_A = 0$ otherwise (Theorem \ref{thm:crossinst}).

\subsubsection{Structure of the invariants}

We first observe the following standard property, that the cohomological partition function is obtained from the K-theoretic one by taking a certain limit (see \cite{CKM, Nek7} for similar results).
\begin{lemma} \label{lem:lim 1}
We have
\[
\lim_{b \to 0} \sfZ_{\vec{r}}^K(q) \Big|_{t_i = e^{b \cdot s_i}, w_{A,\alpha} = e^{b \cdot v_{A,\alpha}}} = \sfZ_{\vec{r}}(q).
\]
\end{lemma}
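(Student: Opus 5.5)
The plan is to use Theorem \ref{thm:main} to write both partition functions as sums over fixed points, and then compare them term by term. For each $n$ there are only finitely many $\blambda$ with $|\blambda| = n$. So it is enough to show, for each fixed $\blambda$, that
\[
\lim_{b\to 0} [-\mathsf{v}_{\blambda}]\Big|_{t_i=e^{bs_i},\,w_{A,\alpha}=e^{bv_{A,\alpha}}} = e(-\mathsf{v}_{\blambda}).
\]
The limit then commutes with the finite sum in each coefficient of $q^n$. Note that $\sfZ^K_{\vec r}$ and $\sfZ_{\vec r}$ are already normalized, so by Theorem \ref{thm:main} the constants $\mathsf{C}^K_{\vec r}$ and $\mathsf{C}_{\vec r}$ never appear separately.

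Write $-\mathsf{v}_{\blambda} = \sum_i \epsilon_i\, x_i$ with $\epsilon_i=\pm1$, where each $x_i$ is a $\TT$-character monomial. By the footnote after \eqref{eqn:Neknot}, $\mathsf{v}_{\blambda}$ has no $\TT$-fixed terms. Hence every $x_i$ is nontrivial, and its weight $c_1^{\TT}(x_i)$, a linear form $\ell_i$ in $s_a, v_{A,\alpha}$ modulo $s_1+\dots+s_4$, is nonzero. Both sides are multiplicative:
\[
[-\mathsf{v}_{\blambda}] = \prod_i [x_i]^{\epsilon_i}, \qquad e(-\mathsf{v}_{\blambda}) = \prod_i \ell_i^{\epsilon_i}.
\]
From \eqref{eqn:bracket},
\[
[x] = \frac{1-x^{-1}}{x^{-1/2}} = x^{1/2}-x^{-1/2}.
\]
Under the substitution this becomes $e^{b\ell/2}-e^{-b\ell/2} = 2\sinh(b\ell/2) = b\,\ell + O(b^3)$. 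Therefore
\[
[-\mathsf{v}_{\blambda}]\big|_{\mathrm{subst}} = b^{\sum_i\epsilon_i}\prod_i \ell_i^{\epsilon_i}\,(1+O(b^2)).
\]
The exponent $\sum_i \epsilon_i = \rk(-\mathsf{v}_{\blambda})$ is the only place where the argument could fail. The limit exists and equals $e(-\mathsf{v}_{\blambda})$ precisely when this rank is zero.

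The main step is to show $\rk \mathsf{v}_{\blambda} = 0$. I would do this directly from \eqref{eqn:Neknot}, specialising all characters to $1$. The term $P_{\phi(A)}$ has rank $0$, and so do $P_{\overline A}$ and $P_{1234}$, so every summand of $\mathsf{v}_{\blambda}$ carries a factor of rank zero. Hence $\rk\mathsf{v}_{\blambda}=0$. This is consistent with the vanishing virtual dimension of the normalized theory, since the $n=0$ contribution has been divided out.

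A subtlety is the choice of square roots $x^{1/2}$. The specialization $t_i^{1/2} = e^{bs_i/2}$ is compatible with the relation $t_1t_2t_3t_4=1$ because $s_1+\dots+s_4=0$, so the square roots are consistent and the expansion above is valid. Combining the per-fixed-point limits with Theorem \ref{thm:main} gives the lemma.
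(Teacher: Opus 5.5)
Your proposal is correct and follows essentially the same route as the paper. Both arguments use Theorem \ref{thm:main} to reduce to a per-fixed-point comparison, then use that $\mathsf{v}_{\blambda}$ has no $\TT$-fixed terms and has rank zero, so the powers of $b$ cancel in the limit. Your direct check that $\rk \mathsf{v}_{\blambda} = 0$ (every summand carries a rank-zero factor $P_{\phi(A)}$, $P_{\overline{A}}$ or $P_{1234}$) is a slightly more explicit justification than the paper's appeal to the normalization.
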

\begin{proof}
By Theorem \ref{thm:main}, we have 
\[
\sfZ_{\vec{r}}^K(q)  = \sum_{\blambda} q^{|\blambda|} [-\mathsf{v}_{\blambda}], \quad \sfZ_{\vec{r}}(q)  = \sum_{\blambda} q^{|\blambda|} e(-\mathsf{v}_{\blambda}),
\]
where $e(-)$ denotes the $\TT$-equivariant Euler class. The main point is that, for any $\blambda$ corresponding to a $\TT$-fixed point $P \in \cM^{\TT}$, the virtual character $\mathsf{v}_{\blambda}$ has rank 0 due to the normalization in Definition \ref{def:origampartfun}. Specifically, we can write
\[
\mathsf{v}_{\blambda} = \sum_{i} \tau^i - \sum_{j} \tau^j, 
\]
where we use multi-index notation $\tau = (t_1,t_2, t_3, (w_{A,\alpha}))$, $i = (i_1,i_2,i_3,(i_{A,\alpha}))$, $j = (j_1,j_2,j_3,(j_{A,\alpha}))$, we have eliminated $t_4 = (t_1t_2t_3)^{-1}$ and
\begin{equation} \label{eqn:rk0}
\sum_{i} 1 - \sum_{j} 1 = 0.
\end{equation}
Recall from the introduction that $\mathsf{v}_{\blambda}$ has no $\TT$-fixed term. 
Clearly
\[
[-\mathsf{v}_{\blambda}] \big|_{t_i = e^{b \cdot s_i}, w_{A,\alpha} = e^{b \cdot v_{A,\alpha}}} = \frac{\prod_j  \big( (j_1 s_1 + j_2 s_2 + j_3 s_3 + \sum_{A,\alpha} j_{A,\alpha} v_{A,\alpha})b + O(b^2) \big) }{\prod_i  \big( (i_1 s_1 + i_2 s_2 + i_3 s_3 + \sum_{A,\alpha} i_{A,\alpha} v_{A,\alpha})b + O(b^2) \big)}.
\]
By \eqref{eqn:rk0}, the number of factors in numerator and denominator is the same. So 
\[
\lim_{b \to 0} [-\mathsf{v}_{\blambda}] \big|_{t_i = e^{b \cdot s_i}, w_{A,\alpha} = e^{b \cdot v_{A,\alpha}}} = \frac{\prod_j   (j_1 s_1 + j_2 s_2 + j_3 s_3 + \sum_{A,\alpha} j_{A,\alpha} v_{A,\alpha}) }{\prod_i   (i_1 s_1 + i_2 s_2 + i_3 s_3 + \sum_{A,\alpha} i_{A,\alpha} v_{A,\alpha})} = e(-\mathsf{v}_{\blambda}).
\]
\end{proof}

As explained in Section \ref{sec:quivervirtualstruc}, the virtual structure sheaf is a priori a class in localized K-theory, i.e., $\widehat{\O}^{\vir} \in K_0^{\TT}(M_{Q_4}(\vec{r},n))_{\loc}$ and thus
\[
(\mathsf{C}_{\vec{r}}^K)^{-1} \cdot \chi(M_{Q_4}(\vec{r},n), \widehat{\O}^{\vir}) \in K_0^{\TT}(\pt)_{\loc} = \Q(t_1^{\frac{1}{2}},t_2^{\frac{1}{2}},t_3^{\frac{1}{2}},t_4^{\frac{1}{2}},(w_{A,\alpha}^{\frac{1}{2}} )) \quad \textrm{with \ } t_1t_2t_3t_4=1.
\]
We will now show that there exists a cohomologically $\Z/2$-graded $\TT$-equivariant coherent \emph{sheaf} $\mathcal{N}_{\vec{r},n}$ on $M_{Q_4}(\vec{r},n)$ representing the $\TT$-equivariant K-theory class $\widehat{\O}^{\vir}$ (up to an overall constant factor in the equivariant parameters). We use this to show the following ``K-theoretic integrality result''
\begin{align*}
(\mathsf{C}_{\vec{r}}^K)^{-1} \cdot \chi(M_{Q_4}(\vec{r},n), \widehat{\O}^{\vir}) \in   \Q(t_1,t_2,t_3,t_4,(w_{A,\alpha})), \quad \textrm{with \ } t_1t_2t_3t_4=1.
\end{align*}
We will use this result in the proof of Theorem \ref{thm:crossinst} at the end of this section.

The existence of $\mathcal{N}_{\vec{r},n}$ is based on a similar fact used in the context of the proof of Nekrasov's Magnificent Four conjecture \cite{Nek7} in \cite{KR}. In loc.~cit.~the virtual structure sheaf $\widehat{\O}^{\vir}$ does \emph{not} lift to a sheaf, it only lifts to a sheaf after multiplication by a suitable tautological insertion. Remarkably, in our setting $\widehat{\O}^{\vir}$ itself lifts to a sheaf (up to an overall constant factor).

We now recall some facts about the virtual structure sheaf in the general setup of Section \ref{sec:quivervirtualstruc}, where we have $E,q,\cA,s,\cM, \Lambda$ and an action by a torus $\TT$. For the discussion below, we do not need to assume $\cM^{\TT}$ is 0-dimensional and reduced. Recall that we also assume there exists a splitting\footnote{This assumption can be dropped, in which case one should work with spin modules over the sheaf of Clifford algebras associated to $(E,q)$ as in \cite{KR}. In the origami setting, however, we only encounter the split case, so we stick to 2-periodic complexes.} $E = \Lambda \oplus \Lambda^*$, where $\Lambda \subset E$ is a $\TT$-equivariant maximal isotropic subbundle. Hence, $s = (s_\Lambda, s_{\Lambda^*})$ splits accordingly. We then have a 2-periodic complex known as a Koszul factorization \cite{KO, OS}
\begin{align*}
&\mathcal{K}^{-1} = \bigoplus_{i \in \Z} \Lambda^{2i+1} \Lambda^*, \quad \mathcal{K}^{0} = \bigoplus_{i \in \Z} \Lambda^{2i} \Lambda^*,
&d = s_{\Lambda^*} \wedge + s_{\Lambda} \intprod.
\end{align*}
Then $d^2=0$ by the isotropic condition. Its cohomology sheaves are supported on $\cM$. The key fact is that, for the orientation $o = (-1)^{(r-1)n}\cdot o_{\Lambda}$, we have
\begin{align} 
\begin{split} \label{eqn:OS}
&\,(-1)^{(r-1)n}\cdot\cK \otimes \det(\Lambda^*)^{-\frac{1}{2}}  \otimes \det(T_{\cA}|_{\cM})^{-\frac{1}{2}} \\
=&\,(-1)^{(r-1)n}\cdot(H^{-1}(\cK,d)[1] \oplus H^0(\cK,d)) \otimes \det(\Lambda^*)^{-\frac{1}{2}}  \otimes \det(T_{\cA}|_{\cM})^{-\frac{1}{2}} \\
=&\,\widehat{\mathcal{O}}^{\vir}_{\cM} \in K_0^{\TT}(\cM)_{\loc},
\end{split}
\end{align}
where $H^i(-)$ are the cohomology sheaves of the 2-periodic complex $(-)$. The second equality follows from work of Oh--Sreedhar \cite{OS}. Thus, in order to lift $\widehat{\mathcal{O}}^{\vir}_{\cM}$ to a
    cohomologically $\Z/2$-graded $\TT$-equivariant coherent sheaf, we only require that $\det(\Lambda^*) \otimes \det(T_{\cA}|_{\cM})$ has a square root as a $\TT$-equivariant line bundle.\footnote{For a $\TT$-equivariant perfect obstruction theory associated to the section $s$ of a vector bundle $E$ on a smooth variety (with $\TT$-actions), $\O^{\vir}_{\cM}$ is represented by the cohomology of the Koszul complex $\bigoplus_i H^i(\Lambda^\mdot E^*,s\intprod)[i]$ \cite{Oko}. If $s = df$, then $\widehat{\O}^{\vir}_{\cM}$ also lifts to a sheaf (up to a cover $\widetilde{\TT} \to \TT$) by \eqref{eqn:NOtwist}.}We show that this is the case up to an overall constant factor in the equivariant parameters.
\begin{proposition} \label{prop:sheaflift}
There exists a cohomologically $\Z/2$-graded $\TT$-equivariant coherent sheaf $\mathcal{N}_{\vec{r},n}$ on $M_{Q_4}(\vec{r},n)$ such that, with respect to the orientation \eqref{eqn:choiceofori}, we have 
\[
[\mathcal{N}_{\vec{r},n}] = \Bigg( \prod_{A \in \bthree} t_A^{r_A r_{\overline{A}}} \prod_{\alpha=1}^{r_A} \prod_{\beta=1}^{r_{\overline{A}}} w_{A,\alpha}^{-1} w_{\overline{A},\beta} \Bigg)^{-\frac{1}{2}} \cdot \widehat{\O}^{\vir} \in K_0^{\TT}(M_{Q_4}(\vec{r},n)). 
\]
Moreover, we have
\begin{align*}
(\mathsf{C}_{\vec{r}}^K)^{-1} \cdot \chi(M_{Q_4}(\vec{r},n), \widehat{\O}^{\vir}) 
\in  \Q(t_1,t_2,t_3,t_4,(w_{A,\alpha})), \quad \mathrm{with \ } t_1t_2t_3t_4=1.
\end{align*}
\end{proposition}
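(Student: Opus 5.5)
Following the remark preceding the statement, the plan is to take $\mathcal{N}_{\vec{r},n} := (-1)^{(r-1)n}\cdot \big(H^{-1}(\cK,d)[1]\oplus H^0(\cK,d)\big)\otimes L$, where $L$ is a $\TT$-equivariant line bundle on $\cA$ with
\[
L^{\otimes 2} \cong \det(\Lambda^*)^{-1}\otimes \det(T_{\cA})^{-1}\otimes c,
\]
and $c$ is a $\TT$-character.
Then \eqref{eqn:OS} gives $[\mathcal{N}_{\vec{r},n}] = c^{1/2}\cdot \widehat{\O}^{\vir}$. The sign $(-1)^{(r-1)n}$ is just a global sign on the $\Z/2$-graded class, which we implement by a parity shift when $(r-1)n$ is odd. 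The whole task is therefore to compute $\det(\Lambda^*)\otimes\det(T_{\cA})$ as a $G\times\TT$-equivariant line bundle on $\cU$, since line bundles on $\cA=[\cU/G]$ are $G$-equivariant line bundles on $\cU$, and then to exhibit an explicit square root up to a constant character.

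For this computation, use the fibre descriptions \eqref{eqn:charTU} and \eqref{eqn:charLambda}, but with $V$ now a $G$-representation, so that $\det V$ is the tautological determinant bundle $\det\cV$. Note $T_{\cA} = T_{\cU} - \mathrm{End}(V)$. Terms of the form $\mathrm{End}(V)\otimes \chi$ contribute $\det(V)^{n}\det(V^*)^{n}\chi^{n^2} = \chi^{n^2}$, a pure character. The term $(t_1^{-1}+\dots+t_4^{-1})\mathrm{End}(V)$ contributes $(t_1t_2t_3t_4)^{-n^2}=1$, and the $\Lambda^*$ part contributes $(t_1t_2\,t_1t_3\,t_2t_3)^{n^2}$, which is $(t_1t_2t_3)^{2n^2}$, a square.

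Next come the framing terms. $\Hom(W_A,V)$ contributes $\det(V)^{r_A}\det(W_A)^{-n}$, and $\Hom(V,W_A)t_A^{-1}$ contributes $\det(V)^{-r_A}\det(W_A)^{n}t_A^{-nr_A}$; these cancel up to the character $t_A^{-nr_A}$. In $\Lambda^*$, the summand $\Hom(W_A,V)\,t_{\bar a}t_A$ for each $\bar a\in\overline{A}$ contributes $\det(V)^{r_A}\det(W_A)^{-n}(t_{\bar a}t_A)^{nr_A}$. Summing over the two values of $\bar a$ gives $\det(V)^{2r_A}\det(W_A)^{-2n}(t_{\bar a}t_{\bar b}t_A^2)^{nr_A}=\det(V)^{2r_A}\det(W_A)^{-2n}t_A^{nr_A}$, since $t_At_{\overline{A}}=1$. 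This is a square times $t_A^{nr_A}$, and that factor cancels the $t_A^{-nr_A}$ from $T_\cU$.

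The remaining term is $\sum_{A\in\bthree}\Hom(W_{\overline{A}},W_A)\,t_{\overline{A}}$ from $\Lambda^*$, whose determinant is the constant $\prod_{A\in\bthree} t_{\overline{A}}^{r_Ar_{\overline A}}\prod_{\alpha,\beta} w_{A,\alpha}w_{\overline{A},\beta}^{-1}$. Since $t_{\overline A}=t_A^{-1}$, this is exactly the inverse of the factor appearing in the statement. So the constant $c$ is that product, and $\det(\Lambda^*)\otimes\det(T_\cA)$ equals $c^{-1}$ times the square of the explicit line bundle $\det(\cV)^{r}\otimes(\text{characters})$. Hence $L$ exists, with no need for a double cover of $\TT$. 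I expect this bookkeeping, in particular tracking which $\TT$-weights are genuinely squares modulo $t_1t_2t_3t_4=1$, to be the main obstacle.

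For integrality, $\mathcal{N}_{\vec{r},n}$ is a genuine $\TT$-equivariant coherent sheaf, so localization gives $\chi(\cM,\mathcal{N}_{\vec{r},n})\in \Q(t_1,\dots,t_4,(w_{A,\alpha}))$, with only integral characters appearing. By Remark \ref{rem:n=0}, the case $n=0$ gives $\mathsf{C}^K_{\vec r} = c^{1/2}\cdot\chi(\mathcal{N}_{\vec r,0})$ with the same constant $c$, since $c$ is independent of $n$. Taking the ratio, $c^{1/2}$ cancels, which yields the claim.
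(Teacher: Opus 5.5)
Your proposal is correct and follows the paper's proof. It uses the same Koszul factorization \eqref{eqn:OS} and the same determinant computation, $\det(T_{\cA})\otimes\det(\Lambda^*) = c_0^{-1}\cdot\big(\det(\cV)^{r}\,t_4^{-n^2}\prod_{A,\alpha}w_{A,\alpha}^{-n}\big)^2$ with $c_0$ the factor in the statement, together with the same integrality argument via the $n=0$ normalization.

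There is one bookkeeping slip. With your relation $L^{\otimes 2}\cong\det(\Lambda^*)^{-1}\otimes\det(T_\cA)^{-1}\otimes c$ you get $[\mathcal{N}_{\vec r,n}]=c^{1/2}\widehat{\O}^{\vir}$, so the stated exponent requires $c=c_0^{-1}$. That is the determinant of $\bigoplus_{A\in\bthree}\Hom(W_{\overline{A}},W_A)\,t_{\overline{A}}$ itself, and it gives $L=t_4^{n^2}\prod_{A,\alpha}w_{A,\alpha}^{n}\det(\cV)^{-r}$, exactly the paper's twist. Your identity $\det(\Lambda^*)\otimes\det(T_\cA)=c^{-1}(\cdots)^2$ instead forces $c=c_0$. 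That choice lands on the opposite exponent, off by the honest character $c_0$, which is harmless after twisting.
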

\begin{proof}
Let $\cA := A_{\widehat{Q}_4}(\vec{r},n)$ and $\cM := M_{Q_4}(\vec{r},n)$. Recall that we denote by $\cV$ the tautological rank $n$ vector bundle on $\cA$ introduced at the end of Section \ref{sec:quivermoduli}. Then
\begin{align*}
T_{\cA} = &(t_1^{-1}+ t_2^{-1}+t_3^{-1}+t_4^{-1}-1) \cdot \cE{\it{nd}}(\cV) \oplus \\
&\bigoplus_{A} \cH{\it{om}}(W_A \otimes \O, \cV) \oplus \bigoplus_{A} \cH{\it{om}}(\cV,W_A \otimes \O) \cdot t_A^{-1},
\end{align*}
where $W_A = \bigoplus_{\alpha=1}^{r_A} \C \cdot w_{A,\alpha}$. We already encountered the fibre analogue of this equation over fixed points $P \in \cM^{\TT}$ in \eqref{eqn:charTU}.  Similarly, we have
\begin{align*}
\Lambda \cong &((t_1t_2)^{-1}+(t_1t_3)^{-1}+(t_2t_3)^{-1}) \cdot \cE{\it{nd}}(\cV) \\
&\oplus \bigoplus_{A \in \bthree} \Hom(W_A, W_{\overline{A}}) \otimes \O \cdot t_A \oplus \bigoplus_{A \in \bsix, \overline{a} \in \overline{A}} \hom(\cV,W_A \otimes \O) \cdot t_{\overline{a}},
\end{align*}
which over fixed points $P \in \cM^{\TT}$ reduces to \eqref{eqn:charLambda}. We deduce 
\[
\det( T_{\cA}|_{\cM} ) \otimes \det(\Lambda^*|_{\cM}) \cong \Bigg( \prod_{A \in \bthree}  t_{A}^{-r_A r_{\overline{A}}} \cdot \prod_{\alpha=1}^{r_A} \prod_{\beta=1}^{r_{\overline{A}}} w_{A,\alpha} w_{\overline{A},\beta}^{-1} \Bigg) \cdot \Bigg( \det(\cV)^{r} \cdot t_4^{-n^2} \prod_{A,\alpha} w_{A,\alpha}^{-n} \Bigg)^2,
\]
where $r := \sum_A r_A$. Then, for $(\cK,d)$ as above, we define
\begin{align*}
\mathcal{N}_{\vec{r},n} := t_4^{n^2} \cdot  \Big( \prod_{A,\alpha} w_{A,\alpha}^{n} \Big) \cdot \det(\cV)^{-r} \cdot (H^{-1}(\mathcal{K},d)[1] \oplus H^0(\mathcal{K},d))[(r-1)n].
\end{align*}
The first part of the proposition follows from \eqref{eqn:OS} and the second part from the fact that $[\mathcal{N}_{\vec{r},n}] \in K_0^{\TT}(\cM)$ (and the normalization in Definition \ref{def:origampartfun}). 
\end{proof}

We remark that the one-parameter subgroup in $\TT$ given by $t_i=1$ and $w_{A,\alpha}=w_{A',\alpha'}$ for all $i, A,\alpha, A', \alpha'$ acts trivially on $\mathcal{M}$ and on all terms of the complex $\mathcal{K}$. It follows that the equivariant parameters $w_{A,\alpha}$ enter $\chi(\cM,\widehat{\O}_{\cM}^{\vir})$ only through their pairwise quotients $w_{A,\alpha}/w_{A',\alpha'}$. Similarly, the equivariant parameters $v_{A,\alpha}$ enter $\int_{[\cM]^{\vir}} 1$ only through their pairwise differences $v_{A,\alpha}-v_{A',\alpha'}$.

\subsubsection{Invariants of the 2D and 3D ADHM quivers}

We now turn our attention to the continuation of Examples \ref{ex:2dreduction} and \ref{ex:3dreduction}.
\begin{example} \label{ex:2dreductioncont}
Consider $Q_4$ with $r_{12} = r$ and $r_A = 0$ otherwise. Then $M_{Q_4}(\vec{r},n) \cong M_{Q_2}(r,n) =: \mathcal{M}$. By Theorem \ref{thm:main} and \eqref{eqn:holsymp}, at a fixed point $P$ corresponding to $\blambda$, we have
\[
\mathsf{v}_{\blambda} = (1-t_3)t_1t_2 T_{\cM}|_P = (1-t_3) \Omega_{\cM}|_P. 
\] Therefore
\begin{equation} \label{eqn:vlambda2D}
[-\mathsf{v}_{\blambda}] = t_3^{-rn} \frac{\Lambda_{-1} (\Omega_{\cM}|_P \otimes t_3)}{\Lambda_{-1} \Omega_{\cM}|_P},
\end{equation}
where we used the identity $\widehat{\Lambda}_{-1} E = (-1)^{\rk(E)} \widehat{\Lambda}_{-1} E^*$. The symmetrized Hirzebruch $\chi_{-y}$-genus of a smooth projective variety $M$ is defined by 
\[
\widehat{\chi}_{-y}(M) := y^{-\dim(M)/2} \chi(M, \Lambda_{-1}( \Omega_M \otimes y)).
\]
For a smooth quasi-projective variety $M$ with torus action and proper fixed locus $M^T$ with connected components $M_i$, this expression is given equivariantly by
\[
\widehat{\chi}_{-y}(M) = y^{-\dim(M)/2} \sum_i \chi\Big(M_i, \frac{\Lambda_{-1}( \Omega_M \otimes y)|_{M_i}}{\Lambda_{-1}(N_{M_i / M}^*)} \Big),
\]
where $N_{M_i / M}$ denotes the normal bundle of $M_i \subset M$. By \eqref{eqn:vlambda2D}, we have
\[
\sfZ_{\vec{r}}^K(q) = \sum_ n q^n \widehat{\chi}_{-t_3}(M_{Q_2}(r,n)).
\]
This can be regarded as the rank $r$ K-theoretic Vafa--Witten generating function of $\C^2$; see for example \cite{AKL} for a precise relationship. The $r=1$ case is computed in \cite[Equation (48)]{CNO} and \cite[Theorem~5.3.13]{Oko}:
\begin{equation} \label{eqn:rk1formula}
\sfZ_{\vec{r}}^K(q) = \mathrm{Exp} \Bigg( \frac{[t_1t_3][t_2t_3]}{[t_1][t_2]} \frac{q}{1-q} \Bigg).
\end{equation}
Here, for $f(t_1, \ldots, t_r, q) \in \Q(t_1, \ldots, t_r)[\![q]\!]$ satisfying $f(t_1, \ldots, t_r,0)=0$, its plethystic exponential $\Exp(f)$ is defined by
\begin{align}\label{def:pleth} 
\Exp(f(t_1, \ldots, t_r,q)) &:= \exp\Big( \sum_{n=1}^{\infty} \frac{1}{n} f(t_1^n, \ldots, t_r^n,q^n) \Big)
\end{align}
viewed as an element of $\Q(t_1, \ldots, t_r)[\![q]\!]$. The plethystic exponential in \eqref{eqn:rk1formula} is obtained by expanding the exponent in ascending powers of $q$.
\end{example}

\begin{example} \label{ex:3dreductioncont}
Consider $Q_4$ with $r_{a4} = 0$ for all $a$. Then $M_{Q_4}(\vec{r},n) \cong M_{Q_3}(\vec{r},n) =: \cM$ and the latter is the critical locus of a regular function $f \colon \cA := A_{\widehat{Q}_3}(\vec{r},n) \to \C$ as described in Section \ref{sec:quivervirtualstruc}. This description induces a $\TT$-equivariant 2-term perfect obstruction theory. Thus, we have two virtual cycles and virtual structure sheaves
\begin{align*}
[\cM]_{\mathrm{OT}}^{\vir}, \quad [\cM]_{\mathrm{BF}}^{\vir} &\in A_0^{\TT}(\cM,\Z[\tfrac{1}{2}]), \\
\widehat{\O}^{\vir}_{\cM, \mathrm{OT}}, \quad \widehat{\O}^{\vir}_{\cM, \mathrm{BF}} &\in K_0^{\TT}(\cM)_{\loc},
\end{align*}
where the subscript $(-)_{\mathrm{OT}}$ indicates a class induced from the 3-term symmetric obstruction theory associated to the isotropic section $s$ (and orientation \eqref{eqn:choiceofori}), and $(-)_{\mathrm{BF}}$ a class induced from the 2-term obstruction theory given by the regular function $f$. Note that  $\mathsf{C}_{\vec{r}}^K = \mathsf{C}_{\vec{r}} = 1$ (Remark \ref{rem:n=0}). We claim that 
\begin{align*}
\int_{[\cM]^{\vir}_{\mathrm{OT}}} 1 = \int_{[\cM]^{\vir}_{\mathrm{BF}}} 1, \quad  \chi(\cM, \widehat{\O}^{\vir}_{\cM, \mathrm{OT}}) = \chi(\cM, \widehat{\O}^{\vir}_{\cM, \mathrm{BF}}).
\end{align*}
This result follows from the following lemma. 
\end{example}

\begin{lemma} \label{lem:3Dcompare}
For a fixed point $P \in \cM^{\TT}$ corresponding to integer partitions $\blambda$, we have
\[
\mathsf{v}_{\blambda} - \Big(T_{\mathcal{A}}|_P - \Omega_{\cA}|_P \otimes (t_1t_2t_3)^{-1} \Big) = G_{\blambda} - G^*_{\blambda},
\]
for some class $G_{\blambda} \in K_0^{\TT}(\pt)$ satisfying $\rk G^m_{\blambda} \equiv 0 \mod 2$, where $(-)^m$ denotes the $\TT$-moving part. 
\end{lemma}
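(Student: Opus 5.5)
We will mirror the proof of Proposition \ref{prop:compare}, working in the 3D setting where $r_{a4}=0$ for all $a$, so that $N_A=K_A=0$ for $A\in\bsix\setminus\bthree$ and $\mathsf{v}_{\blambda}$ only involves $A,B\in\bthree$. The first step is to write down the characters explicitly. As in \eqref{eqn:charTU}, but now for the 3-loop quiver, with $\cA=A_{\widehat{Q}_3}(\vec{r},n)$, we have
\[
T_{\cA}|_P=(t_1^{-1}+t_2^{-1}+t_3^{-1}-1)\sum_{A,B\in\bthree}K_A^*K_B+\sum_{A,B\in\bthree}N_A^*K_B+\sum_{A,B\in\bthree}K_A^*N_B\,t_B^{-1}.
\]
Then $\Omega_{\cA}|_P\otimes(t_1t_2t_3)^{-1}$ is obtained by dualizing and multiplying by $t_4=(t_1t_2t_3)^{-1}$. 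Using $t_4 = (t_1t_2t_3)^{-1}$, we expand $\mathsf{v}_{\blambda}$ from \eqref{eqn:Neknot}: the terms $P_{\phi(A)}T_A$ with $\phi(A)\in\{3,4\}$ or $\phi(\{2,3\})=1$, the cross terms $P_{\overline{A}}N_A\sum_{B\neq A}K_B^*$, and $-P_{1234}\sum_{A<B}K_AK_B^*$.

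The second step is to exhibit an explicit $G_{\blambda}$. Since both sides are ``halves'' of the same self-dual class $T^{\vir}=E_{Q_3}\udot|_P^\vee$, their difference is automatically of the form $G-G^*$, where $G$ is the collection of monomials appearing with the wrong orientation. We will organize the difference term by term into diagonal pieces $K_A^*K_A$, $N_A^*K_A$ and off-diagonal pieces $K_A^*K_B$, $N_A^*K_B$ ($A\neq B$). A natural ansatz, suggested by the formula for $G_{\blambda}$ in the proof of Proposition \ref{prop:compare} restricted to $A\in\bthree$, is
\[
G_{\blambda}=\sum_{A\in\bthree}c_A K_A^*K_A-\sum_{A}d_A N_A^*K_A-\sum_{A\neq B}N_A^*K_B+\sum_{A<B}e\,K_A^*K_B,
\]
with Laurent polynomials $c_A,d_A,e$ in $t_1,t_2,t_3$ to be determined by matching coefficients. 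This is tedious but routine, using repeatedly $t_4=(t_1t_2t_3)^{-1}$ and the identity $(xK)^*=x^{-1}K^*$.

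The third step, which we expect to be the main obstacle, is the parity of $\rk G^m_{\blambda}=\rk G_{\blambda}-\rk G^{\TT}_{\blambda}$. The total rank is computed from the coefficients: the $N^*K$ terms contribute a multiple of $rn$ with coefficient $\rk d_A+ (\text{off-diagonal})$, and the $K^*K$ terms contribute $\rk c_A\,|K_A|^2$ and $\rk e\,|K_A||K_B|$. We will need these to be even. The delicate contributions are the fixed parts. Off-diagonal $K_A^*K_B$ and $N_A^*K_B$ terms with $A\neq B$ have no fixed part, because the framing weights $w_{A,\alpha}$ differ. Diagonal $N_A^*K_A$ terms have fixed part only from the box $(0,0)$, which is controlled by the constant term of $d_A$. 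The diagonal $K_A^*K_A$ fixed parts are computed by Lemma \ref{lem:comb}, after permuting indices so that $A$ plays the role of $\{1,2\}$. The key observation we expect is that, in contrast to Proposition \ref{prop:compare}, the 3D case involves only $A\in\bthree$, and for these the relevant monomials in $c_A$ are of the form $t_a$ with $a\in A$ and $t_{\overline{a}}$, $t_{a\overline{a}}$. Lemma \ref{lem:comb} pairs these contributions as $(|\lambda|-\ell(\lambda))+(|\lambda|-\ell(\lambda^t))$ or $0$, and each pair is even modulo the $|\lambda|-h(0,0)=|\lambda|-\ell-\ell^t+1$ relations. If a direct choice of $G$ fails to give evenness, we will use the freedom to replace a monomial $x$ in $G$ by $-x^*$; this changes $\rk G^m$ by $2$ or $0$ only when $x$ is moving, so it does not affect the parity. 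Finally, we conclude by the 3D analogue of the sign argument in the proof of Theorem \ref{thm:main}, comparing Behrend--Fantechi contributions $1/\widehat\Lambda_{-1}(\Omega_\cA-T_\cA t_4)$ with $[-\mathsf{v}_{\blambda}]$.
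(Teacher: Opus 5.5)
There is a genuine gap in the parity step. Your strategy is the paper's: write out $T_{\cA}|_P$, find an explicit $G_{\blambda}$ by matching coefficients, then count $\TT$-fixed terms with Lemma~\ref{lem:comb}. But you never produce $G_{\blambda}$, and the parity of $\rk G^m_{\blambda}$, which is the actual content of the lemma, is argued incorrectly.

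\textbf{The ``automatic'' splitting.} The class $(E_{Q_3}\udot)^\vee|_P=T_{\cA}|_P-\Omega_{\cA}|_P\otimes(t_1t_2t_3)^{-1}$ is not self-dual: its dual is $-t_4^{-1}$ times itself. The self-dual class is $(E_{Q_4}\udot)^\vee|_P=\mathsf{v}_{\blambda}+\mathsf{v}_{\blambda}^*$; the $N_AN_{\overline{A}}^*$ term vanishes because $r_{a4}=0$. That $T_{\cA}|_P-\Omega_{\cA}|_P\otimes(t_1t_2t_3)^{-1}$ is another half of this class is exactly the first assertion of the lemma. So it is not ``automatic'', and you have to do the coefficient matching.

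\textbf{The parity heuristics are false.} Take $A=\{a<b\}\in\bthree$ and nonempty $\lambda=\lambda_{A,\alpha}$. Lemma~\ref{lem:comb} gives fixed contributions $|\lambda|-\lambda_1$ from $t_a$ and $|\lambda|-\ell(\lambda)$ from $t_b$. Since $h_\lambda(0,0)=\lambda_1+\ell(\lambda)-1$, their sum is $\equiv h_\lambda(0,0)+1$, which is odd for $\lambda=(2)$. Adding the $t_a^{-1}t_b^{-1}$ contribution $|\lambda|-h_\lambda(0,0)$ gives $\equiv|\lambda|+1$, which is again not even in general.

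\textbf{The fallback cannot work.} If $G-G^*=G'-G'^*$, then $G-G'$ is self-dual, so its moving part has even rank. Hence $\rk G^m_{\blambda}\bmod 2$ is an invariant of $G_{\blambda}-G_{\blambda}^*$. No re-choice of $G_{\blambda}$ (replacing $x$ by $-x^*$) can repair a wrong parity; it has to be computed.

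\textbf{The correct bookkeeping} is a global cancellation between total rank and fixed part, not separate evenness of each.
\begin{itemize}
\item With the paper's $G_{\blambda}$, the coefficient of $K_AK_A^*$ is $t_1+t_2+t_3+t_1^{-1}t_2^{-1}+t_1^{-1}t_3^{-1}+t_2^{-1}t_3^{-1}+t_1t_2t_3$, of odd rank $7$. All other coefficients have rank $0$. So $\rk G_{\blambda}=7\sum_A n_A^2\equiv n$, where $n_A:=\sum_\alpha|\lambda_{A,\alpha}|$. The total rank is therefore not even, contrary to what you expect.
\item For the fixed part only the terms with $\alpha=\beta$ matter. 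Let $c$ be the element of $\{1,2,3\}\setminus A$. The monomials $t_c$, $t_a^{-1}t_c^{-1}$, $t_b^{-1}t_c^{-1}$ and $t_1t_2t_3=t_4^{-1}$ contribute nothing, by Lemma~\ref{lem:comb}, symmetry and self-duality of $Z_\lambda Z_\lambda^*$.
\item The monomials $t_a,t_b,t_a^{-1}t_b^{-1}$ contribute $\equiv|\lambda|+1$, as computed above.
\item The $N_A^*K_A$-coefficient $t_A-t_1t_2t_3-1+t_c^{-1}$ contributes exactly $-1$, coming from the box $(0,0)$.
\end{itemize}
So each nonempty $\lambda_{A,\alpha}$ contributes $\equiv|\lambda_{A,\alpha}|$, giving $\rk G^{\TT}_{\blambda}\equiv n$ and hence $\rk G^m_{\blambda}\equiv 0$.

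Your outline expects the total rank and the fixed contributions to be even separately, and it overlooks the $-1$ coming from $N_A^*K_A$; as written it would not reach this conclusion. Your final sentence about comparing Behrend--Fantechi contributions belongs to Example~\ref{ex:3dreductioncont}, not to this lemma.
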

\begin{proof}
Along the lines of \eqref{eqn:charTU}, the $\TT$-character of $T_{\cA}|_P$ is given by
\[
T_{\cA}|_P = (t_1^{-1}+t_2^{-1}+t_3^{-1} -1) \cdot \sum_{A,B \in \bthree} K_A^* K_B + \sum_{A,B \in \bthree} N_A^* K_B + \sum_{A,B \in \bthree} K_A^* N_B \cdot t_B^{-1}. 
\]
Using \eqref{eqn:Neknot}, the proof is analogous to that of Proposition \ref{prop:compare}. For instance, the following choice of $G_{\blambda}$ works:
\begin{align*}
G_{\blambda} = &(t_1+t_2+t_3+t_1^{-1}t_2^{-1}+t_1^{-1}t_3^{-1}+t_2^{-1}t_3^{-1}+t_1t_2t_3) \sum_{A \in \bthree} K_A K_A^* \\
&+ \sum_{A \in \bthree} (t_A - t_1t_2t_3 - 1 + t_c^{-1}) N_A^* K_A \\
&+ (-1+t_1+t_2+t_3-t_1t_2-t_1t_3-t_2t_3+t_1t_2t_3)\sum_{A < B \in \bthree} K_A K_B^* \\
&+ \sum_{A \neq B \in \bthree} (1-t_c) N_A K_B^*,
\end{align*}
where $c$ always denotes the unique element of $\{1,2,3\} \setminus A$. 
\end{proof}

It is now easy to determine the invariants of Example \ref{ex:3dreductioncont} under the Calabi--Yau-3 specialization $s_1+s_2+s_3=0$. This specialization corresponds to the restriction to  
$$
T_0 := Z(t_1t_2t_3-1) \leq T, \quad \TT_0 := T_0 \times (\C^*)^r \leq \TT.
$$
\begin{proposition} \label{prop:CY3spec}
Let $r_{a4} = 0$ for all $a$. Then the specialization $\mathsf{Z}_{\vec{r}}(q)|_{s_1+s_2+s_3 = 0}$ is well-defined and we have\footnote{Since $\vd = 0$, we have $\mathsf{Z}^K_{\vec{r}}(q)|_{t_1t_2t_3=1} = \mathsf{Z}_{\vec{r}}(q)|_{s_1+s_2+s_3 = 0}$ so we only need to consider the latter.}
\[
\mathsf{Z}_{\vec{r}}(q)|_{s_1+s_2+s_3 = 0} = \overline{\eta}(q)^{-r},
\]
where $\overline{\eta}(q) = \prod_{n>0}(1-q^n)$ denotes the normalized Dedekind eta function.
\end{proposition}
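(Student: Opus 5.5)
The plan is to use Theorem \ref{thm:main} together with Lemma \ref{lem:3Dcompare}, which reduce the computation to a sum over fixed points of a symmetric-obstruction-theory contribution. Since $r_{a4}=0$, we have $\mathsf{C}_{\vec r}=1$ (Remark \ref{rem:n=0}). By Example \ref{ex:3dreductioncont}, $\mathsf{Z}_{\vec r}(q)=\sum_n q^n\int_{[\cM]^{\vir}_{\mathrm{BF}}}1$, where the invariant comes from the $\TT$-equivariant Behrend--Fantechi obstruction theory of the critical locus $\cM=Z(df)\subset\cA$. Restricted to $\TT_0=Z(t_1t_2t_3-1)\times(\C^*)^r$, the function $f$ is $\TT_0$-invariant, so $E\udot_{Q_3}$ becomes a $\TT_0$-equivariant symmetric obstruction theory.

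\textbf{Step 1: the specialization is well defined and each fixed point contributes $\pm1$.} At a fixed point $P\leftrightarrow\blambda$, Lemma \ref{lem:3Dcompare} gives
\[
e(-\mathsf v_{\blambda})=(-1)^{\rk G^m_{\blambda}}\,\frac{e(\Omega_\cA|_P\otimes(t_1t_2t_3)^{-1})}{e(T_\cA|_P)}=\frac{e(\Omega_\cA|_P\otimes(t_1t_2t_3)^{-1})}{e(T_\cA|_P)}.
\]
Setting $s_1+s_2+s_3=0$ turns this into $e(\Omega_\cA|_P)/e(T_\cA|_P)=(-1)^{\dim\cA}$, as long as no weight of $T_\cA|_P$ becomes zero on $\TT_0$. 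I would check this by showing $(T_\cA|_P)^{\TT_0}=0$, or more robustly $(T^{\vir}_\cM|_P)^{\TT_0}=0$, equivalently that the $\TT_0$-fixed locus of $\cM$ is the same finite reduced set of points. That follows by rerunning Proposition \ref{prop:fixlocquiver} and Corollary \ref{cor: T-fixed loci quiver} with $T_0$ in place of $T$. The $(\C^*)^r$-fixed locus is a product of $\Hilb^{n_{A,\alpha}}(\C^2_A)$, and each $(\C^*)^2_A\cap T_0$ is still a two-dimensional torus acting on $\C^2_A$ with isolated fixed points, because $t_a,t_b$ remain independent once $t_c$ is eliminated. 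Weights of the moving part that cancel between $T_\cA$ and $\Omega_\cA\otimes(t_1t_2t_3)^{-1}$ are handled by working with the reduced virtual character. In the end each fixed point contributes $(-1)^{\dim \cA}$, or equivalently $(-1)^{\dim T_{\cM}^{\vir,\mathrm{half}}}$ as in Behrend--Fantechi's computation for symmetric obstruction theories.

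\textbf{Step 2: compute the sign.} We have $\dim\cA=3n^2-n^2+2rn=2n^2+2rn$, which is even, so every sign is $+1$. As a check, this parity agrees with the Behrend function sign $(-1)^{\dim T_P\cA}$ at the isolated fixed points, since $\cA$ is smooth.

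\textbf{Step 3: count fixed points.} By Corollary \ref{cor: T-fixed loci quiver} the fixed points are $r$-tuples of partitions, so
\[
\mathsf Z_{\vec r}(q)|_{s_1+s_2+s_3=0}=\sum_{\blambda}q^{|\blambda|}=\Big(\prod_{n>0}(1-q^n)^{-1}\Big)^r=\overline\eta(q)^{-r}.
\]

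The main obstacle is Step 1. One must make sure that restricting to $T_0$ creates no $\TT_0$-fixed weights in the moving part of $\mathsf v_{\blambda}$, since such weights would produce poles or $0/0$ terms. Equivalently, one needs that $T^{\vir}_\cM|_P$ has no $T_0$-fixed part. I would try to prove this via the symmetry $T^{\vir}_\cM|_P=T_\cA|_P-\Omega_\cA|_P\otimes(t_1t_2t_3)^{-1}$ and the identification $h^0(T^{\vir}|_P)^{\TT_0}=T_P(\cM^{\TT_0})=0$, which also forces $h^1$ to vanish by duality.
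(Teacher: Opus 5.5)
Your final formula is right, but Step 1 rests on a false premise, and the fallback skips the one non-formal input of the proof.

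\textbf{Where Step 1 fails.} The tangent space $T_\cA|_P$ does have $\TT_0$-fixed weights, and even $\TT$-fixed ones, because $\cA$ imposes no relations. Take $r_{12}=1$ and $\lambda=(2,2)$, and write $Z=Z_\lambda$. Then
$T_\cA|_P = T_{\Hilb^4(\C^2)}|_P + (t_3^{-1}+t_1^{-1}t_2^{-1})ZZ^*$.
The boxes $(0,0)$ and $(1,1)$ contribute the weight $1$ and the weight $(t_1t_2t_3)^{-1}$. In general $\rk (T_\cA|_P)^{\TT_0}=2\sum_{A,\alpha}(|\lambda_{A,\alpha}|-h_{\lambda_{A,\alpha}}(0,0))$.

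Consequences:
\begin{itemize}
\item Your plan to show $(T_\cA|_P)^{\TT_0}=0$ cannot work.
\item $e(T_\cA|_P)$ is literally zero, so the quotient you write only makes sense after cancelling these weights against matching weights of $\Omega_\cA|_P\otimes(t_1t_2t_3)^{-1}$.
\item Your fallback $(T^{\vir}|_P)^{\TT_0}=0$, coming from $\cM^{\TT_0}=\cM^{\TT}$, does give well-definedness of the specialization.
\item After the cancellation, however, the specialized contribution is $(-1)^{\rk T_\cA|_P^m}=(-1)^{\dim\cA-\rk (T_\cA|_P)^{\TT_0}}$, not $(-1)^{\dim\cA}$.
\end{itemize}
Your sentence about ``working with the reduced virtual character'' hides exactly this. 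Each cancelled weight lowers the rank of the surviving $T$-half by one, so the evenness of $\dim\cA=2n^2+2rn$ proves nothing by itself.

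The same issue affects your appeal to Behrend--Fantechi. Their sign is $(-1)^{\dim T_P\cM}$, for the Zariski tangent space of the singular $\cM$, and $\dim T_P\cM=\dim\cA-\rk\,\mathrm{Hess}_P(f)$. Your ``check'' with $(-1)^{\dim T_P\cA}$ is not the Behrend value of $\cM$; it presupposes the parity that needs proof.

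\textbf{What is missing.} You need that $\rk (T_\cA|_P)^{\TT_0}$ is even. Two ways to get it:
\begin{itemize}
\item \emph{The paper's route:} the explicit count above, obtained from Lemma \ref{lem:comb} modulo $t_1t_2t_3=1$. This gives $\rk T_\cA|_P^m=2n^2+2rn-2\sum_{A,\alpha}(|\lambda_{A,\alpha}|-h_{\lambda_{A,\alpha}}(0,0))$, which is even.
\item \emph{A structural route, avoiding combinatorics:}
  \begin{itemize}
  \item The Hessian of $f$ is a $\TT$-equivariant symmetric pairing on $T_\cA|_P$ of weight $(t_1t_2t_3)^{-1}$.
  \item It is $\TT_0$-invariant, so it restricts to $(T_\cA|_P)^{\TT_0}$. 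There it is nondegenerate, since its kernel is $(T_P\cM)^{\TT_0}=0$.
  \item It pairs the $\TT$-weight space $(t_1t_2t_3)^k$ with $(t_1t_2t_3)^{-1-k}$, and $k\neq -1-k$.
  \item Hence that fixed part splits into dual pairs of weight spaces and has even dimension.
  \end{itemize}
\end{itemize}
With either argument inserted, your sign is indeed $+1$ at every fixed point, and Steps 2 and 3, including $\mathsf{C}_{\vec r}=1$ and the count of $r$-tuples of partitions, go through.
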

\begin{proof} 
Note that $M_{Q_3}(\vec{r},n)^{\TT_0}=M_{Q_3}(\vec{r},n)^{\TT}$ by Proposition \ref{prop:fixlocquiver}. 
For any $P \in \cM := M_{Q_3}(\vec{r},n)^{\TT}$, consider $T_{\cA}|_P - \Omega_{\cA}|_P$. Evidently, it has no $\TT_0$-fixed part. So, the desired specialization is well-defined. The contribution of $P$ to $\mathsf{Z}_{\vec{r}}(q)|_{s_1+s_2+s_3 = 0}$ is given by
\[
(-1)^{\dim T_{\cA}|_P^m} q^n,
\]
where $(-)^m$ denotes the moving part with respect to $\TT_0$. Let $\blambda$ be the integer partitions corresponding to $P$. An easy calculation using Lemma \ref{lem:comb} (but working modulo the relation $t_1t_2t_3=1$) yields 
\begin{align}
\begin{split} \label{eqn:3Dparitycalc}
\rk T_{\cA}|_P^m &= \rk T_{\cA}|_P - \rk T_{\cA}|_P^f \\
&= 2n^2 + 2rn - 2\sum_{A,\alpha} \Big( |\lambda_{A,\alpha}| - h_{\lambda_{A,\alpha}}(0,0) \Big) \\
&\equiv 0 \mod 2.
\end{split}
\end{align}
Therefore, the desired generating function is given by $\sum_{\blambda} q^{|\blambda|} = \overline{\eta}(q)^{-r}$.
\end{proof}

\begin{remark}
Consider $\cM := M_{Q_3}(\vec{r},n)$. Instead of $\int_{[\cM]^{\vir}_{\mathrm{BF}}} 1 \in \Z$ (defined with respect to the torus $\TT_0$), one can also define invariants by integrating Behrend's constructible function $\nu_{\cM} : \cM(\C) \to \Z$ \cite{Beh}
\[
\int_{\cM} \nu_{\cM} \, de := \sum_k k \cdot e(\nu_{\cM}^{-1}(\{k\})) \in \Z,
\]
where $e(-)$ denotes the topological Euler characteristic. As $\cM$ is non-compact, this could (a priori) give a different result from equivariant integration over the virtual cycle. By Proposition \ref{prop:fixlocquiver}, we have $\cM^{\TT} = \cM^{\TT_0}$, which is 0-dimensional and reduced. In particular
\[
\int_{\cM} \nu_{\cM} \, de = \sum_{P \in \cM^{\TT_0}} \nu_{\cM}(P).
\]
We have $T_{\cA}|_P - \Omega_{\cA}|_P = T_{\cM}|_P - \Omega_{\cM}|_P$, where $T_{\cM}|_P:=\Omega_{\cM}|_P^*$ denotes the Zariski tangent space of $\cM$ at $P$. As $\cM^{\TT_0}$ is 0-dimensional and reduced, the restriction $T_{\cM}|_P$ has no $\TT_0$-fixed part. Thus we determine the Behrend value
\[
1 = (-1)^{\rk T_{\cA}|_{P}^m} = (-1)^{\rk T_{\cM}|_{P}^m} = (-1)^{\rk T_{\cM}|_{P}}  = \nu_{\cM}(P),
\]
where the first equality follows from \eqref{eqn:3Dparitycalc} and last equality is given by \cite[Thm.~3.4]{BF2}. Therefore, the Behrend value at the fixed points is $+1$, so the virtual and Behrend invariants agree
\[
\int_{\cM} \nu_{\cM} \, de = \int_{[\cM]^{\vir}_{\mathrm{BF}}} 1.
\]
This situation is similar to the case of $\Hilb^n(\C^3)$ studied in \cite{MNOP, BF2}, except that in our setting the Behrend value at the fixed points is always $+1$.\footnote{We caution the reader that this is not enough evidence to conjecture that $\nu_{\cM}$ is constant \cite{JKS}.} 
\end{remark}

We observe that the invariants of Example \ref{ex:3dreductioncont} also appear to have interesting modular properties under other specializations.

\begin{example}
Let $r_{12}=r_{13} = 1$ and $r_A = 0$ otherwise. Consider the Jacobi theta function $\theta_2(q) = \sum_{n \in \Z + \frac{1}{2}} q^{n^2}$, Dedekind eta function $\eta(q)$  and its normalized version $\overline{\eta}(q)$. For the diagonal specialization $s_1=s_2=s_3$ and $e_{12,1} = e_{13,1}$, the equivariant parameters drop out and we find (by explicit calculation) 
\[
\frac{\sfZ_{\vec{r}}(q)}{\overline{\eta}(q)^{-8}} = \frac{\eta(q^4)^2}{\eta(q^2)\eta(q)^6} \mod q^5.
\]
The right hand side can also be written as $\theta_2(q) / 2\eta(q)^6$. We expect that this equality holds to all orders in $q$. 
\end{example}

\begin{remark}
    Let $\mathbf{M}_{Q_2}(r,n)$, $\mathbf{M}_{Q_3}(\vec{r},n)$ and $\mathbf{M}_{Q_4}(\vec{r},n)$ be the derived moduli spaces of stable representations of the 2D, 3D and 4D ADHM quivers, where the derived structures are induced from the smooth structure, critical locus description, and isotropic zero locus description, respectively (Section \ref{sec:quivervirtualstruc}). They are $0$-shifted symplectic, $(-1)$-shifted symplectic, and $(-2)$-shifted symplectic, respectively, in the sense of \cite{PTVV}.  The fact that the invariants of the 2D and 3D ADHM quivers are obtained as specializations of the 4D ADHM quiver (Examples \ref{ex:2dreductioncont}, \ref{ex:3dreductioncont}) suggests the following geometric relationship between the derived moduli spaces. If $r_{12}=r$ and $r_{A}=0$ otherwise, then we expect that
    $$\mathbf{M}_{Q_3}(\vec{r},n) \simeq \mathrm{Tot}\Big(\mathbb{L}_{\mathbf{M}_{Q_2}(r,n)}[-1]\Big),
    $$
    where $\mathbb{L}[n]$ denotes the $n$-shifted cotangent complex and $\mathrm{Tot}(-)$ denotes the total space. Similarly, if $r_{a4}=0$ for all $a$, then we expect that
    $$\mathbf{M}_{Q_4}(\vec{r},n) \simeq \mathrm{Tot}\Big(\mathbb{L}_{\mathbf{M}_{Q_3}(\vec{r},n)}[-2]\Big).
    $$
\end{remark}

\subsubsection{Crossed instantons}

In the rest of this section, we study $Q_4$ with dimension vector $r_{12} = r_{34} = 1$ and $r_A = 0$ otherwise. We prove Theorem \ref{thm:crossinst}. We start with the following proposition.
\begin{proposition} \label{prop:Jiszero}
Let $P = [(B_a,I_A,J_A)] \in M_{Q_4}(\vec{r},n)$ with $r_{12} = r_{34} = 1$ and $r_A = 0$ otherwise. Then $J_A = 0$ for all $A \in \bsix$. 
\end{proposition}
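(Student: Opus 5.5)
Only $I_{12},J_{12},I_{34},J_{34}$ are present, with $W_{12}=W_{34}=\C$, so $J_{12}$ and $J_{34}$ are linear functionals on $V$. The strategy is the classical Nakajima argument for the 2D ADHM quiver with $r=1$: show that each $J_A$ kills a generating set of $V$ and is therefore zero. By stability \eqref{eqn:ADHMstab}, $V$ is spanned by vectors $f(B_1,\dots,B_4)I_{12}(1)$ and $f(B_1,\dots,B_4)I_{34}(1)$ with $f$ a noncommutative monomial. It therefore suffices to show $J_{12}f(B)I_{12}=J_{12}f(B)I_{34}=0$ for all monomials $f$, and likewise for $J_{34}$.

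First I simplify the monomials. The relations $\mu_A=0$ for $A\neq\{1,2\},\{3,4\}$ (where $I_A=0$) say $[B_a,B_b]=0$ for $a\in\{1,2\},\ b\in\{3,4\}$. Hence $B_3,B_4$ commute with $B_1,B_2$. Any monomial can then be rewritten as $g(B_1,B_2)h(B_3,B_4)$, with $g,h$ noncommutative in their own pair.

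Next I use $\mu_{12,\bar a}$: $B_3I_{12}=B_4I_{12}=0$, so $fI_{12}=g(B_1,B_2)I_{12}$ unless $h$ is constant. Similarly $fI_{34}=h(B_3,B_4)I_{34}$ up to reordering, using $B_1I_{34}=B_2I_{34}=0$. Likewise $J_{12}B_3=J_{12}B_4=0$ and $J_{34}B_1=J_{34}B_2=0$. The cross terms are then immediate. Take $J_{12}\,g(B_1,B_2)h(B_3,B_4)I_{34}$ and commute $h$ leftward through $g$. If $h$ is nonconstant, $J_{12}h(B_3,B_4)=0$. If $h$ is constant, $g$ must be nonconstant (else we use $\nu_{34}=J_{12}I_{34}=0$), and then $g(B_1,B_2)I_{34}=0$. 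So $J_{12}V_{34}=0$, where $V_{34}$ is the span of the $fI_{34}$; symmetrically $J_{34}V_{12}=0$.

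The diagonal terms $J_{12}g(B_1,B_2)I_{12}$ are the main obstacle, since no new 4D relation forces them to vanish. I would try to reproduce Nakajima's trace argument (\cite[Prop.~2.8]{Nak}, already used in Proposition \ref{prop:fixlocquiver}). For $k\ge0$, consider $\tr\big(g(B_1,B_2)\,I_{12}J_{12}\,[\ldots]\big)$ and use $I_{12}J_{12}=-[B_1,B_2]$ to write $J_{12}gI_{12}=\tr(gI_{12}J_{12})=-\tr(g[B_1,B_2])$. The difficulty is that the trace is over all of $V$, not over the $B_1,B_2$-invariant subspace $V_{12}$. My plan is to restrict to $V_{12}$, which is invariant under $B_1,B_2$ and $I_{12}J_{12}$, and on which $B_3,B_4$ vanish (because $B_3gI_{12}=gB_3I_{12}=0$). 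Nakajima's argument on $V_{12}$ would then give that $B_1,B_2$ are simultaneously upper-triangularizable there and that $J_{12}|_{V_{12}}=0$. The standard route is induction on a flag, showing $\tr(\text{word}\cdot I_{12}J_{12})=0$ via the trace of a product with a commutator. Combined with $J_{12}V_{34}=0$ and $V=V_{12}+V_{34}$, this gives $J_{12}=0$, and symmetrically $J_{34}=0$.

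The step I expect to need care is making sure that $I_{12}J_{12}$ preserves $V_{12}$ and that the restricted commutator identity $[B_1,B_2]|_{V_{12}}=-I_{12}J_{12}|_{V_{12}}$ is legitimate. Both follow because $V_{12}$ is $B_1,B_2$-stable and contains $\im I_{12}$.
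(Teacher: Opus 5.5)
Your proposal is correct and follows essentially the same route as the paper: stability splits $V$ into the $I_{12}$- and $I_{34}$-generated parts, the 4D relations kill all cross and mixed terms, and the remaining pure terms such as $J_{34}M(B_3,B_4)I_{34}$ are handled by Nakajima's trace argument using $r_{34}=1$. The only difference is that you restrict to $V_{12}$ (resp.~$V_{34}$) to get a stable rank-one 2D ADHM datum and invoke Nakajima there, whereas the paper runs the trace induction on all of $V$. That works equally well because $[B_3,B_4]+I_{34}J_{34}=0$ holds on all of $V$ and the induction needs only this relation, cyclicity, and $r_{34}=1$, so the ``difficulty'' you flag is not a real obstacle.
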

\begin{proof}
Without loss of generality, we focus on the case $A = \{3,4\}$. By using the stability condition \eqref{eqn:ADHMstab}, we can write any $v \in V = \C^n$ as a linear combination of vectors of the form $F I_{12} w$ and $F' I_{34} w'$ where $w \in W_{12}$, $w' \in W_{34}$, and $F, F' \in \C \langle B_1, \ldots, B_4 \rangle$. By \eqref{eqn:4dADHM}, 
\[
J_{34} F I_{12} w = 0.
\]
Furthermore, all monomials $M$ appearing in $F'$ which contain at least one $B_1$ or $B_2$ have the property that $J_{34} M I_{34} w' = 0$. It remains to consider $F'$ that depend only on $B_3, B_4$. Note that so far, we only used that $r_{A} = 0$ for $A \neq \{1,2\}, \{3,4\}$. For the next part, we require $r_{12} = r_{34} = 1$.

Now, for any monomial $M$ in $B_3, B_4$, we have 
\[
J_{34} M I_{34} = \tr(J_{34} M I_{34}) = 0.
\]
We claim that the second equality follows from repeated use of the equation $[B_3,B_4] + I_{34} J_{34} = 0$ and cyclicity of trace as in \cite[Prop.~2.8]{Nak}. This can be shown by induction on the length $\ell$ of $M$. Note that the case $\ell = 0$ follows immediately from $\tr(J_{34} I_{34})  = \tr(I_{34} J_{34})= - \tr([B_3,B_4]) = 0$. 
\end{proof}

As a direct consequence of Propositions \ref{prop: Quot scheme} and \ref{prop:Jiszero}, we have the following result.
\begin{corollary}
Suppose $r_{12} = r_{34} = 1$ and $r_A = 0$ otherwise. Then $$M_{Q_4}(\vec{r},n) \cong \Quot_{\C^4}\Big(\bigoplus_{A}\O_{\C^2_A}^{\oplus r_A}, n\Big).$$
\end{corollary}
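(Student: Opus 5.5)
The corollary follows by combining the two cited propositions; the only care needed is that the identification holds scheme-theoretically, not just on closed points.

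First, I upgrade Proposition \ref{prop:Jiszero} from closed points to schemes, namely $M_{Q_4}(\vec{r},n) = \widetilde M_{Q_4}(\vec{r},n)$ as closed subschemes of $\cA$. Proposition \ref{prop:Jiszero} only gives $J_A=0$ pointwise. I would rerun its argument for a family over an arbitrary base $S$: a rank $n$ bundle $\cV$ with maps $B_a, I_A, J_A$ satisfying \eqref{eqn:4dADHM} over $\O_S$ and stable on fibres. Since $r_A=0$ for $A\neq\{1,2\},\{3,4\}$, only $J_{12}$ and $J_{34}$ occur. Consider $J_{34}$.

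\begin{itemize}
\item Fibrewise stability, together with Nakayama's lemma, shows that $\cV$ is generated as an $\O_S$-module by the sections $F I_{12}(1)$ and $F' I_{34}(1)$, with $F,F'$ noncommutative monomials in the $B_a$ of bounded length.
\item The identities $J_{34}B_1=J_{34}B_2=0$, $J_{34}I_{12}=0$ and $B_{1}I_{34}=B_2 I_{34}=0$ are relations in \eqref{eqn:4dADHM}. Each commutator $[B_a,B_b]$ with $a\in\{1,2\}$ and $b\in\{3,4\}$ equals $-I_AJ_A$ with $r_A=0$, so it vanishes. Using these, all words except $J_{34}MI_{34}$ with $M$ a word in $B_3,B_4$ are killed, as in the proof of Proposition \ref{prop:Jiszero}.
\item For the remaining words, $J_{34}MI_{34}\in\O_S$ is a $1\times1$ matrix, so it equals its trace. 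The trace identity argument of \cite[Prop.~2.8]{Nak}, by induction on the length of $M$ using $[B_3,B_4]=-I_{34}J_{34}$, is purely algebraic and works over any commutative ring. It gives $\tr(J_{34}MI_{34})=0$.
\end{itemize}

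Hence $J_{34}=0$ on $\cV$, and symmetrically $J_{12}=0$. So the ideal of $\widetilde M_{Q_4}$ is contained in that of $M_{Q_4}$, and the two subschemes coincide.

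Second, I invoke Proposition \ref{prop: Quot scheme} to identify $\widetilde M_{Q_4}(\vec{r},n)$ with the Quot scheme of $\O_{\C^2_{12}}\oplus\O_{\C^2_{34}}$. Its proof is only stated on $\C$-points. If needed, I would make it functorial in the same way as above. A family $(B_a,I_A)$ over $S$ satisfying the relations with $J=0$ is the same as an $\O_S[x_1,\dots,x_4]$-module structure on $\cV$. Such a module is annihilated on the image of $I_A$ by $x_{\overline a}$ for $\overline a\in\overline A$, and therefore receives a map from $\bigoplus_A \O_{\C^2_A\times S}\otimes W_A$. Fibrewise stability is exactly surjectivity of this map on fibres, hence surjectivity by Nakayama. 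This defines inverse natural transformations between the two moduli functors, which gives the isomorphism via representability, or via \cite[Thm.~2.3]{Fog} as in Proposition \ref{prop:fixlocquiver}.

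The main obstacle is the scheme-theoretic version of the trace argument: making sure every step of Nekrasov's and Nakajima's induction uses only the defining equations and no reducedness of the base. I would handle this by carrying the induction out over the universal family on $\cA$ restricted to $M_{Q_4}$. There the statement becomes an identity in the ideal generated by the entries of the relations \eqref{eqn:4dADHM}.
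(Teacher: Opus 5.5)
Your proposal is correct and follows the paper's route. The paper obtains the corollary directly from Propositions \ref{prop: Quot scheme} and \ref{prop:Jiszero}. Your rerun of both arguments over an arbitrary base adds the scheme-theoretic detail the paper leaves implicit: that $J_A$ vanishes on the possibly non-reduced $M_{Q_4}(\vec{r},n)$, and that the Quot identification holds functorially rather than only on $\C$-points.

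One small correction concerns your claim that the trace argument works over any commutative ring. The standard trace induction ends, once shorter words are known to vanish, with an identity $(q+1)\,J_{34}MI_{34}=0$, where $q$ is the number of occurrences of $B_4$ in $M$. It therefore needs a $\Q$-algebra, not an arbitrary commutative ring. This is harmless here, since every base is a $\C$-scheme.
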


We require one more lemma before we can prove Theorem \ref{thm:crossinst}. We define $w := w_{12,1}$, $w' := w_{34,1}$ and let $\blambda = (\lambda_{12}, \lambda_{34}) = (\lambda,\lambda')$ be a pair of integer partitions.
\begin{lemma} \label{lem:lim 2}
Suppose $r_{12} = r_{34} = 1$ and $r_A = 0$ otherwise. Then
\[
\lim_{L \to \infty} [-\mathsf{v}_{\blambda}] \Big|_{w = 1, w' = L} = [-P_{3} T_{\lambda}][-P_{1} T_{\lambda'}] = \lim_{L \to 0} [-\mathsf{v}_{\blambda}] \Big|_{w = 1, w' = L},
\] 
where $T_{\lambda} := T_{12}$ and $T_{\lambda'} := T_{34}$ were defined in \eqref{eqn:Neknot}.
\end{lemma}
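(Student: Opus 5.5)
Specialize Nekrasov's character \eqref{eqn:Neknot} to $r_{12}=r_{34}=1$, $r_A=0$ otherwise, and split $\mathsf{v}_{\blambda}$ into a part independent of $w,w'$ and a part depending only on the ratio $u:=w'/w$. With $K_{12}=wZ_\lambda$, $K_{34}=w'Z'_{\lambda'}$ (where $Z_\lambda=\sum_{(i,j)\in\lambda}t_1^it_2^j$ and $Z'_{\lambda'}=\sum t_3^it_4^j$), $N_{12}=w$, $N_{34}=w'$, $\phi(12)=3$, $\phi(34)=1$, we get
\[
\mathsf{v}_{\blambda}=P_3T_{\lambda}+P_1T_{\lambda'}+\mathsf{c}_{\blambda},\qquad
\mathsf{c}_{\blambda}:=P_{34}N_{12}K_{34}^*+P_{12}N_{34}K_{12}^*-P_{1234}K_{12}K_{34}^*.
\]
Here $T_\lambda=T_{12}$ and $T_{\lambda'}=T_{34}$ contain $w$ and $w'$ only through $w w^{-1}=1$, so they are independent of the framing parameters. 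The cross term $\mathsf{c}_{\blambda}$ is a finite sum of monomials $\pm\, u^{\pm1}m$, where $m$ is a monomial in the $t_a$. Explicitly, the first two summands of $\mathsf{c}_{\blambda}$ carry $u^{-1}$ and $u$, and the last carries $u^{-1}$. Since $[\cdot]$ is multiplicative, it suffices to show $[-\mathsf{c}_{\blambda}]\to 1$ as $u\to\infty$ and as $u\to 0$.

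For a single monomial, $[x]=x^{1/2}-x^{-1/2}$ up to sign conventions, so $[x]\sim \pm x^{\pm1/2}$ in either limit. Hence $[-\mathsf{c}_{\blambda}]$ behaves like $\pm\, u^{\kappa/2}\cdot(\text{monomial in } t)$, where $\kappa$ is a signed count of monomials weighted by $\pm1$ according to their $u$-exponent. The main step is to check that the total exponent vanishes and the leading coefficient is $1$. That is, write $[x]=-x^{-1/2}(1-x)$ and track the dominant behaviour: as $u\to\infty$, terms $u\,m$ contribute $-(um)^{1/2}$ and terms $u^{-1}m$ contribute $(u^{-1}m)^{-1/2}$, each up to sign. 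The limit is finite and nonzero exactly when the combined exponent vanishes, i.e.\ when
\[
\rk(\text{coefficient of }u)=\rk(\text{coefficient of }u^{-1})\quad\text{in }\mathsf{c}_{\blambda}.
\]
This holds because $P_A$ has rank $0$: each of the three pieces of $\mathsf{c}_{\blambda}$ is a multiple of some $P$ and so has virtual rank zero. The leftover constant is then $\pm\prod m^{\mp1/2}$ over the monomials, i.e.\ $\pm\det(\cdot)^{1/2}$ of the rank-zero pieces.

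The main obstacle is showing that this residual constant and sign are exactly $1$, and that both limits agree. The plan is to use the symmetry $[\chi]=(-1)^{\rk\chi}[\chi^*]$ together with the fact that $\det(P_A\chi)=\det((1-t_a)(1-t_b)\chi)=1$ for any $\chi$, since the factor $(1-t_a)(1-t_b)$ has determinant-trivial rank-zero form. In the same way $\det(P_{1234}\chi)=1$. Thus, after multiplying out, both the monomial prefactor and the sign $(-1)^{\#}$ are trivial. Concretely, I would prove a small lemma: for a monomial $m$ and a polynomial $Q$ in the $t_a$ with $\rk$ and $\det$ conditions as above, $\lim_{u\to\infty}[u\,mP_A Q]=\det(mP_AQ)^{\pm 1/2}(-1)^{\rk}=1$, and likewise for $u\to 0$. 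Applying this lemma to each of the three pieces of $\mathsf{c}_{\blambda}$ gives both equalities at once, with the same value $[-P_3T_\lambda][-P_1T_{\lambda'}]$.
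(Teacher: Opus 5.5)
Your proposal is correct and follows the paper's proof. The paper likewise splits $\mathsf{v}_{\blambda}$ into $P_3T_{12}+P_1T_{34}$ plus the three cross terms $[-P_{12}N_{34}K_{12}^*]$, $[-P_{34}N_{12}K_{34}^*]$, $[P_{1234}K_{12}K_{34}^*]$, and checks that each of these tends to $1$ as $L^{\pm1}\to\infty$ by writing out the product box by box; that computation is exactly your rank-zero, $\det=1$ observation for $P_A$ and $P_{1234}$.

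One small slip in your growth count. Since $[x^{-1}]=-[x]$, both $[um]$ and $[u^{-1}m]$ grow like $u^{1/2}$ as $u\to\infty$. So the finiteness condition is that the ranks of the $u$- and $u^{-1}$-coefficients \emph{sum} to zero, not that they agree. This is harmless here, because each of the three pieces has rank zero on its own.
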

\begin{proof}
This follows from a straightforward calculation. The claim is that, for $w=1$, $w' = L$, the expressions 
\[
[-P_{12} N_{34} K_{12}^*], \quad [-P_{34} N_{12} K_{34}^*], \quad [P_{1234} K_{12} K_{34}^*]
\]
all become 1 for $L ^{\pm 1} \to \infty$. Indeed, for the first term
\[
[-P_{12} N_{34} K_{12}^*] \Big|_{w = 1, w' = L} = \prod_{(i,j) \in \lambda} \frac{(t_1^{-\frac{i-1}{2}} t_2^{-\frac{j}{2}} L^{\frac{1}{2}} - t_1^{\frac{i-1}{2}} t_2^{\frac{j}{2}} L^{-\frac{1}{2}})  (t_1^{-\frac{i}{2}} t_2^{-\frac{j-1}{2}} L^{\frac{1}{2}} - t_1^{\frac{i}{2}} t_2^{\frac{j-1}{2}} L^{-\frac{1}{2}})}{(t_1^{-\frac{i}{2}} t_2^{-\frac{j}{2}} L^{\frac{1}{2}} - t_1^{\frac{i}{2}} t_2^{\frac{j}{2}} L^{-\frac{1}{2}}) (t_1^{-\frac{i-1}{2}} t_2^{-\frac{j-1}{2}} L^{\frac{1}{2}} - t_1^{\frac{i-1}{2}} t_2^{\frac{j-1}{2}} L^{-\frac{1}{2}})}
\]
which approaches 1 as $L^{\pm 1} \to \infty$. The limits of other terms are computed similarly.
\end{proof}

\begin{proof}[Proof of Theorem \ref{thm:crossinst}]
Recall that $w := w_{12,1}$, $w' := w_{34,1}$.
    As the diagonal of the framing torus $(\C^*)^r$ acts trivially on the origami moduli spaces, we may set $w=1$. By Proposition \ref{prop:sheaflift}, we have 
\[
( \mathsf{C}_{\vec{r}}^K)^{-1} \cdot \chi(M_{Q_4}(\vec{r},n), \widehat{\O}^{\vir}) = \frac{\chi(M_{Q_4}(\vec{r},n), \mathcal{N}_{\vec{r},n})}{1-(w' t_1t_2)^{-1}}. 
\]
We use the Quot-to-Chow morphism of \cite[Cor.~7.15]{Ryd} 
\[
M_{Q_4}(\vec{r},n) \cong \Quot_{\C^4}\Big(\bigoplus_{A}\O_{\C^2_A}^{\oplus r_A}, n\Big) \to \Sym^n(\C^4).
\]
Since this map is $\TT$-equivariant and proper, we have 
\[
R \nu_* \mathcal{N}_{\vec{r},n} \in K_0^{\TT}(\Sym^n(\C^4)).
\]
Moreover, the framing torus $(\C^*)^r$ acts trivially on $\Sym^n(\C^4)$, so we can write
\[
R \nu_* \mathcal{N}_{\vec{r},n} = \sum_i V_i \cdot w^{\prime i}, \quad V_i \in K_0^T(\Sym^n(\C^4)),
\]
where the sum is finite and $T := Z(t_1t_2t_3t_4-1)$. Hence, we have
\[
\frac{\chi(M_{Q_4}(\vec{r},n), \mathcal{N}_{\vec{r},n})}{1-(w' t_1t_2)^{-1}} = \frac{f(w')}{1-(w' t_1t_2)^{-1}}, \quad f(w') \in \Q(t_1,t_2,t_3,t_4)[w^{\prime \pm 1}], \quad t_1t_2t_3t_4=1.
\]
By Theorem \ref{thm:main} and Lemma \ref{lem:lim 2}, the limits $\lim_{L^{\pm 1} \to \infty} f(L) / (1-(L t_1t_2)^{-1})$ exist so we have $f(w') = \alpha + \beta (w')^{-1}$ for some $\alpha, \beta \in \Q(t_1,t_2,t_3,t_4)$ where $t_1t_2t_3t_4=1$. In fact, these limits are equal so we have $\alpha  = - t_1 t_2 \beta$. Consequently $f(w') / (1-(w' t_1t_2)^{-1}) = \alpha$ is independent of $w'$.\footnote{We stress that the contribution of individual $\TT$-fixed points depends on $w'$ --- the dependence on $w'$ disappears only after summation.} Therefore, again by Theorem \ref{thm:main} and Lemma \ref{lem:lim 2}, we have
\begin{align*}
( \mathsf{C}_{\vec{r}}^K)^{-1} \cdot \chi(M_{Q_4}(\vec{r},n), \widehat{\O}^{\vir}) &= \lim_{L \to \infty} ( \mathsf{C}_{\vec{r}}^K)^{-1} \cdot \chi(M_{Q_4}(\vec{r},n), \widehat{\O}^{\vir}) \\
&= \sum_{\substack{\lambda, \lambda' \\ |\lambda| + |\lambda'| = n}}  [-P_{3} T_{\lambda}][-P_{1} T_{\lambda'}],
\end{align*}
and the theorem follows from Example \ref{ex:2dreductioncont}.
\end{proof}

\subsubsection{Stable/co-stable wall-crossing} \label{sec:stabcostab}

In the definition of $M_{Q_4}(\vec{r},n)$, one can replace the stability condition by the following condition referred to as co-stability: 
\begin{center}\vspace{3pt}
    there is no nonzero subspace $V'\subseteq \bigcap\limits_{A} \ker(J_A)$ that is invariant under $B_1,\dots, B_4$.\vspace{3pt}
\end{center}
The co-stability condition is equivalent to King stability with respect to $(\Theta_{\infty} = -n, \Theta_0 = 1)$ after applying the Crawley-Boevey trick as in the proof of Proposition \ref{prop:King}. Then Section \ref{sec:quivermoduli} holds analogously in this setup. We denote the resulting moduli space by $M_{Q_4}^c(\vec{r},n)$. In particular, there are (projective) semisimplification morphisms
\begin{displaymath}
\xymatrix
{
M_{Q_4}(\vec{r},n) \ar[dr] & & \ar[dl] M_{Q_4}^c(\vec{r},n) \\
& M_{Q_4}^{\mathrm{ssimp}}(\vec{r},n). &
}
\end{displaymath}
Moreover, we can use the same $\TT$-action on $M_{Q_4}^c(\vec{r},n)$ as before, namely \eqref{eqn:Taction}. As we will see below, the $\TT$-fixed locus of $M_{Q_4}^c(\vec{r},n)$ is 0-dimensional and reduced. Then Section \ref{sec:quivervirtualstruc} holds analogously in this setting, so we have an Oh--Thomas virtual cycle and virtual structure sheaf for $M_{Q_4}^c(\vec{r},n)$ (using the analogue of the orientation defined in \eqref{eqn:choiceofori}). We conjecture that the wall-crossing between the invariants of these two moduli spaces is trivial.
\begin{conjecture} \label{conj:stab-costab}
For all $\vec{r}$, $n$, we have
$$
\chi(M_{Q_4}(\vec{r},n),\widehat{\O}^{\vir}) = \chi(M_{Q_4}^c(\vec{r},n),\widehat{\O}^{\vir}).
$$
\end{conjecture}

This conjecture implies an interesting symmetry of the origami partition function.
\begin{corollary} \label{cor:symm}
Suppose Conjecture \ref{conj:stab-costab} holds. Then we have
$$
\mathsf{Z}_{\vec{r}}^K(q) \Big|_{(w_{A,\alpha}) \mapsto (t_A w_{A,\alpha}^{-1})} = \mathsf{Z}_{\vec{r}}^K(q),
$$
that is to say, $\mathsf{Z}_{\vec{r}}^K(q)$ is invariant under replacing all $w_{A,\alpha}$ by $t_A w_{A,\alpha}^{-1}$.
\end{corollary}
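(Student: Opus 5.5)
The plan is to compute both sides of Conjecture \ref{conj:stab-costab} by virtual localization, and to match the co-stable side to the stable side term by term after the substitution $w_{A,\alpha}\mapsto t_A w_{A,\alpha}^{-1}$. The natural tool is a duality between stable and co-stable representations. Given $P=(B_a,I_A,J_A)$ on $V$, form the transposed representation $P^\vee=(B_a^{T},J_A^{T},I_A^{T})$ on $V^*$ with framings $W_A^*$. Stability of $P$ ($V$ generated by the images of the $I_A$) is equivalent to co-stability of $P^\vee$: an invariant subspace of $V^*$ inside $\bigcap\ker I_A^T$ is the annihilator of an invariant subspace of $V$ containing all $I_A(W_A)$. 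The relations \eqref{eqn:4dADHM} are preserved up to signs: $\mu_A$ becomes $-[B_a^T,B_b^T]+(J_AI_A)^T$-type terms, so we first replace $B_a$ by $-B_a^T$ (or $I_A$ by $-J_A^T$), so that $\mu_A,\nu_A,\mu_{A,\bar a},\nu_{A,\bar a}$ go to the corresponding co-stable equations. This gives an isomorphism $M_{Q_4}(\vec r,n)\cong M^c_{Q_4}(\vec r,n)$ which intertwines the $\TT$-action with the automorphism of $\TT$ given by $t_a\mapsto t_a$ and $w_{A,\alpha}\mapsto t_A w_{A,\alpha}^{-1}$. Indeed, under \eqref{eqn:Taction} the transpose of $t_A^{-1}w_AJ_A$ scales like $I_A$ with weight $(t_Aw_A^{-1})^{-1}$.

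Next I will check that this isomorphism is compatible with the quadratic bundles $(E,q)$ and the sections $s$. Transposing induces an isomorphism of $E$ with its co-stable analogue, preserving $q$ up to a global sign, because trace is invariant under transposition. Under this isomorphism the isotropic subbundle $\Lambda$ maps to some maximal isotropic subbundle $\Lambda'$ of the co-stable bundle. Consequently the Oh--Thomas classes agree up to the ratio of the orientations $o_{\Lambda'}$ and $o^c_{\Lambda}$. This ratio is a locally constant sign, $(-1)^{\dim(\Lambda'/\Lambda'\cap\Lambda)}$, and on a connected space it is constant. The stable moduli space is connected by Theorem \ref{thm: connectedness of origami moduli space}, so the sign can be determined at a single point; for $n=0$ it is visibly trivial, though for general $n$ it must be computed.

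Combining these steps gives
\[
\chi(M^c_{Q_4}(\vec r,n),\widehat\O^{\vir})=\pm\,\chi(M_{Q_4}(\vec r,n),\widehat\O^{\vir})\big|_{w_{A,\alpha}\mapsto t_Aw_{A,\alpha}^{-1}}.
\]
The normalizations $\mathsf C^K_{\vec r}$ from Remark \ref{rem:n=0} also transform correctly, since $N_A^*N_{\bar A}t_{\bar A}^{-1}$ goes to $t_AN_A\,t_{\bar A}^{-1}N_{\bar A}^*t_{\bar A}^{-1}$. Using $t_At_{\bar A}=1$, this becomes $t_A^2N_AN_{\bar A}^*$, which should be compared with its dual and checked to give the same bracket up to sign. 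With the conjecture assumed, substituting and summing over $n$ yields the stated invariance of $\mathsf Z^K_{\vec r}(q)$.

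The main obstacle is sign bookkeeping. This covers the sign in $q$, the orientation comparison, the sign of $\mathsf C^K$ under the substitution, and the possibly $n$-dependent global factor $(-1)^{(r-1)n}$ from \eqref{eqn:choiceofori}. I would fix all of these by comparing a single fixed point, the $\blambda$ with all mass in one partition. There the co-stable fixed point contributions can be written down explicitly (the transposed Hilbert scheme data) and compared to $[-\mathsf v_{\blambda}]$ after substitution, using $\widehat\Lambda_{-1}E=(-1)^{\rk E}\widehat\Lambda_{-1}E^*$.
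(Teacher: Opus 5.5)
Your overall route is the paper's. The paper uses exactly $\sigma\colon[(B_a,I_A,J_A)]\mapsto[(B_a^t,-J_A^t,I_A^t)]$, notes that it intertwines the $\TT$-actions through $(t_a,w_A)\mapsto(t_a,t_Aw_A^{-1})$, asserts compatibility with the virtual structures, and then invokes Conjecture \ref{conj:stab-costab}. You are right that the signs need checking; the paper does not check them. However, your sketch of that bookkeeping contains errors, and the decisive step is left open.

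Only the second of your two sign fixes works. Negating $B_a$ leaves $[B_a^t,B_b^t]$ unchanged, because the commutator is quadratic in the $B$'s. By contrast, $I_A\mapsto -J_A^t$ gives $\mu_A\mapsto-\mu_A^t$, $\nu_A\mapsto-\nu_{\overline A}^t$, $\mu_{A,\overline a}\mapsto-\nu_{A,\overline a}^t$ and $\nu_{A,\overline a}\mapsto\mu_{A,\overline a}^t$. The induced bundle map is $(P_A,Q_A,R_{A,\overline a},S_{A,\overline a})\mapsto(-P_A^t,-Q_{\overline A}^t,-S_{A,\overline a}^t,R_{A,\overline a}^t)$, and it preserves $q$ exactly, not just up to sign.

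The orientation comparison needs neither connectedness nor a fixed-point test, and it is not trivial at $n=0$. The pull-back of the co-stable $\Lambda$ is $\bigoplus_{A\in\bthree}(\mathrm{End}(V)\oplus\Hom(W_{\overline A},W_A))\oplus\bigoplus_{A,\overline a}\Hom(W_A,V)$. This meets $\Lambda$ exactly in $\bigoplus_{A\in\bthree}\mathrm{End}(V)$. So the two orientations differ everywhere by $(-1)^{\sum_{A\in\bthree}r_Ar_{\overline A}+2nr}=(-1)^{\sum_{A\in\bthree}r_Ar_{\overline A}}$; the factor $(-1)^{(r-1)n}$ occurs on both sides and cancels.

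Your proposed comparison at the one-partition fixed point would need a new Kool--Rennemo sign computation on the co-stable side. There the fixed points have $I_A=0$ rather than $J_A=0$, so the simplification in Proposition \ref{prop:cokcalc} is unavailable. As written, that step presupposes the sign it is meant to determine.

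Your substitution in $\mathsf{C}^K_{\vec r}$ also has the wrong powers of $t$. In fact $N_A^*\mapsto t_A^{-1}N_A$ and $N_{\overline A}\mapsto t_{\overline A}N_{\overline A}^*$, so $N_A^*N_{\overline A}t_{\overline A}^{-1}\mapsto t_{\overline A}N_AN_{\overline A}^*$, which is exactly its dual. Hence $\mathsf{C}^K_{\vec r}$ changes by the same sign $(-1)^{\sum_{A\in\bthree}r_Ar_{\overline A}}$. Your expression $t_A^2N_AN_{\overline A}^*$ would not give $\pm$ the original bracket.

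Assuming the conjecture, you then get $\chi(M_{Q_4}(\vec r,n),\widehat{\O}^{\vir})|_{w_{A,\alpha}\mapsto t_Aw_{A,\alpha}^{-1}}=(-1)^{\sum_{A\in\bthree}r_Ar_{\overline A}}\,\chi(M_{Q_4}(\vec r,n),\widehat{\O}^{\vir})$. This sign is independent of $n$ and cancels in the normalized $\mathsf{Z}^K_{\vec r}$. The case $n=0$ shows that the unnormalized identity stated in the paper's proof holds only up to this sign, so the normalization is genuinely needed.
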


\begin{proof}
The map
\[
\sigma \colon M_{Q_4}(\vec{r},n) \rightarrow M_{Q_4}^c(\vec{r},n), \quad [(B_a,I_A,J_A)] \mapsto [(B_a^t, -J_A^t, I_A^t)]
\]
defines an isomorphism. This map only intertwines the $\TT$-actions up to an automorphism of $\TT$, i.e.,
\[
\sigma((t_a,w_A) \cdot [(B_a,I_A,J_A)]) = (t_a, t_A w_A^{-1}) \cdot \sigma([(B_a,I_A,J_A)]),
\]
for all $(t_a,w_A) \in \TT$, where $w_A$ denotes the diagonal matrix with entries $w_{A,\alpha}$ as before. In particular, it follows that $M_{Q_4}^c(\vec{r},n)$ has a 0-dimensional reduced $\TT$-fixed locus. The map $\sigma$ is compatible with the virtual structures up to the automorphism $(w_{A,\alpha}) \mapsto (t_A w_{A,\alpha}^{-1})$ of $\TT$, and we conclude
$$
\chi(M_{Q_4}(\vec{r},n),\widehat{\O}^{\vir}) = \chi(M_{Q_4}^c(\vec{r},n),\widehat{\O}^{\vir})\Big|_{(w_{A,\alpha}) \mapsto (t_A w_{A,\alpha}^{-1})}.
$$
The result follows from Conjecture \ref{conj:stab-costab}.
\end{proof}

We stress that, for arbitrary $\vec{r}$, the operation $(w_{A,\alpha}) \mapsto (t_A w_{A,\alpha}^{-1})$ does not simply give rise to a permutation of the contributions of the $\TT$-fixed points. The invariance in the corollary only becomes manifest after summing all the $\TT$-fixed points. In particular, we are not aware of a simple combinatorial argument for this invariance. We have the following verifications of Conjecture \ref{conj:stab-costab} and its corollary.
\begin{enumerate}
\item By Example \ref{ex:2dreductioncont}, for $r_{12} = r$ and $r_A = 0$ otherwise, the statement in the conjecture reduces to 
\begin{equation} \label{eqn:AKL}
\widehat{\chi}_{-t_3}(M_{Q_2}(\vec{r},n)) = \widehat{\chi}_{-t_3}(M_{Q_2}^c(\vec{r},n)),
\end{equation}
which was recently proved in \cite{AKL} using mixed Hodge modules and results of \cite{DHSM} on the BPS sheaf. This proof suggests that the derived push-forward of $\widehat{\O}^{\vir}$ to $M_{Q_4}^{\mathrm{ssimp}}(\vec{r},n)$ behaves like a kind of BPS sheaf and is in particular independent of the choice of stability. The cohomological analogue of \eqref{eqn:AKL} was also proved using Mochizuki style wall-crossing in \cite{Ohk}.
\item If $r_{12}=r_{34}=1$ and $r_{A}=0$ otherwise, the origami partition function is independent of the framing equivariant parameters by Theorem \ref{thm:crossinst}, so Corollary \ref{cor:symm} holds trivially.
\item Using SAGE, we verified Corollary \ref{cor:symm} in the following cases:
\begin{itemize}
    \item $\sum_A r_A = 2$ and $n\leq 4$ (i.e..~for the first 5 terms of the generating series),
    \item $\sum_A r_A = 3$ and $n\leq 2$ (i.e.,~for the first 3 terms of the generating series),
    \item $\sum_A r_A = 4$ and $n\leq 1$ (i.e.,~for the first 2 terms of the generating series).
\end{itemize}
\end{enumerate}

\subsection{Non-perturbative Dyson--Schwinger equations}\label{sec:DS}

Nekrasov's motivation for the construction of crossed instantons is to provide a geometric explanation for so-called non-perturbative Dyson--Schwinger equations, a class of certain recurrence relations satisfied by generating series of integrals over Nakajima quiver varieties, a class of spaces including the moduli spaces $M_{Q_2}(r,n)$. The resulting identities verify predictions arising from a physical correspondence between 4-dimensional gauge theories and 2-dimensional conformal field theories, see e.g. \cite{Nek2}. In the physics literature, a concise summary of Nekrasov's approach can be found in \cite[Sect.~6]{Tam}.

In the mathematics literature, examples of non-perturbative Dyson--Schwinger equations have fruitfully been applied in probability (see, for one of many examples, \cite[Sect.~4]{BBG}), and in conformal field theory \cite{NT}, to match Nekrasov partition functions with certain conformal blocks.  The identities are proved on an as-needed basis in these applications; see \cite[App.~A]{NT}.

Nekrasov's origami moduli spaces provide a uniform, geometric approach to proving many classes of non-perturbative Dyson--Schwinger equations. The algebraic virtual cycle construction and associated sign computation of the previous sections are needed to make Nekrasov's approach mathematically rigorous.

The key geometric idea is that the compactness properties of gauge origami moduli spaces translate to the regularity in the equivariant parameters of the terms of the series $\sfZ_{\vec{r}}(q)$. We recall the following result, which can be regarded as a cohomological version of \cite[Prop.~3.2]{Arb}; see also \cite[Sect.~5.5--6]{Liu2}.

Let $M$ be a scheme of finite type over $\C$ equipped the action of an algebraic torus $T$ such that the fixed locus $M^T$ is proper. We consider Chow groups with $\Q$-coefficients. By \cite[Sect.~2.3, Cor.~2]{Bri}, the map $A^T_*(M^T)\to A^T_*(M)$ induced by the inclusion $M^T\hookrightarrow M$ becomes an isomorphism after inverting all nontrivial $T$-characters (in fact, it suffices to invert finitely many $T$-characters). Composing its inverse with push-forward from $M^T$ to a point, one obtains a well-defined map $A^T_*(M)\to \mathrm{Frac}(A^T_*(\pt))$ given by integration of $1\in A_T^*(M)$ against the cycle.

Given a nontrivial $T$-weight $e^{x}=w \colon  T \to \C^*$, and let $T_w$ denote the maximal torus contained in $\ker(w).$  

\begin{proposition}\label{prop:cohpole}
If the fixed locus $M^{T_w}$ with respect to $T_w\subset T$ is proper, then for any $\alpha\in A^T_*(M)$,  the rational function $$\int_\alpha 1\in \mathrm{Frac}(A^T_*(\mathrm{pt}))$$ has no pole at $x=0$.
\end{proposition}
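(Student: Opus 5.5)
Since $M^{T_w}$ is proper, I will localize in two stages: first to $M^{T_w}$ using only characters nontrivial on $T_w$, and only then to $M^T$. The point is that the first stage never inverts $x$.

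\emph{Step 1.} Applying \cite[Sect.~2.3, Cor.~2]{Bri} to the $T_w$-action (or to the $T$-action on $M$, with localization only at characters whose restriction to $T_w$ is nontrivial), the pushforward
\[
A^T_*(M^{T_w}) \to A^T_*(M)
\]
becomes an isomorphism after inverting a finite set $S$ of $T$-characters, none of which vanishes on $T_w$. Equivalently, no element of $S$ is a rational multiple of $x$. Hence $\alpha = \iota_*\beta$ for some $\beta \in A^T_*(M^{T_w}) \otimes_{A_T^*(\pt)} A_T^*(\pt)[S^{-1}]$.

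\emph{Step 2.} Since $M^{T_w}$ is proper, $T$-equivariant pushforward along $M^{T_w} \to \pt$ is defined without any localization. It sends $\beta$ to $A_T^*(\pt)[S^{-1}]$, a ring of rational functions whose denominators are products of linear forms not proportional to $x$. Such functions have no pole along $x=0$.

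\emph{Step 3.} I then check that this value equals $\int_\alpha 1$ as defined via $M^T$. The inclusion $M^T \subset M^{T_w}$ factors $\iota$. By functoriality of proper pushforward, together with the localization isomorphism for $M^T \subset M^{T_w}$ (valid after inverting all nontrivial characters), integrating $\beta$ over the proper space $M^{T_w}$ agrees with the localization formula through $M^T$. So $\int_\alpha 1 = \int_{M^{T_w}}\beta$ in $\mathrm{Frac}(A^T_*(\pt))$.

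The main obstacle is Step 1: ensuring that localization to $M^{T_w}$ only requires inverting characters nonvanishing on $T_w$. I would argue this following Brion's proof. The complement $M\setminus M^{T_w}$ admits a filtration by $T$-stable pieces on which $T_w$ acts with finite stabilizers, i.e.\ pieces with stabilizer in $T$ not containing $T_w$. The equivariant Chow groups of such pieces are killed by a product of characters nontrivial on $T_w$. The localization sequence then gives the isomorphism after inverting those characters.
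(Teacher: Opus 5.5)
Your proposal is correct and is essentially the paper's proof: localize from $M$ to $M^{T_w}$ by inverting only finitely many characters that are nontrivial on $T_w$, then use properness of $M^{T_w}\to\pt$ and $A_T^*(\pt)$-linearity of the pushforward. The paper differs only in citing Gonzales' strengthening \cite[Prop.~2.15]{Gon} of \cite{Bri} for your Step~1 rather than re-deriving it, and in leaving your compatibility check (Step~3) implicit. In your sketch of Step~1, drop the claim that $T_w$ acts with finite stabilizers: only $T_w\not\subset T_x$ holds, and that is what you need. Also, surjectivity of the localized pushforward is all the argument requires.
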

\begin{proof}
The inclusion $i \colon  M^{T_w}\to M$ induces a morphism $i_* \colon A^{T}_*(M^{T_w})\to A^T_*(M)$. We use the strengthening \cite[Prop.~2.15]{Gon} of \cite[Sect.~2.3, Cor.~2]{Bri}, which asserts that the map $i_*$ becomes an isomorphism after inverting finitely many characters $u_i \colon T\to \C^*,$ all of whose restrictions to $T_w$ are nontrivial. Let $(i_*)_{\mathrm{loc}}$ denote this isomorphism and $\pi \colon M^{T_w}\to \mathrm{pt}$ be the projection. By properness of $\pi$ and linearity of $\pi_*$ over $A^T_*(\mathrm{pt})$, we have  
\begin{equation*}
\int_{\alpha}1=\pi_*((i_*)_{\mathrm{loc}}^{-1}\alpha)\in A^T_*(\mathrm{pt})[c^{T}_1(u_i)^{-1}]\subset \mathrm{Frac}(A^{T}_*(\mathrm{pt})). \qedhere
\end{equation*}
\end{proof}

We specialize the above discussion to moduli spaces of ``crossed instantons.'' Consider $M_{Q_4}(\vec{r},n)$ for $\vec{r}$ such that $r_{12}, r_{34}>0$ and $r_A=0$ otherwise and the torus $\TT \cong  (\C^*)^3\times (\C^*)^{r_{12}}\times (\C^*)^{r_{34}}$ defined in Section \ref{subsec:torus}. 
Let $u \colon \TT\to \C^*$ be the character given by $(t_i,w_{12,\alpha},w_{34,\beta})\mapsto \prod_{\alpha}w_{12,\alpha},$ and set $x=c^{\TT}_1(u)$; in other words $u=e^{x}.$

In \cite[Sect.~8]{Nek3}, an argument is presented for the compactness of the fixed loci of $M_{Q_4}(\vec{r},n)^{\TT'}$ for classes of subtori $\TT'\subset \TT$. One consequence is that the fixed locus  $M_{Q_4}(\vec{r},n)^{\TT_u}$ for $u$ as defined in the previous paragraph is proper. Applying Proposition \ref{prop:cohpole}, one concludes the following statement of  \cite[Sect.~3.4]{Nek4}: expanding $\mathsf{C}_{\vec{r}} \cdot \mathsf{Z}_{\vec{r}}(q)$ in descending powers of $x$, one has 
\begin{equation} \label{eqn:DSinthm}
[x^{-k}]\left(\mathsf{C}_{\vec{r}} \cdot \mathsf{Z}_{\vec{r}}(q)\right)=0\quad \textnormal{for all}\ k>0,
\end{equation}
where $[x^{-k}](-)$ denotes the $x^{-k}$-coefficient.

\begin{remark}
The fact that the non-normalized partition function $\mathsf{C}_{\vec{r}} \cdot \mathsf{Z}_{\vec{r}}(q)$ is defined using \emph{globally defined} classes $[M_{Q_4}(\vec{r},n)]^{\vir}$ is crucial to deduce equation \eqref{eqn:DSinthm} geometrically. 
\end{remark}

Moreover, performing localization with respect to the one-parameter subgroup $\sigma \colon \C^*\to(\C^*)^3\times (\C^*)^{r_{12}}\times(\C^*)^{r_{34}}$ given by $t\mapsto (1,(t,\ldots,t),1),$ one can express integrals against the virtual class of $M_{Q_4}(\vec{r},n)$ in terms of integrals over $$M_{Q_4}(\vec{r},n)^{\sigma}\cong \bigsqcup_{n_1+n_2=n} M_{Q_2}(r_{12},n_1)\times M_{Q_2}(r_{34},n_2).$$

By Theorem \ref{thm:main} and equivariant localization, the virtual
invariant $\int_{[M_{Q_4}(\vec{r},n)]^{\vir}} 1$ can then be written as
\begin{align*}
\sum_{n_1+n_2=n} \int_{M_{Q_2}(r_{12},n_1)\times M_{Q_2}(r_{34},n_2)} &e(T_{M_{Q_2}(r_{12},n_1)}\otimes e^{-s_3})e(T_{M_{Q_2}(r_{34},n_2)}\otimes e^{-s_1}) \cdot \\
&e(\mathcal{W}\otimes e^{-x+s_1+s_2}), \end{align*}
for an explicit equivariant $K$-class $\mathcal{\cW}$ expressible in terms of the tautological bundles on $M_{Q_2}(r_{12},n_1)$ and $M_{Q_2}(r_{34},n_2)$; see \cite[Eqn.~(6.8)--(6.11)]{Tam}. For each $k>0$, extracting coefficients of powers of $x^{-k}$, the equations \eqref{eqn:DSinthm} therefore yield relations of the form \begin{align}\label{eqn: DSform} 0=\sum_{n_1\leq n} \int_{M_{Q_2}(r,n_1)} e(T_{M_{Q_2}(r_{12},n_1)}\otimes e^{-s_3})f_k(\mathcal{V})\in \Q(s_1,s_2,s_3, w_{12,\alpha},w_{34,\beta}),\end{align} 
where $\mathcal{V}$ is a tautological bundle on $M_{Q_2}(r,n_1)$ and $f_k$ is an explicit (but often complicated) function on a localization of $K^{\TT}_0(M_{Q_2}(r,n))$, expressed in terms of Chern roots. In \cite{Nek2}, Nekrasov explains how to interpret the vanishings \eqref{eqn: DSform} as regularity relations for $qq$-characters, one-parameter deformations of $q$-characters studied in the theory of quantum affine algebras.

As explained in \cite{Nek2}, in the case $r_{12}=r_{34}=1$, the relations \eqref{eqn:DSinthm} for small values of $k$ enjoy a particularly nice form. Let $(\C^*)^2$ with equivariant parameters $e^{s_1},e^{s_2}$ act on $M_{Q_2}(1,n)\cong \Hilb^n(\C^2)$ as in Section \ref{subsec:torus}, and set $$\mathsf{G}(q;s_1,s_2,s_3):=\sum_{n=0}^{\infty} q^n \int_{M_{Q_2}(1,n)} e(T_{M_{Q_2}(1,n)} \otimes e^{-s_3}).$$ 

\begin{proposition}\cite[Eqn.~(265)]{Nek2}\label{prop:DS}
Let $r_{12}=r_{34}=1$ and $r_A = 0$ otherwise. Then the Dyson--Schwinger equation \eqref{eqn:DSinthm} for $k=1$ is the differential equation
\[
s_3 s_4 \mathsf{G}(q;s_1,s_2,s_3) \cdot \frac{d}{dq} \mathsf{G}(q;s_3,s_4,s_1) + s_1 s_2 \mathsf{G}(q;s_3,s_4,s_1) \cdot \frac{d}{dq} \mathsf{G}(q;s_1,s_2,s_3) = 0.
\]
\end{proposition}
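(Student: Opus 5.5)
Our plan is to extract the coefficient of $x^{-1}$ in $\mathsf{C}_{\vec{r}}\cdot\mathsf{Z}_{\vec{r}}(q)$ directly from the fixed-point formula of Theorem \ref{thm:main}, for $r_{12}=r_{34}=1$. Write $w=e^{v}$, $w'=e^{v'}$, so that (since only differences of framing parameters enter) $x=v-v'$ up to the normalization $u=w_{12,1}$. By Theorem \ref{thm:main}, each fixed point $\blambda=(\lambda,\lambda')$ contributes $q^{|\lambda|+|\lambda'|}\mathsf{C}_{\vec{r}}\,e(-\mathsf{v}_{\blambda})$. As in Lemma \ref{lem:lim 2}, we split
\[
e(-\mathsf{v}_{\blambda}) = e(-P_3T_\lambda)\,e(-P_1T_{\lambda'})\cdot R_{\blambda}(x),
\]
where $R_{\blambda}(x) := e(P_{12}N_{34}K_{12}^*)\,e(P_{34}N_{12}K_{34}^*)\,e(-P_{1234}K_{12}K_{34}^*)$ (signs as dictated by $-\mathsf{v}_{\blambda}$). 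The first two factors are independent of $x$ and reproduce the integrands of $\mathsf{G}(q;s_1,s_2,s_3)$ and $\mathsf{G}(q;s_3,s_4,s_1)$ at the fixed points of $\Hilb(\C^2_{12})$ and $\Hilb(\C^2_{34})$. The normalization $\mathsf{C}_{\vec{r}}=e(N_{12}^*N_{34}t_{34}^{-1})$ is linear in $x$, up to sign, by Remark \ref{rem:n=0}.

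Next we expand $R_{\blambda}(x)$ at $x=\infty$. Each factor of $R_{\blambda}$ is a ratio of products of linear forms $\pm x + c$, with equal numbers of factors in numerator and denominator (the rank-zero property used in Lemma \ref{lem:lim 1}). Hence $R_{\blambda}=1+a_{\blambda}x^{-1}+b_{\blambda}x^{-2}+\cdots$. Multiplying by $\mathsf{C}_{\vec{r}}=\pm(x+c_0)$, the $x^{-1}$ coefficient becomes $\pm(b_{\blambda}+c_0a_{\blambda})$, so we need the first two orders of the expansion.

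For the first-order term we use $\sum\log(1+c/x)$ expansions. By the standard box-counting computation for Hilbert schemes of $\C^2$, we expect $a_{\blambda}$ to be a multiple of $s_1s_2|\lambda'|$ and $s_3s_4|\lambda|$ (with opposite signs), up to a constant. For example, $P_{12}K_{12}^*$ has total weight $\sum(\text{box weights})$ whose leading contribution is $s_1s_2|\lambda|$. The second-order term $b_{\blambda}$ will involve $|\lambda|^2$, $|\lambda||\lambda'|$, and box-weight power sums.

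The main obstacle is showing that, after summing over all $(\lambda,\lambda')$, the terms other than the cross term $|\lambda||\lambda'|$ cancel or assemble into $x$-independent contributions that vanish. This requires identifying $b_{\blambda}+c_0a_{\blambda}$ with a combination such as $s_3s_4|\lambda'|\cdot(\cdot)+s_1s_2|\lambda|\cdot(\cdot)$. We would first try a direct computation using $P_{1234}=P_{12}P_{34}$: the second-order terms quadratic in one partition alone should cancel between the three factors of $R_{\blambda}$, because they come from $P_{12}P_{34}$-symmetric pieces and $s_1+s_2+s_3+s_4=0$. If the cancellation is only partial, we would use the DS identity \eqref{eqn:DSinthm} to absorb the remaining pieces.

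Once the $x^{-1}$ coefficient equals $\sum q^{|\lambda|+|\lambda'|}\bigl(s_3s_4|\lambda'|+s_1s_2|\lambda|\bigr)g_\lambda g'_{\lambda'}$, with $g,g'$ the fixed-point integrands, we rewrite $\sum |\lambda|q^{|\lambda|}g_\lambda$ as $q\frac{d}{dq}\mathsf{G}$. Dividing by $q$ then gives the stated equation via \eqref{eqn:DSinthm}.
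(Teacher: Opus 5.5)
Your skeleton matches the paper's. You factor $e(-\mathsf{v}_{\blambda})$ into the $x$-independent integrands $e(-P_3T_{12})\,e(-P_1T_{34})$ times three rank-zero $x$-dependent factors, expand at $x=\infty$ to second order, multiply by $\mathsf{C}_{\vec{r}}=-x+v_{34,1}-s_3-s_4$, and finish with $q\frac{d}{dq}$. However, the central computation is mispredicted and then left open as your ``main obstacle''.

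Your claim that $a_{\blambda}$ is a multiple of $s_1s_2|\lambda'|$ and $s_3s_4|\lambda|$ cannot hold. $R_{\blambda}$ is a ratio of products of equally many linear forms, hence homogeneous of degree $0$. So its $x^{-1}$-coefficient has degree $1$ in the equivariant parameters, whereas $s_1s_2|\lambda|$ has degree $2$; it is an $x^{-2}$-coefficient. In fact $a_{\blambda}=0$.

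The missing idea is as follows. If a rank-zero character has weights $y+\ell_i$ with signs $\epsilon_i$, then $\log e=\sum_{k\geq 1}\frac{(-1)^{k-1}}{k}\big(\sum_i\epsilon_i\ell_i^k\big)y^{-k}$. Here $\sum_i\epsilon_i\ell_i^k$ is $k!$ times the degree-$k$ Chern character of the character with the $y$-shift removed. For the three factors $e(-P_{12}N_{34}K_{12}^*)$, $e(-P_{34}N_{12}K_{34}^*)$, $e(P_{1234}K_{12}K_{34}^*)$, that character has the form $P_{12}\mu$, $P_{34}\mu$, $P_{1234}\mu$. Moreover $\ch(P_{12})=s_1s_2+\cdots$ and $\ch(P_{1234})=s_1s_2s_3s_4+\cdots$. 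Substituting $y=\pm x+\mathrm{const}$ changes nothing at these orders, since the $y^{-1}$-term vanishes and $y^{-2}=x^{-2}+O(x^{-3})$. Hence:
the $P_{12}$-factor is $1+s_1s_2|\lambda_{12}|x^{-2}+O(x^{-3})$;
the $P_{34}$-factor is $1+s_3s_4|\lambda_{34}|x^{-2}+O(x^{-3})$;
the $P_{1234}$-factor is $1+O(x^{-4})$.
This is exactly the content of the four coefficient identities in the paper's proof.

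Consequently $b_{\blambda}=s_1s_2|\lambda_{12}|+s_3s_4|\lambda_{34}|$ exactly, at each fixed point separately. There are no $|\lambda|^2$, $|\lambda||\lambda'|$ or power-sum terms, since these could only enter through $a_{\blambda}^2/2$. No cancellation between fixed points is needed. The constant part of $\mathsf{C}_{\vec{r}}$ multiplies $a_{\blambda}=0$, so one gets $[x^{-1}]\big(\mathsf{C}_{\vec{r}}\,e(-\mathsf{v}_{\blambda})\big)=-(s_1s_2|\lambda_{12}|+s_3s_4|\lambda_{34}|)\,e(-P_3T_{12})\,e(-P_1T_{34})$. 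Your final step then closes the argument: identify these factors with the localized integrands of $\mathsf{G}$ (which uses \eqref{eqn:holsymp}) and write $\sum_\lambda|\lambda|q^{|\lambda|}(\cdots)=q\frac{d}{dq}\mathsf{G}$.

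As written, though, your proposal does not close. If $a_{\blambda}$ were nonzero as you predict, the term $c_0a_{\blambda}$ would survive, with $c_0=v_{34,1}-s_3-s_4$ involving a framing parameter, and you would not arrive at the stated equation. Your fallback of absorbing leftover terms via \eqref{eqn:DSinthm} is circular, because the $k=1$ case of \eqref{eqn:DSinthm} is precisely the identity you are supposed to be rewriting.
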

\begin{proof} 
As $r_{12} = r_{34} = 1$, we have 
$x=v_{12,1}$ and $\mathsf{C}_{\vec{r}} = -x + v_{34,1} - s_3 - s_4$. We will show $[x^{-1}](\mathsf{Z}_{\vec{r}}(q)) = 0$. It then follows that the Dyson--Schwinger equation \eqref{eqn:DSinthm} for $k = 1$ equals
\[
[x^{-2}](\mathsf{Z}_{\vec{r}}(q)) = 0.
\]

Using the notation \eqref{eqn:Neknot}, we have
\[
\mathsf{v}_{\blambda}^{\sigma-\mathrm{fix}} = P_3 T_{12} + P_1 T_{34}, \ \ \mathsf{v}_{\blambda}^{\sigma-\mathrm{mov}} = P_{34} {N}_{12} K_{34}^* + P_{12} N_{34} {K}_{12}^* - P_{1234} {K}_{12} K_{34}^*.
\]
Note that a character is $\sigma$-fixed if and only if it exhibits no dependence on $u$. Moreover,  $\mathsf{v}_{\blambda}$ is a rank zero complex and each of the three summands of $\mathsf{v}_{\blambda}^{\sigma-\mathrm{mov}}$ is of rank zero. Thus, we can expand $e(-\mathsf{v}_{\blambda})$ in negative powers of $x$. Using the identities
\begin{align*}
&[x^{-1}]\frac{(x+w+w_1)(x+w+w_2)}{(x+w)(x+w+w_1+w_2)} = 0, \\
&[x^{-2}]\frac{(x+w+w_1)(x+w+w_2)}{(x+w)(x+w+w_1+w_2)} = w_1w_2, \\ 
&[x^{-1}]\frac{(x+w) \Big( \prod_{i<j} (x+w+w_i+w_j) \Big) (x+w+w_1+w_2+w_3+w_4)}{\Big(\prod_i (x+w+w_i) \Big) \prod_{i<j<k}(x+w+w_i+w_j+w_k)} = 0, \\
&[x^{-2}]\frac{(x+w) \Big( \prod_{i<j} (x+w+w_i+w_j) \Big) (x+w+w_1+w_2+w_3+w_4)}{\Big(\prod_i (x+w+w_i) \Big) \prod_{i<j<k}(x+w+w_i+w_j+w_k)} = 0,
\end{align*}
where $i,j,k \in \{1,2,3,4\}$, it is not hard to see that 
\begin{align*}
&[x^{-1}](e(-\mathsf{v}_{\blambda})) = 0, \\
&[x^{-2}](e(-\mathsf{v}_{\blambda})) = e(-P_3 T_{12}) e(-P_1 T_{34}) \Big( |\lambda_{34}| s_3s_4 + |\lambda_{12}| s_1s_2 \Big).
\end{align*}

Denoting $N:=M_{Q_2}(r_{12},n_{12})$ where $n_{12} := |\lambda_{12}|$, we note that
\[
e(-P_3 T_{12}) = \frac{e(t_1t_2t_3 T_N|_P)}{e(t_1t_2 T_N|_P)} = \frac{e(t_3 \Omega_N|_P)}{e(\Omega_N|_P)} =  \frac{e(t_3^{-1} T_N|_P)}{e(T_N|_P)}, 
\]
where $P \in N$ is the fixed point corresponding to $\lambda_{12}$ and we used \eqref{eqn:holsymp}. The result follows by summing over all $\blambda$. 
\end{proof}

In \cite[Sect.~10.3.1]{Nek2}, it is explained how to use the equation of Proposition \ref{prop:DS} to deduce the explicit formula 
\begin{align*}\label{eqn: Hilbadjoint} \mathsf{G}(q;s_1,s_2, s_3)
=\Bigg( \prod_{n=1}^{\infty} \frac{1}{1-q^n} \Bigg)^{\frac{(s_1+s_3)(s_2+s_3)}{s_1 s_2}}\end{align*}
previously proved, for example in \cite[Cor.~1]{CO}.

For arbitrary $r_{12}, r_{34}$, the non-perturbative Dyson--Schwinger equations are much more complicated to state explicitly; see \cite[Sect.~7.1.2, Thm.~6.1]{Nek2}. When $r>1$, there is no known concise closed form for the higher-rank analog $$\sum_{n=0}^{\infty} q^n \int_{M_{Q_2}(r,n)} e(T_{M_{Q_2}(r,n)} \otimes e^{-s_3})$$ of $\mathsf{G}(q;s_1,s_2, s_3)$.  It would be interesting to understand if relations of the form \eqref{eqn:DSinthm} uniquely characterize such series.

\begin{remark}
    In \cite{Nek3}, Nekrasov gives an equivalent definition of the origami moduli spaces as spaces of linear maps between Hermitian vector spaces obeying real-analytic equations quotiented by a $U(n)$-action. The proof of the compactness theorem in loc.~cit.~appears to rely on the real-analytic description of the origami moduli space. It would be interesting to have a purely algebraic proof of his compactness theorem. 
\end{remark}

\begin{remark} In principle,  non-perturbative Dyson--Schwinger equations of the form \eqref{eqn:DSinthm} should be extractable for arbitrary choices of $\vec{r}\in\Z_{\geq 0}^{6}.$
    However, in the notation of \cite{Nek3}, the compactness theorem is proven for the spaces $\mathfrak{M}_n^\infty(\vec{r})$. The space  $\mathfrak{M}_n^\infty(\vec{r})$ is strictly smaller than $\mathfrak{M}_n^0(\vec{r})=M_{Q_4}(\vec{r},n)$ unless the rank vector is of the form $r_{A}, r_{\overline{A}}\geq 0$ for some $A\in \bsix$ and $r_{B}=0$ otherwise. Thus, we only apply the compactness theorem of \cite[Sect.~8]{Nek3} to our setting for such rank vectors. 
    
    For example, if $r_{12}=r_{23}=1$ and $r_A=0$ otherwise, after simplification, the denominator of $\int_{[M_{Q_4}(\vec{r},1)]^{\vir}} 1$ becomes
    \begin{equation} \label{eqn:denom}
    s_1 s_2 s_3 (s_1 + s_2 - v_{12,1} + v_{23,1}) (s_2 + s_3 - v_{23,1} + v_{12,1} ).
\end{equation}
This can be geometrically interpreted as follows. Consider the following codimension one subtori
\begin{align*}
    \mathbb{T}'&:=\{(t_a,w_{A,\alpha})\in \mathbb{T}\,|\, t_1t_2 w_{12,1}^{-1} w_{23,1}=1\},\\
    \mathbb{T}''&:=\{(t_a,w_{A,\alpha})\in \mathbb{T} \,|\, t_2t_3 w_{12,1} w_{23,1}^{-1}=1\}
\end{align*}
corresponding to the last two factors of \eqref{eqn:denom}. Using Example \ref{ex:book n=1}, we can check that the $\mathbb{T}'$-fixed locus of $M_{Q_4}(\vec{r},1)$ is the $B_2=0$ part of the solutions of type (v), and the $\mathbb{T}''$-fixed locus is the $B_2=0$ part of the solutions of type (iv). In particular, $M_{Q_4}(\vec{r},1)^{\mathbb{T}'}$ and $M_{Q_4}(\vec{r},1)^{\mathbb{T}''}$ are isomorphic to $\mathbb{A}^1$, which is not compact. Observe that components (iv) and (v) are exactly the missing components from the smaller moduli space $\mathfrak{M}_n^\infty(\vec{r})$ for which the  compactness theorem is proved in \cite[Sect.~8]{Nek3}.
\end{remark}

\section{Framed sheaves description}

\subsection{Framed sheaves moduli}

In this section, we introduce moduli space of framed sheaves in arbitrary dimensions. We discuss two types of framed sheaves; one depends on a choice of an effective divisor and the other does not. We call the former  framed sheaves and the latter HL-framed sheaves after Huybrechts--Lehn \cite{HL}. 

\begin{definition}
    Let $(X,D)$ be a pair of a smooth projective variety $X$ and an effective divisor $i\colon D\hookrightarrow X$. A framed sheaf on $(X,D)$ is a pair $(E,\phi)$ of a pure coherent sheaf $E$ on $X$ and an isomorphism $\phi\colon i^*E\xrightarrow{\sim}F$ where $F$ is a fixed coherent sheaf on $D$, called a framing sheaf. 
\end{definition}

\begin{definition}
    Let $X$ be a smooth projective variety. An HL-framed sheaf on $X$ is a pair $(E,\phi)$ of a coherent sheaf $E$ on $X$ and a morphism $\phi\colon E\rightarrow F$ where $F$ is a fixed coherent sheaf on $X$, called an HL-framing sheaf. 
\end{definition}

\begin{remark}
    To each framed sheaf, we can canonically associate an HL-framed sheaf via adjunction as follows. Consider a pair $(X,D)$ and a framing sheaf $F$ on $D$. Given any framed sheaf $(E,\phi)$, an isomorphism $\phi\colon i^*E\xrightarrow{\sim} F$ induces a morphism $\phi\colon E\rightarrow i_*F$, for which we use the same notation. Then $(E,\phi\colon E\rightarrow i_*F)$ is an HL-framed sheaf with respect to an HL-framing sheaf $i_*F$. 
\end{remark}

While framed sheaves are geometrically very natural, little is known about their moduli spaces in general. Conversely, HL-framed sheaves are less geometric as  arbitrary morphisms are allowed instead of isomorphisms, but more is known about their moduli spaces \cite{HL}. Specifically, Huybrechts--Lehn construct projective moduli spaces of HL-framed sheaves with respect to certain stability conditions. The main result of this section is that framed sheaves are stable HL-framed sheaves for a certain choice of stability condition under a fairly general assumption. As a corollary, the moduli space of framed sheaves is shown to be quasi-projective under this assumption. Our use of stability conditions for HL-framed sheaves to characterize framed sheaves is motivated by work of Bruzzo--Markushevich \cite[Thm.~3.1]{BM}. 

\medskip

Let $(X,\O_X(1))$ be a polarized smooth projective variety. Recall that the Hilbert polynomial of a coherent sheaf $E$ on $X$ is the unique polynomial $P_E(x)$ such that $P_E(n)=\chi(X,E(n))$ for any integer $n$. If $E$ is at most $d$-dimensional, we have
$$P_E(x)=a_d(E)\frac{x^d}{d!}+a_{d-1}(E)\frac{x^{d-1}}{(d-1)!}+\cdots+a_0(E),\quad a_d(E)\geq0,
$$
with $a_d(E)=0$ if and only if $\dim(E)<d$. 

Fix an HL-framing sheaf $F$. We discuss stability conditions for HL-framed sheaves $(E,\phi\colon E\rightarrow F)$ of $\dim(E)=d$. The stability parameter is given by a polynomial $\delta(x)\in \Q[x]$ of degree strictly smaller than $d$ with a positive leading coefficient. Define the Hilbert polynomial of $(E,\phi)$ as 
$$P_{(E,\phi)}(x)=P_E(x)-\epsilon_{\phi}\cdot \delta(x)
$$
where $\epsilon_\phi=1$ if $\phi\neq 0$ and $\epsilon_\phi=0$ if $\phi= 0$. If $(E,\phi)$ is an HL-framed sheaf, then any subsheaf $E'\subseteq E$ is naturally HL-framed by the composition $\phi'\colon E'\subseteq E\xrightarrow{\phi}F$. For polynomials $f(x)$ and $g(x)$, we say that $f(x)\leq g(x)$ if $f(x)\leq g(x)$ holds for $x\gg 0$. 
\begin{definition}
    A $d$-dimensional HL-framed sheaf $(E,\phi)$ is $\delta$-(semi)stable if $\phi\neq 0$ and for all $0\subsetneq E'\subsetneq E$ we have
    $$a_d(E)\cdot P_{(E',\phi')}(x)\,(\leq)\,a_d(E')\cdot P_{(E,\phi)}(x).
$$
Note that it follows that for a $\delta$-semistable sheaf $(E,\phi)$, either $\ker(\phi) = 0$ or $\ker(\phi)$ is pure $d$-dimensional. 
\end{definition}

\begin{lemma}\label{lem: Hom vanishing}
    Let $(E,\phi)$ be a $d$-dimensional $\delta$-stable \textnormal{HL}-framed sheaf. Then $$\Hom(E,[E\xrightarrow{\phi}F])=0,$$ where $[E\xrightarrow{\phi}F]$ is a complex in degrees $0,1$. 
\end{lemma}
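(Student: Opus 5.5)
We interpret $\Hom(E,[E\xrightarrow{\phi}F])$ as the hypercohomology $\Hom$ in the derived category from $E$ to the two-term complex $C:=[E\xrightarrow{\phi}F]$, with $E$ in degree $0$. The plan is to compute it from the distinguished triangle $C\to E\xrightarrow{\phi}F\to C[1]$. Applying $\Hom(E,-)$ gives the exact sequence
\[
0=\Hom(E,F[-1])\to \Hom(E,C)\to \Hom(E,E)\xrightarrow{\phi\circ -}\Hom(E,F).
\]
The first term vanishes because $E$ and $F$ are sheaves, so negative Ext groups vanish. Hence $\Hom(E,C)$ is identified with the space of endomorphisms $\psi\colon E\to E$ satisfying $\phi\circ\psi=0$. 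It therefore suffices to show that any such $\psi$ is zero.

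So let $\psi\in\End(E)$ with $\phi\circ\psi=0$, and suppose $\psi\neq 0$. Set $E'':=\im(\psi)$ and $K:=\ker(\psi)$. Then $E''\subseteq\ker(\phi)$, so the induced framing on $E''$ is zero and $P_{(E'',0)}=P_{E''}$. We compare using the exact sequence $0\to K\to E\to E''\to 0$.

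\emph{Case $K\neq 0$.} Then $E''\subsetneq E$ is a proper nonzero subsheaf. By stability applied to $E''$ (with framing zero),
\[
a_d(E)P_{E''}<a_d(E'')\bigl(P_E-\delta\bigr).
\]
The subsheaf $K$ satisfies $\phi|_K\neq 0$: otherwise $\phi$ would factor through $E''$, and since $\phi\circ\psi=0$ on all of $E$ we would get $\phi|_{E''}$ vanishing on $\psi(E)=E''$, forcing $\phi=0$. This contradicts stability. Hence stability applied to $K$ gives
\[
a_d(E)(P_K-\delta)<a_d(K)(P_E-\delta).
\]
Adding the two inequalities and using $P_K+P_{E''}=P_E$ and $a_d(K)+a_d(E'')=a_d(E)$ yields
\[
a_d(E)(P_E-\delta)<a_d(E)(P_E-\delta),
\]
which is a contradiction.

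One caveat: if $a_d(E'')=0$, the first inequality reads $a_d(E)P_{E''}<0$ in the polynomial order. This is impossible since $P_{E''}$ has a positive leading coefficient, so that degenerate case yields a contradiction immediately. (In fact $E''\subseteq E$ is pure by the remark following the definition, whenever it is nonzero.)

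\emph{Case $K=0$.} Then $\psi$ is injective, so $E''\cong E$ as sheaves and $P_{E''}=P_E$. If $E''\neq E$, stability applied to $E''$ gives $a_d(E)P_E<a_d(E)(P_E-\delta)$, which is impossible since $\delta$ has positive leading coefficient. If $E''=E$, then $\psi$ is an automorphism and $\phi=\phi\circ\psi\circ\psi^{-1}=0$, contradicting $\phi\neq 0$.

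The main obstacle is the first case, specifically verifying that $\phi|_K\neq 0$ so that $\delta$ enters on both sides with the correct sign. If this step proves awkward, I would instead argue directly. Since $\ker\phi$ is either zero or pure $d$-dimensional, compare $E''\subseteq\ker\phi$ against the quotient $E\to E''$. Note that a stable HL-framed sheaf with $E''$ as a quotient, together with $E''$ as a subsheaf with zero framing, forces $P_E-\delta<P_{E''}$ and $P_{E''}<P_E-\delta$ after normalizing by $a_d$. Here the quotient inequality follows from the subsheaf inequality for $K$ by additivity.
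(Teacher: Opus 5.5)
Your overall route is the paper's: use the triangle to reduce to endomorphisms $\psi$ with $\phi\circ\psi=0$, apply stability to $\im\psi$ (with zero framing) and to $\ker\psi$, and add the two inequalities using additivity of $P$ and $a_d$. There is, however, one step whose justification is wrong: your argument that $\phi|_K\neq 0$.

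If $\phi|_K=0$, then $\phi$ factors through the \emph{quotient} $E\to E/K$. The condition $\phi\circ\psi=0$ says instead that $\phi$ vanishes on the \emph{subsheaf} $\im\psi\subseteq E$. The isomorphism $E/K\cong\im\psi$ induced by $\psi$ does not relate these two maps, so nothing forces $\phi=0$.

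Concretely, take $E=G^{\oplus 2}$, $\psi(g_1,g_2)=(g_2,0)$, and $\phi(g_1,g_2)=\phi_2(g_2)$ with $\phi_2\neq 0$. Then $\phi\circ\psi=0$ and $\phi|_{\ker\psi}=0$, yet $\phi\neq 0$. Only stability rules this configuration out, and that is what you are trying to prove. Your fallback paragraph does not get around this, because it again relies on the subsheaf inequality for $K$ with $\delta$ subtracted.

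The repair is the one the paper uses: you never need $\phi|_K\neq 0$, because $P_{(K,\phi|_K)}\geq P_K-\delta$ holds in either case. If $\phi|_K=0$, stability gives $a_d(E)P_K<a_d(K)(P_E-\delta)$. Since $a_d(E)\delta$ has positive leading coefficient, this implies $a_d(E)(P_K-\delta)<a_d(K)(P_E-\delta)$, which is exactly the inequality you add. With this replacement your argument is complete.

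Your separate treatment of $K=0$ also handles explicitly an edge case the paper passes over. When $\ker\psi=0$, stability cannot be applied to the zero subsheaf, but the needed inequality reduces to $-a_d(E)\delta<0$ and holds trivially.
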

\begin{proof}
    Using the exact triangle $[E\xrightarrow{\phi}F]\rightarrow E\rightarrow F\xrightarrow{\,[1]\,}$, we have 
    $$\Hom(E,[E\xrightarrow{\phi}F])
    =\ker\Big(
    \Hom_X(E,E)\xrightarrow{\phi\circ -}\Hom_X(E,F)
    \Big).
    $$
    Suppose that there exists a nonzero $f \colon E\rightarrow E$ such that $\phi\circ f=0$. Note that $f$ is not an isomorphism, or else we would have $\phi=0$. Therefore $E'\coloneq \im(f)$ is a nontrivial proper subsheaf of $E$ whose induced HL-framing is $\phi'=0$. Since $E'$ is a subsheaf of a pure $d$-dimensional sheaf $\ker(\phi)$, $E'$ is also $d$-dimensional. As $(E,\phi)$ is $\delta$-stable, we get
    $$a_d(E)\cdot P_{E'}(x)=a_d(E)\cdot P_{(E',\phi')}(x)\,<\,a_d(E')\cdot P_{(E,\phi)}(x)
$$
with $a_d(E), a_d(E')>0$. On the other hand, consider $E''\coloneq \ker(f)$ together with the induced HL-framing $\phi''$. Again, the $\delta$-stability of $(E,\phi)$ implies that
$$a_d(E)\cdot (P_{E''}(x)-\delta(x))\leq a_d(E)\cdot P_{(E'',\phi'')}(x)\,<\,a_d(E'')\cdot P_{(E,\phi)}(x).
$$
As $0\rightarrow E''\rightarrow E\rightarrow E'\rightarrow 0$ is a short exact sequence, we have
$$P_{E}(x)=P_{E'}(x)+P_{E''}(x),\quad a_d(E)=a_d(E')+a_d(E''). 
$$
Therefore, the last inequality can be written as 
$$a_d(E)\cdot (P_{(E,\phi)}(x)-P_{(E',\phi')}(x))<(a_d(E)-a_d(E'))\cdot P_{(E,\phi)}(x),
$$
contradicting the first inequality.
\end{proof}

\begin{definition}
Let $S$ be a $\C$-scheme of finite type. An $S$-family of HL-framed sheaves is a pair $(\cE,\Phi)$ of an $S$-flat coherent sheaf $\cE$ on $X\times S$ together with a morphism $\Phi\colon \cE\rightarrow \pi_1^*F$, where $\pi_1 \colon X \times S \to X$ denotes projection. An isomorphism between two such families $(\cE,\Phi)$ and $(\cE',\Phi')$ is an isomorphism $f\colon \cE\xrightarrow{\sim}\cE'$ such that $\Phi=\Phi'\circ f$. We say that an $S$-family of $d$-dimensional HL-framed sheaves $(\cE,\Phi)$ is $\delta$-(semi)stable if the base change $(\cE_s, \Phi_s)$ along any closed points $s\rightarrow S$ is $\delta$-(semi)stable. 
\end{definition}

Let $v\in H^*(X,\Q)$ be a Chern character of a $d$-dimensional sheaf. The above definition gives rise to a moduli functor $\underline{M}^{\delta\textrm{-}ss}_{(X,F)}(v)$ (resp.~$\underline{M}^{\delta\textrm{-}st}_{(X,F)}(v)$) of isomorphism classes of $\delta$-semistable (resp.~$\delta$-stable) HL-framed sheaves $(E,\phi\colon E\rightarrow F)$ on $X$ such that $\ch(E)=v$ in the usual way. 

\begin{theorem}[Huybrechts--Lehn]\label{thm: HL moduli}
    The moduli functor $\underline{M}^{\delta\textrm{-}ss}_{(X,F)}(v)$ is corepresented by a projective scheme $M^{\delta\textrm{-}ss}_{(X,F)}(v)$. Furthermore, there exists an open subscheme $M^{\delta\textrm{-}st}_{(X,F)}(v)\subseteq M^{\delta\textrm{-}ss}_{(X,F)}(v)$ which represents the functor $\underline{M}^{\delta\textrm{-}st}_{(X,F)}(v)$.
\end{theorem}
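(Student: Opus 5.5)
This is a cited theorem of Huybrechts--Lehn, so the plan is to reduce it to their GIT construction rather than reprove it from scratch. The construction is a Quot-scheme GIT argument in the style of Simpson/Gieseker, adapted to pairs.

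\emph{Boundedness.} First I would show that the family of $\delta$-semistable HL-framed sheaves with $\ch(E)=v$ is bounded. Since $\deg\delta<d$, the framing only perturbs lower-order terms of the reduced Hilbert polynomial. For any subsheaf $E'\subseteq E$, the inequality $a_d(E)P_{(E',\phi')}\leq a_d(E')P_{(E,\phi)}$ then bounds the slope $\hat\mu$ of $E'$ by $\hat\mu(E)$ plus a constant depending only on $\delta$ and $v$. Le Potier--Simpson boundedness (the Grothendieck lemma) then gives an integer $m$ with three properties for every such $E$: $E(m)$ is globally generated, $H^i(E(m))=0$ for $i>0$, and the analogous uniform statements hold for all subsheaves that could destabilize.

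\emph{Parameter space.} Set $V=\C^{P(m)}$. I would take the parameter space $\mathfrak{Z}\subset \Quot(V\otimes\O_X(-m),P)\times\PP(\Hom(V,H^0(F(m)))^\vee)$ of pairs $(q\colon V\otimes\O(-m)\twoheadrightarrow E,\ [a])$ where the linear map $a$ factors through $H^0(q(m))$ and so induces a framing $\phi$. This is a closed subscheme. The group $\mathrm{SL}(V)$ acts on $\mathfrak{Z}$, and I would linearize the action on the product of the Grothendieck embedding $\Quot\hookrightarrow \mathrm{Grass}$ with $\PP$. The weights are chosen according to $\delta$: roughly $\O(n_1)\boxtimes\O(n_2)$ with $n_2/n_1$ determined by $\delta(\ell)$ for large $\ell$.

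\emph{Key step and main obstacle.} The hardest step is the Hilbert--Mumford computation showing that GIT-(semi)stable points are exactly the $\delta$-(semi)stable HL-framed pairs, for $m$ and $\ell$ large. The approach I would try first is Huybrechts--Lehn's filtration argument. One-parameter subgroups correspond to weighted filtrations of $V$. The weight splits into a Quot part, which compares $\dim(V'\cap\ldots)$ against $P_{E'}(\ell)$, and a framing part, which jumps by $\delta$ according to whether $\phi$ vanishes on the subsheaf generated by $V'$. The reverse implication needs separate care: a $\delta$-semistable pair must have $V\to H^0(E(m))$ an isomorphism and $E$ pure outside the framing kernel. Both should follow from the slope bounds in the boundedness step.

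\emph{Conclusion.} With GIT stability identified, the good quotient $M^{\delta\textrm{-}ss}=\mathfrak{Z}^{ss}/\!\!/\mathrm{SL}(V)$ is projective, because $\mathfrak{Z}$ is projective and the quotient is taken with respect to an ample linearization; it corepresents the functor by the universal property of Quot. On the stable locus, stabilizers are trivial: Lemma \ref{lem: Hom vanishing} shows that stable pairs have only scalar automorphisms compatible with $\phi$, and $\phi\neq0$ kills even those. Luna's étale slice theorem then makes $\mathfrak{Z}^{s}\to M^{\delta\textrm{-}st}$ a principal $\mathrm{PGL}(V)$-bundle, so $M^{\delta\textrm{-}st}$ is open and represents the stable functor, the universal family descending because the stabilizers are trivial.
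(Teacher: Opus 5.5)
Your plan matches what the paper does: the paper does not reprove this result but invokes Huybrechts--Lehn's Quot-scheme GIT construction. It adds only the remark that, although Huybrechts--Lehn treat $d=\dim(X)$, the comparison of $\delta$-stability with GIT-stability carries over to arbitrary $d$ via Simpson's framework for pure sheaves, and your sketch, phrased in terms of $a_d$, $\hat\mu$ and the Le Potier--Simpson estimate, does exactly that.

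One correction is needed in the last step. Triviality of $\mathrm{Aut}(E,\phi)$ only gives a free $\mathrm{PGL}(V)$-action, while the centre of $\mathrm{GL}(V)$ still acts by scalars on the universal quotient, so that family does not descend as it stands. Before descending, twist it by the tautological line bundle $\O_{\PP}(-1)$ from the framing factor. After the twist the framing becomes a map to the constant family $F$, the central action is trivial, and the family descends.
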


\begin{remark}
    In \cite{HL}, the authors only consider the case when the dimension of the framed sheaf is maximal, i.e., $d=\dim(X)$. Nevertheless, all the results they use to compare $\delta$-(semi)stability with GIT-(semi)stability were already available for arbitrary dimensional cases, most notably the construction of the moduli space of pure sheaves \cite{S}. See also \cite{Sa} for related discussion on how to modify various concepts of HL-framed sheaves for  arbitrary dimension $d$. 
\end{remark}

We now apply Theorem \ref{thm: HL moduli} to $d$-dimensional framed sheaves on $(X,D)$ with respect to a framing sheaf $F\in \Coh(D)$. The following assumptions will be used at various places in this section:
\begin{enumerate}
    \item [(A1)] $\dim(F)=d-1$; 
    \item [(A2)] $D$ is ample; 
    \item [(A3)] $F$ is $\mu_D$-semistable and $d\geq 2$. 
\end{enumerate}
When $D$ is ample, we will consider the polarization $\O_X(1)=\O_X(D)$. Then Hilbert polynomials, $\delta$-(semi)stability and $\mu_D$-semistability (also known as slope semistability) in (A3) will be with respect to this choice.

The following lemma shows that topological transversality in terms of dimension of the intersection and cohomological transversality in terms of vanishing of higher cohomology groups are equivalent for pure $d$-dimensional sheaves. 

\begin{lemma}\label{lem: transverse}
Let $i\colon D\hookrightarrow X$ be an effective divisor and let $E$ be a pure $d$-dimensional sheaf on $X$. Then the following statements are equivalent:
\begin{enumerate}
    \item[$\mathrm{(1)}$] $\dim(\Supp(E)\cap D)=d-1$ or $\Supp(E)\cap D=\emptyset$.
    \item[$\mathrm{(2)}$] $E\otimes^L \O_D\cong E\otimes \O_D$. 
    \item[$\mathrm{(3)}$] $Li^*E\cong i^*E$.
\end{enumerate}
When these conditions are satisfied, we have a short exact sequence
\begin{equation}\label{eq: SES}
    0\rightarrow E(-D)\rightarrow E\rightarrow E\otimes \O_D\rightarrow 0.
\end{equation}
\end{lemma}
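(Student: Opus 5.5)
The plan is to compute $E\otimes^L\O_D$ from the locally free resolution $0\to\O_X(-D)\xrightarrow{s_D}\O_X\to\O_D\to0$, where $s_D$ is the canonical section defining $D$. Tensoring with $E$ shows that $E\otimes^L\O_D$ is represented by the two-term complex $[E(-D)\xrightarrow{s_D}E]$ in degrees $-1,0$. Its $\mathcal{H}^0$ is $E\otimes\O_D$ and its $\mathcal{H}^{-1}$ is the kernel $\mathcal{K}:=\ker(s_D\colon E(-D)\to E)$. So (2) is equivalent to $\mathcal{K}=0$. Once $\mathcal{K}=0$, the short exact sequence \eqref{eq: SES} is just this complex written out.

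For (2)$\Leftrightarrow$(3), recall $i_*Li^*E\cong E\otimes^L\O_D$ (projection formula), and $i_*$ is exact and conservative on cohomology sheaves since $i$ is a closed embedding. Hence the higher cohomology sheaves $L^{k}i^*E$ vanish if and only if $\mathcal{H}^{-k}(E\otimes^L\O_D)$ vanish, and the degree-zero parts match: $i_*i^*E=E\otimes\O_D$.

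The substantive part is (1)$\Leftrightarrow\mathcal{K}=0$. First, $\mathcal{K}$ is supported on $\Supp(E)\cap D$, since $s_D$ is invertible off $D$. For (1)$\Rightarrow\mathcal{K}=0$, the case $\Supp(E)\cap D=\emptyset$ is trivial. Otherwise $\Supp(E)\cap D$ has dimension $d-1<d$, so $\mathcal{K}\subseteq E(-D)$ is a subsheaf of a pure $d$-dimensional sheaf of dimension $<d$, hence zero. For the converse, suppose $\Supp(E)\cap D$ is nonempty and not of dimension $d-1$. Locally $D$ is cut out by one equation, so every component of $\Supp(E)\cap D$ has dimension $\ge d-1$. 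Hence some irreducible component $Z$ of $\Supp(E)$, which has dimension $d$ by purity, satisfies $Z\subseteq D$.

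It remains to show $\mathcal{K}\neq0$ in this case, and I expect this to be the main step. At the generic point $\eta$ of $Z$, the stalk $E_\eta$ is a nonzero module of finite length over the Artinian local ring $\O_{\Supp E,\eta}$, and the local equation $f$ of $D$ lies in the maximal ideal. So $f$ acts nilpotently on $E_\eta$ and cannot be injective, giving $\mathcal{K}_\eta\neq0$. Thus (2) fails, which completes the equivalence.
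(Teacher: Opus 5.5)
Your proof is correct. For (2)$\Leftrightarrow$(3) and (1)$\Rightarrow$(2) you argue exactly as the paper does. The paper also uses the projection formula, and it also observes that $\mathcal{T}{\!\it{or}}_1(E,\O_D)=\ker(E(-D)\to E)$ is supported on $\Supp(E)\cap D$ and sits inside the pure sheaf $E(-D)$.

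The routes differ in (2)$\Rightarrow$(1). The paper argues globally. Once $\mathcal{T}{\!\it{or}}_1$ vanishes, the short exact sequence gives $\ch(E\otimes\O_D)=\ch(E)-\ch(E(-D))$, which has no $d$-dimensional component. Hence $\dim(\Supp(E)\cap D)=\dim(E\otimes\O_D)\le d-1$, and Krull's principal ideal theorem then yields the dichotomy ``$d-1$ or empty''.

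You instead prove the contrapositive locally. If (1) fails, Krull forces a $d$-dimensional irreducible component $Z$ of $\Supp(E)$ to lie inside $D$. At the generic point of $Z$, the local equation of $D$ acts nilpotently on the nonzero finite-length stalk $E_\eta$, so $\mathcal{K}_\eta\neq 0$.

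Your argument is purely commutative-algebraic and works for any Noetherian $X$ with a Cartier divisor, proper or not. The paper's Chern character step tacitly uses that a nonzero effective $d$-cycle has nonzero class, for instance via its positive degree against an ample class. That relies on $X$ being projective, which is a standing assumption in that section. Given that assumption, the paper's version is shorter.
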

\begin{proof}
    The equivalence between (2) and (3) follows from the projection formula
    $$E\otimes^L i_*\O_D \cong i_*(Li^*E\otimes \O_D) \cong i_*(Li^*E). 
$$
As $\O_D$ has homological dimension 1, we have  $\mathcal{T}{\!\it{or}}_i(-,\O_D)=0$ for $i\geq 2$. Therefore (2) is equivalent to $\mathcal{T}{\!\it{or}}_1(E,\O_D)=0$.

Assume (1). Tensoring $E$ with the ideal exact sequence of $D$, we obtain
$$0\rightarrow \mathcal{T}{\!\it{or}}_1(E,\O_D)\rightarrow E(-D)\rightarrow E\rightarrow E\otimes \O_D\rightarrow 0. 
$$
As the support of $\mathcal{T}{\!\it{or}}_1(E,\O_D)$ is contained in $\Supp(E)\cap D$, it is at most $(d-1)$ dimensional by the assumption. On the other hand, $E$ is pure $d$-dimensional. Hence, so is $E(-D)$. This implies that $\mathcal{T}{\!\it{or}}_1(E,\O_D)=0$ so (2) holds. 

Conversely, assume (2). Then $\mathcal{T}{\!\it{or}}_1(E,\O_D)=0$ and we have a short exact sequence
$$0\rightarrow E(-D)\rightarrow E\rightarrow E\otimes \O_D\rightarrow 0. 
$$
So $\ch(E\otimes \O_D)=\ch(E)-\ch(E(-D))$ has no $d$-dimensional component. Therefore, $\dim(\Supp(E)\cap D)=\dim (E\otimes\O_D)\leq d-1$. As $D$ is a divisor, the fundamental theorem of dimension theory implies that either $\dim(\Supp(E)\cap D)=d-1$ or $\Supp(E)\cap D=\emptyset$.
\end{proof}
If a pure $d$-dimensional sheaf $E$ satisfies one of the equivalent statements in Lemma \ref{lem: transverse}, then we say that $E$ is transverse to $D$.

\begin{corollary}\label{cor: framed sheaves are transverse}
    If $(E,\phi)$ is a $d$-dimensional framed sheaf satisfying (A1), then $E$ is transverse to $D$. 
\end{corollary}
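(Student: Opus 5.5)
Since $(E,\phi)$ is a framed sheaf, $\phi\colon i^*E\xrightarrow{\sim}F$ is an isomorphism. In particular $i^*E = E\otimes\O_D$ (pushed forward to $X$) has the same support as $F$. By (A1), $\dim F = d-1$, so
\[
\dim(\Supp(E)\cap D) = \dim \Supp(E\otimes \O_D) = \dim \Supp(F) = d-1 .
\]
Here I use the standard fact $\Supp(E\otimes\O_D)=\Supp(E)\cap D$, valid for coherent sheaves via Nakayama's lemma. This is exactly condition (1) of Lemma \ref{lem: transverse}. By definition $E$ is pure $d$-dimensional, so the lemma applies and $E$ is transverse to $D$.

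There is no genuine obstacle here. The only point to check is the support identity $\Supp(E\otimes\O_D)=\Supp(E)\cap D$, which holds because the fibre of $E\otimes\O_D$ at $x$ is $E(x)\otimes\O_D(x)$, and this is nonzero exactly when both factors are nonzero. Note also that (A1) excludes the alternative $\Supp(E)\cap D=\emptyset$, since $F\neq 0$. So the conclusion holds even in the strong form $\dim(\Supp(E)\cap D)=d-1$, and we obtain the short exact sequence \eqref{eq: SES}.
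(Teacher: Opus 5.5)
Your proposal is correct and follows the paper's proof: the framing isomorphism $i^*E\cong F$ together with (A1) gives $\dim(\Supp(E)\cap D)=d-1$, which is condition (1) of Lemma \ref{lem: transverse}. Your added justification of the support identity $\Supp(E\otimes\O_D)=\Supp(E)\cap D$ via fibres and Nakayama is a detail the paper leaves implicit.
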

\begin{proof}
The framing defines an isomorphism $i^*E\simeq F$ which is of dimension $d-1$ by the assumption (A1). Therefore, it satisfies statement (1) of Lemma \ref{lem: transverse}. 
\end{proof}

The Hilbert polynomials of $E$ and $i_*F$ can be related using the short exact sequence \eqref{eq: SES}.
\begin{proposition}\label{prop: Hilbert polynomial relationship}
    Let $(E,\phi)$ be a $d$-dimensional framed sheaf satisfying (A1) and (A2). Then $P_E(x)$ is of the form
    $$P_E(x)=\frac{a_{d-1}(i_*F)}{d!}x^d+\frac{a_{d-2}(i_*F)+\frac{1}{2}a_{d-1}(i_*F)}{(d-1)!}x^{d-1}+\cdots+\chi(E)
    $$
    and is determined by $P_{i_*F}(x)$ and $\chi(E)$.
\end{proposition}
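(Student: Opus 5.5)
The plan is to use the short exact sequence \eqref{eq: SES} twisted by $\O_X(n)$, together with the framing isomorphism. By Corollary \ref{cor: framed sheaves are transverse}, $E$ is transverse to $D$, so Lemma \ref{lem: transverse} gives
\[
0\rightarrow E(-D)\rightarrow E\rightarrow i_*i^*E\rightarrow 0,
\]
and $\phi$ identifies $i^*E\cong F$. Since the polarization is $\O_X(1)=\O_X(D)$ by (A2), twisting by $\O_X(n)$ and taking Euler characteristics yields the difference equation
\[
P_E(n)-P_E(n-1)=P_{i_*F}(n)\quad\text{for all } n\in\Z,
\]
hence $P_E(x)-P_E(x-1)=P_{i_*F}(x)$ as polynomials.

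The difference operator $\Delta f(x):=f(x)-f(x-1)$ on $\Q[x]$ has kernel the constants and is surjective onto $\Q[x]$, lowering the degree by exactly one. Therefore $P_E$ is determined by $P_{i_*F}$ up to an additive constant, and that constant is fixed by $P_E(0)=\chi(E)$. This proves the ``determined by'' part.

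For the explicit top two coefficients, write $P_{i_*F}(x)=a_{d-1}\frac{x^{d-1}}{(d-1)!}+a_{d-2}\frac{x^{d-2}}{(d-2)!}+\cdots$, using (A1), which says $i_*F$ has dimension $d-1$. Make the ansatz $P_E(x)=c_d x^d+c_{d-1}x^{d-1}+\cdots$. Then
\[
\Delta(x^d)=dx^{d-1}-\tbinom{d}{2}x^{d-2}+\cdots,\qquad \Delta(x^{d-1})=(d-1)x^{d-2}+\cdots.
\]
Comparing the $x^{d-1}$ coefficients gives $d\,c_d=a_{d-1}/(d-1)!$, so $c_d=a_{d-1}/d!$. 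Comparing the $x^{d-2}$ coefficients gives
\[
-\tbinom{d}{2}c_d+(d-1)c_{d-1}=\frac{a_{d-2}}{(d-2)!}.
\]
Substituting, $(d-1)c_{d-1}=\frac{a_{d-2}}{(d-2)!}+\frac{a_{d-1}}{2(d-2)!}$, so $c_{d-1}=\frac{a_{d-2}+\frac12 a_{d-1}}{(d-1)!}$, which is the stated form.

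The only point needing care is the transversality input, which makes the sequence exact and gives $i^*E=F$; this is already supplied by the earlier results. The rest is routine bookkeeping of the finite difference.
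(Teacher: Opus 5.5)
Your proof is correct, but it takes a different route from the paper's.

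\textbf{The paper's route.} It takes Chern characters of \eqref{eq: SES} to obtain $D\cdot\ch(E)=\frac{D}{1-e^{-D}}\ch(i_*F)$. It then applies Hirzebruch--Riemann--Roch, writing $P_E(x)-\chi(E)=\int_X\frac{e^{xD}-1}{D}\cdot\frac{D}{1-e^{-D}}\ch(i_*F)\td(X)$. The coefficients are read off by expanding $\frac{D}{1-e^{-D}}=1+\frac{D}{2}+\cdots$.

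\textbf{Your route.} You use only additivity of $\chi$ on the sequence twisted by $\O_X(nD)$. This gives the difference equation $P_E(x)-P_E(x-1)=P_{i_*F}(x)$, which you then invert; your comparison of the $x^{d-1}$ and $x^{d-2}$ coefficients checks out.

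\textbf{What each buys.} Your argument is more elementary. It needs no Hirzebruch--Riemann--Roch, and it avoids the detour of dividing by the nilpotent class $1-e^{-D}$ via the invertible series $D/(1-e^{-D})$. It also makes the ``determined up to the constant $\chi(E)$'' part immediate from $\ker\Delta=\Q$. The paper's computation gives a closed expression for every coefficient: the coefficient of $x^{k+1}/(k+1)!$ is $\int_X D^k\frac{D}{1-e^{-D}}\ch(i_*F)\td(X)=a_k+\frac12 a_{k+1}+\frac{1}{12}a_{k+2}+\cdots$. It also gives an identity for $D\cdot\ch(E)$ at the level of cohomology, not merely of Hilbert polynomials. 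The Bernoulli numbers implicit in inverting your $\Delta$ are exactly those in $D/(1-e^{-D})$. Only the top two coefficients are used later, in Proposition \ref{prop: framed sheaves are delta-stable}, so either argument suffices.
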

\begin{proof}
    We take the Chern character of the short exact sequence \eqref{eq: SES} to obtain
$$D\cdot\ch(E)=\frac{D}{1-e^{-D}}\cdot\ch(i_*F).$$
By the Hirzebruch--Riemann--Roch theorem, we have 
\begin{align*}
    P_E(x)
    &=\int_X e^{xD}\cdot \ch(E)\cdot\td(X)\\
    &=\chi(E)+\int_X \frac{e^{xD}-1}{D}\cdot\frac{D}{1-e^{-D}} \cdot\ch(i_*F)\cdot\td(X)\\
    &=\chi(E)+\sum_{k\geq 0}\frac{x^{k+1}}{(k+1)!}\int_X D^k\cdot \left(1+\frac{D}{2}+\cdots\right)\cdot\ch(i_*F)\cdot\td(X)\\
    &=\frac{a_{d-1}(i_*F)}{d!}x^d+\frac{a_{d-2}(i_*F)+\frac{1}{2}a_{d-1}(i_*F)}{(d-1)!}x^{d-1}+\cdots+\chi(E).
\end{align*}
The third equality shows that $P_E(x)$ is determined by $P_{i_*F}(x)$ and $\chi(E)$. 
\end{proof}
Taking $\O_X(1) = \O_X(D)$, we write the stability condition $\delta(x)$ of $d$-dimensional HL-framed sheaves as
$$\delta(x)=\sum_{n=0}^{d-1}\delta_n\frac{x^n}{n!}.
$$
We show that if $\delta_{d-1}$ is a small enough positive number with respect to the topological type of the framing sheaf $F$, then any framed sheaf $(E,\phi)$ is automatically $\delta$-stable as an HL-framed sheaf. 

\begin{proposition}\label{prop: framed sheaves are delta-stable}
    Assume (A1)--(A3) hold and $0<\delta_{d-1}<a_{d-1}(F)$. Then any $d$-dimensional framed sheaf $(E,\phi)$ is $\delta$-stable as an HL-framed sheaf. 
\end{proposition}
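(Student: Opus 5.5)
We verify $\delta$-stability directly: $\phi\neq 0$ and the inequality for every proper nonzero subsheaf, comparing only the two top coefficients of the reduced Hilbert polynomials.

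\emph{Setup.} Let $(E,\phi)$ be a $d$-dimensional framed sheaf. Since $\phi\colon i^*E\to F$ is an isomorphism and $F\neq 0$ by (A1), the adjoint $\phi\colon E\to i_*F$ is nonzero, so $\epsilon_\phi=1$. Let $0\subsetneq E'\subsetneq E$. As $E$ is pure, $E'$ is pure $d$-dimensional, hence transverse to $D$ by Lemma \ref{lem: transverse}. The quotient $E''=E/E'$ may fail to be pure or transverse, so we will work with $E'$. The condition to prove is $a_d(E)P_{(E',\phi')}<a_d(E')P_{(E,\phi)}$, i.e. comparing $P_{E'}-\epsilon_{\phi'}\delta$ divided by $a_d(E')$ with $P_E-\delta$ divided by $a_d(E)$. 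Since $\deg\delta\le d-1$, only the $x^{d-1}$ coefficients matter, unless they are equal, in which case strictness comes from the lower terms.

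\emph{Key step.} Relate $E'$ to the framing. Set $G:=\im(i^*E'\to i^*E\cong F)\subseteq F$. Since $E'$ is transverse, Proposition \ref{prop: Hilbert polynomial relationship} gives
\[
a_d(E')=a_{d-1}(i^*E'),\qquad a_{d-1}(E')=a_{d-2}(i^*E')+\tfrac12 a_{d-1}(i^*E').
\]
The same formulas hold for $E$ with $i^*E=F$. I would next show that $i^*E'\to i^*E$ is injective up to lower-dimensional terms. The kernel of $E'\otimes\O_D\to E\otimes\O_D$ is a quotient of $\mathcal{T}{\!\it or}_1(E'',\O_D)$, which injects into $E''(-D)$. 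I expect this to be the main obstacle. I would first try the slope comparison: reduce to saturated $E'$, so that $E''$ is pure, or at least has its torsion in dimension below $d$ contributing only to lower terms, and then use $\mu_D$-semistability of $F$ in (A3). This would give
\[
\frac{a_{d-2}(G)}{a_{d-1}(G)}\le\frac{a_{d-2}(F)}{a_{d-1}(F)},
\]
with errors that only increase $a_{d-1}(E)$-side quantities. Hence the slope $a_{d-1}(E')/a_d(E')$ is at most $a_{d-1}(E)/a_d(E)$.

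\emph{Cases.} In the first case $\phi'\neq0$, both sides subtract $\delta$. After rescaling, the $x^{d-1}$ comparison reads
\[
\frac{a_{d-1}(E')-\delta_{d-1}}{a_d(E')}\quad\text{vs}\quad\frac{a_{d-1}(E)-\delta_{d-1}}{a_d(E)}.
\]
Since $a_d(E')<a_d(E)$ whenever $E'$ meets $D$ properly, the $-\delta_{d-1}/a_d$ term is strictly more negative on the left, which yields strict inequality. If instead $a_d(E')=a_d(E)$, then $E''$ is supported away from the ample $D$, so it is $0$-dimensional; then $a_d(E)P_{E'}<a_d(E)P_E$ holds directly, since $\chi(E'')>0$. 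In the second case $\phi'=0$, we have $i^*E'\to F$ zero, so $G=0$. The transversality argument then forces $a_d(E')$ to come entirely from the torsion kernel, and ampleness of $D$ shows that a $d$-dimensional $E'$ must meet $D$ in dimension $d-1$. This gives a contradiction unless the contribution is absorbed by $E''$. Using the bound $\delta_{d-1}<a_{d-1}(F)=a_d(E)$, one gets $a_{d-1}(E')/a_d(E')\le\mu$ while the right side is $\mu-\delta_{d-1}/a_d(E)$. This needs the sharper estimate that $\phi'=0$ costs at least $a_d(E')\cdot 1$ in the $x^{d-1}$ coefficient relative to $E$, coming from the $\tfrac12 a_{d-1}$ shift in Proposition \ref{prop: Hilbert polynomial relationship} applied to $E'(-D)\subset E$. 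I would check this bound carefully; it is precisely where the hypothesis $\delta_{d-1}<a_{d-1}(F)$ enters.
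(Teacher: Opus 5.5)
Your plan follows the paper's: split on whether $\phi'=0$, compare $x^{d-1}$-coefficients, pass to the framing via Proposition \ref{prop: Hilbert polynomial relationship}, and use $\mu_D$-semistability of $F$. However, the two steps that carry the content are left open, and your sketches of them are partly incorrect.

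The step you call the main obstacle is in fact immediate. If $E'$ is saturated, then $E''=E/E'$ is zero or pure $d$-dimensional with $\mathrm{Supp}(E'')\subseteq\mathrm{Supp}(E)$. So $E''$ is transverse by criterion (1) of Lemma \ref{lem: transverse}. Hence $\mathcal{T}{\!\it{or}}_1(E'',\O_D)=0$ and $i^*E'\hookrightarrow F$ exactly, with no lower-order error terms. Saturation keeps $a_d$ and can only raise $a_{d-1}$, so you get $a_{d-1}(E')/a_d(E')\le a_{d-1}(E)/a_d(E)$ for every nonzero $E'\subseteq E$. Together with $\delta_{d-1}>0$, this settles the case $\phi'\neq0$ when $a_d(E')<a_d(E)$.

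Your treatment of $a_d(E')=a_d(E)$ is wrong as stated. $E''$ need not avoid $D$ or be $0$-dimensional; it can be a $(d-1)$-dimensional sheaf meeting $D$. The conclusion $P_{E'}<P_E$ still holds, simply because $E''\neq0$.

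The genuine gap is the case $\phi'=0$, which you do not prove. The discussion of a ``contradiction unless absorbed by $E''$'' is spurious: $\phi'=0$ just says $E'\subseteq\ker(E\to i_*F)=E(-D)$, which is perfectly possible. The missing estimate comes from twisting the other way from what you wrote, namely $E'(D)\subseteq E$. Since $P_{E'}(x)=P_{E'(D)}(x-1)$, one has $a_{d-1}(E')=a_{d-1}(E'(D))-a_d(E')$. Applying the slope inequality above to $E'(D)$ then gives
\[
\frac{a_{d-1}(E')}{a_d(E')}\le\frac{a_{d-1}(E)}{a_d(E)}-1<\frac{a_{d-1}(E)-\delta_{d-1}}{a_d(E)},
\]
using $\delta_{d-1}<a_{d-1}(F)=a_d(E)$. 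The gain of $1$ is the shift $x\mapsto x-1$, not the $\tfrac12 a_{d-1}$ term. You also cannot saturate $E'$ itself here, since that can make $\phi'\neq0$ and change the inequality to be proved; saturate $E'(D)$ instead.

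The paper takes the maximal $n\geq1$ with $E'\subseteq E(-nD)$, found via Krull's intersection theorem and ampleness of $D$, so that $E'(nD)$ has nonzero framing, and then runs the same comparison. Since the slope inequality holds for all nonzero subsheaves, $n=1$ already suffices. So once the twist is corrected, your route is a mild simplification of the paper's.
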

\begin{proof}
    
Let $(E,\phi)$ be a $d$-dimensional framed sheaf on $(X,D)$. Consider the associated HL-framed sheaf $\phi\colon E\rightarrow i_*F$. We show that it is $\delta$-stable. Clearly, $\phi\neq 0$ because it is the adjoint of an isomorphism. Let $0\subsetneq E'\subsetneq E$ be a nonzero proper subsheaf. We need to show that 
$$a_d(E)\cdot P_{(E',\phi')}(x)< a_d(E')\cdot P_{(E,\phi)}(x).
$$
We separately analyze the cases where $\phi'\colon E'\subset E\rightarrow i_*F$ is zero and nonzero. 

We begin with the case where $\phi'\colon E'\rightarrow i_*F$ is nonzero. We need to show that
\begin{equation}\label{eq: case 1 need to show}
    a_d(E)\cdot(P_{E'}(x)-\delta(x))
    <a_d(E')\cdot(P_E(x)-\delta(x)).
\end{equation}
If we replace $E'$ with its saturation inside $E$, then we still have $\phi'\neq 0$, $a_d(E')$ is unchanged and $P_{E'}(x)$ cannot decrease. So, assume that $E'$ is saturated inside $E$. Then $E'':=E/E'$ is zero or pure $d$-dimensional. We leave the (much easier) case $E''=0$ to the reader and assume that $E''$ is pure $d$-dimensional so that $0<a_d(E')<a_d(E)$. On the other hand, both sides of \eqref{eq: case 1 need to show} are polynomials of degree $d$ with the same leading coefficients. So the inequality would follow from the inequality between the $x^{d-1}$-coefficients 
\begin{equation}\label{eq: case 1 d-1 coeff}
    \frac{a_{d-1}(E')-\delta_{d-1}}{a_{d}(E')}
<\frac{a_{d-1}(E)-\delta_{d-1}}{a_{d}(E)}.
\end{equation}

We prove \eqref{eq: case 1 d-1 coeff} by replacing the information of Hilbert polynomials of $E$ and $E'$ by those of sheaves supported on $D$ using Proposition \ref{prop: Hilbert polynomial relationship}. Consider the exact sequence $0\rightarrow E'\rightarrow E\rightarrow E''\rightarrow 0$ of pure $d$-dimensional sheaves. As $E$ is transverse to $D$ by Corollary \ref{cor: framed sheaves are transverse}, so are $E'$ and $E''$ by the topological characterization of the transversality in Lemma \ref{lem: transverse}. Therefore, we obtain a short exact sequence
$$0\rightarrow E'\otimes \O_D\rightarrow E\otimes\O_D\rightarrow E''\otimes\O_D\rightarrow 0,
$$
and rewrite it as
$$0\rightarrow i_*F'\rightarrow i_*F\rightarrow i_*F''\rightarrow 0,
$$
for some $F', F''\in \Coh(D)$. Note that this yields a framed sheaf $E'$ with $i^*E'\simeq F'$ where $\dim(F')=d-1$. By applying Proposition \ref{prop: Hilbert polynomial relationship} to this framed sheaf and $(E,\phi)$, the inequality \eqref{eq: case 1 d-1 coeff} becomes
$$\frac{a_{d-2}(F')+\frac{1}{2}a_{d-1}(F')-\delta_{d-1}}{a_{d-1}(F')}
<\frac{a_{d-2}(F)+\frac{1}{2}a_{d-1}(F)-\delta_{d-1}}{a_{d-1}(F)}.
$$
Here we crucially used $d\geq 2$ from the assumption (A3). This inequality follows from
$$0<\delta_{d-1},
\quad 0<a_{d-1}(F')<a_{d-1}(F),\quad\textnormal{and}\quad
\frac{a_{d-2}(F')}{a_{d-1}(F')}
\leq\frac{a_{d-2}(F)}{a_{d-1}(F)}.$$
The last inequality uses the $\mu_D$-semistability of $F$ from the assumption (A3). 

We now consider the case where $\phi'\colon E'\rightarrow i_*F$ is zero. We need to show that
\begin{equation}\label{eq: case 2 need to show}
    a_d(E)\cdot P_{E'}(x) <a_d(E')\cdot(P_E(x)-\delta(x)).
\end{equation}
Note that $\phi'=0$ implies that $E'$ is a subsheaf of $E(-D)=\ker(E\rightarrow i_*F)$. We claim that there exists some $n\geq 1$ such that $E' \subseteq E(-nD)$ and $E' \not\subseteq E(-(n+1)D)$. Suppose otherwise. Then we have 
$$0\subsetneq E'\subseteq \bigcap_{n\geq 1}E(-nD)=:E_\infty.
$$
By Krull's theorem, $\Supp(E_{\infty}) \cap D = \varnothing$. However, since $E_\infty$ is 2-dimensional, ampleness of $D$ implies that $\Supp(E_\infty)\cap D\neq \emptyset$, yielding a contradiction. Therefore, there exists an $n\geq 1$ such that $E'\subseteq E(-nD)$ but $E' \not\subseteq E(-(n+1)D)$. 

Set $\widetilde{E}:=E'(nD)\subseteq E$. Then $\widetilde{\phi}\colon \widetilde{E}\subseteq E\xrightarrow{\phi}i_*F$ is nonzero by the choice of $n$. This allows us to use arguments from the argument for the case where $\phi'\neq 0$. In particular, we may assume that $\widetilde{E}$ is saturated inside $E$ and so $i^*\widetilde{E}\simeq \widetilde{F}$ for some $\widetilde{F}\subseteq F$. Again we leave the (easier) case $\widetilde{E} = E$ to the reader and assume $\widetilde{E} \subsetneq E$. Applying Proposition \ref{prop: Hilbert polynomial relationship} to $(\widetilde{E},i^*\widetilde{E}\simeq \widetilde{F})$, we have 
\begin{align*}
    P_{E'}(x)
    &=P_{E'(nD)}(x-n)=P_{\widetilde{E}}(x-n)\\
    &=a_{d-1}(\widetilde{F})\frac{(x-n)^d}{d!}+\left(a_{d-2}(\widetilde{F})+\frac{a_{d-1}(\widetilde{F})}{2}\right)\frac{(x-n)^{d-1}}{(d-1)!}+\cdots\\
    &=a_{d-1}(\widetilde{F})\frac{x^d}{d!}+\left(a_{d-2}(\widetilde{F})+\frac{a_{d-1}(\widetilde{F})}{2}-n\cdot a_{d-1}(\widetilde{F})\right)\frac{x^{d-1}}{(d-1)!}+\cdots.
\end{align*}
Therefore, the inequality \eqref{eq: case 2 need to show} follows from the corresponding inequality of the $x^{d-1}$-coefficients, which becomes 
$$\frac{a_{d-2}(\widetilde{F})+\frac{1}{2}{a_{d-1}(\widetilde{F})}-n\cdot a_{d-1}(\widetilde{F})}{a_{d-1}(\widetilde{F})}
<\frac{a_{d-2}(F)+\frac{1}{2}{a_{d-1}(F)}-\delta_{d-1}}{a_{d-1}(F)}
$$
or, equivalently,
$$\frac{a_{d-2}(\widetilde{F})}{a_{d-1}(\widetilde{F})}-n
<\frac{a_{d-2}(F)-\delta_{d-1}}{a_{d-1}(F)}.
$$
This follows from the inequalities
$$n\geq 1,\quad \delta_{d-1}<a_{d-1}(F),\quad\textnormal{and}\quad
\frac{\alpha_{d-2}(\widetilde{F})}{\alpha_{d-1}(\widetilde{F})}
\leq\frac{\alpha_{d-2}(F)}{\alpha_{d-1}(F)}.$$
The last inequality uses the $\mu_D$-semistability of $F$ from the assumption (A3). 
\end{proof}
\begin{corollary}\label{cor: Hom vanishing for framed sheaves}
If $(E,\phi)$ is a $d$-dimensional framed sheaf satisfying the assumptions (A1)--(A3), then $\Hom_X(E,E(-D))=0$.    
\end{corollary}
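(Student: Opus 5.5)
The plan is to deduce the vanishing from Lemma \ref{lem: Hom vanishing} applied to the HL-framed sheaf $(E,\phi\colon E\to i_*F)$ associated to the framed sheaf. First, by Proposition \ref{prop: framed sheaves are delta-stable} (choosing $\delta$ with $0<\delta_{d-1}<a_{d-1}(F)$, which is always possible), this HL-framed sheaf is $\delta$-stable. Lemma \ref{lem: Hom vanishing} then gives
\[
\ker\Big(\Hom_X(E,E)\xrightarrow{\phi\circ -}\Hom_X(E,i_*F)\Big)=0.
\]

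Next, identify this kernel with $\Hom_X(E,E(-D))$. By Corollary \ref{cor: framed sheaves are transverse}, $E$ is transverse to $D$. Hence Lemma \ref{lem: transverse} gives the short exact sequence \eqref{eq: SES}
\[
0\to E(-D)\to E\to E\otimes\O_D\to 0 .
\]
Here $E\otimes\O_D\cong i_*i^*E$, and under this identification the map $E\to i_*F$ is the adjoint of $\phi$ composed with the isomorphism $i_*\phi\colon i_*i^*E\cong i_*F$. Thus $\ker(E\to i_*F)=E(-D)$. Applying the left exact functor $\Hom_X(E,-)$ yields
\[
\Hom_X(E,E(-D))=\ker\Big(\Hom_X(E,E)\to\Hom_X(E,i_*F)\Big),
\]
and this kernel vanishes by the previous paragraph.

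There is no real obstacle. The only point needing care is checking that the HL-framing map is exactly the quotient map up to an isomorphism of the target, so that its kernel is precisely $E(-D)$; this follows directly from the adjunction construction in the Remark preceding the definition of HL-framed sheaves.
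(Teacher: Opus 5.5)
Your proposal is correct and follows essentially the same route as the paper: choose $\delta$ as in Proposition~\ref{prop: framed sheaves are delta-stable} so the associated HL-framed sheaf is $\delta$-stable, then apply Lemma~\ref{lem: Hom vanishing}. The paper identifies $[E\xrightarrow{\phi}i_*F]\simeq E(-D)$ where you use the short exact sequence and left exactness of $\Hom_X(E,-)$; these amount to the same identification.
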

\begin{proof}
    Choose a stability condition $\delta$ as in Proposition \ref{prop: framed sheaves are delta-stable}. Then the corresponding HL-framed sheaf $\phi\colon E\rightarrow i_*F$ is $\delta$-stable. As $[E\xrightarrow{\phi}i_*F]\simeq E(-D)$, the statement follows from Lemma \ref{lem: Hom vanishing}. 
\end{proof}

We now apply the above proposition to moduli theory of framed sheaves. We first define the moduli functor for framed sheaves. 

\begin{definition} \label{def:functfr}
    Let $S$ be a $\C$-scheme of finite type. An $S$-family of $d$-dimensional framed sheaves is a pair $(\cE,\Phi)$ of an $S$-flat coherent sheaf $\cE$ on $X \times S$, for which the fibre $\cE_s$ is pure $d$-dimensional for all closed points $s \in S$,  together with an isomorphism $\Phi \colon i^*\cE\xrightarrow{\sim}\pi_1^*F$, where $i \colon D \times S \hookrightarrow X \times S$ denotes the inclusion and $\pi_1 \colon D \times S \to D$ denotes projection. An isomorphism between two such families $(\cE,\Phi)$ and $(\cE',\Phi')$ is an isomorphism $f\colon \cE\xrightarrow{\sim}\cE'$ such that $\Phi=\Phi'\circ i^*(f)$.
\end{definition}

Under the assumption (A1), framed sheaves are transverse to the divisor $D$. So, their restriction to the divisor behaves nicely in families. 

\begin{proposition}\label{prop: no derived pull back}
    Assume (A1) holds. If $(\cE,\Phi)$ is an $S$-flat family of $d$-dimensional framed sheaves, then $Li^*\cE \cong i^*\cE$ is flat over $S$.
\end{proposition}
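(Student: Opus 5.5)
The plan is to reduce to the fibrewise statement of Lemma \ref{lem: transverse} and then invoke the local criterion of flatness. Note that $i \colon D \times S \hookrightarrow X \times S$ is again an effective Cartier divisor, namely the pullback of $D$. Consequently $\O_{D\times S}$ has the two-term locally free resolution $0 \to \O_{X\times S}(-D) \to \O_{X\times S} \to \O_{D\times S} \to 0$. It therefore suffices to show that $\mathcal{T}{\!\it{or}}_1(\cE,\O_{D\times S}) = 0$, i.e.\ that the map $\cE(-D) \to \cE$ given by multiplication by a local equation $f$ of $D$ is injective. Once that holds, the projection formula argument of Lemma \ref{lem: transverse} gives $Li^*\cE \cong i^*\cE$. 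The same injectivity will also yield flatness of $i^*\cE$ over $S$.

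\textbf{Step 1 (fibres).} For each closed point $s \in S$, the fibre $\cE_s$ is pure $d$-dimensional by Definition \ref{def:functfr}. Since $\Phi$ restricts to an isomorphism $\cE_s|_D \cong F$ and $F$ has dimension $d-1$ by (A1), Corollary \ref{cor: framed sheaves are transverse} shows that $\cE_s$ is transverse to $D$. Hence $f$ is injective on $\cE_s$, and we have a short exact sequence $0 \to \cE_s(-D) \to \cE_s \to \cE_s\otimes\O_D \to 0$.

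\textbf{Step 2 (local criterion).} Work locally at a point $x \in X\times S$ lying over $s$, with $A = \O_{S,s}$, $B = \O_{X\times S,x}$ and $M = \cE_x$, which is finite over $B$ and flat over $A$. Step 1 says that $f$ is a nonzerodivisor on $M \otimes_A k(s)$. By the standard lemma (e.g.\ Matsumura, Thm.~22.5, or EGA IV 11.3.7), if $M$ is $A$-flat and $f$ is $M\otimes k(s)$-regular, then $f$ is $M$-regular and $M/fM$ is $A$-flat. Applying this at every point of $D\times S$ gives both $\mathcal{T}{\!\it{or}}_1(\cE,\O_{D\times S}) = 0$, so that $Li^*\cE \cong i^*\cE$, and flatness of $i^*\cE = \cE/f\cE$ over $S$. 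Since $S$ is of finite type, closed points are dense, and the local rings at non-closed points are localizations; so checking at closed points suffices.

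The main obstacle is Step 2: correctly applying the local flatness criterion in the relative Noetherian setting, which is the only nonformal input. One must verify that the hypotheses hold, namely that the rings are Noetherian local, $B$ is an $A$-algebra essentially of finite type, and $M$ is finitely generated. These are fine because $X$ is projective and $S$ is of finite type.
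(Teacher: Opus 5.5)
Your proof is correct and follows essentially the same route as the paper: fibrewise transversality from Corollary \ref{cor: framed sheaves are transverse}, followed by a relative local flatness criterion, with your reduction from arbitrary points to closed points handled adequately. The only difference is packaging. The paper reduces to affine $X$ with $D=f^{-1}(0)$ and quotes the proof of \cite[Cor.~3.4]{LW} to get that $\cE$ is flat over $\mathbb{A}^1\times S$ near $\{0\}\times S$, which is equivalent to your direct conclusion (via Matsumura's Thm.~22.5) that $f$ is $\cE$-regular and $\cE/f\cE$ is $S$-flat.
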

\begin{proof}

By Corollary \ref{cor: framed sheaves are transverse}, $\cE_s$ is transverse to the divisor $D$ for every closed point $s\in S$. As the statement is local on $X$, we may assume that $X$ is affine. Let $f \colon X\rightarrow\mathbb{A}^1$ be a regular function for which the Cartier divisor $D=f^{-1}(0)$. In the proof of \cite[Cor.~3.4]{LW}, it is shown that there exists an open subset $U\subseteq \mathbb{A}^1\times S$ containing $\{0\}\times S$ such that $\cE|_U$ is flat over $U$. It follows that $Li^*\cE \cong i^*\cE$ is flat over $S$. 
\end{proof}

Let $v\in H^*(X,\Q)$ be a Chern character of a $d$-dimensional sheaf. This defines a moduli functor $\underline{M}_{(X,D,F)}(v)$ of isomorphism classes of $d$-dimensional framed sheaves $(E,\phi\colon i^*E\simeq F)$ on $(X,D)$ such that $\ch(E)=v$. By adjunction and Proposition \ref{prop: framed sheaves are delta-stable}, we have a natural map
$$\underline{M}_{(X,D,F)}(v)\rightarrow \underline{M}_{(X,i_*F)}^{\delta\textrm{-}st}(v)
$$
under the assumptions (A1)--(A3). We prove that this map is an open embedding. 
\begin{lemma}\label{lem: framed sheaves form an open subset}
Assume that (A1)--(A3) hold and $0<\delta_{d-1}<a_{d-1}(F)$. Then the morphism $\underline{M}_{(X,D,F)}(v)\rightarrow \underline{M}_{(X,i_*F)}^{\delta\textrm{-}st}(v)$ is an open embedding. 
\end{lemma}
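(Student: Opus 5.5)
The plan is to prove that the natural transformation is representable by open embeddings. Concretely, for any finite type $\C$-scheme $S$ and any $S$-family $(\cE,\Phi\colon \cE\to \pi_1^*i_*F)$ of $\delta$-stable HL-framed sheaves with $\ch(\cE_s)=v$, we want an open subscheme $U\subseteq S$ with the following two properties. First, a morphism $T\to S$ factors through $U$ if and only if the pulled back HL-framed family comes from a framed family. Second, the framed family over $U$ is unique up to unique isomorphism. Combined with injectivity on isomorphism classes, this gives the open embedding.

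\textbf{Step 1 (injectivity and uniqueness).} The framed sheaf is recovered from the HL-framed sheaf by adjunction: $\Phi$ corresponds to $i^*\cE\to \pi_1^*F$, and the framed data is exactly the requirement that this adjoint be an isomorphism. An isomorphism of HL-framed families is then the same as an isomorphism of framed families. So the map of functors is a monomorphism, and on the locus where the adjoint is an isomorphism the framed structure is determined.

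\textbf{Step 2 (defining $U$).} Let $U$ be the set of closed points $s\in S$ where the following hold: $\cE_s$ is pure $d$-dimensional, $\cE_s$ is transverse to $D$, and the adjoint $i^*\cE_s\to F$ is an isomorphism. We show $U$ is open as follows.
\begin{itemize}
\item Purity of fibres is an open condition in flat families (Huybrechts--Lehn, Prop.~2.3.1). Moreover, $\delta$-stability already forces $\ker\phi$ to be pure and hence constrains torsion.
\item Transversality: the set where $\mathcal{T}{\!\it{or}}_1(\cE_s,\O_D)=0$ is open. By Lemma~\ref{lem: transverse} it is equivalent to the vanishing of the kernel of $\cE_s(-D)\to\cE_s$. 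Openness should follow from the argument of \cite[Cor.~3.4]{LW} used in Proposition~\ref{prop: no derived pull back}: near a transverse fibre, $\cE$ is flat over $\mathbb{A}^1\times S$ locally, so $i^*\cE=Li^*\cE$ is $S$-flat over an open neighbourhood.
\item On that open set, $i^*\cE$ and $\pi_1^*F$ are both $S$-flat. A morphism of $S$-flat coherent sheaves with proper support over $S$ is an isomorphism on fibres over an open set, namely the complement of the images of the supports of kernel and cokernel, using Nakayama and the fibrewise criterion for isomorphisms of flat sheaves. Here the fibre of $i^*\cE$ at $s$ is $i^*\cE_s$ by flatness.
\end{itemize}

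\textbf{Step 3 (the factorisation property).} A base change $T\to S$ landing in $U$ yields an isomorphism $i^*\cE_T\cong\pi_1^*F$, by compatibility of $i^*$ with base change and the fibrewise isomorphism criterion. This is a framed family. Conversely, a framed family has all fibres in $U$.

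\textbf{Main obstacle.} I expect the openness of the transversality condition in Step 2 to be the crux. The condition $i^*\cE_s\cong F$ forces $\dim(\Supp\cE_s\cap D)=d-1$ only at points of $U$, so one must control nearby fibres. My first attempt is to use upper semicontinuity of $\dim \Supp(\mathcal{T}{\!\it{or}}_1)$ together with the local flatness criterion. If that is awkward, I would instead note that a nonzero $\mathcal{T}{\!\it{or}}_1$ is a subsheaf of $\cE_s(-D)$ killed by $D$. Its Hilbert polynomial then changes $P_{i^*\cE_s}$, which contradicts $P_{i^*\cE_s}=P_F$ since the fibres in the family have constant Hilbert polynomials. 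This reduces openness to the closed condition of the isomorphism locus.
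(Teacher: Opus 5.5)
Your proposal is correct and takes essentially the same route as the paper. Purity is open; transversality is open, and $i^*\cE$ becomes $S$-flat, by the argument of \cite[Cor.~3.4]{LW}; and the locus where the adjoint $i^*\cE_s\to F$ is an isomorphism is open. The paper gets that last step from the surjectivity locus plus a flat kernel of constant Chern character, while you use the images of the supports of kernel and cokernel, which amounts to the same argument. Two small remarks. The fallback in your last paragraph is unnecessary: at points where the framing is an isomorphism, transversality is automatic by Corollary \ref{cor: framed sheaves are transverse}. Also, the isomorphism locus is an open condition, not a closed one as your wording there suggests.
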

\begin{proof}
Suppose we are given an $S$-flat family of $d$-dimensional $\delta$-stable HL-framed sheaves $(\cE,\Phi\colon \cE\rightarrow \pi_1^*(i_*F))$ on $X\times S$. We prove that the locus of $s\in S$ coming from framed sheaves forms an open subset. Recall that the locus where $\cE_s$ is pure defines an open subset $U_1\subseteq S$. Also, framed sheaves are required to be transverse to $D$. By \cite[Cor.~3.4]{LW}, the locus where this occurs forms an open subset $U_2\subseteq U_1$ and $i^*(\cE|_{U_2})$ is flat over $U_2$. Now, consider the adjunction morphism $\Phi|_{U_2}\colon i^*(\cE|_{U_2})\rightarrow \pi_1^* F$ between $U_2$-flat sheaves. The locus where the restriction $\Phi_s \colon i^*\cE_s\rightarrow F$ is surjective defines an open subset $U_3\subseteq U_2$. On this locus, the morphism $\Phi|_{U_3} \colon i^*(\cE|_{U_3})\rightarrow \pi_1^* F$ is surjective by Nakayama's lemma. The kernel is again a $U_3$-flat sheaf, so the locus where $\Phi_s \colon i^*\cE_s\rightarrow F$ is an isomorphism forms a union of connected components $U_4\subseteq U_3$. Here, we use that $Li^* \cE_s \cong i^*\cE_s$ and, as we fix $v$, either $\ch(i^* \cE_s) = \ch(Li^* \cE_s)$ and $\ch(F)$ coincide or not. In other words, $U_4\subseteq S$ is precisely the locus of framed sheaves, which is shown to be open.
\end{proof}

\begin{corollary}\label{cor: quasi-projectivity of moduli of framed sheaves}
    Assume that (A1)--(A3) hold. Then the moduli functor $\underline{M}_{(X,D,F)}(v)$ is represented by a quasi-projective scheme $M_{(X,D,F)}(v)$. 
\end{corollary}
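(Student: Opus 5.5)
The plan is to combine Theorem \ref{thm: HL moduli} with Lemma \ref{lem: framed sheaves form an open subset}. Fix a stability parameter $\delta(x)=\sum_{n=0}^{d-1}\delta_n x^n/n!$ with $0<\delta_{d-1}<a_{d-1}(F)$; this is possible since $a_{d-1}(F)>0$ by (A1). By Theorem \ref{thm: HL moduli}, the functor $\underline{M}^{\delta\textrm{-}st}_{(X,i_*F)}(v)$ is represented by an open subscheme $M^{\delta\textrm{-}st}_{(X,i_*F)}(v)$ of the projective scheme $M^{\delta\textrm{-}ss}_{(X,i_*F)}(v)$. In particular, it is quasi-projective.

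First I check that the natural transformation $\underline{M}_{(X,D,F)}(v)\to \underline{M}^{\delta\textrm{-}st}_{(X,i_*F)}(v)$ is well defined on $S$-families, not just on points. Given $(\cE,\Phi)$ with $\Phi\colon i^*\cE\xrightarrow{\sim}\pi_1^*F$, adjunction produces $\cE\to i_*\pi_1^*F=\pi_1^*(i_*F)$. Each fibre is $\delta$-stable by Proposition \ref{prop: framed sheaves are delta-stable}. Here Proposition \ref{prop: no derived pull back} guarantees that restriction to fibres commutes with $i^*$, so the fibres of the adjoint are the adjoints of the fibres. The transformation is also injective on isomorphism classes: an isomorphism of HL-framed families $f$ with $\Phi=\Phi'\circ f$ on the $\pi_1^*(i_*F)$ side is, by adjunction, the same as $\Phi=\Phi'\circ i^*(f)$. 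So the two functors have the same isomorphisms, and the map is a monomorphism of functors.

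Lemma \ref{lem: framed sheaves form an open subset} then says that the map is representable by open embeddings. Concretely, apply it to the universal family on $M^{\delta\textrm{-}st}_{(X,i_*F)}(v)$. This gives an open subscheme $U$ over which the universal HL-framing comes from a framed family, namely the locus where the fibres are pure, transverse to $D$, and have framing restricting to an isomorphism.

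It remains to show that $U$ represents $\underline{M}_{(X,D,F)}(v)$. A framed $S$-family induces a classifying map $S\to M^{\delta\textrm{-}st}$, and every closed point of $S$ lands in $U$, so the map factors through the open set $U$. Conversely, pulling back the universal family over $U$ gives a framed family: the adjoint of the universal framing restricts to an isomorphism $i^*\cE\to\pi_1^*F$, using flatness from Proposition \ref{prop: no derived pull back} and Nakayama's lemma exactly as in the proof of the lemma. These two constructions are mutually inverse up to isomorphism. Hence $M_{(X,D,F)}(v):=U$ is an open subscheme of a quasi-projective scheme and is therefore quasi-projective.

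The main obstacle is the last step, namely checking that the open locus really represents the framed functor for arbitrary (possibly non-reduced) bases, rather than merely matching it on closed points. What makes this work is that openness in Lemma \ref{lem: framed sheaves form an open subset} is proved at the level of families, together with the flatness of $i^*\cE$ from Proposition \ref{prop: no derived pull back}. Flatness ensures that "being an isomorphism" can be tested fibrewise.
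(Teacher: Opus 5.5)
Your proposal is correct and is the paper's argument: the paper proves the corollary in one line by combining Lemma \ref{lem: framed sheaves form an open subset} (the framed functor is an open subfunctor of the $\delta$-stable HL-framed functor when $0<\delta_{d-1}<a_{d-1}(F)$) with Theorem \ref{thm: HL moduli}, and your write-up just unpacks that open-embedding statement at the level of families. One small aside: the underived $i^*$ commutes with restriction to fibres for any family, so you only need Proposition \ref{prop: no derived pull back} for the flatness and Nakayama step, not for identifying the fibres of the adjoint framing.
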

\begin{proof}
    This result follows from Lemma \ref{lem: framed sheaves form an open subset} and Theorem \ref{thm: HL moduli}. 
\end{proof}

\subsection{Derived structure} \label{sec:derstrucsheaves}

In this section, we discuss a derived enhancement of the moduli space of framed sheaves. Let $(X,D)$ be a smooth projective variety and an effective divisor. Denote the derived stack of perfect complexes on $X$ (resp.~$D$) by $$\overline{\mathbf{M}}_X:=\Map(X,\mathbf{Perf})\quad \big(\textrm{resp.  } \overline{\mathbf{M}}_D:=\Map(D,\mathbf{Perf})\big),$$
which were constructed in \cite{TV}. Pullback along the embedding $i\colon D\rightarrow X$ defines a morphism $Li^*\colon\overline{\mathbf{M}}_X\rightarrow \overline{\mathbf{M}}_D$. A perfect framing sheaf $F\in \Coh(D)$ gives a point in $\overline{\mathbf{M}}_D$. Define $\overline{\mathbf{M}}_{(X,D,F)}$ via the homotopy fibre diagram
\begin{center}
\begin{tikzcd}
 \overline{\mathbf{M}}_{(X,D,F)}\arrow[r] \arrow[d] & \overline{\mathbf{M}}_X \arrow[d,"Li^*"] \\
\{F\} \arrow[r]           & \overline{\mathbf{M}}_D. 
\end{tikzcd}
\end{center}

Let $f=f_E\colon X\rightarrow \mathbf{Perf}$ be a point in $\overline{\mathbf{M}}_X$ given by a perfect complex $E$ on $X$. Consider the exact triangle of tangent complexes 
$$\mathbb{T}_{Li^*, f}\rightarrow \mathbb{T}_{\overline{\mathbf{M}}_X,f}\rightarrow \mathbb{T}_{\overline{\mathbf{M}}_D,f\circ i}\xrightarrow{[1]}
$$
with respect to the morphism $Li^*\colon \overline{\mathbf{M}}_X\rightarrow \overline{\mathbf{M}}_D$. This can be rewritten as
$$R\Gamma(X,Lf^*\mathbb{T}_{\mathbf{Perf}} (-D))\rightarrow R\Gamma(X,Lf^*\mathbb{T}_{\mathbf{Perf}})\rightarrow R\Gamma(X,i_*Li^*Lf^*\mathbb{T}_{\mathbf{Perf}})\xrightarrow{[1]}
$$
obtained by tensoring $Lf^*\mathbb{T}_{\mathbf{Perf}}$ with the ideal exact sequence of $D$, see \cite[Sect.~3]{Spa}. As $\mathbb{T}_{\mathbf{Perf}}=R\mathcal{H}om(\mathcal{U},\mathcal{U})[1]$ for the universal perfect complex $\mathcal{U}$, we can further rewrite the exact triangle in terms of the perfect complex $E$
$$R\Hom_X(E,E(-D))[1]\rightarrow R\Hom_X(E,E)[1]\rightarrow R\Hom_D(Li^*E,Li^*E)[1]\xrightarrow{[1]}.
$$
When $2D\in |-K_X|$, the relative tangent complex $R\Hom_X(E,E(-D))[1]$ satisfies
$$
R\Hom_X(E,E(-D))[1] \cong (R\Hom_X(E,E(-D))[1])^\vee[2-\dim(X)]
$$
by Serre duality. This duality suggests that $\overline{\mathbf{M}}_{(X,D,F)}$ may admit a shifted symplectic structure. The existence of such a structure has been proved in \cite{Spa}.\footnote{In the notation of \cite[Thm.~0.3]{Spa}, we take $Y=\mathbf{Perf}$, $E=0$ with the base map $f\colon D\rightarrow Y$ corresponding to the perfect coherent sheaf $F$ on $D$. The assumption of loc.~cit.~on the mapping stacks being derived Artin stacks locally of finite presentation is satisfied by \cite[Cor.~3.3]{T}.}

\begin{theorem}[Spaide]\label{thm: Spaide}
    If $2D\in |-K_X|$, then $\overline{\mathbf{M}}_{(X,D,F)}$ is equipped with a $(2-\dim(X))$-shifted symplectic structure.
\end{theorem}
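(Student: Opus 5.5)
The statement is Spaide's theorem, and I would prove it by applying \cite[Thm.~0.3]{Spa} rather than building the form by hand. The strategy is the relative AKSZ / Lagrangian-intersection construction, carried out in three steps.

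\textbf{Lagrangian structure on the restriction map.} The target $\mathbf{Perf}$ carries a $2$-shifted symplectic structure $\omega_{\mathbf{Perf}}$ of PTVV, given by the trace pairing on $\mathbb{T}_{\mathbf{Perf}} = R\mathcal{H}om(\mathcal{U},\mathcal{U})[1]$. Since $2D \in |-K_X|$, we have $\O_X(-D)^{\otimes 2}\cong K_X$, which suggests the following. The pair $(X,D)$, after twisting by $\O_X(-D)$, carries a relative orientation of dimension $\dim X$, in the sense of Calaque's relative AKSZ: there is a class $[X] \in H_{\dim X}$ relative to $D$, with a boundary structure on $D$, compatible with the pairing $\O(-D)\otimes \O(-D) \to K_X$.

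Transgression (AKSZ) then gives two things:
\begin{itemize}
\item a $(2-(\dim X-1))$-shifted symplectic structure on $\overline{\mathbf{M}}_D$ (the relevant shift, coming from the twisted pairing on $D$);
\item a Lagrangian structure on $Li^*\colon \overline{\mathbf{M}}_X \to \overline{\mathbf{M}}_D$.
\end{itemize}
On tangent complexes, the Lagrangian condition is exactly the nondegeneracy statement already recorded above: the Serre-duality isomorphism
\[
R\Hom_X(E,E(-D))[1] \cong (R\Hom_X(E,E(-D))[1])^\vee[2-\dim X]
\]
identifies the homotopy fibre $\mathbb{T}_{Li^*}$ with the shifted dual of $\mathbb{T}_{\overline{\mathbf{M}}_X}$ relative to $\mathbb{T}_{\overline{\mathbf{M}}_D}$.

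\textbf{The point $\{F\}$ as a Lagrangian.} The inclusion $\{F\}\to \overline{\mathbf{M}}_D$ is Lagrangian for a trivial reason: a point carries a zero isotropic structure. Its nondegeneracy amounts to the identification $\mathbb{T}_{\overline{\mathbf{M}}_D,F}[-1] \cong \mathbb{T}_{\overline{\mathbf{M}}_D,F}^\vee[\text{shift}-1]$, which is just the symplectic form on $\overline{\mathbf{M}}_D$ restricted to $F$.

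\textbf{Intersecting the two Lagrangians.} By PTVV, the derived intersection of two Lagrangians in an $n$-shifted symplectic stack is $(n-1)$-shifted symplectic. Applying this to the homotopy fibre product $\overline{\mathbf{M}}_{(X,D,F)}$ gives the shift $(3-\dim X)-1 = 2-\dim X$, as claimed.

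\textbf{Finiteness hypotheses.} Finally, I would verify that all the mapping stacks involved are derived Artin stacks locally of finite presentation, citing \cite[Cor.~3.3]{T}; this is needed to apply PTVV.

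\textbf{Main obstacle.} The hard step is the first one: producing the relative orientation on $(X,D)$ twisted by $\O(-D)$, and checking that it transgresses to a genuinely closed form rather than merely a nondegenerate pairing. Nondegeneracy follows from Serre duality, but constructing the closed form is exactly the content Spaide supplies. I would defer that step to \cite[Thm.~0.3]{Spa}, with $Y=\mathbf{Perf}$, $E=0$, and base map $f$ given by $F$, and check only that our hypotheses match theirs.
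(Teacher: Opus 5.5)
Your actual justification is the paper's: it proves the statement solely by citing \cite[Thm.~0.3]{Spa} with $Y=\mathbf{Perf}$, $E=0$ and base map $f$ given by $F$, plus \cite[Cor.~3.3]{T} for finiteness. The problem is the mechanism you sketch around that citation and claim for yourself, deferring only closedness of the form. Under $2D\in|-K_X|$, writing $\overline{\mathbf{M}}_{(X,D,F)}=\{F\}\times_{\overline{\mathbf{M}}_D}\overline{\mathbf{M}}_X$ as an intersection of two Lagrangians is not available, for three reasons.

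\textbf{$\overline{\mathbf{M}}_D$ is not shifted symplectic.} Adjunction gives $\omega_D\cong\O_D(-D)$, so Serre duality pairs $R\Hom_D(G,G)$ with $R\Hom_D(G,G(-D))$, not with itself. At $G=\O_D$, a $(3-\dim X)$-shifted form would force $R\Gamma(\O_D)\simeq R\Gamma(\O_D(-D))$. But $H^0(\O_D)\neq 0$, while $H^0(\O_D(-D))=0$ by Kodaira vanishing.

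\textbf{$Li^*$ is not Lagrangian.} Even granting a form, this would mean $\mathbb{T}_{Li^*}\simeq\mathbb{L}_{\overline{\mathbf{M}}_X}[2-\dim X]$, i.e.\ $R\Hom_X(E,E(-D))[1]\simeq R\Hom_X(E,E(-2D))[1]$. The Serre-duality isomorphism you quote is the self-duality of $\mathbb{T}_{Li^*}$. That is what the \emph{fibre} needs, not a Lagrangian condition on $Li^*$. The restriction map is Lagrangian in the log Calabi--Yau case $D\in|-K_X|$, but there the fibre over a point is not symplectic.

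\textbf{A point is not Lagrangian.} In an $n$-shifted symplectic $M$, $\{F\}\to M$ is Lagrangian only if $\mathbb{T}_{M,F}\simeq 0$. Nondegeneracy asks $0=\mathbb{T}_{\pt}\to\mathbb{L}_{\pt/M}[n-1]\simeq\mathbb{T}_{M,F}$ to be an equivalence.

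A Lagrangian-intersection picture compatible with the hypothesis uses $2D$, not $D$, as the boundary.
\begin{itemize}
\item Since $\omega_{2D}\cong\omega_X(2D)|_{2D}\cong\O_{2D}$, AKSZ makes $\Map(2D,\mathbf{Perf})$ a $(3-\dim X)$-shifted symplectic stack.
\item The relative version makes $\overline{\mathbf{M}}_X\to\Map(2D,\mathbf{Perf})$ Lagrangian: its relative tangent complex is $R\Hom_X(E,E(-2D))[1]=R\Hom_X(E,E\otimes\omega_X)[1]\simeq\mathbb{L}_{\overline{\mathbf{M}}_X,E}[2-\dim X]$.
\item The locus $\Map(2D,\mathbf{Perf})\times_{\overline{\mathbf{M}}_D}\{F\}$ is also Lagrangian. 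Its tangent complex $R\Hom_D(F,F(-D))[1]$ is dual, with the right shift, to the quotient $R\Hom_D(F,F)[1]$, precisely because $\omega_D\cong\O_D(-D)$.
\item Intersecting the two gives $\overline{\mathbf{M}}_{(X,D,F)}$, with shift $(3-\dim X)-1=2-\dim X$ and tangent complex $R\Hom_X(E,E(-D))[1]$.
\end{itemize}
So the difficulty is not only closedness of the form, as your ``main obstacle'' paragraph suggests. As written, only the citation carries your argument; the sketch should be dropped or replaced by this one.
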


Let $\mathbf{M}_{(X,D,F)}\subseteq \overline{\mathbf{M}}_{(X,D,F)}$ be the pullback of $\overline{\mathbf{M}}_{(X,D,F)}$ under the open derived substack $\mathbf{M}_X\subseteq \overline{\mathbf{M}}_X$ corresponding to the locus of pure coherent sheaves on $X$. Similarly, denote by $\mathbf{M}_D \subseteq \overline{\mathbf{M}}_D$ the open derived substack of coherent sheaves on $D$. By definition, $\mathbf{M}_{(X,D,F)}$ is the derived moduli stack parametrizing pure coherent sheaves $E$ on $X$ together with an isomorphism $\phi\colon Li^*E\xrightarrow{\sim} F$ on $D$. 

Assume (A1) holds and let $\gamma \in H^{2\dim(X)-4}(X,\Q)$ be Poincar\'e dual to the class of a 2-dimensional closed subscheme of $X$. We denote by $\mathbf{M}_{(X,D,F)}(v) \subseteq \mathbf{M}_{(X,D,F)}$ the open and closed substack parametrizing objects of fixed topological type $v$, i.e., $\ch(E)=v=(0,\ldots,0,\gamma,*,*) \in H^{\mathrm{even}}(X,\Q)$. Then $\mathbf{M}_{(X,D,F)}(v)$ gives a derived enhancement of the moduli stack $\underline{M}_{(X,D,F)}(v)$, that is, \footnote{
    By slight abuse of notation, $\underline{M}_{(X,D,F)}(v)$ denotes the stack whose corresponding classical moduli functor was defined in Definition \ref{def:functfr}.}
$$t_0(\mathbf{M}_{(X,D,F)}(v))=\underline{M}_{(X,D,F)}(v).$$
Here Proposition \ref{prop: no derived pull back} (and in particular (A1)) is used. This construction in particular defines an obstruction theory (not necessarily $2$-term) on the moduli stack $\underline{M}_{(X,D,F)}(v)$ with a tangent complex equal to 
\begin{equation}\label{eqn: tangent complex of framed moduli}
    R\Hom_X(E,E(-D))[1]
\end{equation}
at a point $(E,\phi\colon i^*E\xrightarrow{\sim}F)$. We note that in the case where $\dim(X) = 4$, the rank of this tangent complex is $-\gamma^2$.

\begin{corollary}\label{cor: derived structure}
    Assume that (A1) holds, $F\in \Coh(D)$ is perfect, and $2D\in |-K_X|$. Then the derived enhancement $\mathbf{M}_{(X,D,F)}(v)$ of $\underline{M}_{(X,D,F)}(v)$ admits a $(2-\dim(X))$-shifted symplectic structure. 
\end{corollary}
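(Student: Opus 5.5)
The statement to prove is Corollary \ref{cor: derived structure}. The plan is to restrict Spaide's shifted symplectic structure from Theorem \ref{thm: Spaide} to an open derived substack, and then to identify the classical truncation.

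First, by Theorem \ref{thm: Spaide}, the hypotheses $2D\in|-K_X|$ and $F$ perfect (so that $F$ defines a point $\{F\}\to\overline{\mathbf{M}}_D$) give a $(2-\dim(X))$-shifted symplectic structure $\omega$ on the homotopy fibre product $\overline{\mathbf{M}}_{(X,D,F)}$. Second, $\mathbf{M}_{(X,D,F)}$ is by definition the base change of $\overline{\mathbf{M}}_{(X,D,F)}\to\overline{\mathbf{M}}_X$ along the open immersion $\mathbf{M}_X\subseteq\overline{\mathbf{M}}_X$. Open immersions are stable under homotopy base change, so $\mathbf{M}_{(X,D,F)}\to\overline{\mathbf{M}}_{(X,D,F)}$ is an open immersion. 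Fixing the topological type $v$ picks out an open and closed substack, so $j\colon\mathbf{M}_{(X,D,F)}(v)\hookrightarrow\overline{\mathbf{M}}_{(X,D,F)}$ is an open immersion as well.

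Third, I pull $\omega$ back along $j$. Since $j$ is étale, $\mathbb{L}_{\mathbf{M}_{(X,D,F)}(v)}\cong j^*\mathbb{L}_{\overline{\mathbf{M}}_{(X,D,F)}}$. The pullback $j^*\omega$ is therefore again a closed $2$-form of degree $2-\dim(X)$. Its underlying $2$-form induces $j^*$ of the nondegeneracy quasi-isomorphism $\mathbb{T}\to\mathbb{L}[2-\dim(X)]$, so it is nondegenerate. Concretely, at a point $(E,\phi)$ this is the Serre duality pairing on $R\Hom_X(E,E(-D))[1]$ noted before Theorem \ref{thm: Spaide}.

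Finally, it remains to check that $\mathbf{M}_{(X,D,F)}(v)$ is a derived enhancement of $\underline{M}_{(X,D,F)}(v)$, i.e.\ that $t_0(\mathbf{M}_{(X,D,F)}(v))=\underline{M}_{(X,D,F)}(v)$. Classical points of the fibre product are families of pure sheaves $\cE$ together with an isomorphism $L i^*\cE\cong\pi_1^*F$. Under (A1), Proposition \ref{prop: no derived pull back} gives $Li^*\cE\cong i^*\cE$, flat over $S$. Hence these classical points are exactly the families of Definition \ref{def:functfr}.

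I expect the main obstacle to be this last comparison, in two respects. One must make sure the derived fibre over $\{F\}$ introduces no extra classical data, which needs $Li^*\cE$ to be a sheaf in families, and this is precisely where (A1) enters. One must also make sure the symplectic form descends compatibly to the open substack; this is routine, but it should be stated that shifted symplectic structures restrict along étale maps.
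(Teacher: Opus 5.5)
Your proposal is correct and matches the paper's argument. The paper's proof is exactly the restriction of Spaide's $(2-\dim(X))$-shifted symplectic structure on $\overline{\mathbf{M}}_{(X,D,F)}$ to the open derived substack $\mathbf{M}_{(X,D,F)}(v)$. The identification $t_0(\mathbf{M}_{(X,D,F)}(v))=\underline{M}_{(X,D,F)}(v)$, using (A1) via Proposition \ref{prop: no derived pull back}, is recorded in the text just before the corollary. Your remarks that open immersions are stable under base change and that nondegeneracy survives étale pullback spell out what the paper leaves implicit.
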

\begin{proof}
    Restrict the $(2-\dim(X))$-shifted symplectic structure of $\overline{\mathbf{M}}_{(X,D,F)}$ to the open derived substack $\mathbf{M}_{(X,D,F)}(v)$.
\end{proof}

For convenience, we record the 4-fold case as a self-contained statement. 

\begin{theorem}\label{cor: CY4 obstruction theory}
    Let $(X,D)$ be a smooth projective 4-fold together with an ample effective divisor $D$ satisfying $2D\in |-K_X|$. Let $F\in \Coh(D)$ be a $\mu_D$-semistable 1-dimensional sheaf which is perfect. Then the moduli space $M_{(X,D,F)}(v)$ of 2-dimensional framed sheaves is a quasi-projective scheme and admits a $(-2)$-shifted symplectic derived enhancement. In particular, $M_{(X,D,F)}(v)$ admits a 3-term symmetric obstruction theory, with self-dual 3-term locally free resolution,  which is isotropic and has virtual dimension $-\tfrac{1}{2} \gamma^2$.
\end{theorem}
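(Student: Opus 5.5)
Theorem \ref{cor: CY4 obstruction theory} is assembled from the results already proved in this section, specialized to $\dim(X)=4$ and $d=2$; the only new work is extracting a symmetric locally free resolution from the derived structure. The steps are as follows.

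First, the assumptions (A1)--(A3) hold. Since $F$ is $1$-dimensional and the framed sheaves are $2$-dimensional, (A1) is $\dim F = d-1$. (A2) is the ampleness of $D$, and (A3) is the $\mu_D$-semistability of $F$ together with $d=2\geq 2$. Corollary \ref{cor: quasi-projectivity of moduli of framed sheaves} then gives that $\underline{M}_{(X,D,F)}(v)$ is represented by a quasi-projective scheme $\cM:=M_{(X,D,F)}(v)$. This scheme is a scheme rather than a stack: by Corollary \ref{cor: Hom vanishing for framed sheaves}, $\Hom_X(E,E(-D))=0$, so the automorphisms of a framed sheaf that fix the framing are trivial. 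Next, because $F$ is perfect and $2D\in|-K_X|$, Corollary \ref{cor: derived structure} supplies a derived enhancement $\mathbf{M}_{(X,D,F)}(v)$ with $t_0=\underline{M}_{(X,D,F)}(v)$, carrying a $(2-4)=(-2)$-shifted symplectic structure.

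Second, I would obtain the obstruction theory. Restricting the cotangent complex $\LL_{\mathbf{M}}$ to the classical truncation gives $E\udot_{\fr}:=\LL_{\mathbf{M}}|_{\cM}\to \LL_{\cM}\to\tau^{\geq-1}\LL_{\cM}$. By \eqref{eqn: tangent complex of framed moduli}, its dual at a point $P$ is $R\Hom_X(E,E(-D))[1]$, which has amplitude $[-1,2]$. I would show that the amplitude is $[0,2]$, i.e.\ three-term, using Serre duality with $K_X(D)=\O_X(-D)$:
\[
\Ext^i(E,E(-D))\cong \Ext^{4-i}(E,E(-D))^*.
\]
Taking $i=0$ gives $\Hom(E,E(-D))=0$, and hence also $\Ext^4=0$. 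So $(E\udot_{\fr})^\vee$ has cohomology only in degrees $0,1,2$, namely $\Ext^1,\Ext^2,\Ext^3$. The shifted symplectic form gives $\theta_{\fr}\colon (E\udot_{\fr})^\vee[2]\cong E\udot_{\fr}$ with $\theta_{\fr}^\vee[2]=\theta_{\fr}$. The virtual dimension is $\frac12\rk=-\frac12\gamma^2$, since the rank of $R\Hom(E,E(-D))[1]$ is $-\chi(E,E(-D))$, which equals $-\gamma^2$ by Riemann--Roch as noted before the corollary.

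Third, and this is the main obstacle, I need a self-dual three-term locally free resolution $T\to E\to T^*$ with $E$ quadratic and the section isotropic, as Oh--Thomas require. For this I would invoke the local Darboux theorem for $(-2)$-shifted symplectic derived schemes (Brav--Bussi--Joyce), which presents $\mathbf{M}$ Zariski-locally as the derived zero locus of an isotropic section of a quadratic bundle on a smooth scheme. To get a \emph{global} self-dual resolution, I would instead follow \cite[Prop.~4.1]{OT}. Since $\cM$ is quasi-projective, one first resolves $E\udot_{\fr}$ globally by a three-term complex of locally free sheaves, and then symmetrizes using $\theta_{\fr}$ to obtain $T\to E\stackrel{q}{\cong}E^*\to T^*$. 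The Oh--Thomas argument needs an ample line bundle, which is available here by quasi-projectivity. Isotropy of the resulting obstruction-theoretic data comes from the symplectic structure exactly as in loc.~cit.
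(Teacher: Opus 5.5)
Your proposal is correct and follows the paper's proof. You check (A1)--(A3) to get quasi-projectivity, take the $(-2)$-shifted symplectic enhancement from Spaide's theorem as packaged in the derived-structure corollary, and then use quasi-projectivity with Oh--Thomas: Prop.~4.1 for the self-dual three-term resolution, and Prop.~4.3 and Thm.~4.6 for isotropy. Your Serre-duality argument that $\Hom_X(E,E(-D))=\Ext^4_X(E,E(-D))=0$ makes explicit the three-term amplitude that the paper leaves implicit. One small slip: $R\Hom_X(E,E(-D))[1]$ a priori has amplitude $[-1,3]$, not $[-1,2]$.
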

\begin{proof}
Combine Corollaries \ref{cor: quasi-projectivity of moduli of framed sheaves} and \ref{cor: derived structure}. In particular, the existence of the self-dual 3-term locally free resolution follows from quasi-projectivity of $M_{(X,D,F)}(v)$ (Corollary \ref{cor: quasi-projectivity of moduli of framed sheaves}) and \cite[Prop.~4.1]{OT}. The existence of the $(-2)$-shifted symplectic derived enhancement implies the isotropic property for the intrinsic normal cone by \cite[Prop.~4.3, Thm.~4.6]{OT}. 
\end{proof}

\begin{remark}
There is a ``classical'' way, not involving derived geometry, to construct the obstruction theory on $M_{(X,D,F)}(v)$. Consider the restriction morphism $r = Li^* \colon M_{X} \to M_D$ on the classical truncations $M_X = t_0(\overline{\mathbf{M}}_X)$, $M_D = t_0(\overline{\mathbf{M}}_D)$. Let $\cU$, $\cU'$ be the universal complexes on $M_X \times X$, $M_D \times D$ respectively, and denote the projections to the first components by $\pi$. Letting $i$ also denote the base change of $i$ to $M_X \times X$ , we have  $L i^* \cU \cong  L(r \times \mathrm{id}_D)^* \cU'$. Consider the natural exact triangle 
\[
R\hom_{\pi}(\cU,\cU(-D)) \longrightarrow R\hom_{\pi}(\cU,\cU) \stackrel{\alpha}{\longrightarrow} R\hom_{\pi}(\cU,i_* Li^* \cU) \stackrel{[1]}{\longrightarrow}.
\]
By adjunction, we have $R\hom_{\pi}(\cU,i_* Li^* \cU) \cong Lr^* R\hom_{\pi}(\cU',\cU')$. Defining $\EE := (R\hom_{\pi}(\cU,\cU)[1])^{\vee}$ and $\FF := (R\hom_{\pi}(\cU',\cU')[1])^{\vee}$, we obtain a diagram
\begin{displaymath}
\xymatrix
{
Lr^* \FF \ar^{(\alpha[1])^\vee}[r] \ar[d] & \EE \ar[d] & & \\
Lr^* \LL_{M_D} \ar[r] & \LL_{M_X} \ar[r] & \LL_{M_X / M_D} \ar^>>>>>{[1]}[r] &
}
\end{displaymath}
where the vertical arrows are induced by the Atiyah classes of $\cU$, $\cU'$ and the square commutes by functoriality of the Atiyah class. Completing the diagram, we obtain a morphism 
\[
\mathbb{G} \to \LL_{M_X/M_D}, \quad \mathbb{G} \cong (R\hom_{\pi}(\cU,\cU(-D))[1])^\vee,
\]
which is a relative obstruction theory by \cite[(3.13)]{Man}. It induces an obstruction theory on $M_{(X,D,F)}(v)$ for each perfect framing sheaf $F \in \Coh(D)$. See \cite[Prop.~3.3.2]{KLT} for a similar construction. This approach does not suffice to deduce the isotropic property in Theorem \ref{cor: CY4 obstruction theory}.
\end{remark}

\subsection{Origami partition function via framed sheaves} \label{sec:origamiviasheaves}

In this section, we consider a specific moduli space of framed sheaves which is closely related to the moduli space of stable representations of the 4D ADHM quiver discussed in Section \ref{sec:quiver}. 

Let $(X,D)$ be the following specific pair satisfying $2D\in |-K_X|$
$$X=\PP^1\times\PP^1\times\PP^1\times\PP^1,\quad D=\{(z_1,z_2,z_3,z_4)\,|\,z_1z_2z_3z_4=\infty\}. 
$$
For each $A\in \bsix$, define a coordinate surface 
$$S_A:=\{(z_1,z_2,z_3,z_4)\in X\,|\,z_{\bar{a}}=z_{\bar{b}}=0\},\quad \overline{A}=\{\bar{a}<\bar{b}\}
$$
and denote its intersection with the divisor by $\ell_A:=S_A\cap D$. Then $S_A\simeq \PP^1\times \PP^1$ and $\ell_A$ is a chain of two $\PP^1$'s meeting in a point.

Recall that the moduli space $M_{Q_4}(\vec{r},n)$ depends on the rank vector $\vec{r}=(r_A)_{A\in \bsix}$ and the instanton number $n\in\Z_{\geq 0}$. To compare $M_{Q_4}(\vec{r},n)$ with a moduli space of framed sheaves, we need to choose the appropriate framing sheaf and topological type. We define a framing sheaf with respect to $\vec{r}$ by
$$F_{\vec{r}}:=\bigoplus_{A}\O_{\ell_A}^{\oplus r_A}\in \Coh(D).
$$
As $\ell_A=S_A\cap D$, the simplest example of a framed sheaf with respect to $F_{\vec{r}}$ is 
$$E=\bigoplus_{A} \O_{S_A}^{\oplus r_A},\quad \phi\colon i^*E\xrightarrow{\sim} F_{\vec{r}},
$$
where $\phi$ corresponds to the identity map. We consider this as a framed sheaf of ``instanton number" zero. For general ``instanton number'' $n\geq 0$, we choose a Chern character $v_{\vec{r},n}\in H^*(X,\Q)$ such that
$$v_{\vec{r},n}=\ch\Big(\bigoplus_{A}\O_{S_A}^{\oplus r_A}\Big)-n[\pt] = \big(0,0,\sum_{A} r_A[S_A],0,-n \big),
$$
where the second equality follows from Grothendieck--Riemann--Roch and the fact that $N_{S_A/(\PP^1)^4} \cong \O_{S_A} \oplus \O_{S_A}$.

\begin{remark}
    If $(E,\phi\colon i^*E\xrightarrow{\sim}F)$ is a framed sheaf, then Proposition \ref{prop: Hilbert polynomial relationship} implies that the Hilbert polynomial $P_E(x)$ is determined by the Hilbert polynomial $P_{i_* F}(x)$ up to an integer fixed by $\chi(E)$. Here, we are essentially doing the same for the Chern character $\ch(E)$ by fixing $i_*\ch(F)$ and the instanton number $n\in \Z_{\geq 0}$. 
\end{remark}
\begin{definition}
    The moduli space of 2-dimensional framed sheaves on $(X,D)$ with respect to the rank vector $\vec{r}$ and the instanton number $n$ is defined as 
    $$M_{(\PP^1)^4}(\vec{r},n):=M_{(X,D,F_{\vec{r}})}(v_{\vec{r},n}). 
$$
\end{definition}

Next, we define an action of $(\C^*)^4 \times (\C^*)^r$ and $\TT := T \times (\C^*)^r$ on $M_{(\PP^1)^4}(\vec{r},n)$, where $T = Z(t_1t_2t_3t_4-1)$ and $r := \sum_{A} r_{A}$ as before. The action on the points of $(\PP^1)^4 \setminus D$ is given by
\[
(t_1,t_2,t_3,t_4) \cdot (p_1,p_2,p_3,p_4) = (t_1p_1,t_2p_2,t_3p_3,t_4p_4),
\]
which extends to an action on $(\PP^1)^4$ leaving $D$ invariant. Moreover, $(\C^*)^4$ leaves the surfaces $S_A$ and the curves $\ell_A$ invariant. For each $t = (t_1,t_2,t_3,t_4) \in (\C^*)^4$, we denote the induced automorphism by $t\colon (\PP^1)^4 \to (\PP^1)^4$. Let $w = (w_{A,\alpha}) \in (\C^*)^r$. Then $(t,w)$ sends a framed sheaf $[(E,\phi)] \in M_{(\PP^1)^4}(\vec{r},n)$ to
\[
(t^* E, \quad t^* E|_D \stackrel{t^* \phi}{\longrightarrow} t^* F_{\vec{r}} \cong F_{\vec{r}} \stackrel{w}{\longrightarrow} F_{\vec{r}}),
\]
where $t^* F_{\vec{r}} \cong F_{\vec{r}}$ is the direct sum of the equivariant structures $t^* \O_{\ell_A} \cong \O_{\ell_A}$ and $w$ acts on the direct summand $\O_{\ell_A}^{\oplus r_A}$of $F_{\vec{r}}$ by the diagonal matrix $\mathrm{diag}(w_{A,1}, \ldots, w_{A,r_A})$.

\begin{theorem}\label{thm: moduli of framed sheaves}
    The moduli space $M_{(\PP^1)^4}(\vec{r},n)$ is a quasi-projective scheme and admits a $(-2)$-shifted symplectic derived enhancement. In particular, $M_{(\PP^1)^4}(\vec{r},n)$ admits a 3-term symmetric obstruction theory 
    $$
    \phi_{\fr} \colon E\udot_{\fr} \to \tau^{\geq -1} \LL_{\cM}, \quad (E\udot_{\fr})^\vee[2] \stackrel{\theta_{\fr}}{\cong} E\udot_{\fr}, \quad \vd := \frac{1}{2} \rk(E_{\fr}\udot) =  - \sum_{A \in \bthree} r_A r_{\overline{A}}
    $$ 
    that is isotropic. Furthermore, $M_{(\PP^1)^4}(\vec{r},n)$ carries a $\TT$-action with respect to which the 3-term symmetric obstruction theory is $\TT$-equivariant. 
\end{theorem}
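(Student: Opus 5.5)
The plan is to reduce everything to Theorem \ref{cor: CY4 obstruction theory} by checking its hypotheses for the specific data $(X,D,F_{\vec{r}})$. Then we compute the virtual dimension, and finally verify that the construction is compatible with the $\TT$-action.

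\textbf{Step 1: hypotheses of Theorem \ref{cor: CY4 obstruction theory}.}
\begin{itemize}
\item First, $X=(\PP^1)^4$ has $K_X=\O(-2,-2,-2,-2)$. The divisor $D$ is the union of the four hyperplanes $\{z_a=\infty\}$, so $D\in|\O(1,1,1,1)|$ is ample and $2D\in|-K_X|$.
\item Next, $F_{\vec{r}}$ is 1-dimensional, giving (A1). Each $\ell_A$ is a chain of two $\PP^1$'s, which is a local complete intersection in $D$. Indeed, $\ell_A$ is cut out in the smooth surface $S_A$ by a divisor, so $\O_{\ell_A}$ has a finite locally free resolution on $X$ (Koszul, since $S_A$ is a complete intersection). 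Hence $\O_{\ell_A}$ is perfect on $D$, because $D$ is a Cartier divisor and Tor-amplitude is preserved. This gives perfectness of $F_{\vec{r}}$.
\item For $\mu_D$-semistability, I would argue that each $\O_{\ell_A}$ is $\mu_D$-semistable of the same slope. The slope $a_0/a_1$ of $\O_{\ell_A}$ with respect to $\O(D)$ is independent of $A$ by symmetry of the coordinates: each $\ell_A$ consists of two lines of $D$-degree $1$, and $\chi(\O_{\ell_A})=1$. Destabilizing quotients of $\O_{\ell_A}$ are, up to finite length, $\O_{\PP^1}$ on one component. This component has slope $\chi/\deg=1$, versus $1/2$ for $\ell_A$, so it does not destabilize. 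A direct sum of semistable sheaves of equal slope is then semistable.
\end{itemize}
This semistability check is the step I expect needs the most care, specifically the correct convention for $a_{d-2}/a_{d-1}$ and the treatment of subsheaves such as $\O_{\PP^1}(-1)$ on one component. I would do it by listing saturated subsheaves of $\O_{\ell_A}$. These are the ideal sheaves of one component restricted to the other, namely $\O_{\PP^1}(-1)$, with slope $0<1/2$.

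\textbf{Step 2: virtual dimension.} Theorem \ref{cor: CY4 obstruction theory} then gives quasi-projectivity, the $(-2)$-shifted symplectic derived enhancement, and the isotropic 3-term symmetric obstruction theory with $\vd=-\tfrac12\gamma^2$. Here $\gamma=\sum_A r_A[S_A]$. In $(\PP^1)^4$, $[S_A]\cdot[S_B]=1$ if $B=\overline{A}$, and $0$ if $A\neq B$ with $A\cap B\neq\varnothing$, since the planes then meet only along a curve and can be displaced. The self-intersection is $[S_A]^2=0$, since $N_{S_A}$ is trivial. Hence $\gamma^2=2\sum_{A\in\bthree}r_Ar_{\overline{A}}$, giving the stated $\vd$.

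\textbf{Step 3: equivariance.} The $\TT$-action preserves $D$ and $F_{\vec{r}}$ (the latter up to the given equivariant structure twisted by $w$). Therefore the action is defined on the moduli functor and hence on the representing scheme. Spaide's construction and Serre duality are natural under automorphisms of $(X,D,F)$. The derived stacks $\mathbf{Perf}$ and the mapping stacks carry compatible $\TT$-actions, and $K_X\cong\O(-2D)$ equivariantly holds for $T=Z(t_1t_2t_3t_4-1)$, which is exactly where the Calabi--Yau condition on $T$ enters. So the obstruction theory $R\Hom_X(E,E(-D))[1]$ and the duality $\theta_{\fr}$ are $\TT$-equivariant. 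For the equivariant self-dual 3-term resolution, I would invoke the equivariant version of \cite[Prop.~4.1]{OT}, using a $\TT$-linearized ample bundle on the quasi-projective moduli space.
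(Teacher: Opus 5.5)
Your route is the paper's: check the hypotheses of Theorem \ref{cor: CY4 obstruction theory} for $(X,D,F_{\vec r})$, compute $\gamma^2=2\sum_{A\in\bthree}r_Ar_{\overline{A}}$, and obtain equivariant symmetry from $K_X\cong\O(-2D)\otimes t_1t_2t_3t_4$, which becomes trivial on $T$. Your ampleness, intersection-number and $\mu_D$-semistability checks are correct. The semistability argument, listing the saturated subsheaves $\O_{\PP^1}(-1)$ of slope $0<1/2$, is in fact more explicit than the paper's, which simply asserts that each $\O_{\ell_A}$ is $\mu_D$-stable with the same slope.

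The one step that fails as written is perfectness of $\O_{\ell_A}$ on $D$. The inference ``$\O_{\ell_A}$ has a finite locally free resolution on $X$, hence it is perfect on $D$'' runs in the wrong direction. Every coherent sheaf on the smooth $X$ has such a resolution, and perfectness of $i_*G$ on $X$ does not by itself imply perfectness of $G$ on $D$. For example, the skyscraper sheaf at the node $(\infty,\infty,0,0)$ of $\ell_{12}$ is not perfect on $D$, because $D$ is singular there.

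The fix is to resolve $\O_{S_A}$ rather than $\O_{\ell_A}$, and then restrict. Since $S_A\cap D=\ell_A$ is one-dimensional, Lemma \ref{lem: transverse} gives $Li^*\O_{S_A}\cong\O_{\ell_A}$, and derived pullback preserves perfectness. Concretely, near $S_A$ the divisor $D$ equals $D_a+D_b$, a product of a curve in the $(z_a,z_b)$-directions with the $(z_{\overline{a}},z_{\overline{b}})$-plane. So $z_{\overline{a}},z_{\overline{b}}$ form a regular sequence on $\O_D$, and the Koszul resolution of $\O_{S_A}$, restricted to $D$, is a locally free resolution of $\O_{\ell_A}$. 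This is exactly the statement that $\ell_A\hookrightarrow D$ is lci, which is what the paper invokes.

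A minor point in Step 3: the $\TT$-linearized ample line bundle you use for the equivariant version of the Oh--Thomas resolution should be taken from the Huybrechts--Lehn GIT construction. Quasi-projectivity alone does not supply one on a possibly non-normal scheme.
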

\begin{proof}
We check that $M_{(\PP^1)^4}(\vec{r},n)$ satisfies the assumptions of Theorem \ref{cor: CY4 obstruction theory}. The divisor $D$ is clearly ample and satisfies $2D\in |-K_X|$. As $\ell_A\hookrightarrow D$ is lci, $\O_{\ell_A}$ is perfect as a coherent sheaf on $D$, hence so is $F_{\vec{r}}$. On the other hand, $\O_{\ell_A}$ is $\mu_D$-stable and the sheaves $\O_{\ell_A}$, for each $A\in \bsix$, have the same $\mu_D$-slope. Therefore, $F_{\vec{r}}$ is $\mu_D$-semistable. We also  calculate
\[
\Big( \sum_{A} r_A [S_A] \Big)^2 = 2 \sum_{A \in \bthree} r_A r_{\overline{A}}.
\]

Finally, we observe that the obstruction theory is $\TT$-equivariant (in fact, $(\C^*)^4 \times (\C^*)^r$-equivariant). Furthermore, there is a $(\C^*)^4$-equivariant isomorphism
\[
K_{(\PP^1)^4} \cong \O(-2D) \otimes (t_1t_2t_3t_4).
\]
By Grothendieck--Verdier duality, we therefore have an isomorphism
\[
\theta_{\fr} \colon (E\udot_{\fr})^\vee[2] \cong E\udot_{\fr} \otimes (t_1t_2t_3t_4)^{-1}, \quad \theta_{\fr}^\vee[2] = \theta_{\fr} \otimes (t_1t_2t_3t_4).
\]
The obstruction theory is therefore equivariantly symmetric upon restriction to $\TT.$
\end{proof}

Let $\ell_{\infty}$ denote $\{(z_1,z_2) \, | \, z_1z_2 = \infty\} \subset \mathbb{P}^1 \times \mathbb{P}^1$ and consider the moduli space $M_{\mathbb{P}^1 \times \mathbb{P}^1}(r,n)$ parametrizing pairs $(E,\phi)$ of a rank $r$ torsion free sheaf $E$ on $\mathbb{P}^1 \times \mathbb{P}^1$ satisfying $c_2(E) = n$ and an isomorphism $\phi \colon E|_{\ell_{\infty}} \to \O_{\ell_{\infty}}^{\oplus r}$ .\footnote{This provides an equivalent description of the traditional framed sheaves moduli space, i.e., there is an equivariant isomorphism $M_{\mathbb{P}^1 \times \mathbb{P}^1}(r,n) \cong M_{\PP^2}(r,n)$.}

\begin{proposition} \label{prop:supportprop}
Suppose $r_{12} = r$ and $r_A = 0$ otherwise. Then there exists a $\TT$-equivariant isomorphism
$$
M_{(\PP^1)^4}(\vec{r},n) \cong M_{\PP^1 \times \PP^1}(r,n).
$$
\end{proposition}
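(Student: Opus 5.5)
The plan is to show that every framed sheaf $(E,\phi)$ in $M_{(\PP^1)^4}(\vec{r},n)$ with $r_{12}=r$ (and $r_A=0$ otherwise) is scheme-theoretically supported on $S_{12}\cong\PP^1\times\PP^1$, i.e.\ that $E\cong j_*E'$ for $j\colon S_{12}\hookrightarrow X$. Then we identify $(E',\phi)$ with a rank $r$ framed torsion free sheaf on $(\PP^1\times\PP^1,\ell_\infty)$, noting that $\ell_{12}=S_{12}\cap D=\ell_\infty$. We carry this out in families, so that the bijection becomes a natural isomorphism of moduli functors, and it is evidently compatible with the $\TT$-actions: $t_3,t_4$ act trivially on $S_{12}$, and $w$ acts on the framing.

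\textbf{Support.} Let $x_3,x_4$ be affine coordinates on the third and fourth $\PP^1$ factors, vanishing along $S_{12}$. The multiplication maps are not global, since $x_3$ is a section of $\O(0,0,1,0)$ rather than a function. So we instead consider the ideal sheaf $\mathcal{I}$ of $S_{12}$ and the composition $\mathcal{I}\otimes E\to E$, and show that its image $E_1:=\mathcal{I}E\subseteq E$ vanishes.

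First, $E|_D\cong F=\O_{\ell_{12}}^{\oplus r}$ is scheme-theoretically supported on $S_{12}\cap D$. Hence $\mathcal{I}E\subseteq \ker(E\to i_*F)=E(-D)$, which means the induced HL-framing on $E_1$ is zero. Now run the argument from the second case of Proposition \ref{prop: framed sheaves are delta-stable}, or better Corollary \ref{cor: Hom vanishing for framed sheaves}.

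The local generators $x_3,x_4$ of $\mathcal{I}$ give maps
\[
E\otimes \O(0,0,-1,0)\to E, \qquad E\otimes\O(0,0,0,-1)\to E,
\]
both landing in $E(-D)$ because their composition with the framing vanishes. One would like to deduce that they vanish from $\Hom(E,E(-D))=0$, but the twist by $\O(0,0,-1,0)$ is weaker than $\O(-D)=\O(-1,-1,-1,-1)$. So I would instead apply Lemma \ref{lem: Hom vanishing}-type stability reasoning directly.

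Suppose $E_1\neq 0$. Then $E_1$ is a nonzero subsheaf of the pure $2$-dimensional sheaf $E(-D)$, so it is pure of dimension $2$. Since the framing of $E$ restricts to zero on $E_1$, we are in the second case of Proposition \ref{prop: framed sheaves are delta-stable}. The Hilbert polynomial bound obtained there ($E_1\subseteq E(-nD)$ with $n\ge1$ and $\mu_D$-semistability of $F$) should be compared against the fact that $E_1$ is a quotient of $\mathcal{I}\otimes E$. Since $\mathcal{I}/\mathcal{I}^2\cong\O(0,0,-1,0)\oplus\O(0,0,0,-1)$ has $D$-degree only $-1$ per factor, this comparison may not directly conflict.

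A cleaner route is a torus/degeneration argument on the support cycle. The class $\ch_2(E)=r[S_{12}]$ forces the $2$-dimensional support cycle to be $r[S_{12}]$ plus effective surfaces homologous to zero, hence exactly $r[S_{12}]$ with set-theoretic support $S_{12}$. So $E$ is supported on some infinitesimal thickening of $S_{12}$. Filter $E$ by $\mathcal{I}^kE$. If $\mathcal{I}E\neq0$, the graded piece $\mathcal{I}^kE/\mathcal{I}^{k+1}E$ with $k\ge1$ maximal is a nonzero sheaf on $S_{12}$. It is a quotient of $\Sym^k(\mathcal{I}/\mathcal{I}^2)\otimes E|_{S_{12}}$, twisted negatively along the third and fourth factors. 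These twists are trivial on $S_{12}$ itself, however, so the contradiction must come from the framing. Namely, $\mathcal{I}^kE$ restricts to zero on $D$ while having positive rank on $S_{12}$. This contradicts transversality (Lemma \ref{lem: transverse}), since any subsheaf of $E$ on $S_{12}$ of positive rank meets $\ell_{12}$ in dimension $1$ and injects into $E|_D$ after saturation. \textbf{This support step is the main obstacle}, and I would first try to make this last transversality/saturation argument rigorous, using that $\mathcal{I}^kE$ is pure and transverse to $D$ by Lemma \ref{lem: transverse}.

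\textbf{Identification.} Once $E=j_*E'$, the sheaf $E'$ is pure $2$-dimensional on $S_{12}$, hence torsion free. It has rank $r$ from $\ch_2$, and $c_2(E')=n$ follows from Grothendieck--Riemann--Roch, using triviality of $N_{S_{12}/X}$ to compare $\ch_4$. The framing $E'|_{\ell_\infty}\cong\O_{\ell_\infty}^{\oplus r}$ is exactly the framing on $\PP^1\times\PP^1$. Conversely, $j_*$ of a framed torsion free sheaf is a framed sheaf on $X$. Both constructions work over any base $S$, with flatness preserved by $j_*$ and by the support argument applied fiberwise together with Nakayama. This gives the $\TT$-equivariant isomorphism.
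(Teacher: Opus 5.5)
Your overall plan (show that $E$ is scheme-theoretically supported on $S_{12}$, then read off a framed torsion free sheaf on $\PP^1\times\PP^1$) is the paper's. However, the support step, which carries all the content, is not established.

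\textbf{Set-theoretic support.} Your claim is wrong as argued: $\ch_2(E)=r[S_{12}]$ does not determine the support cycle, because every fibre $\PP^1\times\PP^1\times\{p\}\times\{q\}$ is homologous to $S_{12}$. What pins down the support is the framing together with ampleness of $D$, as in the paper. Set-theoretically $\Supp(E)\cap D=\Supp(E|_D)=\ell_{12}$. If $\Supp(E)$ met $Z(z_3-t)$ or $Z(z_4-t)$ for some $t\in\PP^1\setminus\{0\}$, then by purity the intersection would contain a curve. That curve meets the ample divisor $D$ at a point off $\ell_{12}\subset\{z_3=z_4=0\}$, a contradiction.

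\textbf{Scheme-theoretic support.} Your transversality/saturation argument gives no contradiction. The inclusion $\mathcal{I}^kE\subseteq E(-D)$ is compatible with its saturation restricting nontrivially to $D$, since the two may differ by a sheaf supported on $D$. In fact no local argument can work. Near a general point of $\ell_{12}$, take coordinates $y_1,y_2,x_3,x_4$ with $D=\{y_1=0\}$, and consider $\C[y_1,y_2]e_0\oplus\C[y_1,y_2]e_1$ with $x_3e_0=y_1e_1$ and $x_3e_1=x_4e_0=x_4e_1=0$. This module is pure, transverse to $D$, restricts to $D$ as $\O_{\ell_{12}}^{\oplus 2}$, and is not killed by $x_3$. (The same model shows that the paper's passage from $E|_D\cong\O_{\ell_{12}}^{\oplus r}$ to generic reducedness of $\Supp(E)$ is only immediate when $r=1$, which is the case used in Proposition~\ref{prop:fixlocframed}.)

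The missing global input is the idea you discarded. Once $\Supp(E)=S_{12}$ set-theoretically, let $k\geq 1$ be maximal with $\mathcal{I}^kE\neq 0$. Pick $m=x_3^ax_4^b$ with $a+b=k$ such that $m\colon E\otimes L\to E$ is nonzero, where $L=\O(0,0,-a,-b)$.
\begin{itemize}
\item Since $m\,\mathcal{I}E\subseteq\mathcal{I}^{k+1}E=0$, this map factors through $(E\otimes\O_{S_{12}})\otimes L\cong E\otimes\O_{S_{12}}$, because $L|_{S_{12}}$ is trivial.
\item Its image lies in $\mathcal{I}E\subseteq E(-D)$, because $\mathcal{I}$ kills $E|_D\cong\O_{\ell_{12}}^{\oplus r}$.
\end{itemize}
Thus $E\twoheadrightarrow E\otimes\O_{S_{12}}\to E(-D)$ is a nonzero element of $\Hom_X(E,E(-D))$, contradicting Corollary~\ref{cor: Hom vanishing for framed sheaves}. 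Your worry that the twist is weaker than $\O(-D)$ disappears once you pass to $E\otimes\O_{S_{12}}$.

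\textbf{Families.} ``Fibrewise plus Nakayama'' does not handle non-reduced bases. Knowing $\mathcal{I}\mathcal{E}_s=0$ for all $s$ only says that $(\mathcal{I}\mathcal{E})\otimes k(s)\to\mathcal{E}_s$ vanishes. For example, translating $S_{12}$ to $\{z_3=\epsilon,z_4=0\}$ over $\C[\epsilon]/(\epsilon^2)$ gives a flat family with closed fibre $\O_{S_{12}}$ that is not killed by $\mathcal{I}$; only the framing excludes it. The paper instead proves bijectivity on points and then compares Zariski tangent spaces: $\Ext^1_X(j_*E,j_*E(-D))\cong\Ext^1_{S_{12}}(E,E(-\ell_{12}))$, using $N_{S_{12}/X}\cong\O^{\oplus 2}$ and the vanishing of $\Hom$ and $\Ext^2$ on $S_{12}$. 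Hence $M_{(\PP^1)^4}(\vec r,n)$ is smooth and the bijective morphism is an isomorphism. Alternatively, the $\Hom$ argument above runs verbatim over an affine base, since fibrewise vanishing gives $p_*\hom(\mathcal{E},\mathcal{E}(-D))=0$.
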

\begin{proof}
Identifying $S_{12} = \PP^1 \times \PP^1$, there is an obvious morphism
\[
f\colon M_{\PP^1 \times \PP^1}(r,n) \to M_{(\PP^1)^4}(\vec{r},n),
\]
where the Chern classes match due to the isomorphism $N_{S_{12} / (\PP^1)^4} \cong \O \oplus \O$ and Grothendieck--Riemann--Roch. We first prove that $f$ is a bijection. Let $[(E,\phi)] \in M_{(\PP^1)^4}(\vec{r},n)$. It suffices to show that the scheme theoretic support $S:=\mathrm{Supp}(E)$ equals $S_{12}$, so that $[(E,\phi)]$ is in the image of $f$. We first show that $S$ is set theoretically equal to $S_{12}$ by showing that $S\cap Z(z_4-t)=S\cap Z(z_3-t)=\emptyset$ for any $t\in \PP^1\backslash\{0\}$. Suppose otherwise. Without loss of generality, we may assume that $S\cap Z(z_4-t)\neq \emptyset$. Then the dimension of $S\cap Z(z_4-t)$ is at least 1 so it intersects the ample divisor $D=\{z_1z_2z_3z_4=\infty\}$, i.e., 
$$\emptyset\neq S\cap Z(z_4-t)\cap D=\ell_{12}\cap Z(z_4-t),
$$
which is clearly a contradiction. Next, we note that $E|_{D}\simeq \O_{\ell_{12}}^{\oplus r}$, which implies that $S\cap D=\ell_{12}$ scheme theoretically. So, $S$ is generically reduced. Then purity of $E$  implies that $S=S_{12}$ as claimed.

Now we prove that $f$ is an isomorphism. Recall that $M_{\PP^1\times \PP^1}(r,n)$ is smooth of dimension $2rn$ with tangent space at $[(E,\phi)]\in M_{\PP^1\times \PP^1}(r,n)$ given by $\Ext^1_{S_{12}}(E,E(-\ell_{12}))$. On the other hand, using \eqref{eqn: tangent complex of framed moduli}, the tangent space of $M_{(\PP^1)^4}(\vec{r},n)$ at $f([E,\phi)])$ is given by 
$$\Ext^1_{(\PP^1)^4}(i_*E, i_*E(-D))
$$
where $i \colon S_{12}\hookrightarrow (\PP^1)^4$ is the natural inclusion. As $N_{S_{12} / (\PP^1)^4} \cong \O \oplus \O$, $L_q i^* i_* E \cong E \otimes \Lambda^q N^*_{S_{12}/(\PP^1)^4}$, and $\Hom_{S_{12}}(E,E(-\ell_{\infty})) \cong 0 \cong \Ext^2_{S_{12}}(E,E(-\ell_{\infty}))$ (Corollary \ref{cor: Hom vanishing for framed sheaves}), we  deduce that
\begin{align*}
    \Ext^1_{(\PP^1)^4}(i_*E, i_*E(-D))
    &\cong \Ext^1_{S_{12}}(E, E(-\ell_{12})).
\end{align*}
This argument also proves that the Zariski tangent spaces at points of of $M_{(\PP^1)^4}(\vec{r},n)$ are all of dimension $2rn$. As $f$ is a bijection, this implies that $M_{(\PP^1)^4}(\vec{r},n)$ is reduced and hence smooth. Thus $f$ is a bijective morphism between smooth varieties over $\C$, therefore it is an isomorphism. 
\end{proof}

\begin{proposition} \label{prop:fixlocframed}
There exists a $\TT$-equivariant isomorphism of schemes
\[
M_{(\PP^1)^4}(\vec{r},n)^{(\C^*)^r} \cong  \bigsqcup_{\sum_{A,\alpha} n_{A,\alpha} = n} \prod_{A,\alpha} M_{\PP^1 \times \PP^1}(1,n_{A,\alpha}),
\]
where the disjoint union is over all decompositions $\sum_{A,\alpha} n_{A,\alpha} = n$. 
\end{proposition}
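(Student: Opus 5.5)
I would mimic the proof of Proposition \ref{prop:fixlocquiver}, with the framed sheaf playing the role of the quiver representation. The plan is to first build a bijection on closed points, then run the same argument over a base $S$ of finite type, and conclude by \cite[Thm.~2.3]{Fog}.

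\textbf{From right to left.} Given framed sheaves $(E_{A,\alpha},\phi_{A,\alpha})\in M_{\PP^1\times\PP^1}(1,n_{A,\alpha})$, push each one forward along $S_A\hookrightarrow(\PP^1)^4$ and take the direct sum. The framings assemble into an isomorphism with $F_{\vec r}$. Purity and transversality to $D$ are clear. The Chern character is $v_{\vec r,n}$ by the Grothendieck--Riemann--Roch computation used in the proof of Proposition \ref{prop:supportprop}, since $N_{S_A/(\PP^1)^4}$ is trivial. The framing torus acts on each summand by the character $w_{A,\alpha}$, so the resulting point is $(\C^*)^r$-fixed.

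\textbf{From left to right.} Let $[(E,\phi)]$ be $(\C^*)^r$-fixed. For each $w$ there is a unique isomorphism $g_w\colon E\to E$ with $\phi\circ g_w|_D=w\circ\phi$. Uniqueness follows from Corollary \ref{cor: Hom vanishing for framed sheaves}: two such isomorphisms differ by an automorphism restricting to the identity on $D$, and $\Hom(E,E(-D))=0$. Hence $w\mapsto g_w$ is a group homomorphism $(\C^*)^r\to\mathrm{Aut}(E)$. Since $E$ is coherent, this gives an action on $H^0(E(k))$ for all $k$, so $E$ decomposes into isotypic pieces
\[
E=\bigoplus_\chi E_\chi .
\]
Restricting to $D$ and using $\phi$, only the characters $\chi_{A,\alpha}$ occur. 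Any other isotypic summand restricts to zero on $D$, so it is supported away from the ample divisor $D$. A pure $2$-dimensional sheaf with that property must vanish. Thus
\[
E=\bigoplus_{A,\alpha}E_{A,\alpha}, \qquad E_{A,\alpha}|_D\cong\O_{\ell_A}.
\]
Each $(E_{A,\alpha},\phi_{A,\alpha})$ is then a framed sheaf with framing $\O_{\ell_A}$, i.e.\ a point of $M_{(\PP^1)^4}((\delta_{A}),n_{A,\alpha})$ for the rank vector concentrated at $A$, where $n_{A,\alpha}$ is read off from $\ch_4$. The $n_{A,\alpha}$ sum to $n$ and are nonnegative by the identification below. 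Applying Proposition \ref{prop:supportprop} (after permuting coordinates so that $A=\{1,2\}$) identifies this point with a point of $M_{\PP^1\times\PP^1}(1,n_{A,\alpha})$. The two constructions are mutually inverse, and the whole construction is compatible with the $T$-action.

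\textbf{Families and the main obstacle.} For families over $S$, the same argument works with $w\in\Hom(S,(\C^*)^r)$. One uses relative vanishing of $\pi_*\mathcal{H}om(\cE,\cE(-D))$, which holds by cohomology and base change because the vanishing is fibrewise. The isotypic decomposition of a coherent sheaf with a diagonalizable group action then gives $S$-flat summands. I expect the main obstacle to be the decomposition step: showing that the $g_w$ really form an algebraic torus action, and that the non-framing isotypic components vanish, in families rather than just pointwise. The ampleness and purity argument controls this on fibres, and flatness of the summands (direct summands of a flat sheaf) lets it propagate over $S$.
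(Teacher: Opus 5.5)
Your proposal is correct and follows essentially the same route as the paper: uniqueness of $g_w$ from $\Hom_X(E,E(-D))=0$ (Corollary \ref{cor: Hom vanishing for framed sheaves}) makes $E$ $(\C^*)^r$-equivariant, the eigensheaf decomposition is matched with $F_{\vec{r}}=\bigoplus_{A,\alpha}\O_{\ell_A}\otimes\chi_{A,\alpha}$, the support argument of Proposition \ref{prop:supportprop} places each summand on $S_A$, and the argument is then run over a base $S$ and upgraded to an isomorphism of schemes via Fogarty's criterion. The only minor difference is how the isotypic summands with non-framing characters are shown to vanish: you use ampleness of $D$ together with purity, whereas the paper deduces this (and that each $E_{\chi_{A,\alpha}}$ has rank one on $S_A$) from $\ch_2(E)=\sum_A r_A[S_A]$; both arguments work.
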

\begin{proof}
There is an obvious map from right to left by taking direct sums. We now describe the map from left to right. Let $[(E,\phi)] \in M_{(\PP^1)^4}(\vec{r},n)$ be $(\C^*)^r$-fixed. Then for every $w \in (\C^*)^r$, there exists an isomorphism $g_w \colon E \to E$ such that the diagram
\begin{displaymath}
\xymatrix
{
E \ar[r] \ar_{g_w}[d] & F_{\vec{r}} \ar^w[d] \\
E \ar[r] & F_{\vec{r}}
}
\end{displaymath}
commutes, where the horizontal arrows are the restriction $E \to E|_D$ composed with $\phi \colon E|_D \cong F_{\vec{r}}$. By Corollary \ref{cor: Hom vanishing for framed sheaves}, the natural map
\[
\Hom_X(E,E) \hookrightarrow \Hom_X(E,E|_D)
\]
is an injection, which implies that $g_w$ is uniquely determined by the diagram. In particular, for all $v,w \in (\C^*)^r$ one has $g_{vw} = g_v \circ g_w$. In other words, $E$ is $(\C^*)^r$-equivariant. In terms of $(\C^*)^r$-weights, there is a decomposition 
\[
F_{\vec{r}} = \bigoplus_{A,\alpha} \O_{\ell_A} \otimes \chi_{A,\alpha}, 
\]
where $\chi_{A,\alpha} \colon  (\C^*)^r \to \C^*$ is the character defined by $\chi_{A,\alpha}(w) = w_{A,\alpha}$. Consider the eigensheaf $E_{\chi_{A,\alpha}} \subset E$ of weight $\chi_{A,\alpha}$.  As the map $E \to E|_D \cong F_{\vec{r}}$ is equivariant, the pair $(E_{\chi_{A,\alpha}}, \phi|_{E_{\chi_{A,\alpha}}})$ is a framed sheaf and by Proposition \ref{prop:supportprop} its support is $S_A$. As $\ch_2(E) = \sum_{A} r_{A} [S_A]$, the sheaf $E_{\chi_{A,\alpha}}$ has rank 1 on its support and $E_{\chi} = 0$ for all $\chi \notin (\chi_{A,\alpha})$. Denoting $n_{A,\alpha} = -\ch_4(E_{\chi_{A,\alpha}})$ (which is $c_2(E_{\chi_{A,\alpha}})$ for $E_{\chi_{A,\alpha}}$ as a sheaf on $S_A$), we have specified an element
\[
[(E_{\chi_{A,\alpha}}, \phi|_{E_{\chi_{A,\alpha}}})] \in M_{\PP^1 \times \PP^1}(1,n_{A,\alpha}).
\]
It can be verified that the two maps described above are inverse to each other.

As in the proof of Proposition \ref{prop:fixlocquiver}, the arguments work in families over a base $\C$-scheme $S$ of finite type, which then extends the above bijection to an isomorphism of schemes by \cite[Thm.~2.3]{Fog}. This argument uses that for a family of framed sheaves $(\mathcal{E},\phi)$ as in Definition \ref{def:functfr} we still have an injection $\Hom_{X \times S}(\cE,\cE) \hookrightarrow \Hom_{X \times S}(\cE,\cE|_{D \times S})$. 
\end{proof}

From the equivariant isomorphism
\begin{equation} \label{eqn:MP1xP1toHilb}
M_{\PP^1 \times \PP^1}(1,n) \cong \Hilb^n(\C^2),
\end{equation}
we obtain the following two corollaries.
\begin{corollary}\label{cor: T-fixed loci framed sheaves}
    The $\mathbb{T}$-fixed locus $M_{(\PP^1)^4}(\vec{r},n)^{\mathbb{T}}$ consists of reduced points labeled by 
\[
\blambda = (\lambda_{A,\alpha})_{A\in\bsix,\,1\leq\alpha\leq r_A}, \quad |\blambda| = \sum_{A,\alpha} |\lambda_{A,\alpha}| = n,
\]
where each $\lambda_{A,\alpha}$ is an integer partition. 
\end{corollary}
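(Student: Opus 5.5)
The plan is to deduce the statement from Proposition \ref{prop:fixlocframed}, in the same way that Corollary \ref{cor: T-fixed loci quiver} was deduced from Proposition \ref{prop:fixlocquiver}. Since $(\C^*)^r \leq \TT$, we have
\[
M_{(\PP^1)^4}(\vec{r},n)^{\TT} = \big(M_{(\PP^1)^4}(\vec{r},n)^{(\C^*)^r}\big)^{T}.
\]
By Proposition \ref{prop:fixlocframed} this equals the $T$-fixed locus of
\[
\bigsqcup_{\sum n_{A,\alpha}=n} \prod_{A,\alpha} M_{\PP^1\times\PP^1}(1,n_{A,\alpha}),
\]
computed for the induced $T$-action.

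The first step is to identify this induced action. On the factor indexed by $(A,\alpha)$, a framed sheaf $E_{\chi_{A,\alpha}}$ is supported on $S_A\cong \PP^1\times\PP^1$, with coordinates $(z_a,z_b)$ for $A=\{a<b\}$. The torus $T$ acts on $S_A$ only through $(t_a,t_b)$, by the standard scaling.

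The framing torus acts on this factor through the character $\chi_{A,\alpha}$. Since $E_{\chi_{A,\alpha}}$ is an eigensheaf, this rescaling of the framing is absorbed by an automorphism of $E_{\chi_{A,\alpha}}$, so it acts trivially on the factor.

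The projection $T\to(\C^*)^2$, $t\mapsto(t_a,t_b)$, is surjective, because $t_{\bar a},t_{\bar b}$ can be chosen freely subject to $t_1t_2t_3t_4=1$. Hence the $T$-fixed locus of each factor equals its fixed locus under the standard $(\C^*)^2$-action. Under \eqref{eqn:MP1xP1toHilb} this action becomes the standard torus action on $\Hilb^{n_{A,\alpha}}(\C^2)$.

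The second step is to invoke the classical fact that $\Hilb^m(\C^2)^{(\C^*)^2}$ is 0-dimensional and reduced, with points indexed by monomial ideals, i.e.\ by integer partitions of $m$. The equivariant isomorphism \eqref{eqn:MP1xP1toHilb} transports this to $M_{\PP^1\times\PP^1}(1,m)$.

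A finite product of reduced 0-dimensional schemes over $\C$ is reduced and 0-dimensional, and so is a disjoint union of such. Since the isomorphism of Proposition \ref{prop:fixlocframed} is $\TT$-equivariant and scheme-theoretic, taking $T$-fixed loci commutes with it. The fixed points are therefore labeled by tuples $\blambda=(\lambda_{A,\alpha})$ with $|\lambda_{A,\alpha}|=n_{A,\alpha}$, and hence $|\blambda|=n$.

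The only point needing care is the scheme-theoretic compatibility of fixed loci under these isomorphisms, i.e.\ that reducedness is preserved. This is automatic once the isomorphisms are equivariant isomorphisms of schemes, which is what Proposition \ref{prop:fixlocframed} and \eqref{eqn:MP1xP1toHilb} provide. The other check is that the $(\C^*)^r$-weights act trivially on each factor, which follows from the eigensheaf description in the proof of Proposition \ref{prop:fixlocframed}.
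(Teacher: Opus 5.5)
Your proposal is correct and takes essentially the paper's route. The paper deduces the corollary from Proposition \ref{prop:fixlocframed} together with the equivariant isomorphism \eqref{eqn:MP1xP1toHilb}, in the same way as Corollary \ref{cor: T-fixed loci quiver}: $T$ acts on each factor through the standard $(t_a,t_b)$-scaling and the framing torus acts trivially, so the classical description of $\Hilb^m(\C^2)^{(\C^*)^2}$ applies.
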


\begin{corollary} \label{cor:isofixloc}
There exists an isomorphism of schemes 
\[
M_{Q_4}(\vec{r},n)^{(\C^*)^r} \cong M_{(\PP^1)^4}(\vec{r},n)^{(\C^*)^r}.
\]
\end{corollary}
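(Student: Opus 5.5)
The plan is to compose the two fixed-locus descriptions already established, together with the classical Barth-type isomorphism on each factor. By Proposition \ref{prop:fixlocquiver}, there is a $\TT$-equivariant isomorphism
\[
M_{Q_4}(\vec{r},n)^{(\C^*)^r} \cong \bigsqcup_{\sum n_{A,\alpha}=n} \prod_{A,\alpha} M_{Q_2}(1,n_{A,\alpha}).
\]
By Proposition \ref{prop:fixlocframed}, there is a $\TT$-equivariant isomorphism
\[
M_{(\PP^1)^4}(\vec{r},n)^{(\C^*)^r} \cong \bigsqcup_{\sum n_{A,\alpha}=n} \prod_{A,\alpha} M_{\PP^1\times\PP^1}(1,n_{A,\alpha}).
\]
Both disjoint unions are indexed by the same set of decompositions $(n_{A,\alpha})$. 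It therefore suffices to give, for each $(A,\alpha)$ and each $m$, an isomorphism $M_{Q_2}(1,m)\cong M_{\PP^1\times\PP^1}(1,m)$. We then take products and disjoint unions and compose with the two propositions above.

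For the factorwise isomorphism, I would use the identifications $M_{Q_2}(1,m)\cong \Hilb^m(\C^2)$ recalled in the proof of Corollary \ref{cor: T-fixed loci quiver}, and $M_{\PP^1\times\PP^1}(1,m)\cong\Hilb^m(\C^2)$ from \eqref{eqn:MP1xP1toHilb}. The latter is the usual map sending a rank one framed torsion free sheaf, which is trivial near $\ell_\infty$, to the ideal sheaf $I_Z\subset \O$ of a length $m$ subscheme $Z\subset \C^2=\PP^1\times\PP^1\setminus \ell_\infty$. Alternatively, one can compose Barth's isomorphism \eqref{eqn:Barth} with the footnoted isomorphism $M_{\PP^1\times\PP^1}(r,n)\cong M_{\PP^2}(r,n)$. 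Here $\C^2$ is identified with $\C^2_A\subset S_A$ using the coordinates $(z_a,z_b)$ for $A=\{a<b\}$.

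The only point requiring care is equivariance. The statement only asks for an isomorphism of schemes, so strictly no compatibility of group actions is needed. Still, for the later comparison of virtual tangent spaces one wants the isomorphism to be $\TT$-equivariant. Under both propositions, the $\TT$-action on the factor indexed by $(A,\alpha)$ reduces to the standard action of $(t_a,t_b)$ on $\C^2_A$, while $(\C^*)^r$ acts trivially. The Hilbert scheme identifications are equivariant for these standard $(\C^*)^2$-actions. This holds because the quiver $T$-action \eqref{eqn:Taction}, $B_a\mapsto t_a^{-1}B_a$, matches pullback of sheaves along $z\mapsto tz$; one checks it on coordinate functions, which is exactly the convention fixed in the footnote to \eqref{eqn:Taction}. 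I expect this sign and inverse bookkeeping to be the main (mild) obstacle, to be settled by comparing the characters at fixed points: on both sides the tautological character is $\sum_{(i,j)\in\lambda}t_a^it_b^j w_{A,\alpha}$, as in \eqref{eqn:charV}.
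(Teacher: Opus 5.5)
Your proposal is correct and is essentially the paper's own proof. The paper likewise combines Proposition~\ref{prop:fixlocquiver}, Proposition~\ref{prop:fixlocframed}, and the identification \eqref{eqn:MP1xP1toHilb} (with $M_{Q_2}(1,m)\cong\Hilb^m(\C^2)$) factor by factor. Your remarks on $\TT$-equivariance go beyond what this statement requires but are consistent with how the isomorphism is used later in Proposition~\ref{prop:Tvirmatch}.
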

\begin{proof}
Apply Propositions \ref{prop:fixlocquiver}, \ref{prop:fixlocframed}, and \eqref{eqn:MP1xP1toHilb}.
\end{proof}

Using the notation of Section \ref{sec:quivervirtualstruc}, we recall that for $P \in M_{Q_4}(\vec{r},n)^{\TT}$, we have
\begin{align*}
(E_{Q_4}^{\mdot})^{\vee}|_P &=  (T_{\cA}|_P - \Lambda|_P) + (T_{\cA}|_P - \Lambda|_P)^* \\
&=- \sum_{A} N_A N_{\overline{A}}^* t_{\overline{A}} + \mathsf{v}_{\blambda} + \mathsf{v}_{\blambda}^* \in K_0^{\TT}(\pt) 
\end{align*}
where $\cA := A_{\widehat{Q}_4}(\vec{r},n)$. 
\begin{proposition} \label{prop:Tvirmatch}
For any $P \in M_{Q_4}(\vec{r},n)^{\TT} \cong M_{(\PP^1)^4}(\vec{r},n)^{\TT}$, we have
\[
(E_{Q_4}^{\mdot})^{\vee}|_P = (E_{\mathrm{fr}}^{\mdot})^{\vee}|_P \in K_0^{\TT}(\pt).
\]
\end{proposition}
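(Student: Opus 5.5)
Both sides will be computed as explicit Laurent polynomials in $K_0^{\TT}(\pt)$ and then compared. The quiver side is already recorded before the statement:
\[
(E_{Q_4}^{\mdot})^{\vee}|_P=-\sum_A N_AN_{\overline{A}}^*t_{\overline{A}}+\mathsf{v}_{\blambda}+\mathsf{v}_{\blambda}^*.
\]
On the sheaf side, the point $P$ corresponds, via Propositions \ref{prop:fixlocframed} and \eqref{eqn:MP1xP1toHilb}, to the framed sheaf
\[
E=\bigoplus_{A,\alpha}\iota_{A*}\big(I_{Z_{A,\alpha}}\otimes w_{A,\alpha}\big),
\]
where $\iota_A\colon S_A\hookrightarrow X=(\PP^1)^4$ and $I_{Z_{A,\alpha}}\subset\O_{S_A}$ is the monomial ideal of $\lambda_{A,\alpha}$ at the origin of $\C^2_A\subset S_A$. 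By Theorem \ref{thm: moduli of framed sheaves} and \eqref{eqn: tangent complex of framed moduli}, we need the $\TT$-character of $R\Hom_X(E,E(-D))[1]$. This is $-\sum_{(A,\alpha),(B,\beta)}\chi_X(\iota_{A*}I_{A,\alpha},\iota_{B*}I_{B,\beta}(-D))$ with the framing weights inserted.

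To compute each term, write $I=\O_{S_A}-\O_{Z}$ in K-theory. Since $\O_Z(-D)=\O_Z$ because $Z$ is disjoint from $D$, each pairing splits into four pieces: $(\O_{S_A},\O_{S_B}(-D))$, $(\O_{S_A},\O_{Z_B})$, $(\O_{Z_A},\O_{S_B}(-D))$, and $(\O_{Z_A},\O_{Z_B})$. Each piece is evaluated by Atiyah--Bott localization on $X$, or by a local computation at the origin $0\in\C^4$, which is the only fixed point contributing to the last three.
\begin{itemize}
\item The point--point piece is the standard $\C^4$ term $-K_AK_B^*P_{1234}/(\dots)$. Concretely, $\chi_{\C^4}(\O_{Z_A},\O_{Z_B})=\overline{Q_A}Q_B\,\overline{P_{1234}}$, up to the convention for duals, where $Q_A$ is the character of $\O_{Z_A}$.
\item The mixed pieces use $\chi_{\C^4}(\O_{\C^2_A},\O_{Z_B})=\overline{P_{\overline A}}^{\,\vee}Q_B$-type formulas, via the Koszul resolution of $\O_{\C^2_A}$.
\item The surface--surface piece $\chi_X(\O_{S_A},\O_{S_B}(-D))$ is a global but purely $T$-equivariant computation. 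Its contributions come from the fixed points of $S_A\cap S_B$ away from $D$. This intersection is the surface itself when $A=B$, a $\PP^1$ when $|A\cap B|=1$, and a point when $B=\overline{A}$.
\end{itemize}
Converting $K_{A,\alpha}=Q_{A,\alpha}\cdot P_A$-style relations, namely $Q=(1-\dots)$ in terms of $K_A$, will produce terms in $K_A$, $N_A$, $P_A$, $P_{\overline A}$, and $P_{1234}$.

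Next, collect the resulting total into the form $\mathsf{v}_{\blambda}+\mathsf{v}_{\blambda}^*-\sum_A N_AN_{\overline A}^*t_{\overline A}$, using $t_1t_2t_3t_4=1$. I expect the pieces to match as follows.
\begin{itemize}
\item The diagonal blocks $A=B$ will reproduce $P_{\phi(A)}T_A$ plus its dual. This uses that $R\Hom_{X}(\iota_*I,\iota_*I(-D))$ equals $R\Hom_{S_A}(I,I(-\ell_A))\otimes\Lambda^\bullet N$ with $N=t_{\bar a}^{-1}+t_{\bar b}^{-1}$ (as in the proof of Proposition \ref{prop:supportprop}), and the fact that $P_{\overline A}$ factors as $P_{\phi(A)}$ times its other factor, which gives the half-plus-dual splitting.
\item The off-diagonal mixed pieces give $P_{\overline A}N_AK_B^*$ and its dual.
\item The point--point pieces give $-P_{1234}K_AK_B^*$ and its dual.
\item The remaining constant pieces with $B=\overline A$ give $-N_AN_{\overline A}^*t_{\overline A}$. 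For $|A\cap B|=1$ the constant surface--surface pieces should vanish; this checks against the virtual dimension computation $(\sum r_A[S_A])^2=2\sum_{\bthree}r_Ar_{\overline A}$.
\end{itemize}

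The main obstacle will be the surface--surface constant terms and the bookkeeping of duals and weight conventions. In particular, the framing sheaf action by $w$ and the quiver action \eqref{eqn:Taction}, where $J_A$ is weighted by $t_A^{-1}w_A$, must be matched so that $N_A$ appears with the correct $t_A$ twist. I would first settle this by checking the case $n=0$ against Remark \ref{rem:n=0}, and the case $r_{12}=1$ alone against the known $\Hilb^n(\C^2)$ tangent character $T_A$. Once those normalizations agree, the general case is a linear identity in the $K_A$ and $N_A$, verified term by term as above.
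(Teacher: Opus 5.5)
Your overall route is the paper's. You write $E_{\blambda}=\bigoplus_{A,\alpha}I_{Z_{A,\alpha}/S_A}\otimes w_{A,\alpha}$, replace $I_Z$ by $\O_{S_A}-\O_Z$, compute the surface--surface, surface--point and point--point pieces, and match with $-\sum_A N_AN_{\overline A}^*t_{\overline A}+\mathsf v_{\blambda}+\mathsf v_{\blambda}^*$.

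The step that does not work as you describe it is the surface--surface piece, which you yourself flag as the main obstacle. It is not true that $\chi_X(\O_{S_A},\O_{S_B}(-D))$ only receives contributions from the fixed points of $S_A\cap S_B$ off $D$. For $A=B$ and for $|A\cap B|=1$, the origin alone gives a nonzero rational function, and the correct value $0$ appears only after cancellation with the fixed points of $S_A\cap S_B$ lying on $D$. Your rank check $(\sum_A r_A[S_A])^2=2\sum_{\bthree}r_Ar_{\overline A}$ only constrains the total rank, so it cannot establish this equivariant vanishing.

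The missing argument, which is the one the paper uses, goes as follows. Let $\widetilde D_c=Z(z_c)$ and $\overline A=\{\overline a<\overline b\}$. Resolve $\O_{S_A}$ by the equivariant Koszul complex $\O(-\widetilde D_{\overline a}-\widetilde D_{\overline b})\to\O(-\widetilde D_{\overline a})\oplus\O(-\widetilde D_{\overline b})\to\O_X$, and restrict to $S_B\cong\PP^1\times\PP^1$. There $D|_{S_B}=\ell_B$ has bidegree $(1,1)$. Meanwhile $\widetilde D_c|_{S_B}$ is a ruling if $c\in B$, and a trivial bundle twisted by a character if $c\notin B$. Hence every resulting term is a line bundle of bidegree $(-1,-1)$, $(-1,0)$ or $(0,-1)$, whose equivariant Euler characteristic vanishes, except when $B=\overline A$. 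In that case $\O_{S_B}(\widetilde D_{\overline a}+\widetilde D_{\overline b}-D)$ has bidegree $(0,0)$ and sits in degree $2$. Using $\O(-\widetilde D_c)\cong\O(-D_c)\otimes t_c$, it equals $\O_{S_B}\otimes t_{\overline A}^{-1}=\O_{S_B}\otimes t_A$. After reindexing, this is exactly the term $-\sum_AN_AN_{\overline A}^*t_{\overline A}$.

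There are two smaller corrections. First, drop the ``conversion'' $K_{A,\alpha}=Q_{A,\alpha}P_A$. If $Q_{A,\alpha}$ is the character of $H^0(\O_{Z_{A,\alpha}})$, then $Q_{A,\alpha}w_{A,\alpha}$ already equals $K_{A,\alpha}$, since coordinate functions have weight $t_a$ in the paper's convention. An extra factor $P_A$ would produce wrong terms.

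Second, your treatment of the diagonal blocks via $R\Hom_{S_A}(I,I(-\ell_A))\otimes\Lambda_{-1}N$ is a legitimate variant and yields $P_{\overline A}T_A$. However, the factorization $P_{\overline A}=P_{\phi(A)}P_{\psi(A)}$ (with $\psi(A)=\max\overline A$) does not by itself split this as $P_{\phi(A)}T_A+P_{\phi(A)}^*T_A^*$. You also need $T_A^*=t_A^{-1}T_A$, which is immediate from the formula for $T_A$ because $t_AP_A^*=P_A$. The relevant identity is then $P_{\phi(A)}+P^*_{\phi(A)}t_A^{-1}=P_{\overline A}$.

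The paper uses this identity together with $P_{\phi(A)}P_A+P_{\phi(A)}^*P_A^*=P_{1234}$ to rewrite the quiver side once and for all as $-\sum_AN_AN_{\overline A}^*t_{\overline A}+\sum_{A,B}(P_{\overline A}N_AK_B^*+P_{\overline A}^*N_A^*K_B-P_{1234}K_AK_B^*)$. Diagonal and off-diagonal blocks are then matched uniformly by the same three Ext computations, with no separate appeal to the tangent space of the framed moduli space on $\PP^1\times\PP^1$.
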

\begin{proof}
Using the relation $t_1t_2t_3t_4=1$, we have 
\begin{equation} \label{eqn:Pidentities}
P_{\phi(A)}+P_{\phi(A)}^*t_A^{-1}=P_{\overline{A}},\quad P_{\phi(A)}P_A+P_{\phi(A)}^*P_A^*=P_{1234},
\end{equation}
for all $A \in \bsix$. 

Let $\blambda$ be the collection of integer partitions corresponding to a fixed point $P$. From the definition of $\mathsf{v}_{\blambda}$ in \eqref{eqn:Neknot} and \eqref{eqn:Pidentities}, one has
\[
- \sum_{A} N_A N_{\overline{A}}^* t_{\overline{A}} + \mathsf{v}_{\blambda} + \mathsf{v}_{\blambda}^* = - \sum_{A} N_A N_{\overline{A}}^* t_{\overline{A}} + \sum_{A,B} \big(P_{\overline{A}} N_A K_B^* + P_{\overline{A}}^* N_A^* K_B - P_{1234} K_A K_B^*\big).
\]
Denote $X:=(\PP^1)^4$. Let $Z_{A,\alpha} \subset \C^2 \subset S_A \subset X$ be the 0-dimensional subscheme corresponding to $\lambda_{A,\alpha}$ and denote by $I_{Z_{A,\alpha}/S_A} \subset \O_{S_A}$ its ideal sheaf. 
It remains to show that $R\Hom_X(E_{\blambda},E_{\blambda}(-D))[1]$ is given by the same expression, where
\[
E_{\blambda} := \bigoplus_{A,\alpha} I_{Z_{A,\alpha} / S_A} \otimes w_{A,\alpha}
\]
and $I_{Z_{A,\alpha}/S_A}$ is viewed as a 2-dimensional sheaf on $X$.

Using the identity $I_{Z_{A,\alpha}} = \O_{S_A} - \O_{Z_{A,\alpha}}$ and Serre duality on $X$, this reduces to the calculation of
\[
R\Hom_X(\O_{S_A},\O_{S_B}(-D)), \ R\Hom_X(\O_{S_A}, \O_{Z_{B,\beta}}(-D)), \ R\Hom_X(\O_{Z_{A,\alpha}},\O_{Z_{B,\beta}}(-D)).
\]
The calculation of the last term follows from a rather standard argument involving the local-to-global spectral sequence and \v{C}ech cohomology as in \cite{MNOP}. The result is
\[
R\Hom_X(\O_{Z_{A,\alpha}},\O_{Z_{B,\beta}}(-D)) = \frac{P_{1234}}{t_1t_2t_3t_4} K_{A,\alpha}^* K_{B,\beta}.
\]
For the second term, define $\TT$-invariant divisors $\widetilde{D}_a = Z(z_a)$ and $D_a = Z(1/z_a)$. Then 
\begin{align}
\begin{split} \label{eqn:restrOD}
&D = D_1 + D_2 + D_3 + D_4, \\
&\O(-\widetilde{D}_a)  \cong \O(-D_a) \otimes t_a, \\
&\O(-D_a)|_{X \setminus D} \cong \O, \\
&D_{\overline{a}}|_{S_A} = D_{\overline{b}}|_{S_A} = 0,
\end{split}
\end{align}
for any $A = \{a<b\}$, where $\overline{A} = \{\overline{a} < \overline{b}\}$. Next, we use the $\TT$-equivariant resolution
\[
0 \to \O(-\widetilde{D}_{\overline{a}} - \widetilde{D}_{\overline{b}}) \to \O(-\widetilde{D}_{\overline{a}}) \oplus \O(-\widetilde{D}_{\overline{b}}) \to \O_X \to \O_{S_A} \to 0, 
\]
which, together with \eqref{eqn:restrOD}, yields
\begin{align*}
R\Hom_X(\O_{S_A}, \O_{Z_{B,\beta}}(-D)) = &R\Gamma(X,\O_{Z_{B,\beta}}(-D)) - R\Gamma(X,\O_{Z_{B,\beta}}(\widetilde{D}_{\overline{a}}-D)) - \\
&R\Gamma(X,\O_{Z_{B,\beta}}(\widetilde{D}_{\overline{b}}-D)) + R\Gamma(X,\O_{Z_{B,\beta}}(\widetilde{D}_{\overline{a}} + \widetilde{D}_{\overline{b}}-D)) \\
=&P_{\overline{A}}^* \cdot K_{B,\beta}.
\end{align*}
For the first term, recall that $$\chi(\O_{\PP^1 \times \PP^1}(-1,-1)) = \chi(\O_{\PP^1 \times \PP^1}(-1,0))  = \chi(\O_{\PP^1 \times \PP^1}(0,-1)) = 0,\quad \chi(\O_{\PP^1 \times \PP^1}) = 1,$$ all of which hold $(\C^*)^2$-equivariantly. Together with \eqref{eqn:restrOD}, we conclude that
\[
R\Hom_X(\O_{S_A},\O_{S_B}(-D)) = \delta_{A \pitchfork B} \cdot t_A,
\]
where we define
\[
\delta_{A \pitchfork B} := \left\{ \begin{array}{cc} 1 & \mathrm{if \, } B = \overline{A} \\ 0 & \mathrm{otherwise.} \end{array} \right.
\]
The desired formula for $R\Hom_X(E_{\blambda},E_{\blambda}(-D))[1]$ follows by summing all contributions.
\end{proof}

\begin{remark}
It is an interesting question whether the obstruction theory of Theorem \ref{thm: moduli of framed sheaves} on $\mathcal{M} := M_{(\mathbb{P}^1)^4}(\vec{r},n)$ admits an orientation, as required to construct a virtual cycle and structure sheaf  
$$
[\mathcal{M}]^{\mathrm{vir}} \in A_{\vd}^{\TT}(\mathcal{M}), \quad \vd = -\sum_{A \in \underline{\bfthree}} r_A r_{\overline{A}}, \quad \widehat{\O}_{\mathcal{M}}^{\vir} \in K_0^{\mathbb{T}}(\mathcal{M})_{\loc}.
$$

One could give a naive \emph{definition} of invariants by assuming the Oh--Thomas localization formula
$$
\int_{[\mathcal{M}]^{\vir}} 1 := \sum_{P \in \mathcal{M}^{\TT}} \pm \frac{1}{\sqrt{(-1)^{\vd}e(E_{\mathrm{fr}}\udot|_P^\vee)}}, \quad \chi(\mathcal{M}, \widehat{\mathcal{O}}_{\mathcal{M}}^{\vir}) := \sum_{P \in \mathcal{M}^{\TT}} \pm \frac{1}{\sqrt{(-1)^{\vd}\widehat{\Lambda}_{-1}(E_{\mathrm{fr}}\udot|_P)}}
$$
and picking some explicit choice of signs at each of the fixed points $P \in \mathcal{M}^{\TT}$ (the invariants depend on these sign choices). Of course, an arbitrary such choice of signs need not arise from a global orientation. Referring to Corollary \ref{cor:isofixloc}, one could pick the sign choice from Section \ref{sec:quivervirtualstruc} (as calculated in Section \ref{sec:proofmainthm}). Then obviously Proposition \ref{prop:Tvirmatch} would allow one to describe the origami partition function of Definition \ref{def:origampartfun} in terms of framed sheaves, i.e.
\begin{align*}
\sfZ_{\vec{r}}^K(q) = \frac{1}{\mathsf{C}^K_{\vec{r}}}\sum_{n=0}^\infty q^n \chi(M_{(\mathbb{P}^1)^4}(\vec{r},n), \widehat{\O}^{\vir}), \quad \mathsf{C}^K_{\vec{r}} = \chi(M_{(\mathbb{P}^1)^4}(\vec{r},0), \widehat{\O}^{\vir}),
\end{align*}
with a similar description for the cohomological generating series. This definition of the origami partition function could be properly justified by showing that there exists a $\mathbb{T}$-equivariant isomorphism 
$$
M_{Q_4}(\vec{r},n) \cong M_{(\mathbb{P}^1)^4}(\vec{r},n)
$$
compatible with the obstruction theories, and then using the orientation constructed on the left hand side as in Section \ref{sec:quivervirtualstruc}. The existence of such an equivariant isomorphism will be conjectured in the following section. 
\end{remark}

\subsection{Main conjecture} \label{sec:mainconj}

The following conjecture relates the moduli space of stable representations of the 4D ADHM quiver to the moduli space of 2-dimensional framed sheaves on $(\mathbb{P}^1)^4$. It can be viewed as a conjectural generalization of Barth's result \eqref{eqn:Barth} \cite{Bar}.
\begin{conjecture} \label{conj: main body main conjecture}
For any $(\vec{r},n)$, there exists a $\TT$-equivariant isomorphism
$$M_{Q_4}(\vec{r},n) \cong M_{(\PP^1)^4}(\vec{r},n).$$ 
Moreover, under this isomorphism there exists a $\TT$-equivariant commutative diagram
\begin{displaymath}
\xymatrix
{
E_{Q_4}\udot \ar^{\cong}[r] \ar[d] & E_{\fr}\udot \ar[dl] \\
\tau^{\geq -1} \LL_{M_{Q_4}(\vec{r},n)}
}
\end{displaymath}
where the top isomorphism intertwines the symmetric pairings $\theta_{Q_4}$, $\theta_{\fr}$.
\end{conjecture}

Supporting evidence for this conjecture includes the isomorphism on the level of $(\C^*)^r$-fixed loci in Corollary \ref{cor:isofixloc} and the comparison of the virtual tangent characters at $\mathbb{T}$-fixed points in Proposition \ref{prop:Tvirmatch}. In what follows, we establish a stronger version of Corollary \ref{cor:isofixloc}. 

Recall from Proposition \ref{prop: Quot scheme} that there exists a closed subscheme $\widetilde{M}_{Q_4}(\vec{r},n)\hookrightarrow M_{Q_4}(\vec{r},n)$, cut out by $J_A = 0$, which is isomorphic to $\Quot_{\C^4}\Big(\bigoplus_{A}\O_{\C^2_A}^{\oplus r_A}, n\Big)$. The conjecture suggests that this Quot scheme should also lie in the framed moduli space $M_{(\mathbb{P}^1)^4}(\vec{r},n)$. Indeed, there exists a morphism $$f\colon\Quot_{\C^4}\Big(\bigoplus_{A}\O_{\C^2_A}^{\oplus r_A}, n\Big)\rightarrow M_{(\mathbb{P}^1)^4}(\vec{r},n)$$ that sends a quotient\footnote{Let $j\colon \C^4\rightarrow (\mathbb{P}^1)^4$ be the open embedding of the complement of $D$. Then we may identify the quotient of $\bigoplus_A \O_{\C^2_A}^{\oplus r_A}=j^*(\bigoplus_A \O_{S_A}^{\oplus r_A})\rightarrow Q$ on $\C^4$ with the adjoint quotient $\bigoplus_A \O_{S_A}^{\oplus r_A}\rightarrow j_*Q$ on $(\mathbb{P}^1)^4$. }
$$0\rightarrow E\rightarrow \bigoplus_{A}\O_{S_A}^{\oplus r_A}\rightarrow Q\rightarrow 0
$$
with $\textnormal{Supp}(Q)\subset \C^4 = (\PP^1)^4 \setminus D$ to a framed sheaf $(E, \phi)$ where $\phi$ is the composition 
$$E\subseteq \bigoplus_{A}\O_{S_A}^{\oplus r_A}\rightarrow \bigoplus_{A}\O_{S_A}^{\oplus r_A}\Big|_D\simeq \bigoplus_{A}\O_{\ell_A}^{\oplus r_A}.
$$
Note that this indeed defines a framed sheaf because $E$ and $\bigoplus_{A}\O_{S_A}^{\oplus r_A}$ agree on the divisor $D$.

\begin{proposition}\label{prop:Quot in framed moduli}
The morphism $f$ is a closed embedding. Moreover $f$ fits into a commutative diagram 
\begin{center}
\begin{tikzcd}
M_{Q_4}(\vec{r},n)^{(\C^*)^r} \arrow[r, hook] \arrow[d, "\rotatebox{90}{$\sim$} \textnormal{ Corollary \ref{cor:isofixloc}}", no head] & \widetilde{M}_{Q_4}(\vec{r},n) \arrow[r, hook] \arrow[d, "\rotatebox{90}{$\sim$} \textnormal{ Proposition \ref{prop: Quot scheme}}", no head] & M_{Q_4}(\vec{r},n) \\
M_{(\PP^1)^4}(\vec{r},n)^{(\C^*)^r} \arrow[r, hook]        &\Quot_{\C^4}\Big(\bigoplus_{A}\O_{\C^2_A}^{\oplus r_A}, n\Big)  \arrow[r, hook]                              & M_{(\mathbb{P}^1)^4}(\vec{r},n).
\end{tikzcd}
\end{center}
\end{proposition}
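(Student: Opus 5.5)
I would prove that $f$ is a closed embedding in three steps: first show it is a monomorphism (injective on $S$-points), then show it is proper, and conclude with the standard fact that a proper monomorphism of finite type schemes is a closed embedding. Commutativity of the diagram I would handle last, by tracing the explicit constructions.

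\emph{Monomorphism.} Let $S$ be a base and take two $S$-families of quotients
\[
0\to \cE_k\to \bigoplus_A \O_{S_A\times S}^{\oplus r_A}\to \mathcal{Q}_k\to 0, \qquad k=1,2,
\]
with $\mathcal{Q}_k$ supported away from $D\times S$, whose associated framed families are isomorphic via $g\colon \cE_1\xrightarrow{\sim}\cE_2$ compatible with the framings. I want to show $\cE_1=\cE_2$ as subsheaves. Write $G:=\bigoplus_A\O_{S_A}^{\oplus r_A}$, and let $\iota_k\colon\cE_k\hookrightarrow G$ be the inclusions. Consider the difference $\iota_2\circ g-\iota_1\colon \cE_1\to G$. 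It vanishes after restriction to $D$, so it factors through $G(-D)$. Since $\cE_1\to G$ is an isomorphism on $X\setminus \mathrm{Supp}(\mathcal{Q}_1)$, this difference is determined by an element of $\Hom(G,G(-D))$ up to torsion corrections. The vanishing I need is
\[
\Hom_X(\cE_1,G(-D))=0 \quad\text{fibrewise}.
\]
I would obtain it from Corollary~\ref{cor: Hom vanishing for framed sheaves}, applied to the framed sheaf $G$ after applying $\Hom(\cE_1,-)$ to $0\to\cE_2(-D)\to G(-D)\to \mathcal{Q}_2\to 0$. The term $\Hom(\cE_1,\mathcal{Q}_2)$ need not vanish, so I would argue more directly: the image of $\cE_1\to G(-D)$ lies in a pure sheaf agreeing with $G(-D)$ near $D$, and the argument of Proposition~\ref{prop: framed sheaves are delta-stable}, which produces the iterated multiplication by $\O(-nD)$, forces the map to be zero. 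Hence $\iota_2 g=\iota_1$ and the subsheaves coincide.

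\emph{Properness.} I would use the valuative criterion. Given a DVR family of framed sheaves whose generic fibre lies in the image of $f$, the generic inclusion $\cE_\eta\subset G_\eta$ extends uniquely by reflexive-type hull arguments on the pure two-dimensional sheaf $G$. One should check that the special fibre stays injective with cokernel of length $n$ supported off $D$; the length is fixed by the Chern character $v_{\vec{r},n}$. Alternatively, properness follows from the Quot scheme being proper over $\Sym^n(\C^4)$ together with compatibility with proper maps to affine bases, but the valuative route seems cleaner.

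\emph{Commutativity.} For the diagram, the right-hand square is the definition, and the left square reduces to comparing the Hilbert scheme identifications of Propositions~\ref{prop:fixlocquiver}, \ref{prop:fixlocframed} and \eqref{eqn:MP1xP1toHilb}. Both send $\blambda$-type data to $\bigoplus I_{Z_{A,\alpha}/S_A}\otimes w_{A,\alpha}$, and checking this in families is routine. I expect the Hom-vanishing step in the monomorphism argument to be the main obstacle.
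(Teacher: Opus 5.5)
Your skeleton matches the paper's: prove that $f$ is a monomorphism, prove that it is proper, and conclude that a proper monomorphism is a closed immersion. Your treatment of the diagram is also fine. Note only that there is no right-hand square, since there is no vertical arrow $M_{Q_4}(\vec r,n)\to M_{(\PP^1)^4}(\vec r,n)$. What needs checking is the left square, together with the fact that the $(\C^*)^r$-fixed framed sheaves lie in the image of $f$.

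\textbf{Monomorphism.} Your reduction to $\Hom_X(\cE_1,G(-D))=0$ is correct, where your $G$ is the paper's $E_{\vec r}$. However, neither justification you offer works. The Corollary route fails, as you note. The argument of Proposition~\ref{prop: framed sheaves are delta-stable} only yields inequalities of Hilbert polynomials, never the vanishing of a map.

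The vanishing is easy all the same:
\begin{itemize}
\item $\Hom_X(\cQ_1,G(-D))=0$ by purity.
\item $\Ext^1_X(\cQ_1,G(-D))\cong\Ext^3_X(G(-D),\cQ_1\otimes K_X)^*=0$, because each $\O_{S_A}$ has a length-two Koszul resolution.
\item Hence $\Hom_X(\cE_1,G(-D))\cong\Hom_X(G,G(-D))=0$. The diagonal terms are $H^0(\O_{\PP^1\times\PP^1}(-1,-1))=0$, and the off-diagonal ones vanish by purity.
\item Fibrewise vanishing then gives the family version by base change.
\end{itemize}
This is the paper's argument with $G(-D)$ in place of $G$. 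There one gets $\Hom(\cE_i,G)\cong\mathrm{End}_X(G)$, and since $\mathrm{End}_X(G)\to\mathrm{End}_D(F_{\vec r})$ is an isomorphism, $\widetilde\phi=\mathrm{id}$. Your iterated-$\O(-kD)$ idea also works, but the input it needs is $\Hom_D(F_{\vec r},F_{\vec r}(-kD))=0$ for all $k\geq 1$, together with Krull and ampleness.

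\textbf{Properness: the genuine gap.} ``Reflexive-type hull arguments'' give no mechanism here. Over a DVR $R$ with uniformizer $\pi$, some $\pi^m\iota_\eta$ always extends to $h\colon\cE\to G_R$. Your sketch does not show:
\begin{itemize}
\item that the normalized extension restricts to the framing on $D$, rather than landing in $G(-D)$;
\item that points of the cokernel cannot specialize into $D$;
\item that $h_0$ is injective.
\end{itemize}
``One should check'' is exactly the content of properness. The $\Sym^n(\C^4)$ alternative would need a separated morphism out of $M_{(\PP^1)^4}(\vec r,n)$ through which the Quot-to-Chow map factors, and none is available on the sheaf side.

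The paper's missing idea is HL stability. Every point $E\subseteq G$ of the \emph{projective} scheme $\Quot_{(\PP^1)^4}(G,n)$, framed by $E\subseteq G\to i_*F_{\vec r}$, is $\delta$-stable for small $\delta_1$. Gieseker semistability of $G$ handles subsheaves with $\phi'\neq 0$, and that of $G(-D)$ handles those with $\phi'=0$. This gives a map $\bar f$ to the separated HL moduli space, in which $M_{(\PP^1)^4}(\vec r,n)$ is open with preimage $\Quot_{\C^4}$. So $f$ is a base change of a proper map.

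Your valuative route can also be closed, but only by adding the vanishing $\Hom_X(\cE_0,G(-D))=0$ for the framed special fibre $\cE_0$:
\begin{itemize}
\item Normalize $h$ so that $h_0\neq 0$. The framing excludes $m<0$, and the vanishing excludes $m>0$.
\item Hence $h_0|_D$ is the framing isomorphism.
\item Then $G/\mathrm{im}(h_0)$ vanishes along the ample $D$, so it is $0$-dimensional.
\item So $\ker h_0$ has no $2$-dimensional part, and it is zero by purity of $\cE_0$.
\item Finally, $\mathrm{coker}(h)$ is $R$-flat by the local criterion.
\end{itemize}
None of this is in the proposal.
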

\begin{proof}
    Once we show that $f$ is a closed embedding, it is straightforward to check that both $\widetilde{M}_{Q_4}(\vec{r},n)$ and $\Quot_{\C^4}\Big(\bigoplus_{A}\O_{\C^2_A}^{\oplus r_A}, n\Big)$ contain the $(\C^*)^r$-fixed locus and that the isomorphism from Proposition \ref{prop: Quot scheme} is compatible with the one from Corollary \ref{cor:isofixloc}. In order to prove that the morphism $f$ is a closed embedding, it suffices to show that it is a proper monomorphism \cite[\href{https://stacks.math.columbia.edu/tag/04XV}{Tag 04XV}]{Sta}.

    We first show that $f$ is a monomorphism, i.e., it defines an injective map between $T$-points, where $T$ is any base $\C$-scheme of finite type. For notational simplicity, set $E_{\vec{r}}:=\bigoplus_{A}\O_{S_A}^{\oplus r_A}$ and $F_{\vec{r}}:=E_{\vec{r}}\big|_D=\bigoplus_{A}\O_{\ell_A}^{\oplus r_A}$. Consider two $T$-valued points of the Quot scheme  
    \begin{equation} \label{eqn:Quotses}
    0\rightarrow \cE_i\xrightarrow{\iota_i} E_{\vec{r}}\boxtimes\O_T\rightarrow\cQ_i\rightarrow 0, \quad i=1,2.
    \end{equation}
    Assume that they define the same $T$-family of framed sheaves, that is, that there exists an isomorphism $\cE_1\xrightarrow{\phi}\cE_2$ such that the following diagram commutes
    \begin{equation}\label{eqn: random diagram}
        \begin{tikzcd}
\cE_1 \arrow[d, "\phi"] \arrow[r, "\iota_1"] &  E_{\vec{r}}\boxtimes\O_T\arrow[r, two heads] & F_{\vec{r}}\boxtimes\O_T \arrow[d, equal] \\
\cE_2 \arrow[r, "\iota_2"]                & E_{\vec{r}}\boxtimes\O_T \arrow[r, two heads] & F_{\vec{r}}\boxtimes\O_T.                     
\end{tikzcd}
    \end{equation}
    Let $X := (\PP^1)^4$ and denote by $p \colon X \times T \to T$ the projection. The short exact sequence \eqref{eqn:Quotses} induces a long exact sequence
    \begin{align*}
    0 &\to p_* \hom(\cQ_i, E_{\vec{r}} \boxtimes \O_T) \to \mathrm{End}_X(E_{\vec{r}}) \otimes \O_T \to p_* \hom(\cE_i,E_{\vec{r}} \boxtimes \O_T) \\
    &\to \ext_p^1(\cQ_i, E_{\vec{r}} \boxtimes \O_T) \to \cdots.
    \end{align*}
    For any closed point $t \in T$, we claim that
    \[
    \Hom_X(\cQ_{i,t},E_{\vec{r}}) = 0, \quad \Ext^1_X(\cQ_{i,t},E_{\vec{r}}) = 0.
    \]
    The first vanishing follows from the fact that $\cQ_{i,t}$ is 0-dimensional and $E_{\vec{r}}$ is pure 2-dimensional. The last term $\Ext^1_X(\cQ_{i,t},E_{\vec{r}})\simeq \Ext^3_X(E_{\vec{r}},\cQ_{i,t}\otimes K_X)^*$ also vanishes for the following reason. The terms of the spectral sequence $$H^p(X,\mathcal{E}xt^q(E_{\vec{r}},\cQ_{i,t}\otimes K_X)) \Rightarrow \Ext^{p+q}_X(E_{\vec{r}}, \mathcal{Q}_{i,t} \otimes K_X)$$  are zero for $p>0$ because the local Ext sheaves are 0-dimensional. Moreover, $E_{\vec{r}}$ has homological dimension $2$ which implies that $\mathcal{E}xt^3(E_{\vec{r}},\cQ_{i,t}\otimes K_X) = 0$. As $\cE_i$ and $E_{\vec{r}} \boxtimes \O_T$ are $T$-flat, we deduce that
    \[
    p_* \hom(\cQ_i, E_{\vec{r}} \boxtimes \O_T) = 0, \quad \ext_p^1(\cQ_i, E_{\vec{r}}\boxtimes \O_T) = 0,
    \]
    by \cite[Kor.~1]{BPS} and thus that $\mathrm{End}_X(E_{\vec{r}}) \otimes \O_T \cong p_* \hom(\cE_i,E_{\vec{r}} \boxtimes \O_T)$. Taking $i=1$ and global sections, we see that $\iota_2 \circ \phi = \widetilde{\phi} \circ \iota_1$ for some $\widetilde{\phi} \in \Gamma(T,\O_T) \otimes_{\C} \mathrm{End}_X(E_{\vec{r}})$. 
    
     Restricting \eqref{eqn: random diagram} to $D\times T\hookrightarrow X \times T$, we obtain 
    \begin{equation*}
        \begin{tikzcd}
\cE_1\big|_{D\times T} \arrow[d, "\phi|_{D\times T}"] \arrow[r] &  E_{\vec{r}}\big|_D\boxtimes\O_T\arrow[r, equal] \arrow[d, "\widetilde{\phi}|_{D\times T}"] & F_{\vec{r}}\boxtimes\O_T \arrow[d, equal] \\
\cE_2\big|_{D\times T} \arrow[r]                & E_{\vec{r}}\big|_D\boxtimes\O_T \arrow[r, equal] & F_{\vec{r}}\boxtimes\O_T,              
\end{tikzcd}
\end{equation*}
    where the left square and big square commute. As all horizontal arrows are now isomorphisms, the right square also commutes. In particular $\widetilde{\phi}|_{D\times T}$ is the identity map.
    Moreover, the restriction map
    \[
    \mathrm{End}_X(E_{\vec{r}}) \otimes_{\C} \Gamma(T,\O_T) \to \mathrm{End}_D(F_{\vec{r}}) \otimes_{\C} \Gamma(T,\O_T)
    \]
    is easily seen to be an isomorphism that sends the identity to the identity. It follows that $\widetilde{\phi} = \id$. We conclude that $\iota_2 \circ \phi = \iota_1$ and the two $T$-valued points of the Quot scheme are the same. 

    We now prove that $f$ is proper. It suffices to show that there is a Cartesian square
    \begin{equation*}
\begin{tikzcd}
\Quot_{\C^4}\Big(\bigoplus_{A}\O_{\C^2_A}^{\oplus r_A}, n\Big) \arrow[r, "f"] \arrow[d, phantom, "\subseteq" sloped] & M_{(\mathbb{P}^1)^4}(\vec{r},n) \arrow[d, phantom, "\subseteq" sloped] \\
\Quot_{(\mathbb{P}^1)^4}\Big(\bigoplus_{A}\O_{S_A}^{\oplus r_A}, n\Big) \arrow[r, "\bar{f}", dashed] & M_{((\mathbb{P}^1)^4,i_*F_{\vec{r}})}^{\delta(x)\textrm{-}st}(v_{\vec{r},n})
\end{tikzcd}
\end{equation*}
where $\delta(x)=\delta_1x+\delta_0$ is an HL-stability condition for some $0<\delta_1 < a_1(F)$ (Proposition \ref{prop: framed sheaves are delta-stable}). Once this diagram is constructed, properness of $f$ follows from projectivity of the lower left term and separatedness of the lower right term. Let 
$$0\rightarrow E\rightarrow E_{\vec{r}}\rightarrow Q\rightarrow 0$$
be a point in the Quot scheme of $(\mathbb{P}^1)^4$. We would like to show that $(E,\phi)$ is $\delta(x)$-stable where $\phi$ is defined as the composition
$$E\subseteq E_{\vec{r}}\twoheadrightarrow E_{\vec{r}}\big|_D=i_*F_{\vec{r}}. 
$$
In particular, we have $\epsilon_{\phi} = 1$. Once $\delta(x)$-stability is established, the morphism $\overline{f}$ is well-defined and it can be checked that the resulting diagram is Cartesian. 

Let $0\subsetneq E'\subsetneq E$ be any proper nonzero subsheaf. Set $\phi'$ to be the composition $E'\subset E\xrightarrow{\phi}i_*F_{\vec{r}}$. In order to prove $\delta(x)$-stability of $(E,\phi)$, we have to show that
\begin{equation}\label{eqn: necessary inequality}
    a_2(E)(a_1(E')-\epsilon_{\phi'}\cdot\delta_1) \leq a_2(E')(a_1(E)-\epsilon_{\phi}\cdot\delta_1)
\end{equation}
and, whenever equality occurs, we have 
\begin{equation} \label{eqn: ineq cst}
    a_2(E)(a_0(E')-\epsilon_{\phi'}\cdot\delta_0) < a_2(E')(a_0(E)-\epsilon_{\phi}\cdot\delta_0).
\end{equation}
We divide into two cases depending on the value of  $\epsilon_{\phi'}$. First, assume that $\epsilon_{\phi'}=1$, i.e., $\phi'\neq 0$. As $E_{\vec{r}}$ is Gieseker $D$-semistable and $E$ and $E'$ agree away from the 0-dimensional support of $Q$, we have 
$$a_2(E)a_1(E')\leq a_2(E')a_1(E).
$$
When $a_2(E') < a_2(E)$, we deduce that \eqref{eqn: necessary inequality} holds with strict inequality. When $a_2(E') = a_2(E)$, we only have to deal with the case $a_1(E') = a_1(E)$. But then $a_0(E') < a_0(E)$ and \eqref{eqn: ineq cst} holds.

Now, we assume that $\epsilon_{\phi'}=0$, i.e., $\phi'=0$. In this case, we have $E'\subseteq E_{\vec{r}}(-D)$. Gieseker $D$-semistability of $E_{\vec{r}}(-D)$ then implies that
$$a_2(E)a_1(E')=a_2(E(-D))a_1(E')\leq a_2(E')a_1(E(-D)) = a_2(E')(a_1(E) - D^2 \ch_2(E)).
$$
As $D$ is ample, we can (uniformly) choose $\delta_1$ sufficiently small so that \eqref{eqn: necessary inequality} holds with strict inequality.
\end{proof}

\begin{example}
Consider the case $r_{12} = r_{34} = 1$. Then $\widetilde{M}_{Q_4}(\vec{r},n) \cong M_{Q_4}(\vec{r},n)$ by Proposition \ref{prop:Jiszero}. Denote by $M_{(\PP^1)^4}(\vec{r},n)^{\mathrm{main}}$ the connected component containing the connected closed subscheme
\begin{equation} \label{n=1iso}
\widetilde{M}_{Q_4}(\vec{r},n) \cong \Quot_{\C^4}\Big(\bigoplus_{A}\O_{\C^2_A}^{\oplus r_A}, n\Big),
\end{equation}
where we use Propositions \ref{prop: Quot scheme}, \ref{prop: connectedness of Quot scheme}, and \ref{prop:Quot in framed moduli}. Thus, we have a closed embedding
$$
f \colon  M_{Q_4}(\vec{r},n) \hookrightarrow M_{(\PP^1)^4}(\vec{r},n)^\main.
$$
By the inverse function theorem for varieties \cite[Thm.~14.9]{Har}, it follows that if $d_P f$ is surjective at all closed points $P$, then $f$ induces an isomorphism of the underlying reduced varieties. 

Taking $n=1$, $M_{Q_4}(\vec{r},1)$ is reduced and is as described in Example \ref{ex:butterfly n=1}. The elements in the image of $f$ are characterized as follows. Recall that the Quot scheme locus parametrizes quotients of $\O_{S_{12}} \oplus \O_{S_{34}}$ of the form
\begin{equation}\label{eqn: butterfly n=1 quotient}
    0\rightarrow E\rightarrow \O_{S_{12}}\oplus \O_{S_{34}}\rightarrow Q\rightarrow 0,
\end{equation}
where $Q$ is a skyscraper sheaf supported at a point $p \in \C^2_{12}\cup \C^2_{34}$, where $\C_A^2 = S_A \setminus D$. If $p \in \C^2_{12}\backslash\{o\}$ (resp.~$p\in \C^2_{34}\backslash\{o\}$), then $E$ is isomorphic to $I_{p\in S_{12}}\oplus \O_{S_{34}}$ (resp.~$\O_{S_{12}}\oplus I_{p\in S_{34}}$), where $I_{p \in S_A} \subset \O_{S_A}$ denote the corresponding ideal sheaves viewed as sheaves on $(\PP^1)^4$. If $p=o$, then defining the quotient \eqref{eqn: butterfly n=1 quotient} amounts to choosing a 1-dimensional quotient of $\O_{S_{12}}|_o\oplus\O_{S_{34}}|_o$. Therefore, such points are parametrized by a pencil $\mathbb{P}^1_{[s:t]}$. Under the isomorphism \eqref{n=1iso}, these quotients correspond to points of the component (iii) in Example \ref{ex:butterfly n=1}. For $[1:0]\in \mathbb{P}^1$ (resp.~$[0:1]\in \mathbb{P}^1$), the subsheaf $E$ is isomorphic to $I_{o\in S_{12}}\oplus \O_{S_{34}}$ (resp.~$\O_{S_{12}}\oplus I_{o\in S_{34}}$). For $[s:t]$ with both $s$ and $t$ nonzero, the subsheaf $E$ is isomorphic to $\O_{S_{12}\cup S_{34}}$. The dimensions of the Zariski tangent spaces at points in the image of \eqref{n=1iso} can be computed using standard (but lengthy) Ext group calculations. Up to symmetry, they are given by
$$
\dim \Ext^1_{(\PP^1)^4}(E, E(-D))=
\begin{cases}
    2&\textnormal{if}\quad E= I_{p\in S_{12}}\oplus \O_{S_{34}}\ \textnormal{for } p\in \C^2_{12}\backslash\{o\},\\
    3&\textnormal{if}\quad E= I_{o\in S_{12}}\oplus \O_{S_{34}},\\
    1&\textnormal{if}\quad E= \O_{S_{12}\cup S_{34}}.\\
\end{cases}
$$
These calculations imply the following additional evidence for Conjecture \ref{conj: main body main conjecture}
$$
M_{Q_4}(\vec{r},1) \cong M_{(\PP^1)^4}(\vec{r},1)^\main_{\red}.
$$
\end{example}

\hfill

\end{document}